\documentclass[12pt, reqno]{amsart}
\usepackage{amsmath}
\usepackage{amssymb}
\usepackage{enumerate}
\usepackage{mathrsfs}
\usepackage{amsfonts}
\usepackage{color}
\usepackage{vmargin}
\usepackage{amsthm}
\usepackage{graphicx} 
\usepackage{esint}
\usepackage{mathtools}
\usepackage{MnSymbol}
\usepackage[shortcuts]{extdash}
\usepackage{hyperref}

\hypersetup{
	colorlinks=true,
	linkcolor=blue,
	citecolor=blue,
	urlcolor=blue
}

\allowdisplaybreaks
\usepackage[T1]{fontenc}
\usepackage{setspace}
\DeclareMathOperator*{\esssup}{ess\,sup}
\DeclareMathOperator*{\essinf}{ess\,inf}
\DeclareMathOperator*{\diam}{diam}

\def\XXint#1#2#3{{\setbox0=\hbox{$#1{#2#3}{\int}$ }
		\vcenter{\hbox{$#2#3$ }}\kern-.6\wd0}}

\long\def\symbolfootnote[#1]#2{\begingroup%
	\def\thefootnote{\fnsymbol{footnote}}\footnote[#1]{#2}\endgroup}

\setmarginsrb{20mm}{20mm}{20mm}{20mm}{10mm}{10mm}{10mm}{10mm}

\newtheoremstyle{remark}
{}{}{}{}{\bfseries}{.}{.5em}{{\thmname{#1 }}{\thmnumber{#2}}{\thmnote{ (#3)}}}
\theoremstyle{remboldstyle}
\newtheorem{remark}{Remark}[section]

\RequirePackage{amsthm}
\newcommand{\measurerestr}{%
	\,\raisebox{-.127ex}{\reflectbox{\rotatebox[origin=br]{-90}{$\lnot$}}}\,%
}
\newtheorem{defi}{Definition}[section]

\newtheorem{tw}{Theorem}[section]
\newtheorem{corollary}{Corollary}[section]
\newtheorem{lem}[tw]{Lemma}

\newtheorem{prop}[tw]{Proposition}

\newtheorem{ex}{Example}

\numberwithin{equation}{section}

\makeatletter
\def\@tocline#1#2#3#4#5#6#7{\relax
	\ifnum #1>\c@tocdepth 
	\else
	\par \addpenalty\@secpenalty\addvspace{#2}%
	\begingroup \hyphenpenalty\@M
	\@ifempty{#4}{%
		\@tempdima\csname r@tocindent\number#1\endcsname\relax
	}{%
		\@tempdima#4\relax
	}%
	\parindent\z@ \leftskip#3\relax \advance\leftskip\@tempdima\relax
	\rightskip\@pnumwidth plus4em \parfillskip-\@pnumwidth
	#5\leavevmode\hskip-\@tempdima
	\ifcase #1
	\or\or \hskip 1em \or \hskip 2em \else \hskip 3em \fi%
	#6\nobreak\relax
	\hfill\hbox to\@pnumwidth{\@tocpagenum{#7}}\par
	\nobreak
	\endgroup
	\fi}
\makeatother

\def\={\hspace{-3mm}&=&\hspace{-3mm}}

\begin{document}
	
	\date{}

	\title[\bf Compact embeddings of Sobolev, Besov, and Triebel-Lizorkin spaces]{\bf Compact embeddings of variable exponent Sobolev, Besov, and Triebel-Lizorkin spaces on metric measure spaces}
	
	\author{Micha{\l} Dymek}
	\maketitle
	
	\begin{abstract}
	We study compact embeddings of Sobolev, Besov, and Triebel-Lizorkin spaces with variable exponents on both bounded and unbounded metric measure spaces. We establish sufficient conditions for compactness, and under additional assumptions, we show that they are also necessary. Moreover, we investigate the influence of isometry group actions on the compactness of embeddings. In particular, we answer the open question posed by P. Górka in \cite{Górka}, proving a Berestycki-Lions type theorem.	\end{abstract}
	\bigskip
	
	\noindent
	{\bf Keywords}: metric measure space, Sobolev spaces, Besov spaces, Triebel-Lizorkin spaces, compact embedding, variable exponent, group invariant space
	\medskip
	
	\noindent
	\emph{Mathematics Subject Classification (2020): 46E35, 30L99, 42B35.} 
	
	\tableofcontents
	
	\section{Introduction}
	
	Compact embeddings of certain function spaces, such as Sobolev spaces, play a crucial role in modern analysis, particularly in the theory of nonlinear partial differential equations. One of the most fundamental results concerning compact embeddings is the Rellich-Kondrachov theorem. It states that if $\Omega \subseteq \mathbb R^n$ is a bounded domain with sufficiently regular boundary, then for every $p\in [1,n)$ there holds the compact embedding $W^{1,p}(\Omega) \hookrightarrow \hookrightarrow L^q(\Omega)$, where $1\leq q < p^*$ and $p^*=np/(n-p)$ (see, for instance \cite{Adams}). It is well-known that the assumption of boundedness of $\Omega$ cannot be omitted.
	
	However, in the case of unbounded domains, an interesting result was proven by Berestycki and Lions. Specifically, let $p\in [1,\infty]$ and $n\in \mathbb N$ with $n\geq 2$. Define the subspace $W^{1,p}_r(\mathbb R^n)$ of $W^{1,p}(\mathbb R^n)$ consisting of radially symmetric functions. Berestycki and Lions \cite{Berestycki, Lions} showed that if $p<n$, then the space $W_r^{1,p}(\mathbb R^n)$ is compactly embedded into $L^q(\mathbb R^n)$ for all $q$ satisfying $p<q<p^*$.

	The study of compact embeddings of Sobolev spaces has been extended to much more general settings than those mentioned above. For instance, many results have been obtained on Riemannian manifolds \cite{Gaczkowski, krytyczny, hebey}, locally compact Abelian groups \cite{kostrzewa}, metric measure spaces \cite{samko, zwartestale, Ambrosio, gacz, Górka, nonlinear, hajlaszkoskela, zmienneq, kalamajska} and for variable exponents \cite{fan, macios}. In particular, Rellich-Kondrachov type theorems have been established for Haj{\l}asz-Sobolev and Poincar\'e spaces defined on metric measure spaces by G\'orka and S{\l}abuszewski in \cite{nonlinear}, Haj{\l}asz and Koskela \cite{hajlaszkoskela} and \cite{kalamajska}. Recently, Alvarado, G\'orka and S{\l}abuszewski \cite{zwartestale} identified sufficient and necessary conditions guaranteeing compactness of embeddings for the fractional Haj{\l}asz-Sobolev spaces $M^{s,p}$, the Haj{\l}asz-Besov spaces $N^s_{p,q}$ and the Haj{\l}asz-Triebel-Lizorkin spaces $M^s_{p,q}$ (which were firstly introduced in \cite{hajlaszpierwszy}, \cite{koskela} and \cite{yang}). The Berestycki-Lions theorem was extended by G\'orka in \cite{Górka} to Sobolev spaces on metric measure spaces. Namely, if $(X,d,\mu)$ is a metric measure space and $H$ is a subgroup of the measure-preserving isometries of $(X,d)$, then, under suitable assumptions on $(X,d,\mu)$ and the orbits under the action of $H$, the embedding $M^{1,p}_H(X,d,\mu) \hookrightarrow L^q(X,\mu)$ is compact, where $M^{1,p}_H(X,d,\mu)$ is the subspace of $M^{1,p}(X,d,\mu)$ consisting of $H$-invariant functions. Moreover, the Berestycki-Lions theorem has been established on Riemannian manifolds by Hebey and Vaugon \cite{hebey} and Skrzypczak and Tintarev \cite{skrzypczak}, on the Heisenberg group by Balogh and Kristály \cite{baloch}. It has also been generalized to the variable exponent setting by Fan, Zhao, and Zhao \cite{fan} on Euclidean spaces and by Gaczkowski, Górka, and Pons \cite{Gaczkowski} on complete Riemannian manifolds. Compact embeddings in a presence of isometry group action was also investigated by Skrzypczak in \cite{skrzypczak2} for Besov and Triebel-Lizorkin spaces.
	
	The primary goal of this paper is to extend the Rellich-Kondrachov and Berestycki-Lions theorems to the variable exponent Haj{\l}asz-Sobolev spaces $M^{s(\cdot),p(\cdot)}$, the Haj{\l}asz-Besov $N^{s(\cdot)}_{p(\cdot),q(\cdot)}$ and the Haj{\l}asz-Triebel-Lizorkin $M^{s(\cdot)}_{p(\cdot),q(\cdot)}$ spaces defined on metric measure spaces. Additionally, we investigate necessary conditions for the compactness of embeddings, thus providing a partial characterization of compact embeddings for Sobolev, Besov, and Triebel-Lizorkin spaces in this broad setting.
	
	The remainder of this paper is organized as follows. In the second section, we introduce the notation and some measure-theoretic concepts used in the statements of our main results. We also recall the definitions of variable exponent function spaces, which are essential for our considerations, including Lebesgue, Sobolev, Triebel-Lizorkin, Besov, and Hölder spaces on metric measure spaces. The third section is dedicated to our main results. We first analyze the compact embeddings on measure spaces with finite or integrable measure, characterizing the embeddings into Lebesgue spaces within certain classes of metric measure spaces and proving the Rellich-Kondrachov theorem. Next, we state and prove the Berestycki-Lions theorem and discuss necessary conditions for embeddings of group invariant spaces. The final subsection focuses on compact embeddings between Sobolev, Besov, and Triebel-Lizorkin spaces.
	\section{Preliminaries}
	
	\subsection{Metric measure spaces}
	
	Let $(X,d,\mu)$ be a metric measure space equipped with a metric $d$ and Borel measure~$\mu$. Denote by
	\begin{equation*}
	B(x,r)=\left\{y\in X: d(x,y)<r \right\}
	\end{equation*}
	the open ball centered at $x\in X$ with radius $r\in (0,\infty)$. Throughout this paper, we assume that for every ball $B(x,r)\subseteq X$
	\begin{equation*}
	0<\mu(B(x,r))<\infty.
	\end{equation*}
	It is well known (see \cite{borel}) that the existence of a measure $\mu$ satisfying this assumption is equivalent to the separability of the metric space. Therefore, all metric measure spaces considered in this paper are assumed to be separable.
	
	Next, we introduce the concept of metric measure spaces with variable dimension. Such spaces have been studied in \cite{zmienneq} and \cite{zanurzenia}. Suppose that $Q: X \rightarrow (c,+\infty)$ is a bounded function, where $c\in (0,\infty)$. We say that the measure $\mu$ is \texttt{lower Ahlfors $Q(\cdot)$-regular}, if there exists $b_1\in (0,\infty)$ such that for all $x\in X$ and $r\in (0,1]$
	\begin{equation*}
	\mu\left(B(x,r)\right) \geq b_1r^{Q(x)}.
	\end{equation*}
	Similarily, we say that $\mu$ is \texttt{upper Ahlfors $Q(\cdot)$-regular} if there is $b_2\in (0,\infty)$ such that
	\begin{equation*}
	b_2r^{Q(x)} \geq \mu\left(B(x,r)\right)
	\end{equation*}
	for all $x\in X$ and $r\in (0,1]$. We say that $\mu$ is \texttt{$Q(\cdot)$-Ahlfors regular} if it is both upper and lower Ahlfors $Q(\cdot)$-regular.
	
	Now, let us recall the notion of doubling metric spaces. If $(X,d,\mu)$ is a metric measure space, we say that the measure $\mu$ is \texttt{doubling} if and only if there exists $C_d\in [1,\infty)$ such that
	\begin{equation*}
	\mu\left(B(x,2r)\right)\leq C_d\mu\left(B(x,r)\right) \textnormal{ for every ball } B(x,r)\subseteq X.
	\end{equation*}
	Furthermore, we say that metric space $(X,d)$ is \texttt{geometrically doubling} if and only if there exists constant $M\in (0,\infty)$ such that every ball $B(x,r) \subseteq X$ can be covered by at most $M$ balls with radius $r/2$. It is known that metric space equipped with a doubling measure is geometrically doubling (\cite{Coifman}). Conversely, in \cite{lukkainen} it was proven that every complete geometrically doubling metric space carries a doubling measure.
	
	Next, we state the covering characterization of geometrically doubling metric spaces (see \cite{zanurzenia} for the proof), which will be useful in some proofs within this paper.
	\begin{lem}\label{pokryciowy}
		Let $(X,d)$ be a metric space and $r\in (0,\infty)$. The following statements are equivalent:
		\begin{enumerate}
			\item[(i)] $(X,d)$ is geometrically doubling;
			\item[(ii)] There exist $A, B\in (0,\infty)$, a sequence $\left\{x_n\right\}_{n=1}^{\infty}\subseteq X$ such that the collection of balls $\left\{B(x_n,r)\right\}_{n=1}^{\infty}$ covers $X$ and, for any $\delta\in (r,\infty)$ each point $x\in X$ belongs to at most $A\left(\delta/r\right)^{B}$ balls $B(x_n,\delta)$.
		\end{enumerate}
	\end{lem}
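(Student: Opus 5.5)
The plan is to prove (i)$\Rightarrow$(ii) with a maximal separated net and a packing count. For (ii)$\Rightarrow$(i) I will first try a packing argument at the one given scale $r$. I expect this direction to be the real obstacle, and I will say below why the lemma may need to be read as "for every $r$" there.

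\emph{(i)$\Rightarrow$(ii).} Fix $r>0$ and choose, by Zorn's lemma, a maximal $r$-separated set $\{x_n\}\subseteq X$, meaning $d(x_n,x_m)\ge r$ for $n\neq m$. The first step is the packing bound, which I derive from geometric doubling alone. Iterating the definition $k$ times covers any ball $B(y,R)$ by at most $M^k$ balls of radius $R2^{-k}$. Taking $k$ minimal with $R2^{-k}\le r/2$, each of these small balls contains at most one point of an $r$-separated set. Hence every ball $B(y,R)$ with $R\ge r$ contains at most
\[
M^{\log_2(2R/r)+1}=M^2\left(R/r\right)^{\log_2 M}
\]
points of an $r$-separated set. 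In particular $\{x_n\}$ is at most countable, with no separate appeal to separability needed. By maximality, every $x\in X$ lies within distance $r$ of some $x_n$, so the balls $B(x_n,r)$ cover $X$. For the overlap bound, fix $\delta>r$ and $x\in X$. If $x\in B(x_n,\delta)$ then $x_n\in B(x,\delta)$, so the number of such $n$ is at most $M^2(\delta/r)^{\log_2 M}$. This gives (ii) with $A=M^2$ and $B=\log_2 M$. A finite net is a harmless degenerate case, which I handle by allowing the sequence to be finite.

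\emph{(ii)$\Rightarrow$(i).} The natural attempt is as follows. Take $B(x,\rho)$ and a maximal $\rho/2$-separated set $\{y_i\}\subseteq B(x,\rho)$, so that the balls $B(y_i,\rho/2)$ cover $B(x,\rho)$. The goal is to bound the number of $y_i$. For $\rho>4r$, pick for each $i$ an index $n_i$ with $y_i\in B(x_{n_i},r)$. The $n_i$ are distinct, since $d(y_i,y_j)\ge\rho/2>2r$, and $x\in B(x_{n_i},\rho+r)$ for every $i$. Applying (ii) with $\delta=\rho+r$ bounds the number of $y_i$ by $A\left((\rho+r)/r\right)^B$.

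This is the step I expect to fail as stated. The bound grows with $\rho/r$ rather than being uniform in $\rho$. At scales $\rho\lesssim r$, (ii) at a single scale gives no information at all. I would first try to upgrade the bound by iterating the cover at dyadic scales $\rho=2^jr$, but I do not see how to make it $\rho$-independent.

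In fact a bounded non-doubling space shows the literal single-scale converse cannot hold. For example, take an infinite $\varepsilon$-separated set with $\varepsilon<r/2$ and diameter less than $r$. Then a single ball $B(x_1,r)$ covers $X$ and (ii) holds trivially, yet $X$ is not geometrically doubling. So the converse must be understood with (ii) holding for every $r>0$ with $A,B$ independent of $r$, which is how the lemma is used in \cite{zanurzenia}. Under that reading, one applies the count above with $r=\rho/8$ and $\delta=\rho+\rho/8$, which gives at most $A\,9^B$ balls of radius $\rho/2$ covering $B(x,\rho)$.
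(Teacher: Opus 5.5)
Your proof of (i)$\Rightarrow$(ii) is correct. It is the standard maximal-net and packing argument; take $B=\log_2\max\{M,2\}$ so that $B>0$ even for a one-point space. The paper gives no proof of its own here and defers to \cite{zanurzenia}. It only uses this direction, in the proof of Proposition~\ref{cauchy} with $\delta=2r$, so nothing downstream depends on the converse.

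Your objection to (ii)$\Rightarrow$(i) is also right: with $r$ fixed, the implication is false. Your repair, (ii) for every $r>0$ with $A,B$ independent of $r$, is correct. With $r=\rho/8$ and $\delta=9\rho/8$ it gives doubling constant $A\,9^B$.

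Your bounded counterexample uses a finite (or repeated) sequence of centres. That is legitimate, because you already need that convention for (i)$\Rightarrow$(ii) on bounded spaces. The failure does not depend on conventions, though, and it also occurs at large scales. This settles your doubt about removing the $\rho$-dependence from $A((\rho+r)/r)^B$: it cannot be done.

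Let $X=\bigcup_{N\ge 1}X_N$ with $\# X_N=N$. Set $d(a,b)=N$ for distinct $a,b\in X_N$, and $d(a,b)=\frac{N}{2}+|10^N-10^{N'}|+\frac{N'}{2}$ for $a\in X_N$, $b\in X_{N'}$, $N\neq N'$. This is a tree metric: the points are leaves of stars with edges of length $N/2$, hung on $\mathbb{R}$ at the points $10^N$.

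Distinct points are at distance at least $2$. So with $r=1$ and $\{x_n\}$ an enumeration of $X$ (distinct centres), the balls $B(x_n,1)=\{x_n\}$ cover $X$. Now fix $x\in X_N$ and $\delta>1$, and count the points of $B(x,\delta)$.
\begin{enumerate}
\item[(a)] The set $X_N$ contributes $1$ point if $\delta\le N$ and $N<\delta$ points otherwise.
\item[(b)] The components with $N'>N$ lie at distance $>0.9\cdot 10^{N'}$ from $x$.
\item[(c)] The components with $N'<N$ lie at distance $>0.9\cdot 10^{N}$ from $x$.
\end{enumerate}
Since $\sum_{N'\le J}N'\le 0.9\cdot 10^{J}$, each of (b) and (c) also contributes at most $\delta$ points. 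Hence (ii) holds with $A=3$ and $B=1$.

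However, $B(x,2N)\supseteq X_N$ for $x\in X_N$, and every ball of radius $N$ contains at most one point of $X_N$. So covering $B(x,2N)$ requires at least $N$ balls of radius $N$, and $X$ is not geometrically doubling.
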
	
	
	Throughout the paper, we will also use the following characterization of totally bounded metric measure spaces.
	
	\begin{lem}\cite{zwartestale}\label{charakteryzacja}
	Let $(X,d,\mu)$ be a metric measure space. Then, the following statements are equivalent:
	\begin{enumerate}
		\item[(i)] $(X,d)$ is totally bounded;
		\item[(ii)] $\mu(X)<\infty$ and for every $r\in (0,\infty)$ we have $\displaystyle h(r):=\inf_{x\in X} \mu(B(x,r))>0$;
	\end{enumerate}
	\end{lem}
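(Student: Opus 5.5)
The plan is to prove the two implications separately. Both rest on two facts: the standing assumption $0<\mu(B(x,r))<\infty$ for every ball, and the elementary inclusion $B(y,r/2)\subseteq B(x,r)$ whenever $d(x,y)<r/2$.

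For (i)$\Rightarrow$(ii), fix $r\in(0,\infty)$. Total boundedness gives finitely many points $x_1,\dots,x_N$ with $X=\bigcup_{i=1}^N B(x_i,r/2)$.
\begin{itemize}
\item Finiteness of the measure: taking $r=2$ in particular, we get $\mu(X)\le\sum_{i=1}^N\mu(B(x_i,1))<\infty$, since each ball has finite measure.
\item Positivity of $h(r)$: given any $x\in X$, choose $i$ with $d(x,x_i)<r/2$. By the triangle inequality $B(x_i,r/2)\subseteq B(x,r)$, hence $\mu(B(x,r))\ge \min_{1\le i\le N}\mu(B(x_i,r/2))$. This minimum is over finitely many positive numbers, so it is positive and independent of $x$. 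Therefore $h(r)>0$.
\end{itemize}

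For (ii)$\Rightarrow$(i), fix $r>0$ and build an $r$-separated set greedily.
\begin{itemize}
\item Pick any $x_1\in X$.
\item Having chosen $x_1,\dots,x_k$, stop if $X=\bigcup_{i\le k}B(x_i,r)$. Otherwise pick $x_{k+1}$ outside this union.
\item The chosen points satisfy $d(x_i,x_j)\ge r$ for $i\ne j$, so the balls $B(x_i,r/2)$ are pairwise disjoint.
\item Each of these balls has measure at least $h(r/2)>0$. Hence after $k$ steps $k\,h(r/2)\le\mu(X)$, which gives $k\le \mu(X)/h(r/2)<\infty$.
\end{itemize}
So the procedure must terminate after finitely many steps. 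It only terminates when the balls $B(x_i,r)$ cover $X$, and this yields a finite $r$-net. Since $r$ was arbitrary, $(X,d)$ is totally bounded.

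There is no serious obstacle. The only point needing care is in (ii)$\Rightarrow$(i): the greedy construction must stop. This is exactly the counting bound $k\le\mu(X)/h(r/2)$, and it uses both hypotheses $\mu(X)<\infty$ and $h(r/2)>0$. Applying (ii) at radius $r/2$ rather than $r$ is what makes the disjointness argument work.
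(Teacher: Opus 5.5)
Your proof is correct. The half-radius inclusion $B(x_i,r/2)\subseteq B(x,r)$ gives the uniform lower bound in (i)$\Rightarrow$(ii), and the pairwise disjointness of the balls $B(x_i,r/2)$ gives the termination bound $k\le \mu(X)/h(r/2)$ in (ii)$\Rightarrow$(i). The paper does not prove this lemma itself; it quotes it from \cite{zwartestale}, and your covering/packing argument is the standard proof of this characterization.
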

	
	Next, we highlight an integrability condition for measures, first introduced in \cite{zwartestale}. Similar to the case of constant exponents, it will turn out that this condition is closely related to the compact embeddings of Sobolev, Besov, and Triebel-Lizorkin spaces into Lebesgue spaces.
	
	\begin{defi}
	Let $(X,d,\mu)$ be a metric measure space. We say that measure $\mu$ is \texttt{integrable} if for every $r\in (0,\infty)$
	\begin{equation*}
		\int_X \frac{1}{\mu(B(x,r))}\mbox{d}\mu(x) <\infty.
	\end{equation*}
	\end{defi}
	
	\begin{ex}
	From Lemma \ref{charakteryzacja} it immediately follows that if $(X,d,\mu)$ is totally bounded metric measure space, then $\mu$ is integrable.
	\end{ex}
	
	The following example is taken from \cite{zwartestale}.
	
	\begin{ex}\label{gaussowska} Let $((0,\infty),d,\mu)$, where $d=\left|\cdot - \cdot\right|$ is the Euclidean distance and $\mu$ is defined as $\mbox{d}\mu=e^{\varepsilon x^{\beta}}\mbox{d}x$ for any fixed $\beta \in (1,\infty)$ and $\varepsilon \in \left\{-1,1\right\}$. Then, the measure $\mu$ is integrable.
	\end{ex}

	\subsection{Measure-preserving isometries}
	In this section, we recall the notion of measure-preserving isometries introduced in \cite{Górka}. Namely, let $\textnormal{Iso}(X,d)$ be the group of isometries of the metric space $(X,d)$. Then, by $\textnormal{Iso}_\mu(X,d)$ we denote the subgroup of measure-preserving isometries, i.e.
	\begin{equation*}
	\textnormal{Iso}_\mu(X,d):=\left\{\varphi\in \textnormal{Iso}(X,d): \varphi_\# \mu=\mu \right\},
	\end{equation*}
	where $\varphi_\#\mu$ is the pushforward measure, that is $\varphi_{\#}\mu(A):=\mu(\varphi^{-1}(A))$ for every Borel set $A\subseteq X$.  In further considerations, we shall assume that $H$ is a subset of $\textnormal{Iso}_\mu(X,d)$. We define the orbit of $x\in X$ under the action of $H$ as
	\begin{equation*}
	H(x)=\left\{h(x): h \in H\right\}.
	\end{equation*}
	For given $x\in X$ and $r\in (0,\infty)$ we consider the following quantity
	\begin{equation*}
	M_H(x,r):=\sup\left\{\# A: A\subseteq H(x) \wedge B(\xi,r)\cap B(\eta,r)=\emptyset \textnormal{ for } \xi,\eta \in A \textnormal{ such that } \xi \neq \eta\right\},
	\end{equation*}
	where $\# A$ denotes cardinality of the set $A$. In other words, $M_H(x,r)$ is the lowest upper bound for the number of non-overlapping balls centered in $H(x)$ with radius $r$.

	\subsection{The space of measurable functions}
	
	Let $\left(X,\mu\right)$ be measure space such that $\mu(X)<\infty$. Then, by $L^0(X,\mu)$ we denote the space of all measurable functions on $X$ that are finite $\mu$-almost everywhere. The space $L^0(X,\mu)$ endowed with a metric
	\begin{equation*}
	d(f,g):=\int_X \frac{\left|f(x)-g(x)\right|}{1+\left|f(x)-g(x)\right|}\mbox{d}\mu(x), \textnormal{ where } f,g\in L^0(X,\mu)
	\end{equation*}
	is a complete metric space. It is well known that the convergence in this metric is equivalent to convergence in measure.
	
	In \cite{zwartestale}, the following characterization of totally bounded sets in $L^0(X,\mu)$ was proven.
	
	\begin{tw}\label{krotov}
	Let $(X,d)$ be a separable metric space and let $\mu$ be a Borel measure on $X$ such that $\mu(X)<\infty$. Then, a subset $\mathcal{F} \subseteq L^0(X,\mu)$ is totally bounded if for any $\varepsilon\in (0,\infty)$ there exist $\delta\in (0,\infty)$ and $\lambda\in (0,\infty)$ such that for any $u\in \mathcal{F}$ there is a set $E(u)\subseteq X$, satisfying following conditions:
	\begin{enumerate}
		\item[(i)] $\mu\left(E(u)\right) < \varepsilon$;
		\item[(ii)] $\left|u(x)-u(y)\right| < \varepsilon$ for any $x,y\in X \setminus E(u)$ such that $d(x,y)<\delta$;
		\item[(iii)] $\left|u(x)\right| \leq \lambda$ for $x\in X \setminus E(u)$. 
\end{enumerate}
\end{tw}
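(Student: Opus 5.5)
We argue by showing that $\mathcal F$ admits a finite $\eta$-net in the metric $d$ of $L^0(X,\mu)$ for every $\eta>0$. The plan is to discretize: the hypotheses (i)--(iii) say that, away from a small exceptional set, each $u$ is uniformly bounded and uniformly continuous at a common scale $\delta$. Together with the finiteness of $\mu$, this lets us approximate every $u$ by a simple function from a finite family.

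First fix $\eta>0$ and choose $\varepsilon>0$ small relative to $\eta$ and $\mu(X)$. Take the corresponding $\delta$ and $\lambda$. Since $X$ is separable, there is a countable dense set $\{z_k\}$, so the balls $B(z_k,\delta/2)$ cover $X$. Form the disjoint Borel pieces $A_k=B(z_k,\delta/2)\setminus\bigcup_{j<k}B(z_j,\delta/2)$. Because $\mu(X)<\infty$, we may pick $N$ with $\mu\bigl(X\setminus\bigcup_{k\le N}A_k\bigr)<\varepsilon$. Each $A_k$ has diameter less than $\delta$. Hence by (ii), for $x,y\in A_k\setminus E(u)$ we get $|u(x)-u(y)|<\varepsilon$, and by (iii) the values satisfy $|u|\le\lambda$ off $E(u)$.

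Next we build the finite net. Let $G=\{-\lambda,-\lambda+\varepsilon,\dots\}\cap[-\lambda,\lambda]$ be an $\varepsilon$-grid. Consider the finite family of simple functions $g=\sum_{k\le N}c_k\chi_{A_k}$ with $c_k\in G$. Given $u\in\mathcal F$, for each $k\le N$ with $A_k\not\subseteq E(u)$, pick a point $x_k\in A_k\setminus E(u)$ and let $c_k$ be a grid point within $\varepsilon$ of $u(x_k)$; for the other $k$ set $c_k=0$. Then $|u-g|<2\varepsilon$ on $\bigcup_{k\le N}A_k\setminus E(u)$. Using $\frac{t}{1+t}\le\min(t,1)$, we obtain
\begin{equation*}
d(u,g)\le 2\varepsilon\,\mu(X)+\mu(E(u))+\mu\Bigl(X\setminus\bigcup_{k\le N}A_k\Bigr)<2\varepsilon\mu(X)+2\varepsilon,
\end{equation*}
which is less than $\eta$ for suitable $\varepsilon$. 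Therefore $\mathcal F$ is covered by finitely many $d$-balls of radius $\eta$, i.e. it is totally bounded.

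The main point needing care is measurability and the choice of representatives. Since $u$ is only defined almost everywhere, the sets $E(u)$ should be understood for a fixed representative, and we need a measurable $E(u)$ (or an outer-measure bound) so that the estimate for $d$ is legitimate. If $E(u)$ is not measurable, we replace it by a measurable superset of outer measure less than $\varepsilon$, which exists since $\mu$ is a finite Borel measure. The rest of the argument is routine.
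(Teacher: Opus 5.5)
The paper does not prove this statement. It quotes it from \cite{zwartestale} and uses it as a black box in the proof of Theorem \ref{mierzalne}, so there is no in-paper argument to compare against. Your discretization argument is correct and gives a complete, self-contained proof of this sufficiency criterion.

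The hypotheses are used where they should be:
\begin{itemize}
\item Separability gives a countable cover by balls of radius $\delta/2$.
\item $\sigma$-additivity together with $\mu(X)<\infty$ lets you fix $N$ depending only on $\varepsilon$ and $\delta$, not on $u$. This is exactly the uniformity that makes the finite family $\{\sum_{k\le N} c_k\chi_{A_k}\}$ a net for all of $\mathcal F$ simultaneously.
\item $\mu(X)<\infty$ is used a second time in the bound $2\varepsilon\mu(X)$.
\end{itemize}
Your handling of representatives is fine. So is your handling of a possibly non-measurable $E(u)$: passing to a Borel superset of measure $<\varepsilon$ keeps (ii)--(iii) valid on its complement. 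That superset exists by the definition of outer measure; finiteness of $\mu$ is not needed for it.

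Two cosmetic corrections. First, a subset of $B(z_k,\delta/2)$ can have diameter exactly $\delta$ (an open interval of length $\delta$ in $\mathbb{R}$, for instance). You should say instead that any two of its points are at distance $<\delta$, which is all that (ii) requires. Second, list $c_k=0$ explicitly among the admissible coefficients, since $0$ need not lie on your grid $G$.
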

	
	\subsection{Variable exponent Lebesgue spaces}
	Now we recall the notion of the variable exponent Lebesgue space. Let $(X,\mu)$ be a measure space. For measurable function $p:X \rightarrow (0,\infty]$ and measurable subset $E\subseteq X$ we use the following notation
	\begin{equation*}
	p_E^+:=\esssup_{x\in E} p(x), \hspace{4mm} p_E^-:=\essinf_{x\in E} p(x).
	\end{equation*}
	If $E=X$, then we abbreviate $p^+=p_X^+$ and $p^-=p_X^-$. Moreover, if $p_1,p_2: X \rightarrow (0,\infty)$ are measurable functions, we write $p_1 \ll p_2$ if and only if $\left(p_2-p_1\right)^- >0$. 
	
	By a variable exponent we mean every measurable function $p:X \rightarrow (0,\infty]$ such that $p^->0$. The class of all variable exponents on $X$ is denoted by $\mathcal{P}(X)$. Additionally, we define the class of bounded exponents as $\mathcal{P}_b(X):=\mathcal{P}(X) \cap L^{\infty}(X,\mu).$
	
	If $p\in \mathcal{P}(X)$, then we say that $u \in L^{p(\cdot)}(X,\mu)$ if and only if $u$ is measurable and there exists $\lambda \in (0,\infty)$ such that following semi-modular is finite
	\begin{equation*}
	\rho_{p(\cdot)}(\lambda u):=\int_X \varphi_{p(x)}\left(\lambda\left|u(x)\right|\right)\mbox{d}\mu(x),
	\end{equation*}
	where
	\begin{equation*}
		\varphi_p(t):= \left\{ \begin{array}{lll} t^p,& \textnormal{ for } p\in (0,\infty),\\ 0,& \textnormal{ for } t\leq 1, p=\infty,\\ \infty,& \textnormal{ for } t>1, p=\infty.\end{array}\right.
	\end{equation*}
	for every $t\geq 0$.
	The space $L^{p(\cdot)}(X,\mu)$ is a quasi-Banach space due to the following Luxemburg quasi-norm
	\begin{equation*}
	\left\|u\right\|_{L^{p(\cdot)}\left(X,\mu\right)}:=\inf\left\{\lambda\in (0,\infty):  \rho_{p(\cdot)}\left(\frac{u}{\lambda}\right)\leq 1 \right\} \textnormal{ for } u\in L^{p(\cdot)}(X,\mu).
	\end{equation*}
	Throughout this article, we denote by $\kappa_{p(\cdot)}\in[1,\infty)$ the constant appearing in the quasi-triangle inequality satisfied by the above quasi-norm. If $p^- \geq 1$, then $L^{p(\cdot)}(X,\mu)$ becomes a Banach space. The variable exponent Lebesgue space is a special case of the Musielak-Orlicz space. If $p$ is constant, then $L^{p(\cdot)}(X,\mu)$ coincides with an ordinary Lebesgue space. More information about variable exponent Lebesgue spaces can be found in books \cite{cruzuribe, diening}
	
	In some statements throughout this paper, additional assumptions on the exponents are necessary. Namely, if $(X,d)$ is a metric space, we say that function $p:X \rightarrow \mathbb{R}$ is \texttt{locally log-H\"older continuous} on $X$, if
	\begin{equation*}
	\scalebox{1.1}{$\exists_{C_{\log}(p)\in (0,\infty)}\hspace{1mm}\forall_{x,y\in X}$} \hspace{1mm} \left|p(x)-p(y)\right|\leq \frac{C_{\log}(p)}{\log\left(\mbox{e}+1/d(x,y)\right)}.
	\end{equation*}
	Moreover, we say that $p$ satisfies \texttt{log-H\"older decay condition at infinity} with the basepoint $x_0 \in X$ if
	\begin{equation*}
	\scalebox{1.1}{$\exists_{p_{\infty}\in \mathbb R} \hspace{1mm} \exists_{C_{\infty}(p)\in (0,\infty)}\hspace{1mm}\forall_{x\in X}$} \hspace{1mm} \left|p(x)-p_{\infty}\right| \leq \frac{C_{\infty}(p)}{\log\left(e+d(x,x_0)\right)}.
	\end{equation*}
	We will say that $p$ is \texttt{globally log-H\"older continuous} on $X$, if it is locally log-H\"older continuous and satisfies log-H\"older decay condition at infinity.

	Subsequently, we define the following class of regular exponents
	\begin{align*}
	\mathcal{P}^{\textnormal{log}}(X)&:=\left\{p\in \mathcal{P}(X): 1/p \textnormal{ is locally log-H\"older continuous} \right\},\\
	\mathcal{P}_{\infty}^{\textnormal{log}}(X)&:=\left\{p\in \mathcal{P}(X): 1/p \textnormal{ is globally log-H\"older continuous} \right\}.
	\end{align*}
	Moreover, we set $\mathcal{P}_b^{\log}(X):=\mathcal{P}_b(X) \cap \mathcal{P}^{\log}(X).$
	
	Furthermore, in the context of a metric measure space $(X,d,\mu)$, we say that the function $p: X \to \mathbb R$ belongs to the class $\mathcal{P}_{\mu}^{\log}(X)$, if $p\in \mathcal{P}_b(X)$ and there exists a constant $C_{\log,\mu}(p)\in (0,\infty)$ such that for every ball $B\subseteq X$
	\begin{equation*}
		\mu(B)^{\frac{1}{p_{B}^+}-\frac{1}{p_B^-}} \leq C_{\log,\mu}(p)
	\end{equation*}
	and there exists $p_{\infty} \in \mathbb (0,\infty]$ such that
	\begin{equation*}
		1\in L^{s(\cdot)}(X,\mu), \textnormal{ where } \frac{1}{s(x)}:=\left|\frac{1}{p(x)}-\frac{1}{p_{\infty}}\right|.
	\end{equation*}
	It is well-known (see \cite{maximal}) that if $p\in \mathcal{P}_{\infty}^{\log}(X)$ and $\mu$ is doubling, then $p\in \mathcal{P}_{\mu}^{\log}(X)$.

	The proof of the following lemma can be found in \cite{zanurzenia}.
	\begin{lem}\label{loglemma}
		Let $(X,d,\mu)$ be a metric measure space and $B:=B(z,r)$, where $z \in X$, $r\in (0,\infty)$. Assume that $t\in \mathcal{P}^{\log}_b(B)$. Then, the following statements are true.
		\begin{enumerate} 			\item[(i)] For any $R\in [2r,\infty)$ and $x \in B$ the following inequality holds
			\begin{equation*}
				e^{-C_{\log}(1/t)}(1/R)^{1/t_B^-} \leq (1/R)^{1/t(x)}\leq e^{C_{\log}(1/t)}(1/R)^{1/t_B^+}.
			\end{equation*}
			\item[(ii)] We have
			\begin{equation*}
				r^{1/t_B^+ - 1/t_B^-}\leq e^{C_{\log}(1/t)} 2^{\frac{1}{t_B^-}-\frac{1}{t_B^+}}.
			\end{equation*}
			\item[(iii)] There exists constant $M(r,t)\geq 1$ depending on $r,t$ and log-H\"older constant of $1/t$ such that for every $x,y\in B$
			\begin{equation*}
				\frac{1}{M(r,t)}d(x,y)^{\frac{1}{t(y)}} \leq d(x,y)^{\frac{1}{t(x)}} \leq M(r,t)d(x,y)^{\frac{1}{t(y)}}.
			\end{equation*}
		\end{enumerate}
	\end{lem}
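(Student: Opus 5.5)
The statement to prove is Lemma \ref{loglemma}. All three parts rest on one elementary estimate. Write $C:=C_{\log}(1/t)$. For $x,y\in B$ we have $d(x,y)<2r$, so
\begin{equation*}
\left|\frac{1}{t(x)}-\frac{1}{t(y)}\right|\leq \frac{C}{\log\left(e+1/d(x,y)\right)}.
\end{equation*}
We will work with essential suprema and infima. Replacing $t_B^{\pm}$ by values $t(y)$ with $y$ in a full-measure set, or arguing with $\varepsilon$-approximations, changes nothing, so in each part we may compare $1/t(x)$ with $1/t(y)$ for suitable points $y\in B$ and then pass to $1/t_B^{\pm}$.

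\emph{Part (i).} Fix $x\in B$ and $R\geq 2r$. The two inequalities are equivalent to
\begin{equation*}
\left|\log(1/R)\right|\cdot\left|\frac{1}{t(x)}-\frac{1}{t_B^{\mp}}\right|\leq C.
\end{equation*}
Since $1/t_B^{+}\leq 1/t(x)\leq 1/t_B^{-}$, the sign is automatic when $R\geq1$. In that case $(1/R)^{1/t(x)}\geq (1/R)^{1/t_B^-}$, so the left inequality holds trivially, and only the right one needs the log-Hölder estimate. The key bound is $|1/t(x)-1/t(y)|\,\log R\leq C$ for $y\in B$. This follows because $\log R\leq \log(e+1/d(x,y))$ whenever $d(x,y)\leq 1/R$. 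That is the wrong direction, so I would instead use the direct comparison with $R\geq 2r>d(x,y)$.

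This is the main obstacle: the estimate is only valid if $R$ is small, or if the log-Hölder condition is effectively applied at scale $R$. I would therefore split into the cases $R\leq 1$ and $R>1$. For $R\leq1$, use $d(x,y)<2r\leq R$, which gives $\log(e+1/d(x,y))\geq\log(1/R)$, and hence $|1/t(x)-1/t(y)|\log(1/R)\leq C$. For $R>1$, use the boundedness hidden in the definition (as in \cite{zanurzenia}). Then take the sup or inf over $y$.

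\emph{Part (ii).} Apply (i) with $R=2r$ at two points $x_1,x_2$ nearly realizing $t_B^+$ and $t_B^-$. Multiplying the resulting inequalities gives
\begin{equation*}
(2r)^{1/t_B^+-1/t_B^-}\leq e^{C},
\end{equation*}
which rearranges to the claimed bound on $r^{1/t_B^+-1/t_B^-}$.

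\emph{Part (iii).} Write
\begin{equation*}
d(x,y)^{1/t(x)-1/t(y)}=\exp\left(\left(\frac{1}{t(x)}-\frac{1}{t(y)}\right)\log d(x,y)\right).
\end{equation*}
If $d(x,y)\leq1$, then $|\log d(x,y)|\leq\log(e+1/d(x,y))$, so the exponent is bounded by $C$. If $1<d(x,y)<2r$, the exponent is bounded by $(1/t)_B^{\rm osc}\log(2r)$, which is finite because $t$ is bounded below and the constant may depend on $r$. Set
\begin{equation*}
M(r,t):=\max\left(e^{C},(2r)^{1/t_B^--1/t_B^+}\right).
\end{equation*}
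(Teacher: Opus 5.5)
\textbf{Part (i) has a genuine gap for $R>1$.} It starts with your opening reformulation. The two inequalities are \emph{not} equivalent to $|\log(1/R)|\,|1/t(x)-1/t_B^{\mp}|\le C$. Taking logarithms, they are equivalent to the one-sided statements
\begin{equation*}
\Bigl(\frac{1}{t_B^-}-\frac{1}{t(x)}\Bigr)\log\frac{1}{R}\leq C
\quad\textnormal{and}\quad
\Bigl(\frac{1}{t(x)}-\frac{1}{t_B^+}\Bigr)\log\frac{1}{R}\leq C .
\end{equation*}
Your absolute-value version is strictly stronger. It fails for large $R$ whenever $1/t$ is not constant on $B$, because its left side grows like $\log R$. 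So your ``key bound'' $|1/t(x)-1/t(y)|\log R\le C$ cannot be proved for large $R$. The appeal to ``boundedness hidden in the definition'' does not help either: boundedness of $t$ only gives $(1/t_B^- - 1/t_B^+)\log R$, which is not uniform in $R$. You also claim that for $R\ge 1$ the right-hand inequality needs the log-H\"older estimate, and that is incorrect.

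\textbf{The fix for $R\ge 1$.} The base $1/R$ lies in $(0,1]$, so $\sigma\mapsto (1/R)^{\sigma}$ is nonincreasing. Together with $1/t_B^+\le 1/t(x)\le 1/t_B^-$ this gives
\begin{equation*}
(1/R)^{1/t_B^-}\leq (1/R)^{1/t(x)}\leq (1/R)^{1/t_B^+}.
\end{equation*}
So both inequalities hold with constant $1\le e^{C}$, and nothing else is needed. Equivalently, both signed left-hand sides above are $\le 0$, since $\log(1/R)\le 0$ and the brackets are nonnegative.

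The ordering $1/t_B^+\le 1/t(x)\le 1/t_B^-$ holds at \emph{every} $x\in B$, not just almost everywhere. This is because $1/t$ is continuous (by log-H\"older continuity), $B$ is open, and every ball has positive measure, so essential and pointwise extrema of $1/t$ on $B$ coincide. That settles your $\varepsilon$-approximation remark.

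\textbf{The case $2r\le R<1$.} Here your argument is correct and handles both inequalities at once. From $d(x,y)<2r\le R$ you get $\log(e+1/d(x,y))\ge \log(1/R)>0$, hence $|1/t(x)-1/t(y)|\log(1/R)\le C$. Then take the supremum and infimum over $y$.

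\textbf{Parts (ii) and (iii) are fine.}
\begin{itemize}
\item[(ii)] A single inequality from (i) with $R=2r$ already gives $(2r)^{1/t_B^+-1/t_B^-}\le e^{C}$. For instance, use the right-hand one at points where $1/t(x)\to 1/t_B^-$. If you multiply two of them, you must take a square root at the end. When $2r\ge 1$, (ii) is trivial, so its reliance on the flawed case of (i) is harmless.
\item[(iii)] The case split $d(x,y)\le 1$ versus $1<d(x,y)<2r$ is correct. So is the choice $M(r,t)=\max\{e^{C},(2r)^{1/t_B^- - 1/t_B^+}\}$, and the case $x=y$ is trivial.
\end{itemize}
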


	In addition, for a given family $H$ of measure-preserving isometries of the space $(X,d,\mu)$ we define the class of $H$-invariant exponents as follows
	\begin{equation*}
	\mathcal{P}_H(X)
	:=\left\{p\in \mathcal{P}(X): \scalebox{1.1}{$\forall_{\varphi \in H}$} \hspace{2mm} p \circ \varphi =p\right\}.
	\end{equation*}
	
	We now state some well-known and useful results concerning the properties of the semi-modular $\rho_{p(\cdot)}$ and quasi-norm $\left\|\cdot\right\|_{L^{p(\cdot)}\left(X,\mu\right)}$. Their proofs can be found in \cite{cruzuribe}. First, we present a version of H\"older's inequality adapted to variable exponent spaces.
	\begin{prop}\label{holder1}
		Let $(X,\mu)$ be a measure space and suppose $p\in \mathcal{P}_b(X)$ with $p^- > 1$. Let $p'\in \mathcal{P}_b(X)$ be conjugate exponent to $p$, 
		i.e. $ \displaystyle\scalebox{1.1}{$\forall_{x\in X}$}\hspace{1mm}\frac{1}{p(x)}+\frac{1}{p'(x)}=1$. Then, for every $f\in L^{p(\cdot)}(X,\mu)$, $g\in L^{p'(\cdot)}(X,\mu)$ the following inequality holds
		\begin{equation*}
		\left\|fg\right\|_{L^1\left(X,\mu\right)}\leq 2\left\|f\right\|_{L^{p(\cdot)}\left(X,\mu\right)}\left\|g\right\|_{L^{p'(\cdot)}\left(X,\mu\right)}.
		\end{equation*}
	\end{prop}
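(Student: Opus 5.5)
The plan is to reduce the variable exponent inequality to the elementary Young inequality, applied pointwise and then integrated, by normalizing the two functions. The inequality is homogeneous in $f$ and in $g$, so it suffices to treat $\left\|f\right\|_{L^{p(\cdot)}(X,\mu)}=\left\|g\right\|_{L^{p'(\cdot)}(X,\mu)}=1$. The degenerate cases are disposed of first. If one of the norms vanishes, the corresponding function is zero $\mu$-a.e.: a nonzero function would give a positive modular at every scale, and that modular blows up as the scale parameter tends to zero. Then $fg=0$ a.e. and the inequality is trivial. Otherwise we replace $f$ by $f/\|f\|$ and $g$ by $g/\|g\|$.

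The first ingredient is the unit-ball property of the modular: if $\|u\|_{L^{p(\cdot)}}\le 1$ then $\rho_{p(\cdot)}(u)\le 1$. To prove it, we use that for $\lambda>1$ with $\rho_{p(\cdot)}(u/\lambda)\le1$ and $\lambda\downarrow\|u\|$, the integrand $|u(x)/\lambda|^{p(x)}$ increases monotonically to $|u(x)/\|u\||^{p(x)}$. The monotone convergence theorem then gives $\rho_{p(\cdot)}(u/\|u\|)\le 1$. Since $p\in\mathcal P_b(X)$ and $p^->1$, we have $1<p^-\le p^+<\infty$. Consequently the conjugate $p'$ also lies in $(1,\infty)$, with $(p')^+=(p^-)'<\infty$, so both modulars are plain integrals of powers and no $\varphi_\infty$ cases arise.

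Next we apply Young's inequality $ab\le a^{p(x)}/p(x)+b^{p'(x)}/p'(x)$ pointwise with $a=|f(x)|$ and $b=|g(x)|$. Integrating gives
\begin{equation*}
\int_X|fg|\,\mbox{d}\mu\le \int_X\frac{|f|^{p(x)}}{p(x)}\mbox{d}\mu+\int_X\frac{|g|^{p'(x)}}{p'(x)}\mbox{d}\mu\le \frac{1}{p^-}\rho_{p(\cdot)}(f)+\frac{1}{(p')^-}\rho_{p'(\cdot)}(g).
\end{equation*}
Each modular is at most $1$ and each coefficient is less than $1$, so the right-hand side is at most $2$. Undoing the normalization yields the stated bound with constant $2$. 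One could even get $1/p^-+1/(p')^-\le 2$ more sharply, but $2$ suffices.

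The only step needing care is the unit-ball property, specifically justifying the passage to the limit $\lambda\downarrow\|u\|$. Monotone convergence handles this cleanly because the integrand is monotone in $\lambda$, so I expect no real obstacle. Measurability of $x\mapsto|f(x)|^{p(x)}$ is automatic from the measurability of $p$ and $f$.
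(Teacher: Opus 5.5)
Your proposal is correct. It is the standard argument the paper relies on: the paper gives no proof of its own and cites \cite{cruzuribe}, where the inequality is obtained by normalizing to unit norms, applying Young's inequality pointwise, and using that a unit norm forces the modular to be at most $1$, which yields the constant $1/p^- + 1/(p')^- \le 2$. One small slip: in your unit-ball step, ``$\lambda>1$'' should read ``$\lambda>\|u\|$'', which is harmless once you have normalized to $\|u\|=1$.
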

	
	Next, we have a scaling property of the modular.
	
	\begin{lem}\label{skalowanie}
	Let $(X,\mu)$ be a measure space and $p\in \mathcal{P}_b(X), \alpha\in (0,\infty).$ Then, for every non-negative function $u\in L^{\alpha p(\cdot)}(X,\mu)$
	\begin{equation*}
		\left\| u \right\|_{L^{\alpha p(\cdot)}(X,\mu)} = \left\| u^{\alpha} \right\|_{L^{p(\cdot)}(X,\mu)}^{\frac{1}{\alpha}}.
	\end{equation*}
	\end{lem}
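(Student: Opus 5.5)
The plan is to reduce the identity to an identity between the corresponding modulars, and then use that the Luxemburg quasi-norm is defined as an infimum over $\lambda$.

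\textbf{Step 1: pointwise identity.} For $u\ge 0$ and $\lambda\in(0,\infty)$, the fact that $p$ is bounded gives $\varphi_{\alpha p(x)}(t)=t^{\alpha p(x)}$. Hence for every $x\in X$
\begin{equation*}
\varphi_{\alpha p(x)}\Big(\frac{u(x)}{\lambda}\Big)=\Big(\frac{u(x)^{\alpha}}{\lambda^{\alpha}}\Big)^{p(x)}=\varphi_{p(x)}\Big(\frac{u(x)^{\alpha}}{\lambda^{\alpha}}\Big).
\end{equation*}
Integrating over $X$ yields
\begin{equation*}
\rho_{\alpha p(\cdot)}(u/\lambda)=\rho_{p(\cdot)}(u^{\alpha}/\lambda^{\alpha}).
\end{equation*}
As a by-product, $u\in L^{\alpha p(\cdot)}(X,\mu)$ if and only if $u^\alpha\in L^{p(\cdot)}(X,\mu)$, so both sides of the claimed identity are finite.

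\textbf{Step 2: matching the admissible sets.} The map $\lambda\mapsto \lambda^{\alpha}$ is an increasing bijection of $(0,\infty)$ onto itself. By Step 1, for every $\lambda>0$ we have $\rho_{\alpha p(\cdot)}(u/\lambda)\le 1$ if and only if $\rho_{p(\cdot)}(u^\alpha/\lambda^\alpha)\le 1$. Therefore
\begin{equation*}
\{\lambda>0:\rho_{\alpha p(\cdot)}(u/\lambda)\le1\}=\{\sigma^{1/\alpha}:\sigma>0,\ \rho_{p(\cdot)}(u^\alpha/\sigma)\le 1\}.
\end{equation*}

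\textbf{Step 3: conclusion.} The map $\sigma\mapsto\sigma^{1/\alpha}$ is continuous and increasing, so it commutes with taking infima over subsets of $(0,\infty)$. Applying this to the set equality of Step 2 gives $\|u\|_{L^{\alpha p(\cdot)}}=\|u^\alpha\|_{L^{p(\cdot)}}^{1/\alpha}$. This step also covers the degenerate case where the infimum is $0$, which happens when $u=0$ a.e.

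There is no real obstacle in this argument. The only point needing care is the boundedness of $p$, which is assumed: it lets us avoid the $p=\infty$ branch of $\varphi_p$.
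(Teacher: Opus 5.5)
Your proof is correct and is essentially the standard argument. The paper gives no proof of its own and refers to Cruz-Uribe--Fiorenza, where the identity likewise follows from $\rho_{\alpha p(\cdot)}(u/\lambda)=\rho_{p(\cdot)}(u^{\alpha}/\lambda^{\alpha})$ and then taking the infimum over $\lambda$ via the increasing bijection $\lambda\mapsto\lambda^{\alpha}$, which your Steps 1--3 carry out completely, including the case $u=0$ a.e.
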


	Furthermore, we often need the following estimates relating the quasi-norm and the semi-modular.

	\begin{lem}\label{modularnorma}
	Let $(X,\mu)$ be a measure space and $p\in \mathcal{P}_b(X)$. Then, the following inequality
	\begin{equation*}
	\min\left\{ \rho_{p(\cdot)}\left(u\right)^{\frac{1}{p^-}}, \rho_{p(\cdot)}\left(u\right)^{\frac{1}{p^+}} \right\} \leq \left\|u\right\|_{L^{p(\cdot)}(X,\mu)} \leq \max\left\{ \rho_{p(\cdot)}\left(u\right)^{\frac{1}{p^-}}, \rho_{p(\cdot)}\left(u\right)^{\frac{1}{p^+}} \right\}
	\end{equation*}
	holds for all $u\in L^{p(\cdot)}(X,\mu)$. In particular,
	\begin{equation*}
		\left\|u \right\|_{L^{p(\cdot)}(X,\mu)} \leq 1 \iff \rho_{p(\cdot)}\left(u\right) \leq 1.
	\end{equation*}
\end{lem}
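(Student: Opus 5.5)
The plan is to work directly with the definition of the Luxemburg quasi-norm, using only the homogeneity of $\varphi_{p(x)}$ on $(0,\infty)$. First I dispose of the case $\rho_{p(\cdot)}(u)=0$, where $u=0$ a.e. and both sides vanish. Then I set $\rho:=\rho_{p(\cdot)}(u)\in(0,\infty)$; finiteness of $\rho$ is implicit in the statement, since otherwise the bounds are read as $\infty$ and are trivial. Because $p\in\mathcal{P}_b(X)$, we have $0<p^-\le p^+<\infty$, so $\varphi_{p(x)}(t)=t^{p(x)}$ for every $x$. Consequently, for $\lambda>0$,
\[
\rho_{p(\cdot)}(u/\lambda)=\int_X \lambda^{-p(x)}|u(x)|^{p(x)}\,\mbox{d}\mu(x).
\]

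The key step is the two-sided estimate $\min\{\lambda^{-p^-},\lambda^{-p^+}\}\rho\le \rho_{p(\cdot)}(u/\lambda)\le \max\{\lambda^{-p^-},\lambda^{-p^+}\}\rho$. This holds because $\lambda^{-p(x)}$ lies between $\lambda^{-p^+}$ and $\lambda^{-p^-}$ for a.e. $x$; which of the two is larger depends on whether $\lambda\ge1$ or $\lambda\le1$. Note also that $\lambda\mapsto\rho_{p(\cdot)}(u/\lambda)$ is nonincreasing and continuous on $(0,\infty)$ by dominated and monotone convergence, so the infimum defining the norm is attained and satisfies $\rho_{p(\cdot)}(u/\|u\|)=1$. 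I will mostly use the definition directly, so this is a convenience rather than a necessity.

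\emph{Upper bound.} Put $\lambda_0:=\max\{\rho^{1/p^-},\rho^{1/p^+}\}$ and check that $\rho_{p(\cdot)}(u/\lambda_0)\le1$, which gives $\|u\|\le\lambda_0$.
\begin{itemize}
\item If $\rho\ge1$, then $\lambda_0=\rho^{1/p^-}\ge1$. Hence $\lambda_0^{-p(x)}\le\lambda_0^{-p^-}=\rho^{-1}$, and the modular is at most $1$.
\item If $\rho<1$, then $\lambda_0=\rho^{1/p^+}<1$. Hence $\lambda_0^{-p(x)}\le\lambda_0^{-p^+}=\rho^{-1}$, and again the modular is at most $1$.
\end{itemize}

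\emph{Lower bound.} Let $\lambda_1:=\min\{\rho^{1/p^-},\rho^{1/p^+}\}$. I claim that every $\lambda<\lambda_1$ has $\rho_{p(\cdot)}(u/\lambda)>1$, so that $\|u\|\ge\lambda_1$.
\begin{itemize}
\item For $\lambda<1$: then $\lambda^{-p(x)}\ge\lambda^{-p^-}$, and $\lambda<\rho^{1/p^-}$ gives $\lambda^{-p^-}\rho>1$.
\item For $\lambda\ge1$: then $\lambda^{-p(x)}\ge\lambda^{-p^+}$, and $\lambda<\rho^{1/p^+}$ gives $\lambda^{-p^+}\rho>1$.
\end{itemize}
In both cases I use only that $\lambda<\lambda_1$ implies $\lambda$ is below both $\rho^{1/p^-}$ and $\rho^{1/p^+}$.

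\emph{The equivalence.} If $\rho\le1$, the upper bound gives $\|u\|\le\max\{\rho^{1/p^-},\rho^{1/p^+}\}\le1$. Conversely, if $\|u\|\le1$, then $\rho\le1$. Otherwise $\rho>1$ would give $\|u\|\ge\min\{\rho^{1/p^-},\rho^{1/p^+}\}>1$, which is a contradiction.

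I expect no real obstacle in any of these steps. The only delicate point is the bookkeeping of the cases $\lambda\gtrless1$ versus $\rho\gtrless1$, which decide whether $p^-$ or $p^+$ governs the estimate. I will also have to be careful that the boundedness $p^+<\infty$ is used to exclude the $p=\infty$ branch of $\varphi_p$.
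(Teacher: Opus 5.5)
Your proof is correct. The paper gives no proof of this lemma and defers to Cruz-Uribe--Fiorenza, so the comparison is with the usual textbook argument. That argument first establishes the identity $\rho_{p(\cdot)}(u/\|u\|_{L^{p(\cdot)}(X,\mu)})=1$ for $u\neq 0$, which is where $p^+<\infty$ and a limiting argument enter. It then writes $\rho_{p(\cdot)}(u)=\int_X \|u\|^{p(x)}\,|u(x)/\|u\||^{p(x)}\,\mbox{d}\mu(x)$ and compares $\|u\|^{p(x)}$ with $\|u\|^{p^-}$ and $\|u\|^{p^+}$, splitting on whether $\|u\|>1$ or $\|u\|\le 1$.

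You instead split on whether $\rho\ge 1$ or $\rho<1$, and test explicit values of $\lambda$ directly against the infimum defining the Luxemburg quasi-norm. So you never need the identity $\rho_{p(\cdot)}(u/\|u\|)=1$, nor continuity of $\lambda\mapsto\rho_{p(\cdot)}(u/\lambda)$. The textbook route yields that identity as a useful by-product. Yours is more elementary and self-contained, and it visibly never uses $p^-\ge 1$, so it covers directly the quasi-normed range $0<p^-<1$ the paper works in.

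One correction. You say that when $\rho_{p(\cdot)}(u)=\infty$ the bounds ``are read as $\infty$ and are trivial''; that is wrong, because the lower bound would then force $\|u\|=\infty$. The right observation is that this case never occurs. By definition of $L^{p(\cdot)}(X,\mu)$ there is $\lambda>0$ with $\rho_{p(\cdot)}(\lambda u)<\infty$. Your own scaling estimate then gives $\rho_{p(\cdot)}(u)\le\max\{\lambda^{-p^-},\lambda^{-p^+}\}\,\rho_{p(\cdot)}(\lambda u)<\infty$, since $p^+<\infty$.
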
	

Using these lemmas, we prove the following proposition, which will be useful in the main part of the paper.

	\begin{prop}\label{jednakowo}
		Let $(X,d,\mu)$ be a metric measure space such that $M:=\sup\left\{\mu\left(B(x,1)\right): x\in X\right\}$ is finite. Suppose that $p_1,p_2\in \mathcal{P}_b(X)$ are such that $p_1\ll p_2$ in $X$. Then, there exists a constant $C=C(p_1,p_2,M)\in (0,\infty)$ such that for every $y\in X$, $r\in (0,1)$ and $u\in L^{p_2(\cdot)}(B(y,r))$ we have that $u\in L^{p_1(\cdot)}(B(y,r))$ and
		\begin{equation*}
		\left\| u\right\|_{L^{p_1(\cdot)}(B(y,r))}\leq C \left\|u \right\|_{L^{p_2(\cdot)}(B(y,r))}.
		\end{equation*}
	\end{prop}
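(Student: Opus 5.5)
We prove the inequality by a normalization-and-modular argument combined with a pointwise Young-type estimate. By homogeneity of the Luxemburg quasi-norm it suffices to show the following: there is a constant $C$, depending only on $p_1,p_2,M$, such that
\begin{equation*}
\left\|u\right\|_{L^{p_2(\cdot)}(B(y,r))}\leq 1 \quad\Longrightarrow\quad \left\|u\right\|_{L^{p_1(\cdot)}(B(y,r))}\leq C.
\end{equation*}
Write $B:=B(y,r)$. Since $r<1$, we have $\mu(B)\leq \mu(B(y,1))\leq M$. By Lemma \ref{modularnorma}, the hypothesis $\left\|u\right\|_{L^{p_2(\cdot)}(B)}\leq 1$ is equivalent to
\begin{equation*}
\rho_{p_2(\cdot)}(u)=\int_B |u|^{p_2(x)}\,\mbox{d}\mu(x)\leq 1 .
\end{equation*}

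The key step is a pointwise bound. Fix $x\in B$ and $t:=|u(x)|$. Since $p_1(x)<p_2(x)$, we have
\begin{equation*}
t^{p_1(x)}\leq 1+t^{p_2(x)}:
\end{equation*}
if $t\leq 1$ the left side is at most $1$, and if $t>1$ then $t^{p_1(x)}\leq t^{p_2(x)}$. Integrating over $B$ gives
\begin{equation*}
\rho_{p_1(\cdot)}(u)\leq \mu(B)+\rho_{p_2(\cdot)}(u)\leq M+1 .
\end{equation*}
In particular $\rho_{p_1(\cdot)}(u)<\infty$, so $u\in L^{p_1(\cdot)}(B)$; since every $u\in L^{p_2(\cdot)}(B)$ can be rescaled to have norm at most $1$, this gives membership in general. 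Lemma \ref{modularnorma} then converts the modular bound into a norm bound:
\begin{equation*}
\left\|u\right\|_{L^{p_1(\cdot)}(B)}\leq \max\left\{(M+1)^{1/p_1^-},(M+1)^{1/p_1^+}\right\}=(M+1)^{1/p_1^-}=:C,
\end{equation*}
where the maximum equals $(M+1)^{1/p_1^-}$ because $M+1\geq 1$. Finally, for general $u\neq 0$, apply this bound to $u/\left\|u\right\|_{L^{p_2(\cdot)}(B)}$ and use homogeneity of the quasi-norm.

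Lemma \ref{modularnorma} requires bounded exponents, and both $p_1,p_2\in\mathcal{P}_b(X)$ by assumption, so it applies. Only the pointwise inequality $p_1\leq p_2$ is actually used, so the strict gap $p_1\ll p_2$ is not needed. The only care needed is that all constants are uniform in $y$ and $r$. This holds because the single uniform quantity entering the argument is $\mu(B)\leq M$.
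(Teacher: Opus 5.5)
Your proof is correct, but it follows a different and more elementary route than the paper's. The paper also normalizes to $\|u\|_{L^{p_2(\cdot)}(B(y,r))}=1$. It then sets $q:=p_2/p_1$, notes that $\||u|^{p_1}\|_{L^{q(\cdot)}(B(y,r))}\leq 1$, and applies the variable-exponent H\"older inequality (Proposition \ref{holder1}) with the pair $q$ and $q'=p_2/(p_2-p_1)$. This gives $\rho_{p_1(\cdot)}(u\chi_{B(y,r)})\leq 2\|1\|_{L^{q'(\cdot)}(B(y,r))}\leq 2(M+1)^{1/(q')^-}=:N$, and a rescaling then yields the constant $N^{1/p_1^-}$.

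That H\"older step is exactly where the paper uses the strict gap $p_1\ll p_2$. The gap is equivalent to $q^->1$, i.e.\ to $q'$ being bounded, which Proposition \ref{holder1} requires.

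Your pointwise inequality $t^{p_1(x)}\leq 1+t^{p_2(x)}$ replaces H\"older with the one-line modular bound $\rho_{p_1(\cdot)}(u)\leq\mu(B)+\rho_{p_2(\cdot)}(u)\leq M+1$. It also gives the slightly better constant $(M+1)^{1/p_1^-}$, and, as you observe, needs only $p_1\leq p_2$ a.e.

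The H\"older route could in principle give more: a factor $\|1\|_{L^{q'(\cdot)}(B)}$ that tends to $0$ with $\mu(B)$, the variable analogue of $\mu(B)^{1/p_1-1/p_2}$. However, the paper immediately discards this by bounding $\mu(B)\leq M+1$. Where the proposition is applied (Proposition \ref{cauchy}), only uniformity in $y$ and $r$ is needed, so your argument loses nothing.
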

	
	\begin{proof}
		Fix $y\in X$. By the homogeneity of quasi-norm $\left\|\cdot\right\|_{L^{p(\cdot)}}$, without loss of generality we may assume that $\left\|u\right\|_{L^{p_2(\cdot)}(B(y,r))}= 1$. Lemma \ref{modularnorma} then implies that $\rho_{p_2(\cdot)}\left(u\chi_{B(y,r)}\right)\leq 1$. Set $q=p_2/p_1$, then $\rho_{q(\cdot)}\left(\left|u\right|^{p_1}\chi_{B(y,r)}\right)\leq 1.$ Applying Lemma \ref{modularnorma} again we obtain 
		\begin{equation*}\left\| \left|u\right|^{p_1} \right\|_{L^{q(\cdot)}(B(y,r))}\leq 1.
		\end{equation*} Using H\"older inequality for the exponents $q=p_2/p_1$ and $q'=p_2/(p_2-p_1)$ we get
		\begin{align*}
		\rho_{p_1(\cdot)}\left(u \chi_{B(y,r)}\right)=\int_{B(y,r)} \left|u(x)\right|^{p_1(x)}\mbox{d}\mu(x)& \leq 2 \left\| \left|u\right|^{p_1(\cdot)} \right\|_{L^{q(\cdot)}(B(y,r))} \left\|1\right\|_{L^{q'(\cdot)}(B(y,r))} \\ &\leq 2 \max\left\{\mu(B(y,r))^{\frac{1}{(q')_{B(y,r)}^+}},\mu(B(y,r))^{\frac{1}{(q')_{B(y,r)}^-}}\right\} \\ & \leq 2 \max\left\{\left(M+1\right)^{\frac{1}{(q')_{B(y,r)}^+}},\left(M+1\right)^{\frac{1}{(q')_{B(y,r)}^-}}\right\} \\ & \leq 2\left(M+1\right)^{\frac{1}{(q')^-}}:=N.
		\end{align*} From the definition of the semi-modular $\rho_{p_1}$ we get
		\begin{equation*}
		\rho_{p_1(\cdot)}\left(\frac{u}{N^{\frac{1}{p_1^-}}}\chi_{B(y,r)}\right)\leq \frac{1}{N}\rho_{p_1(\cdot)}\left(u\chi_{B(y,r)}\right)\leq 1.
		\end{equation*}
		Lemma \ref{modularnorma} yields $\left\|u\right\|_{L^{p_1(\cdot)}(B(y,r))} \leq N^{\frac{1}{p_1^-}}$. Setting $C:=N^{\frac{1}{p_1^-}}$ completes the proof.
	\end{proof}
	
	Using H\"older inequality, one can prove the following lemma.
	\begin{lem}\label{wlozenielp}\cite{zanurzenia}
		Let $(X,\mu)$ be a measure space with finite measure and fix $p,q\in \mathcal{P}_b(X)$ such that $q \gg p$. Then, $L^{q(\cdot)}(X,\mu) \subseteq L^{p(\cdot)}(X,\mu)$ and for every $u\in L^{q(\cdot)}(X,\mu)$, 
		\begin{equation*}
			\left\| u \right\|_{L^{p(\cdot)}(X,\mu)} \leq 2^{\frac{1}{p^-}}\max\left\{\left\|1 \right\|_{L^{t'(\cdot)}(X,\mu)}^{\frac{1}{p^+}}, \left\|1 \right\|_{L^{t'(\cdot)}(X,\mu)}^{\frac{1}{p^-}}\right\} \left\| u \right\|_{L^{q(\cdot)}(X,\mu)},
		\end{equation*}
		where $\displaystyle t:=q/p$.
	\end{lem}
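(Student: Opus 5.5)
We argue exactly as in the proof of Proposition \ref{jednakowo}, working now on the whole space $X$, whose measure is finite. The quasi-norm is homogeneous, so we may assume $\left\|u\right\|_{L^{q(\cdot)}(X,\mu)}=1$. By Lemma \ref{modularnorma} this gives $\rho_{q(\cdot)}(u)\leq 1$. Put $t:=q/p$, so that $|u|^{q}=(|u|^{p})^{t}$ and hence $\rho_{t(\cdot)}(|u|^{p(\cdot)})=\rho_{q(\cdot)}(u)\leq 1$. Lemma \ref{modularnorma} then yields $\left\||u|^{p(\cdot)}\right\|_{L^{t(\cdot)}(X,\mu)}\leq 1$.

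Next we check that Proposition \ref{holder1} applies to the pair $t,t'$. Since $q\gg p$ and $p,q$ are bounded, $t^-\geq 1+(q-p)^-/p^+>1$, and $t\leq q^+/p^->\!$ is bounded. The conjugate $t'=q/(q-p)$ is bounded because $(q-p)^->0$. Proposition \ref{holder1} therefore gives
\begin{equation*}
\rho_{p(\cdot)}(u)=\int_X |u|^{p(x)}\,\mbox{d}\mu(x)\leq 2\left\||u|^{p(\cdot)}\right\|_{L^{t(\cdot)}(X,\mu)}\left\|1\right\|_{L^{t'(\cdot)}(X,\mu)}\leq 2K,
\end{equation*}
where $K:=\left\|1\right\|_{L^{t'(\cdot)}(X,\mu)}$. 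This quantity is finite: $t'$ is bounded and $\mu(X)<\infty$, so $\rho_{t'(\cdot)}(1)=\mu(X)<\infty$.

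It remains to convert the modular bound into a norm bound. We use Lemma \ref{modularnorma} directly, or rescale: $\rho_{p(\cdot)}(u/\lambda)\leq \lambda^{-p^-}\rho_{p(\cdot)}(u)$ for $\lambda\geq1$ and $\lambda^{-p^+}\rho_{p(\cdot)}(u)$ for $\lambda\le 1$. Taking $\lambda$ with the appropriate power of $2K$ gives $\left\|u\right\|_{L^{p(\cdot)}}\leq \max\{(2K)^{1/p^-},(2K)^{1/p^+}\}$.

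This bound is at most $2^{1/p^-}\max\{K^{1/p^+},K^{1/p^-}\}$. To see this, split into the cases $2K\geq 1$ and $2K<1$. The factor $2^{1/p^+}\le 2^{1/p^-}$ is absorbed, and $\max\{K^{1/p^-},K^{1/p^+}\}$ covers whichever exponent occurs. The resulting finiteness also shows $u\in L^{p(\cdot)}(X,\mu)$.

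The only delicate point is this last bookkeeping of the exponents $1/p^\pm$ so that they match the stated constant. The rest is the same Hölder-plus-modular argument already used in Proposition \ref{jednakowo}.
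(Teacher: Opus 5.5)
Your proof is correct and is exactly the H\"older-inequality argument the paper alludes to (the same scheme as in Proposition \ref{jednakowo}). You normalize $\|u\|_{L^{q(\cdot)}(X,\mu)}=1$, apply H\"older with $t=q/p$ and $t'=q/(q-p)$ to get $\rho_{p(\cdot)}(u)\le 2\|1\|_{L^{t'(\cdot)}(X,\mu)}$, and then Lemma \ref{modularnorma} gives precisely the stated constant. The final case split is unnecessary, since $(2K)^{1/p^-}=2^{1/p^-}K^{1/p^-}$ and $(2K)^{1/p^+}\le 2^{1/p^-}K^{1/p^+}$ hold whatever the size of $K$.
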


Let us now recall the following version of Jensen's inequality for variable exponent Lebesgue spaces. The proof of this result can be found in \cite{maximal}.
\begin{prop}\label{jensen}
	Let $(X,d,\mu)$ be a metric measure space and $p \in \mathcal{P}^{\log}_{\mu}(X)$, where $p^- > 1$. Define $q: X \times X \to \mathbb R$ by
	\begin{equation*}
		\frac{1}{q(x,y)}:= \max\left\{\frac{1}{p(x)}-\frac{1}{p(y)},0\right\}.
	\end{equation*}
	Then, for any $\gamma \in (0,1)$ there exists $\beta\in (0,1)$ depending only on $\gamma$ and $C_{\log,\mu}(p)$ such that
	\begin{equation*}
		\left(\beta \fint_{B} \left|f(y)\right| \mbox{d}\mu(y)\right)^{p(x)} \leq \fint_{B} \left|f(y)\right|^{p(y)}\mbox{d}\mu(y)+ \fint_B \gamma^{q(x,y)}\chi_{\left\{y\in X:0<f(y)\leq 1\right\}}(y) \mbox{d}\mu(y)
	\end{equation*}
	for every ball $B \subseteq X$, $x\in B$ and $f\in L^{p(\cdot)}(X,\mu) + L^{\infty}(X,\mu)$ with\footnote{Let us recall that $L^{p(\cdot)}(X,\mu)+L^{\infty}(X,\mu):=\left\{f_0+f_1: f_0 \in L^{p(\cdot)}(X,\mu), f_1\in L^{\infty}(X,\mu)\right\}$. Moreover, we define $\left\|f\right\|_{L^{p(\cdot)}(X,\mu)+L^{\infty}(X,\mu)}:=\inf\left\{\left\|f_0\right\|_{L^{p(\cdot)}(X,\mu)}+\left\|f_1\right\|_{L^{\infty}(X,\mu)}: f=f_0+f_1, f_0 \in L^{p(\cdot)}(X,\mu), f_1\in L^{\infty}(X,\mu)\right\}.$} $\left\|f\right\|_{L^{p(\cdot)}(X,\mu)+L^{\infty}(X,\mu)}\leq 1.$
\end{prop}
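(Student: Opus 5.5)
The plan is to split $f$ into a ``large'' part and a ``small'' part and treat each by a different mechanism. Only the local condition $\mu(B)^{1/p_B^+-1/p_B^-}\le C_{\log,\mu}(p)$ should be needed. Write $f_1:=f\chi_{\{|f|>1\}}$ and $f_2:=f\chi_{\{0<|f|\le 1\}}$. Since $p(x)\ge p^->1$, the map $t\mapsto t^{p(x)}$ is convex, so
\begin{equation*}
\Bigl(\beta\fint_B|f|\Bigr)^{p(x)}\le \frac12\Bigl(2\beta\fint_B|f_1|\Bigr)^{p(x)}+\frac12\Bigl(2\beta\fint_B|f_2|\Bigr)^{p(x)}.
\end{equation*}
It then suffices to bound the first term by $\fint_B|f|^{p(y)}\chi_{\{|f|>1\}}\,\mbox{d}\mu(y)$ and the second by $\fint_B\bigl(|f|^{p(y)}+\gamma^{q(x,y)}\bigr)\chi_{\{0<|f|\le1\}}\,\mbox{d}\mu(y)$. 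We shrink $\beta$ at the end to absorb all constants, using $\beta^{p(x)}\le\beta^{p^-}$.

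\emph{Small part.} First use the ordinary Jensen inequality for the convex function $t\mapsto t^{p(x)}$:
\begin{equation*}
\Bigl(\beta'\fint_B|f_2|\Bigr)^{p(x)}\le\fint_B(\beta'|f_2(y)|)^{p(x)}\,\mbox{d}\mu(y).
\end{equation*}
Then bound the integrand pointwise for $t=|f_2(y)|\in(0,1]$.
\begin{itemize}
\item If $p(y)\le p(x)$, then $(\beta' t)^{p(x)}\le t^{p(x)}\le t^{p(y)}$.
\item If $p(y)>p(x)$, apply Young's inequality with exponents $r=p(y)/p(x)$ and $r'$. A direct computation gives $1/r'=p(x)\bigl(1/p(x)-1/p(y)\bigr)=p(x)/q(x,y)$, hence
\begin{equation*}
t^{p(x)}\beta'^{p(x)}\le t^{p(y)}+\beta'^{q(x,y)}\le t^{p(y)}+\gamma^{q(x,y)}
\end{equation*}
as soon as $\beta'\le\gamma$.
\end{itemize}
This settles the small part with $\beta'=\min\{\gamma,\cdot\}$.

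\emph{Large part.} Here $|f_1|\ge1$ on its support. Apply Jensen with the constant exponent $p_B^-$:
\begin{equation*}
\Bigl(\fint_B|f_1|\Bigr)^{p_B^-}\le\fint_B|f_1|^{p_B^-}\le\fint_B|f_1|^{p(y)}=:I.
\end{equation*}
Raising to the power $p(x)/p_B^-\ge1$ gives $I^{p(x)/p_B^-}=I\cdot I^{p(x)/p_B^--1}$. We split according to the size of $I$.
\begin{itemize}
\item If $I\le1$, the extra factor is at most $1$.
\item If $I>1$, we use a modular bound $\int_X|f_1|^{p}\,\mbox{d}\mu\le K$ with $K$ an absolute constant (discussed below). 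Then $1<I\le K/\mu(B)$, which forces $\mu(B)<K$, and $I^{p(x)/p_B^--1}\le\bigl(K/\mu(B)\bigr)^{p_B^+/p_B^--1}$. Writing $p_B^+/p_B^--1=p_B^+(1/p_B^--1/p_B^+)$, this is at most $K^{p^+/p^-}\bigl(\mu(B)^{1/p_B^+-1/p_B^-}\bigr)^{p^+}\le K^{p^+/p^-}C_{\log,\mu}(p)^{p^+}$.
\end{itemize}
So $\bigl(\fint_B|f_1|\bigr)^{p(x)}\le C\,I$ with $C$ depending only on $C_{\log,\mu}(p)$ and $p^\pm$. A factor $\beta^{p^-}\le C^{-1}$ absorbs it.

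\emph{Main obstacle.} The step I expect to be the hardest is justifying the uniform modular bound on the large part from the hypothesis $\|f\|_{L^{p(\cdot)}+L^\infty}\le1$. Write $f=g+h$ with $\|g\|_{L^{p(\cdot)}}+\|h\|_\infty\le1+\varepsilon$. On $\{|f|>1\}$ we do not directly get $|f|\le$ const$\cdot|g|$, because $|h|$ may be close to $1$. I would first try to replace the threshold $1$ by $2$: on $\{|f|>2\}$ one has $|f|\le2|g|$, so $\int|f|^{p}\chi_{\{|f|>2\}}\le 2^{p^+}\rho(g)\le 2^{p^+}$. The intermediate range $\{1<|f|\le2\}$ is then treated like the small part, since $t^{p(x)}\le 2^{p^+}t^{p(y)}$ there as well. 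If this turns out not to work cleanly, I would instead normalise to $\|f\|\le 1/2$ and rescale $\beta$ accordingly.
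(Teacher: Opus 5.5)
The paper does not prove this proposition; it only quotes it from \cite{maximal}. Your argument is the standard route for this estimate (cf.\ \cite{diening,maximal}): a convexity split, Young's inequality with exponents $p(y)/p(x)$ and its conjugate to produce $\gamma^{q(x,y)}$ on $\{0<|f|\le 1\}$, and Jensen with $p_B^-$ combined with $\mu(B)^{1/p_B^+-1/p_B^-}\le C_{\log,\mu}(p)$ on the large part. It does prove the inequality. Your threshold-$2$ device for the modular bound also works. With only an approximate decomposition $\|g\|_{L^{p(\cdot)}}+\|h\|_{L^\infty}\le 1+\varepsilon$ you get $|f|\le \frac{2}{1-\varepsilon}|g|$ on $\{|f|>2\}$ rather than $2|g|$, which is harmless.

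What you do not deliver is the dependence asserted in the statement. Your $\beta$ also depends on $p^-$ and $p^+$, through $2^{p^+}$, $K$, $K^{p^+/p^-}C_{\log,\mu}(p)^{p^+}$ and the absorption via $\beta^{p^-}$. This is irrelevant for the paper's use in Proposition \ref{uniintegrable}, but it is easily removed with three changes.
\begin{enumerate}
\item[(a)] On $\{1<|f|\le 2\}$ take $3\beta\le\frac12$. Then $(3\beta t)^{p(x)}\le (t/2)^{p(x)}\le 1<t^{p(y)}$ for $t\in(1,2]$, so no factor $2^{p^+}$ appears.
\item[(b)] Letting $\varepsilon\to 0$ in the bound above gives $\|f\chi_{\{|f|>2\}}\|_{L^{p(\cdot)}(X,\mu)}\le 2$. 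So $F:=\frac12 f\chi_{\{|f|>2\}}$ satisfies $|F|>1$ on its support and $\rho_{p(\cdot)}(F)\le 1$. Hence $J:=\fint_B|F(y)|^{p(y)}\,\mbox{d}\mu(y)\le\mu(B)^{-1}$, and no $K$ is needed.
\item[(c)] Compare on the $1/p$ scale instead of raising $I$ to the power $p(x)/p_B^-$. Put $C:=\max\{1,C_{\log,\mu}(p)\}$. If $J>1$, then
\[
J^{\frac{1}{p(x)}-\frac{1}{p_B^-}}\ge J^{\frac{1}{p_B^+}-\frac{1}{p_B^-}}\ge \mu(B)^{\frac{1}{p_B^-}-\frac{1}{p_B^+}}\ge C^{-1},
\]
which is exactly $\bigl(C^{-1}J^{1/p_B^-}\bigr)^{p(x)}\le J$. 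For $J\le 1$ this inequality is trivial. Together with Jensen for $p_B^-$, this gives $\bigl(\frac{1}{2C}\fint_B|f|\chi_{\{|f|>2\}}\,\mbox{d}\mu\bigr)^{p(x)}\le\fint_B|f(y)|^{p(y)}\chi_{\{|f|>2\}}(y)\,\mbox{d}\mu(y)$.
\end{enumerate}
With these, $\beta:=\frac13\min\{\gamma,\frac{1}{2C}\}$ works and depends only on $\gamma$ and $C_{\log,\mu}(p)$.

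Finally, the relation $p_B^-\le p(x)\le p_B^+$ that you use (when raising to $p(x)/p_B^-$, and for Jensen in $t\mapsto t^{p(x)}$) holds only for a.e.\ $x\in B$. These are essential bounds, and $\mathcal{P}^{\log}_{\mu}(X)$ imposes no continuity. Redefining $p$ at a single point of $\mathbb{R}^n$ keeps all hypotheses and breaks the inequality at that point. So what you prove, and what is true in general, is the estimate for a.e.\ $x\in B$. That suffices for Proposition \ref{uniintegrable}, where the estimate is integrated in $x$.
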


	To finish this subsection, let us recall the following Vitali-type compactness theorem from \cite{bandaliyev}.
	
	\begin{tw}\label{hanson}
	Suppose that $(X,\mu)$ is a measure space such that $\mu(X)< \infty$ and $p\in \mathcal{P}_b(X)$. Then, the subset $\mathcal{F} \subseteq L^{p(\cdot)}(X,\mu)$ is totally bounded in $L^{p(\cdot)}(X,\mu)$ if and only if:
	\begin{enumerate}
		\item[(i)] $\mathcal{F}$ is totally bounded in $L^0(X,\mu)$;
		\item[(ii)] the family $\mathcal{F}$ is $p(\cdot)$-equi-integrable, that is,
		\begin{equation*}
		\scalebox{1.1}{$\forall_{\varepsilon\in (0,\infty)} \exists_{\delta\in (0,\infty)} \forall_{A\subseteq X}$} \hspace{2mm} \mu(A)<\delta \implies \sup_{u\in \mathcal{F}} \int_{A} \left|u(x)\right|^{p(x)}\mbox{d}\mu(x) < \varepsilon.
		\end{equation*}
	\end{enumerate}
	\end{tw}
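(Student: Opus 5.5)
We prove the two implications separately, working throughout with the semi-modular $\rho_{p(\cdot)}$ and passing to the quasi-norm via Lemma \ref{modularnorma}. Since $p\in\mathcal{P}_b(X)$, we have $0<p^-\le p^+<\infty$. We will repeatedly use the elementary pointwise inequality
\begin{equation*}
|a+b|^{p(x)}\le 2^{p^+}\left(|a|^{p(x)}+|b|^{p(x)}\right),
\end{equation*}
which holds for all $a,b\in\mathbb R$, including when $p(x)<1$. Together with Lemma \ref{modularnorma}, the modular and the quasi-norm are small simultaneously: $\|u\|_{L^{p(\cdot)}}\to 0$ if and only if $\rho_{p(\cdot)}(u)\to 0$.

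\emph{Necessity.} Assume $\mathcal F$ is totally bounded in $L^{p(\cdot)}(X,\mu)$.

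For (i), it suffices to show that the identity $L^{p(\cdot)}(X,\mu)\to L^0(X,\mu)$ is uniformly continuous. By a Chebyshev-type estimate,
\begin{equation*}
\mu(\{|f|>\eta\})\le \eta^{-p^+}\rho_{p(\cdot)}(f) \quad\text{for } \eta\le 1,
\end{equation*}
so $\|f\|_{L^{p(\cdot)}}\to0$ forces convergence in measure. Convergence in measure is controlled by the metric $d$ on $L^0$, because $\mu(X)<\infty$. Hence $\varepsilon$-nets in $L^{p(\cdot)}$ are mapped to $\varepsilon'$-nets in $L^0$.

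For (ii), fix $\varepsilon>0$ and choose a finite net $u_1,\dots,u_N\in\mathcal F$ such that every $u\in\mathcal F$ has some $i$ with $\rho_{p(\cdot)}(u-u_i)<\varepsilon 2^{-p^+-1}$. This is possible by Lemma \ref{modularnorma}. Each single function $|u_i|^{p(\cdot)}$ lies in $L^1$, so by absolute continuity of the integral there is a $\delta>0$ such that
\begin{equation*}
\int_A|u_i|^{p}\,\mbox{d}\mu<\varepsilon 2^{-p^+-1} \quad\text{for all } i \text{ whenever } \mu(A)<\delta.
\end{equation*}
Applying the pointwise inequality then gives $\int_A|u|^{p}\,\mbox{d}\mu<\varepsilon$ for every $u\in\mathcal F$.

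\emph{Sufficiency.} We show that every sequence $(u_n)\subseteq\mathcal F$ has a subsequence that is Cauchy in $L^{p(\cdot)}$. By (i), $(u_n)$ has a subsequence that is Cauchy in $L^0$, hence Cauchy in measure; we keep the notation $(u_n)$ for it. Fix $\varepsilon>0$ and let $\delta$ be given by (ii) for $\varepsilon$. Choose $\eta\in(0,1)$ with $\eta^{p^-}\mu(X)<\varepsilon$. Then take $n_0$ such that $A_{n,m}:=\{|u_n-u_m|>\eta\}$ satisfies $\mu(A_{n,m})<\delta$ for all $n,m\ge n_0$. Splitting the modular over $A_{n,m}$ and its complement yields
\begin{equation*}
\rho_{p(\cdot)}(u_n-u_m)\le \eta^{p^-}\mu(X)+2^{p^+}\left(\int_{A_{n,m}}|u_n|^{p}\,\mbox{d}\mu+\int_{A_{n,m}}|u_m|^{p}\,\mbox{d}\mu\right)\le (1+2^{p^++1})\varepsilon .
\end{equation*}
By Lemma \ref{modularnorma}, the quasi-norm $\|u_n-u_m\|_{L^{p(\cdot)}}$ is then small. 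Every sequence thus has a Cauchy subsequence, which is equivalent to total boundedness in a (quasi-)metric space. This equivalence remains valid for the quasi-norm, since the quasi-metric is equivalent to a genuine metric by the Aoki--Rolewicz theorem; alternatively, one can argue directly with $\kappa_{p(\cdot)}$.

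The only delicate points are the pointwise inequality when $p^-<1$ and the translation between modular and quasi-norm. Both are handled by the constants $2^{p^+}$ and Lemma \ref{modularnorma}, so we expect no real obstacle. The step requiring the most care is making sure that the set $A_{n,m}$ depends on the pair $(n,m)$ but that (ii) is uniform over all sets of measure less than $\delta$, which is exactly what the definition of $p(\cdot)$-equi-integrability provides.
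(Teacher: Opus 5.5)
Your proof is correct and complete. Note that the paper does not prove this statement. It quotes it as a known Vitali-type result from \cite{bandaliyev} and uses it as a black box in the proofs of Theorem \ref{zwarteskonczone} and Lemma \ref{operator}. So there is no in-paper argument to compare with. What you give is the classical Vitali convergence scheme carried out at the level of the modular $\rho_{p(\cdot)}$, and it is a valid self-contained proof.

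Each step checks out:
\begin{itemize}
\item The Chebyshev bound $\mu(\{|f|>\eta\})\leq \eta^{-p^+}\rho_{p(\cdot)}(f)$ for $\eta\leq 1$ holds.
\item The estimate $d(f,g)\leq \eta\mu(X)+\mu(\{|f-g|>\eta\})$ holds, so the identity $L^{p(\cdot)}(X,\mu)\to L^0(X,\mu)$ is uniformly continuous.
\item The splitting of $\rho_{p(\cdot)}(u_n-u_m)$ over $\{|u_n-u_m|>\eta\}$ and its complement is correct.
\item Lemma \ref{modularnorma} correctly transfers smallness between the modular and the quasi-norm, since $p^+<\infty$.
\end{itemize}

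Two small simplifications are available.
\begin{itemize}
\item In the necessity of (ii), the net points $u_i$ need not belong to $\mathcal{F}$. Because $p^+<\infty$, every $u\in L^{p(\cdot)}(X,\mu)$ already satisfies $\int_X|u(x)|^{p(x)}\,\mbox{d}\mu(x)<\infty$, which is all that absolute continuity of the integral requires.
\item The appeal to Aoki--Rolewicz is unnecessary. If $\mathcal{F}$ were not totally bounded, you could choose inductively $u_n\in\mathcal{F}$ with $\|u_n-u_m\|_{L^{p(\cdot)}(X,\mu)}\geq r$ for $n\neq m$. This uses only the symmetry of the quasi-norm and produces a sequence with no Cauchy subsequence.
\end{itemize}

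Your sufficiency argument never uses boundedness of $\mathcal{F}$ in $L^{p(\cdot)}(X,\mu)$, and this is consistent. Hypothesis (i) rules out the atomic situations in which equi-integrability alone would allow unbounded families.
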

	
	\subsection{Mixed Lebesgue sequence spaces} Now, we recall definition of mixed Lebesgue sequence spaces, which play crucial role in the definition of variable exponent Besov and Triebel-Lizorkin spaces.

	Let $(X,\mu)$ be a measure space and $p\in \mathcal{P}_b(X)$, $q\in \mathcal{P}(X)$. We define the following semi-modular (see \cite{AH})
	\begin{equation*}
		\rho_{\ell^{q(\cdot)}(L^{p(\cdot)}(X,\mu))}\left(\left\{u_k\right\}_{k\in \mathbb Z}\right):=\sum_{k\in \mathbb Z} \inf\left\{\lambda_k\in (0,\infty): \rho_{p(\cdot)}\left(\frac{u_k}{\lambda_k^{\frac{1}{q(\cdot)}}}\right) \leq 1\right\}
	\end{equation*}
	for every sequence $\left\{u_k\right\}_{k\in \mathbb Z}\subseteq L^{p(\cdot)}(X,\mu)$,
	where we apply the convention $\lambda^{1/\infty}=1$.	Using this semi-modular, we define the \texttt{mixed Lebesgue sequence space} $\ell^{q(\cdot)}(L^{p(\cdot)}(X,\mu))$ as follows
	\begin{equation*}
		\ell^{q(\cdot)}(L^{p(\cdot)}(X,\mu)):=\left\{\left\{u_k\right\}_{k\in \mathbb Z}\subseteq L^{p(\cdot)}(X,\mu): \exists_{\mu\in (0,\infty)} \hspace{1mm} \rho_{\ell^{q(\cdot)}(L^{p(\cdot)}(X,\mu))}\left(\frac{\left\{u_k\right\}_{k\in \mathbb Z}}{\mu}\right) <\infty \right\}.
	\end{equation*}
	This is a quasi-normed space with the quasi-norm
	\begin{equation*}
		\left\| \left\{u_k\right\}_{k\in \mathbb Z} \right\|_{\ell^{q(\cdot)}(L^{p(\cdot)}(X,\mu))} := \inf\left\{\mu\in (0,\infty): \rho_{\ell^{q(\cdot)}(L^{p(\cdot)}(X,\mu))}\left(\frac{\left\{ u_k \right\}_{k\in \mathbb Z}}{\mu}\right) \leq 1 \right\}
	\end{equation*}for $\left\{u_k\right\}_{k\in \mathbb Z} \in \ell^{q(\cdot)}(L^{p(\cdot)}(X,\mu))$.
	More details on mixed Lebesgue sequence spaces $\ell^{q(\cdot)}(L^{p(\cdot)})$ can be found in \cite{AH, nasze, ghorba, kempka}.
	
	Moreover, we say that the sequence $\left\{u_k\right\}_{k\in \mathbb Z}$ of measurable functions belongs to the \texttt{mixed Lebesgue sequence space} $L^{p(\cdot)}(\ell^{q(\cdot)}(X,\mu))$ if and only if
	\begin{equation*}
		\left\| \left\{u_k\right\}_{k\in \mathbb Z} \right\|_{\ell^{q(\cdot)}} \in L^{p(\cdot)}(X,\mu),
	\end{equation*}
	where for $x\in X$ we define
	\begin{equation*}
		\left\| \left\{u_k(x)\right\}_{k\in \mathbb Z} \right\|_{\ell^{q(x)}}:= \left\{\begin{array}{ll} \displaystyle\sup_{k\in \mathbb Z} \left|u_k(x)\right|,& \textnormal{ if } q(x)=\infty,\\ \displaystyle \left(\sum_{k\in \mathbb Z} \left|u_k(x)\right|^{q(x)}\right)^{\frac{1}{q(x)}},& \textnormal{ if } q(x)<\infty.\end{array}\right.
	\end{equation*}
	The space $L^{p(\cdot)}(\ell^{q(\cdot)}(X,\mu))$ is also a quasi-normed space with the quasi-norm
	\begin{equation*}
		\left\| \left\{u_k\right\}_{k\in \mathbb Z} \right\|_{L^{p(\cdot)}(\ell^{q(\cdot)}(X,\mu))}:= \left\| \left\| \left\{u_k\right\}_{k\in \mathbb Z} \right\|_{\ell^{q(\cdot)}} \right\|_{L^{p(\cdot)}(X,\mu)}, \textnormal{ where } \left\{u_k\right\}_{k\in \mathbb Z} \in L^{p(\cdot)}(\ell^{q(\cdot)}(X,\mu)).
	\end{equation*}
	
	\begin{lem}\label{mieszane}\cite{AH}
		Let $(X,\mu)$ be a measure space and $p\in \mathcal{P}_b(X)$, $q\in \mathcal{P}(X)$.
		\begin{enumerate}
			\item[(i)] If $q$ is constant, then
			\begin{equation*}
				\left\| \left\{f_k\right\}_{k\in \mathbb Z} \right\|_{\ell^{q}(L^{p(\cdot)}(X,\mu))}= \left\| \left\{ \left\|f_k\right\|_{L^{p(\cdot)}(X,\mu)}\right\}_{k\in \mathbb Z} \right\|_{\ell^q}
			\end{equation*}
			for every $\left\{f_k\right\}_{k\in \mathbb Z} \subseteq L^{p(\cdot)}(X,\mu)$.
			\item[(ii)] If $q^+ <\infty$, then
			\begin{equation*}
				\rho_{\ell^{q(\cdot)}(L^{p(\cdot)}(X,\mu))}\left( \left\{f_k\right\}_{k\in \mathbb Z}\right)= \sum_{k\in \mathbb Z} \left\| \left|f_k\right|^{q(\cdot)} \right\|_{L^{\frac{p(\cdot)}{q(\cdot)}}(X,\mu)}
			\end{equation*}
			for every $\left\{f_k\right\}_{k\in \mathbb Z}\subseteq L^{p(\cdot)}(X,\mu)$. Moreover, $\left\{f_k\right\}_{k\in \mathbb Z}\in \ell^{q(\cdot)}(L^{p(\cdot)}(X,\mu))$ if and only if
			\begin{equation*}
				\sum_{k\in \mathbb Z} \left\| \left|f_k\right|^{q(\cdot)} \right\|_{L^{\frac{p(\cdot)}{q(\cdot)}}(X,\mu)}<\infty.
			\end{equation*}
		\end{enumerate}
	\end{lem}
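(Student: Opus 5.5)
The plan is to prove part (ii) first, since it is a direct computation with the definition of the semi-modular. Part (i) then follows from (ii) for finite constant $q$, together with a separate treatment of $q=\infty$. Throughout, the only tools needed are Lemma \ref{skalowanie} and Lemma \ref{modularnorma}, together with the observation that $p/q\in\mathcal{P}_b(X)$ with $(p/q)^-\geq p^-/q^+>0$ whenever $q^+<\infty$. Hence $L^{p(\cdot)/q(\cdot)}(X,\mu)$ is a legitimate (quasi-normed) variable Lebesgue space, possibly with exponent below $1$.

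For (ii), fix $k$ and $\lambda_k\in(0,\infty)$. Since $q<\infty$ everywhere, the integrand of the modular rewrites pointwise as
\begin{equation*}
\left(\frac{|f_k(x)|}{\lambda_k^{1/q(x)}}\right)^{p(x)}=\left(\frac{|f_k(x)|^{q(x)}}{\lambda_k}\right)^{p(x)/q(x)}.
\end{equation*}
Consequently $\rho_{p(\cdot)}(f_k/\lambda_k^{1/q(\cdot)})=\rho_{p(\cdot)/q(\cdot)}(|f_k|^{q(\cdot)}/\lambda_k)$. Taking the infimum over $\lambda_k$ turns the $k$-th summand into exactly the Luxemburg quasi-norm $\||f_k|^{q(\cdot)}\|_{L^{p(\cdot)/q(\cdot)}(X,\mu)}$, with the convention $\inf\emptyset=\infty$ on both sides. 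Summing over $k$ gives the identity.

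For the membership statement, one direction is immediate: if the sum is finite, take $\mu=1$ in the definition of $\ell^{q(\cdot)}(L^{p(\cdot)})$. Conversely, suppose $\rho(\{f_k\}/\mu)<\infty$ for some $\mu$. If $\mu\le 1$, then pointwise $|f_k|^{q}\le|f_k/\mu|^{q}$, and monotonicity of the quasi-norm gives finiteness. If $\mu>1$, then $|f_k|^{q(x)}\le\mu^{q^+}|f_k/\mu|^{q(x)}$, and homogeneity yields $\||f_k|^q\|\le\mu^{q^+}\||f_k/\mu|^q\|$. Summing gives a finite bound in either case.

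For (i) with constant $q<\infty$, part (ii) together with Lemma \ref{skalowanie} (applied with $\alpha=q$) gives $\||f_k|^q\|_{L^{p(\cdot)/q}}=\|f_k\|_{L^{p(\cdot)}}^q$. Hence
\begin{equation*}
\rho\left(\{f_k\}/\mu\right)=\mu^{-q}\sum_k\|f_k\|_{L^{p(\cdot)}}^q,
\end{equation*}
which is $\le 1$ if and only if $\mu\ge\|\{\|f_k\|_{L^{p(\cdot)}}\}\|_{\ell^q}$. Taking the infimum over such $\mu$ gives the claim.

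The case $q=\infty$ is the main point where care is needed, because of the convention $\lambda^{1/\infty}=1$. The condition $\rho_{p(\cdot)}(f_k/\mu)\le1$ no longer depends on $\lambda_k$, so the inner infimum equals $0$ if the condition holds and $\infty$ otherwise. Thus $\rho(\{f_k\}/\mu)\le1$ exactly when $\rho_{p(\cdot)}(f_k/\mu)\le1$ for all $k$. By Lemma \ref{modularnorma} this is equivalent to $\|f_k\|_{L^{p(\cdot)}}\le\mu$ for all $k$, so the infimum over $\mu$ is $\sup_k\|f_k\|_{L^{p(\cdot)}}$. I expect the only real obstacles to be this bookkeeping of the degenerate infima and the check that the quasi-norm of $L^{p/q}$ is monotone and homogeneous even when $(p/q)^-<1$. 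Both follow directly from the definition of the Luxemburg quasi-norm.
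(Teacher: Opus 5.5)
Your proof is correct. The paper does not prove this lemma; it imports it from \cite{AH}, where it is obtained by the same direct unfolding of the definition that you use. There, the pointwise identity $\rho_{p(\cdot)}(f_k/\lambda_k^{1/q(\cdot)})=\rho_{p(\cdot)/q(\cdot)}(|f_k|^{q(\cdot)}/\lambda_k)$ turns each summand into a Luxemburg quasi-norm for (ii). For (i), the scaling identity $\||f_k|^{q}\|_{L^{p(\cdot)/q}(X,\mu)}=\|f_k\|_{L^{p(\cdot)}(X,\mu)}^{q}$ handles finite constant $q$, and the convention $\lambda^{1/\infty}=1$ is checked separately for $q=\infty$.

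Your membership argument, splitting the scaling parameter into $\mu\le 1$ and $\mu>1$ and using $q^+<\infty$ in the latter case, is sound, as is your bookkeeping of the degenerate infima when $q=\infty$. The only cosmetic point is that $q^+<\infty$ gives $q<\infty$ only $\mu$-almost everywhere, not everywhere, which is all the integrals require.
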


	\subsection{Variable exponent Sobolev, Triebel-Lizorkin and Besov spaces on metric spaces}

	\indent Let $(X,d,\mu)$ be a metric measure space and let $u: X \to \overline{\mathbb{R}}$ be a measurable function which is finite $\mu$-almost everywhere. Let $s\in \mathcal{P}_b(X)$. We say that a non-negative function $g: X \to \mathbb \overline{\mathbb{R}}$ is a \texttt{scalar $s(\cdot)$-gradient} of $u$ if there is a set $N_u\subseteq X$ such that $\mu(N_u)=0$ and
	\begin{equation}\label{pointwise}
		\vert u(x)-u(y)\vert\leq d(x,y)^{s(x)}g(x)+d(x,y)^{s(y)}g(y)
	\end{equation}
	holds for all $x,y\in X\setminus N_u$. The collection of all scalar $s(\cdot)$-gradients of $u$ we denote by $\mathcal{D}^{s(\cdot)}(u)$.
	
	Moreover, we say that the sequence of non-negative measurable functions $\left\{g_k\right\}_{k\in \mathbb Z}$ is a \texttt{vector $s(\cdot)$-gradient} of $u$ if there exists a set $N_u \subseteq X$ such that $\mu(N_u)=0$ and
	\begin{equation*}
		\left|u(x)-u(y)\right|\leq d(x,y)^{s(x)}g_k(x)+d(x,y)^{s(y)}g_k(y)
	\end{equation*}
	for all $x,y\in X\setminus N_u$ satisfying $2^{-k-1}\leq d(x,y)<2^{-k}$. The collection of all vector $s(\cdot)$-gradients of $u$ we denote by $\mathbb D^{s(\cdot)}(u)$.
	
	For $s,p\in \mathcal{P}_b(X)$ we define the \texttt{homogeneous Haj{\l}asz-Sobolev space} $\dot{M}^{s(\cdot),p(\cdot)}(X,d,\mu)$ as the space of all measurable functions $u:X \to \overline{\mathbb{R}}$ which are finite almost $\mu$-everywhere and
	\begin{equation*}
		\left\|u\right\|_{\dot{M}^{s(\cdot),p(\cdot)}(X,d,\mu)}:=\inf_{g\in \mathcal{D}^{s(\cdot)}(u)}\left\|g\right\|_{L^{p(\cdot)}(X,\mu)}<\infty.
	\end{equation*}
	Moreover, we define the \texttt{non-homogeneous Haj{\l}asz-Sobolev space} $M^{s(\cdot),p(\cdot)}(X,d,\mu)$ as
	\begin{equation*}
		M^{s(\cdot),p(\cdot)}(X,d,\mu)=\dot{M}^{s(\cdot),p(\cdot)}(X,d,\mu) \cap L^{p(\cdot)}(X,\mu).
	\end{equation*}
	
	The space $M^{s(\cdot),p(\cdot)}(X,d,\mu)$ is a quasi-Banach space with the following quasi-norm
	\begin{equation*}
		\left\|u\right\|_{M^{s(\cdot),p(\cdot)}(X,d,\mu)}:=\left\|u\right\|_{L^{p(\cdot)}(X,\mu)}+\left\|u\right\|_{\dot{M}^{s(\cdot),p(\cdot)}(X,d,\mu)}.
	\end{equation*}
	
	Now we shall present the definition of variable exponent Haj{\l}asz-Triebel-Lizorkin and Haj{\l}asz-Besov spaces. 
	
	For $s,p\in \mathcal{P}_b(X)$, $q\in \mathcal{P}(X)$ we define
	\begin{enumerate}
		\item[$(i)$] the \texttt{homogeneous Haj{\l}asz-Triebel-Lizorkin space} $\dot{M}^{s(\cdot)}_{p(\cdot),q(\cdot)}(X,d,\mu)$ as the space of all measurable functions $u:X \to \overline{\mathbb{R}}$  which are finite almost $\mu$-everywhere and
		\begin{equation*}
			\left\|u\right\|_{\dot{M}^{s(\cdot)}_{p(\cdot),q(\cdot)}\left(X,d,\mu\right)}:=\inf_{g\in \mathbb D^{s(\cdot)}(u)} \left\| g \right\|_{L^{p(\cdot)}\left(\ell^{q(\cdot)}\left(X,\mu\right)\right)}<\infty.
		\end{equation*}
		\item[(ii)] the \texttt{homogeneous Haj{\l}asz-Besov space} $\dot{N}^{s(\cdot)}_{p(\cdot),q(\cdot)}(X,d,\mu)$ as the space of all measurable functions $u: X\to \overline{\mathbb{R}}$  which are finite almost $\mu$-everywhere and
		\begin{equation*}
			\left\|u\right\|_{\dot{N}^{s(\cdot)}_{p(\cdot),q(\cdot)}(X,d,\mu)}:=\inf_{g\in \mathbb D^{s(\cdot)}(u)} \left\| g \right\|_{\ell^{q(\cdot)}\left(L^{p(\cdot)}(X,\mu)\right)}<\infty.
		\end{equation*}
		
	\end{enumerate} 
	
	For $s,p\in \mathcal{P}_b(X)$, $q\in \mathcal{P}(X)$ we define \texttt{non-homogeneous Haj{\l}asz-Triebel-Lizorkin space} $M^{s(\cdot)}_{p(\cdot),q(\cdot)}(X,d,\mu)$ and \texttt{non-homogeneous Haj{\l}asz-Besov space} $N^{s(\cdot)}_{p(\cdot),q(\cdot)}(X,d,\mu)$ as
	\begin{align*}
		M^{s(\cdot)}_{p(\cdot),q(\cdot)}(X,d,\mu)&:= \dot{M}^{s(\cdot)}_{p(\cdot),q(\cdot)}(X,d,\mu) \cap L^{p(\cdot)}(X,\mu),\\
		N^{s(\cdot)}_{p(\cdot),q(\cdot)}(X,d,\mu)&:= \dot{N}^{s(\cdot)}_{p(\cdot),q(\cdot)}(X,d,\mu) \cap L^{p(\cdot)}(X,\mu).
	\end{align*}
	
	We endow spaces $M^{s(\cdot)}_{p(\cdot),q(\cdot)}(X,d,\mu)$, $N^{s(\cdot)}_{p(\cdot),q(\cdot)}(X,d,\mu)$ with the following quasi-norms
	\begin{align*}
		\left\|u\right\|_{M^{s(\cdot)}_{p(\cdot),q(\cdot)}(X,d,\mu)}&:=\left\|u\right\|_{L^{p(\cdot)}(X,\mu)}+\left\|u\right\|_{\dot{M}^{s(\cdot)}_{p(\cdot),q(\cdot)}(X,d,\mu)}, \textnormal{ where } u\in M^{s(\cdot)}_{p(\cdot),q(\cdot)}(X,d,\mu)\\
		\left\|u\right\|_{N^{s(\cdot)}_{p(\cdot),q(\cdot)}(X,d,\mu)}&:=\left\|u\right\|_{L^{p(\cdot)}(X,\mu)}+\left\|u\right\|_{\dot{N}^{s(\cdot)}_{p(\cdot),q(\cdot)}(X,d,\mu)}, \textnormal{ where } u\in N^{s(\cdot)}_{p(\cdot),q(\cdot)}(X,d,\mu).
	\end{align*}
	Both Haj{\l}asz-Triebel-Lizorkin and Haj{\l}asz-Besov spaces are quasi-Banach spaces due to their quasi-norms. They were first introduced\footnote{It is worth noting that the authors of \cite{zmiennepraca} used a slightly different definition of $M^{s(\cdot),p(\cdot)}$, $M^{s(\cdot)}_{p(\cdot),q(\cdot)}$, $N^{s(\cdot)}_{p(\cdot),q(\cdot)}$ than we do here. See \cite{zanurzenia} for the proof of the equivalence of these two definitions.} in \cite{zmiennepraca}. Moreover, in \cite{zmiennepraca} it was proven that $M^{s(\cdot)}_{p(\cdot),q(\cdot)}$, $N^{s(\cdot)}_{p(\cdot),q(\cdot)}$ coincide with classical Triebel-Lizorkin and Besov space on Euclidean space. More information about classical variable exponent Besov and Triebel-Lizorkin spaces can be found in \cite{AH, lizorkin, xu}.
	
	Now we introduce invariant Sobolev, Besov and Triebel-Lizorkin spaces. Namely, if $H \subseteq \textnormal{Iso}_\mu(X)$, then by $M^{s(\cdot),p(\cdot)}_H(X,d,\mu)$ we denote the subspace of $M^{s(\cdot),p(\cdot)}(X,d,\mu)$ consisting of $H$-invariant functions, i.e.
	\begin{equation*}
	M^{s(\cdot),p(\cdot)}_H(X,d,\mu):=\left\{u\in M^{s(\cdot),p(\cdot)}(X,d,\mu): \scalebox{1.1}{$\forall_{\varphi \in H}$} \hspace{2mm}u \circ \varphi = u\right\}.
	\end{equation*}
	Similarily, we define
	\begin{align*}
		&M^{s(\cdot),H}_{p(\cdot),q(\cdot)}(X,d,\mu):=\left\{u\in M^{s(\cdot)}_{p(\cdot),q(\cdot)}(X,d,\mu): \scalebox{1.1}{$\forall_{\varphi \in H}$} \hspace{2mm}u \circ \varphi = u\right\},\\
		&N^{s(\cdot),H}_{p(\cdot),q(\cdot)}(X,d,\mu):=\left\{u\in N^{s(\cdot)}_{p(\cdot),q(\cdot)}(X,d,\mu): \scalebox{1.1}{$\forall_{\varphi \in H}$} \hspace{2mm}u \circ \varphi = u\right\}.
	\end{align*} 
	This is easy to notice that $M^{s(\cdot),p(\cdot)}_H(X,d,\mu)$, $M^{s(\cdot),H}_{p(\cdot),q(\cdot)}(X,d,\mu)$ and $N^{s(\cdot),H}_{p(\cdot),q(\cdot)}(X,d,\mu)$ are closed subspaces of, respectively, $M^{s(\cdot),p(\cdot)}(X,d,\mu)$, $M^{s(\cdot)}_{p(\cdot),q(\cdot)}(X,d,\mu)$ and $N^{s(\cdot)}_{p(\cdot),q(\cdot)}(X,d,\mu)$. In particular, these are themselves quasi-Banach spaces.
	
	Below, we shall state some useful properties of Sobolev, Besov and Triebel-Lizorkin spaces. The proof of the following proposition can be found in \cite{zanurzenia}.
	
	\begin{prop}\label{pomiedzy}
		Let $(X,d,\mu)$ be a metric measure space. The following statements hold.
		\begin{enumerate}
			\item[(i)] If $p,s\in \mathcal{P}_b(X)$, $q_1,q_2\in \mathcal{P}(X)$ are such that $q_1\leq q_2$, then
			\begin{align*}
				\dot{M}^{s(\cdot)}_{p(\cdot),q_1(\cdot)}(X,d,\mu) \hookrightarrow \dot{M}^{s(\cdot)}_{p(\cdot),q_2(\cdot)}(X,d,\mu),\hspace{8mm}\dot{N}^{s(\cdot)}_{p(\cdot),q_1(\cdot)}(X,d,\mu) \hookrightarrow     \dot{N}^{s(\cdot)}_{p(\cdot),q_2(\cdot)}(X,d,\mu)
			\end{align*}
			and the operator semi-norms of the above embeddings are less or equal to one.
			\item[(ii)] If $p,s\in \mathcal{P}_b(X)$, then
			\begin{align*}
				\dot{M}^{s(\cdot)}_{p(\cdot),\infty}(X,d,\mu)=\dot{M}^{s(\cdot),p(\cdot)}(X,d,\mu)
			\end{align*}
			with equal quasi-semi-norms.
			\item[(iii)] If $p,s\in \mathcal{P}_b(X)$, it holds that
			\begin{equation*}
				\dot{M}^{s(\cdot)}_{p(\cdot),p(\cdot)}(X,d,\mu)=\dot{N}^{s(\cdot)}_{p(\cdot),p(\cdot)}(X,d,\mu)
			\end{equation*}
			with equal quasi-semi-norms.
			\item[(iv)] Let $s,t,p\in \mathcal{P}_b(X)$, $q_1,q_2\in \mathcal{P}(X)$ are such that $ t \gg s$. Then,
			\begin{align*}
				N^{t(\cdot)}_{p(\cdot),q_1(\cdot)}(X,d,\mu)\hookrightarrow N^{s(\cdot)}_{p(\cdot),q_2(\cdot)}(X,d,\mu), \hspace{8mm}
				M^{t(\cdot)}_{p(\cdot),q_1(\cdot)}(X,d,\mu)\hookrightarrow M^{s(\cdot)}_{p(\cdot),q_2(\cdot)}(X,d,\mu).
			\end{align*}
			\item[(v)] If $p,s\in \mathcal{P}_b(X)$, $q\in \mathcal{P}(X)$, then it holds that
			\begin{align*}
				\dot{M}^{s(\cdot)}_{p(\cdot),q(\cdot)}(X,d,\mu) \hookrightarrow \dot{M}^{s(\cdot),p(\cdot)}(X,d,\mu)
			\end{align*}
			where the operator semi-norm is less or equal to one.
			\item[(vi)] If $p,s\in \mathcal{P}_b(X)$, $q\in \mathcal{P}(X)$ are such that $p\geq q$, then
			\begin{equation*}
				\dot{N}^{s(\cdot)}_{p(\cdot),q(\cdot)}(X,d,\mu) \hookrightarrow \dot{M}^{s(\cdot),p(\cdot)}(X,d,\mu),
			\end{equation*}
			where the operator semi-norm is less or equal to one.
			\item[(vii)] If $p,s\in \mathcal{P}_b(X)$, $q\in \mathcal{P}(X)$, then for every $t \in \mathcal{P}_b(X)$ such that $s \gg t$ it holds that
			\begin{equation*}
				N^{s(\cdot)}_{p(\cdot),q(\cdot)}(X,d,\mu) \hookrightarrow M^{t(\cdot),p(\cdot)}(X,d,\mu).
			\end{equation*}
			\item[(viii)] If $p,s\in \mathcal{P}_b(X)$ and $q\in \mathcal{P}(X)$, then
			\begin{equation*}
				\dot{M}^{s(\cdot)}_{p(\cdot),q(\cdot)}(X,d,\mu) \hookrightarrow \dot{N}^{s(\cdot)}_{p(\cdot),\infty}(X,d,\mu).
			\end{equation*}
			\item[(ix)] If $p,s\in \mathcal{P}_b(X)$ and $\delta \in (0,\infty)$, then for every $t\in \mathcal{P}_b(X)$ such that $s \gg t$ there exists constant $\zeta(p,s,t,\delta)\in (0,\infty)$  such that for every ball $B:=B(x_0,r_0)\subseteq X$ with $r_0\in (0,\delta]$ and every $q\in \mathcal{P}(X)$, it holds that
			\begin{equation*}
				\dot{N}^{s(\cdot)}_{p(\cdot),q(\cdot)}(B,d,\mu) \hookrightarrow \dot{M}^{t(\cdot),p(\cdot)}(B,d,\mu)
			\end{equation*}
			and
			\begin{equation*}
				\left\| u \right\|_{\dot{M}^{t(\cdot),p(\cdot)}(B,d,\mu)} \leq \zeta(p,s,t,\delta)\left\|u \right\|_{\dot{N}^{s(\cdot)}_{p(\cdot),q(\cdot)}(B,d,\mu)},
			\end{equation*}
			for all $u\in \dot{N}^{s(\cdot)}_{p(\cdot),q(\cdot)}(B,d,\mu).$
		\end{enumerate}	
	\end{prop}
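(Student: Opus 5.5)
The plan is to prove each item directly from the definitions of $\mathcal{D}^{s(\cdot)}(u)$ and $\mathbb{D}^{s(\cdot)}(u)$. We compare gradients pointwise and then pass to (semi-)modulars via Lemma \ref{modularnorma} and Lemma \ref{mieszane}. Throughout, let $g=\{g_k\}_{k\in\mathbb Z}\in\mathbb D^{s(\cdot)}(u)$ be arbitrary; at the end we take the infimum over such $g$.

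\emph{The algebraic items (i), (ii), (iii), (v), (vi), (viii).}
\begin{itemize}
\item[(i)] For the Triebel--Lizorkin part, use the pointwise inequality $\|a\|_{\ell^{q_2(x)}}\le\|a\|_{\ell^{q_1(x)}}$. For the Besov part, compare semi-modulars. If $\rho_{\ell^{q_1(\cdot)}(L^{p(\cdot)})}(g/\lambda)\le1$, then every admissible $\lambda_k$ may be taken $\le1$. For such $\lambda_k$ we have $\lambda_k^{1/q_2}\ge\lambda_k^{1/q_1}$, so each infimum for $q_2$ is at most the one for $q_1$. Hence $\rho_{q_2}(g/\lambda)\le1$, and the norm inequality follows.
\item[(ii)] If $g\in\mathcal D^{s(\cdot)}(u)$, the constant sequence $g_k=g$ is a vector gradient. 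Conversely, $\sup_k g_k$ is a scalar gradient, because every pair $x\neq y$ lies in exactly one dyadic annulus. Since $\|\cdot\|_{\ell^\infty}=\sup_k$, the quasi-semi-norms coincide.
\item[(iii)] By Lemma \ref{mieszane}(ii) with $q=p$, both semi-modulars equal $\sum_k\int g_k^{p}\,d\mu$ (Tonelli), so the two quasi-norms are equal.
\item[(v)] The function $\sup_kg_k$ is a scalar gradient and satisfies $\sup_kg_k\le\|g\|_{\ell^{q(\cdot)}}$ pointwise.
\item[(vi)] We have $p/q\ge1$, so $L^{p(\cdot)/q(\cdot)}$ is a Banach space. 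Its triangle inequality gives
\[
\Big\|\sum_k g_k^{q}\Big\|_{L^{p/q}}\le\sum_k\|g_k^{q}\|_{L^{p/q}},
\]
that is, the $L^{p}(\ell^{q})$ modular is dominated by the $\ell^{q}(L^{p})$ modular (via Lemmas \ref{skalowanie} and \ref{mieszane}). This yields $\dot N\hookrightarrow\dot M^{s(\cdot)}_{p(\cdot),q(\cdot)}$ with norm at most one, and (v) finishes the argument.
\item[(viii)] For each $k$ we have $g_k\le\|g\|_{\ell^{q(\cdot)}}$ pointwise, so $\sup_k\|g_k\|_{L^{p(\cdot)}}\le\|g\|_{L^{p(\cdot)}(\ell^{q(\cdot)})}$.
\end{itemize}

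\emph{The items trading smoothness, (iv), (vii), (ix).} Write $\varepsilon:=(s-t)^->0$ in (vii) and (ix), and analogously $\varepsilon:=(t-s)^-$ in (iv).

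For (vii), we build a scalar $t(\cdot)$-gradient. When $k\ge0$ and $2^{-k-1}\le d(x,y)<2^{-k}$, we have $d^{s}=d^{t}d^{s-t}\le d^{t}2^{-k\varepsilon}$, so $2^{-k\varepsilon}g_k$ serves as a $t$-gradient on that annulus. When $d(x,y)\ge1$, use instead
\[
|u(x)-u(y)|\le|u(x)|+|u(y)|\le d(x,y)^{t(x)}|u(x)|+d(x,y)^{t(y)}|u(y)|.
\]
Hence
\[
G:=|u|+\sum_{k\ge0}2^{-k\varepsilon}g_k\in\mathcal D^{t(\cdot)}(u).
\]
To estimate $\|G\|_{L^{p(\cdot)}}$ when $p^-<1$, we use the $r$-triangle inequality with $r:=\min\{1,p^-\}$, which follows from applying Lemma \ref{skalowanie} to $|f|^r$ in $L^{p/r}$. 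This gives
\[
\|G\|_{L^{p(\cdot)}}^r\lesssim\|u\|_{L^{p(\cdot)}}^r+\sum_{k\ge0}2^{-k\varepsilon r}\sup_j\|g_j\|_{L^{p(\cdot)}}^r,
\]
a convergent geometric series. Finally, $\sup_j\|g_j\|_{L^{p(\cdot)}}\lesssim\|g\|_{\ell^{q(\cdot)}(L^{p(\cdot)})}$, by the $q=\infty$ case of (i) together with Lemma \ref{mieszane}(i).

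For (iv), apply the same device to vector gradients. For $k\ge0$ take $h_k:=2^{-k\varepsilon}\sup_j g_j$-type terms built from the $g_j$ with $j$ near $k$. For $k<0$ take $h_k:=2^{(k+1)s}|u|$, using $d(x,y)\ge2^{-k-1}$. In both cases the $\ell^{q_2}$ sums are controlled by a geometric series, which handles arbitrary $q_1,q_2$; the Triebel--Lizorkin case is treated pointwise.

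For (ix), the argument is the same, but on $B$ with $r_0\le\delta$ only the annuli with $2^{-k-1}<2\delta$ occur. That is finitely many $k<0$, at most about $\log_2(4\delta)$ of them. There we bound $2^{-k\varepsilon}\le(4\delta)^{(s-t)^+}$, so no $L^{p(\cdot)}$ term is needed and the constant $\zeta$ depends only on $p,s,t,\delta$.

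The main obstacle is the bookkeeping in (iv) and (vii). The decay factors $d^{s(x)-t(x)}$ vary with the point and must be bounded uniformly by $2^{-k\varepsilon}$ using $s-t\ge\varepsilon$. In addition, the geometric summation has to be carried out in a quasi-normed space, which is why the $r$-power trick is needed. The remaining items are essentially direct comparisons of modulars.
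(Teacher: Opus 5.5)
Your proof is correct. This paper does not prove Proposition~\ref{pomiedzy} itself; it defers to \cite{zanurzenia}. Your items (i)--(iii), (v), (vi), (viii) are the standard pointwise and modular comparisons. Your scale-splitting for (iv), (vii), (ix) is exactly the device the paper uses in the proof of Proposition~\ref{rownowaznosc}: damped gradients $2^{-k\varepsilon}g_k$ on small scales, $|u|$ or $2^{(k+1)s(\cdot)}|u|$ on large scales, then geometric summation with the $r$-triangle inequality.

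Two spots need tightening.

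\begin{itemize}
\item In the Besov case of (iv), take simply $h_k:=2^{-k\varepsilon}g_k$ with the same index $k$; the dyadic annuli are the same for both smoothness exponents. A global $\sup_j g_j$ would not be controlled by $\|g\|_{\ell^{q_1(\cdot)}(L^{p(\cdot)})}$ when $q_1>p$. You should also reduce the variable $q_2$ to the constant $q_2^-$ via (i) and Lemma~\ref{mieszane}(i), as you already do in (vii).
\item In (ix), on the finitely many annuli with $k<0$, the damping factor is $d(x,y)^{s(x)-t(x)}\le\max\{1,2\delta\}^{(s-t)^+}$, since $d(x,y)<2r_0\le 2\delta$. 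It is not $2^{-k\varepsilon}$.
\end{itemize}
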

	
	Now we recall the Sobolev-Poincar\'{e} inequality from \cite{zanurzenia}, which will be essential in the proof of one of the main results.
	
	\begin{tw}\label{ciagle}
	Let $(X,d,\mu)$ be a metric measure space and assume that there exist $Q\in \mathcal{P}^{\log}_b(X)$, $\delta\in (0,\infty)$ and $b\in (0,\infty)$ such that for every $x\in X$ and $r \in (0,\delta]$ it holds
	\begin{equation*}
		\mu\left(B(x,r)\right)\geq br^{Q(x)}.
	\end{equation*}
	Fix $\sigma \in (1,\infty)$ and let $p,s\in \mathcal{P}^{\log}_b(X)$ satisfy $sp \ll Q$. Then, for every ball $B_0=B(x_0,r_0)\subseteq X$ with $r_0\leq \delta/\sigma$ and $u\in M^{s(\cdot),p(\cdot)}(\sigma B_0)$, $g\in \mathcal{D}^{s(\cdot)}(u)\cap L^{p(\cdot)}(\sigma B_0)$ there holds inequality
	\begin{equation}\label{localembedding}
		\left\|u\right\|_{L^{\gamma(\cdot)}(B_0)} \leq \left(1+\Lambda(B_0)\right)C_S \left(\frac{\mu(B_0)}{r_0^{Q(x_0)}}\right)^{\frac{1}{\gamma_{B_0}^-}}\left\|g \right\|_{L^{p(\cdot)}(\sigma B_0)}+\Lambda(B_0)\left\|u\right\|_{L^{p(\cdot)}(B_0)},
	\end{equation}
	where $\displaystyle \gamma=Qp/(Q-sp)$, $C_S$ is a constant depending only on $b$, $\delta$, $\sigma$, $p$, $s$, $Q$ and
	\begin{equation*}
		\Lambda(B_0):=\kappa_{\gamma(\cdot)}^2\max\left\{2,\left(\frac{2}{\mu(B_0)}\right)^{\frac{1}{\gamma_{B_0}^-}}\right\}\left\|1\right\|_{L^{\gamma(\cdot)}(B_0)}.
	\end{equation*}
	\end{tw}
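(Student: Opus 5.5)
The plan is to follow Hajłasz's pointwise proof of the Sobolev embedding for $M^{1,p}$, and to handle the variable exponents by freezing them at the point $x$ with Lemma \ref{loglemma}.

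\textbf{Reductions.} By homogeneity and the quasi-triangle inequality in $L^{\gamma(\cdot)}(B_0)$ (constant $\kappa_{\gamma(\cdot)}$), I will split
\begin{equation*}
u=(u-c)+c,
\end{equation*}
with $c$ a suitable constant, for instance a mean of $u$ over a subball. The constant part contributes $|c|\,\|1\|_{L^{\gamma(\cdot)}(B_0)}$. Bounding $|c|$ by $\mu(B_0)^{-1/\gamma^-_{B_0}}$-type multiples of $\|u\|_{L^{p(\cdot)}(B_0)}$ and of $\|g\|$ is what should produce the factor $\Lambda(B_0)$, including the $\max\{2,(2/\mu(B_0))^{1/\gamma^-_{B_0}}\}$ term. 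It therefore remains to prove
\begin{equation*}
\|u-c\|_{L^{\gamma(\cdot)}(B_0)}\lesssim \bigl(\mu(B_0)/r_0^{Q(x_0)}\bigr)^{1/\gamma^-_{B_0}}\,\|g\|_{L^{p(\cdot)}(\sigma B_0)}.
\end{equation*}
For this I normalize $\|g\|_{L^{p(\cdot)}(\sigma B_0)}=1$, so that $\rho_{p(\cdot)}(g\chi_{\sigma B_0})\le 1$ by Lemma \ref{modularnorma}.

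\textbf{Pointwise estimate.} Fix a Lebesgue-type point $x\in B_0$ and a radius $r\le (\sigma-1)r_0$, so that $B(x,r)\subseteq\sigma B_0$ and $r\le\delta$. Averaging the defining inequality \eqref{pointwise} over $y\in B(x,r)$ gives
\begin{equation*}
|u(x)-u_{B(x,r)}|\le r^{s(x)}g(x)+\fint_{B(x,r)}r^{s(y)}g(y)\,d\mu(y).
\end{equation*}
By Lemma \ref{loglemma}(i),(iii), $r^{s(y)}\simeq r^{s(x)}$ on the ball, with constants depending only on $\delta$ and $C_{\log}(s)$. For the average of $g$ I use the lower mass bound $\mu(B(x,r))\ge br^{Q(x)}$ together with the Hölder inequality (Proposition \ref{holder1}), or alternatively the Jensen-type Proposition \ref{jensen}. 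Since $\rho_{p(\cdot)}(g)\le1$, this should give
\begin{equation*}
\fint_{B(x,r)} g\,d\mu\lesssim r^{-Q(x)/p(x)}+1,
\end{equation*}
where the log-Hölder continuity of $Q$ and $p$ again lets me replace $Q(y),p(y)$ by $Q(x),p(x)$ up to constants.

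\textbf{Telescoping and optimization.} I telescope the means $u_{B(x,2^{-j}r)}$ from the radius $r$ down to $0$ and compare the top mean with $c$. This yields
\begin{equation*}
|u(x)-c|\lesssim r^{s(x)}Mg(x)+r^{s(x)-Q(x)/p(x)}
\end{equation*}
for $r$ in the admissible range, where $Mg$ denotes a localized maximal function. Choosing $r=\min\{Mg(x)^{-p(x)/Q(x)},\,(\sigma-1)r_0\}$ balances the two terms, since $sp\ll Q$ makes the exponent $s-Q/p$ negative. In the first regime this gives
\begin{equation*}
|u(x)-c|^{\gamma(x)}\lesssim Mg(x)^{p(x)},
\end{equation*}
because $\gamma(x)\bigl(1-s(x)p(x)/Q(x)\bigr)=p(x)$. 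In the truncated regime it gives a constant times $r_0^{-Q(x)\gamma(x)/p(x)}$-type terms. Integrating these over $B_0$ and converting $r_0^{Q(x)}$ to $r_0^{Q(x_0)}$ (Lemma \ref{loglemma}(ii)) produces the factor $\mu(B_0)/r_0^{Q(x_0)}$. Going back to the norm with Lemma \ref{modularnorma} then yields the power $1/\gamma^-_{B_0}$.

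\textbf{Main obstacle.} I expect the hardest step to be the maximal-function bound in $L^{p(\cdot)}$. Without doubling, I would avoid $Mg$ altogether. Instead I would use a weak-type or level-set (truncation) argument in the spirit of Hajłasz–Koskela, splitting $u$ at the levels $2^k$. On each level set $\{2^k<|u-c|\le 2^{k+1}\}$ the pointwise inequality with a frozen exponent gives modular control by $\int g^{p}$ on $\{g>\lambda_k\}$, and summing over $k$ recovers the strong $\gamma(\cdot)$ bound. Throughout, the delicate bookkeeping is keeping every constant independent of the ball, using only $\delta,\sigma,b$ and the log-Hölder constants; this is exactly where $r_0\le\delta/\sigma$ is needed.
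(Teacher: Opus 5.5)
The paper only quotes this result from \cite{zanurzenia}, so I judge your plan on its own terms. There is a genuine gap at the central step: passing from a pointwise bound to the $L^{\gamma(\cdot)}$ bound.

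\textbf{The maximal function step.} Your main route needs $\int_{B_0}(Mg)^{p(x)}\,d\mu\lesssim 1$, i.e.\ a maximal inequality on $L^{p(\cdot)}$. Nothing in the hypotheses gives this. There is no doubling, only $\mu(B(x,r))\ge br^{Q(x)}$, and $p^-\le 1$ is allowed, where the inequality fails even in $\mathbb{R}^n$. Without doubling you also have no Lebesgue differentiation theorem to justify telescoping down to radius $0$. Your fallback (level sets of $|u-c|$, truncation) is only named. Truncation upgrades a weak-type bound to a strong one, and your weak-type bound would again come from $Mg$.

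The missing observation is that no maximal function is needed. Your averaged Haj{\l}asz inequality already contains $g(x)$ itself. Telescope only \emph{upward}, from $r$ to $R:=(\sigma-1)r_0/2$. Then every remaining term has the form $t^{s(x)}\fint_{B(x,t)}g$ with $r\le t\le 2R$. You already bound these averages by $Ct^{-Q(x)/p(x)}$, using the lower mass bound and $\rho_{p(\cdot)}(g)\le 1$. Since $(Q/p-s)^->0$, the sum $\sum_j(2^jr)^{s(x)-Q(x)/p(x)}$ is geometric. Next compare $u_{B(x,R)}$ with $c:=u_{B_0}$ through the double average $\fint_{B(x,R)}\fint_{B_0}|u(y)-u(z)|\,d\mu(z)\,d\mu(y)$, which involves no ratio of measures. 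This gives, for a.e.\ $x\in B_0$ and all $r\in(0,R]$,
\begin{equation*}
|u(x)-c|\le C\bigl(r^{s(x)}g(x)+r^{s(x)-Q(x)/p(x)}\bigr).
\end{equation*}
Now take $r=\min\{g(x)^{-p(x)/Q(x)},R\}$ and use $\gamma(1-sp/Q)=p$ and $\gamma(s-Q/p)=-Q$. This gives $|u(x)-c|^{\gamma(x)}\le C\bigl(g(x)^{p(x)}+r_0^{-Q(x_0)}\bigr)$. Integrating over $B_0$ gives the required bound with the factor $\mu(B_0)/r_0^{Q(x_0)}$.

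When $p^-\le 1$, the averages $\fint g$ and $u_B$ may not exist, and Proposition \ref{holder1} is unavailable. In that case run the same argument with $|u(x)-u(y)|^{q}$, $0<q<\min\{1,p^-\}$, as in the proof of Proposition \ref{uniintegrable}.

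\textbf{The reduction step.} It also cannot produce $\Lambda(B_0)$ as written. It asks you to bound $|c|$ by a $\mu(B_0)^{-1/\gamma^-_{B_0}}$-multiple of $\|u\|_{L^{p(\cdot)}(B_0)}$. For $u\equiv c$ and constant exponents this forces $\mu(B_0)^{1/p-1/\gamma}\gtrsim 1$, which fails on small balls since $\gamma>p$.

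In fact \eqref{localembedding} is false as printed. Take $\mathbb{R}$ with Lebesgue measure, $Q\equiv 1$, $b=2$, $\delta=1$, $\sigma=2$, $p\equiv 1$, $s\equiv 1/2$ (so $\gamma\equiv 2$), $u\equiv 1$, $g\equiv 0$, and $B_0=(-r_0,r_0)$. The left side is $(2r_0)^{1/2}$. The right side is $2\sqrt{2}\,\kappa_{\gamma(\cdot)}^2 r_0$, which is smaller as soon as $r_0<1/(4\kappa_{\gamma(\cdot)}^{4})$.

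What does hold comes from a dichotomy: either $\{|u|\ge|c|/2\}\cap B_0$ or $\{|u-c|>|c|/2\}\cap B_0$ has measure at least $\mu(B_0)/2$. This yields
\begin{equation*}
|c|\le 2\max\bigl\{1,(2/\mu(B_0))^{1/p^-_{B_0}}\bigr\}\|u\|_{L^{p(\cdot)}(B_0)}+2\max\bigl\{1,(2/\mu(B_0))^{1/\gamma^-_{B_0}}\bigr\}\|u-c\|_{L^{\gamma(\cdot)}(B_0)}.
\end{equation*}
So the coefficient of $\|u\|_{L^{p(\cdot)}(B_0)}$ must carry the exponent $1/p^-_{B_0}$, not $1/\gamma^-_{B_0}$. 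This corrected form still suffices for Proposition \ref{cauchy}, which only uses that $\Lambda(B(y,r))$ is bounded uniformly in $y$ for fixed $r$.
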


	We will also need the following Sobolev embedding theorems from \cite{zanurzenia}.
	
	\begin{tw}\label{boundedembedding}
		Let $(X,d,\mu)$ be a bounded metric measure space and assume that there exist $Q\in \mathcal{P}_b^{\log}(X)$ and $b\in (0,1]$ such that for every $x\in X$ and $r\in (0,1]$,
		\begin{equation*}
			\mu\left(B(x,r)\right)\geq br^{Q(x)}.
		\end{equation*}
		Let $p,s\in \mathcal{P}_b^{\log}(X)$. If $sp \ll Q$, then, there exists a positive constant $\hat{C}_{S}$, depending only on $b$, $\diam X$, $\mu (X)$, $p$, $s$, $Q$, such that, for every pair of functions $u\in \dot{M}^{s(\cdot),p(\cdot)}(X,d,\mu)$ and $g\in \mathcal{D}^{s(\cdot)}(u)\cap L^{p(\cdot)}(X,\mu)$, there holds
			\begin{equation*}
				\inf_{c\in \mathbb R}\left\|u-c\right\|_{L^{\gamma(\cdot)}(X,\mu)} \leq \hat{C}_{S} \left\|g\right\|_{L^{p(\cdot)}(X,\mu)},
			\end{equation*}
			where $\displaystyle \gamma:=\frac{Qp}{Q-sp}$. Moreover, for every $u\in M^{s(\cdot),p(\cdot)}(X,d,\mu)$ and $g\in \mathcal{D}^{s(\cdot)}(u) \cap L^{p(\cdot)}(X,\mu)$
			\begin{equation*}
				\left\| u\right\|_{L^{\gamma(\cdot)}(X,\mu)} \leq \hat{C}_S\left( \left\|u \right\|_{L^{p(\cdot)}(X,\mu)} + \left\| g\right\|_{L^{p(\cdot)}(X,\mu)}\right).
			\end{equation*}
	\end{tw}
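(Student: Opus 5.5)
The plan is to derive both inequalities from the local Sobolev–Poincaré inequality of Theorem \ref{ciagle}, applied to a single ball that equals the whole space $X$. Two preliminary steps make this possible: first we extend the lower mass bound to radii larger than $1$, and then we reduce the homogeneous statement to the non-homogeneous one by subtracting a suitable constant.

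\emph{Step 1: extending the lower mass bound.} Set $D:=\diam X<\infty$ and fix $\sigma:=2$, $x_0\in X$ and $r_0:=D+1$. Then $B_0:=B(x_0,r_0)=X=\sigma B_0$. Put $\delta:=\sigma r_0$, so that $r_0\leq\delta/\sigma$. For $r\in(1,\delta]$ we have
\begin{equation*}
\mu(B(x,r))\geq \mu(B(x,1))\geq b\geq b\,\delta^{-Q^+}r^{Q(x)}.
\end{equation*}
Hence the hypothesis of Theorem \ref{ciagle} holds on $(0,\delta]$ with $b':=b\,\delta^{-Q^+}$, a constant depending only on $b$, $D$ and $Q$.

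\emph{Step 2: the non-homogeneous inequality.} Theorem \ref{ciagle} applied to $B_0=X$ gives
\begin{equation*}
\|u\|_{L^{\gamma(\cdot)}(X)}\leq (1+\Lambda)C_S\Big(\tfrac{\mu(X)}{r_0^{Q(x_0)}}\Big)^{1/\gamma^-_X}\|g\|_{L^{p(\cdot)}(X)}+\Lambda\|u\|_{L^{p(\cdot)}(X)}.
\end{equation*}
All the factors are controlled by $b$, $D$, $\mu(X)$, $p$, $s$, $Q$:
\begin{itemize}
\item By Lemma \ref{modularnorma}, $\|1\|_{L^{\gamma(\cdot)}(X)}\leq\max\{\mu(X)^{1/\gamma^-},\mu(X)^{1/\gamma^+}\}$.
\item The constant $\kappa_{\gamma(\cdot)}$ depends only on $\gamma^\pm$, hence only on $p,s,Q$.
\item The factor $\big(\mu(X)/r_0^{Q(x_0)}\big)^{1/\gamma_X^-}$ is bounded by $\max\{1,\mu(X)\}\max\{1,r_0^{-Q^-}\}$ raised to the power $1/\gamma^-$.
\end{itemize}
This yields the second inequality with a constant $\hat C_S$ of the required dependence.

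\emph{Step 3: the homogeneous inequality.} Let $u\in\dot M^{s(\cdot),p(\cdot)}$ and $g\in\mathcal{D}^{s(\cdot)}(u)\cap L^{p(\cdot)}$.
\begin{itemize}
\item If $\|g\|_{L^{p(\cdot)}}=0$, then $g=0$ a.e., so $u$ is a.e. constant and there is nothing to prove.
\item Otherwise, both sides scale linearly under $u\mapsto u/\|g\|$, $g\mapsto g/\|g\|$. So we may assume $\|g\|_{L^{p(\cdot)}}=1$, and then $\rho_{p(\cdot)}(g)\leq 1$ by Lemma \ref{modularnorma}.
\end{itemize}
By Chebyshev, $\mu(\{g^{p(\cdot)}>2/\mu(X)\})\leq\mu(X)/2$. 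Hence we can pick $y\notin N_u$ with $g(y)\leq\max\{1,2/\mu(X)\}$. Since $d(x,y)\leq D$, for a.e. $x$,
\begin{equation*}
|u(x)-u(y)|\leq \max\{1,D^{s^+}\}\big(g(x)+g(y)\big).
\end{equation*}
With $c:=u(y)$, the function $v:=u-c$ therefore satisfies
\begin{equation*}
\|v\|_{L^{p(\cdot)}}\leq C\big(\|g\|_{L^{p(\cdot)}}+\|1\|_{L^{p(\cdot)}(X)}\big)\leq C',
\end{equation*}
with $C'$ depending on $D,\mu(X),p,s$. The quasi-triangle constant enters only through $p$. Moreover $g$ is still an $s(\cdot)$-gradient of $v$, so $v\in M^{s(\cdot),p(\cdot)}(X)$. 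Step 2 applied to $v$ then gives $\|u-c\|_{L^{\gamma(\cdot)}}\leq\hat C_S(C'+1)$, which is a constant times $\|g\|_{L^{p(\cdot)}}=1$. This proves the first inequality.

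The main obstacle is the bookkeeping in Step 2. One has to verify that $\Lambda(X)$ and the ratio $\mu(B_0)/r_0^{Q(x_0)}$ are bounded by quantities depending only on the listed data. One also has to verify that the constant $C_S$ of Theorem \ref{ciagle}, which depends on $b'$ and $\delta$, inherits only a dependence on $b$, $\diam X$ and the exponents.
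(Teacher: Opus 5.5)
The paper contains no proof of this theorem. It is quoted, like Theorem \ref{ciagle}, from \cite{zanurzenia}, so there is no in-paper argument to compare yours with. Judged on its own, your reduction is correct.

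\begin{itemize}
\item With $\sigma=2$ and $r_0=\diam X+1$ you get $B_0=\sigma B_0=X$.
\item The rescaled constant $b\,\delta^{-Q^+}$ gives the lower mass bound on all of $(0,\delta]$.
\item Since $r_0\geq 1$ and $B_0=X$, every factor in \eqref{localembedding} depends only on $b$, $\diam X$, $\mu(X)$, $p$, $s$, $Q$. This includes the factor $(2/\mu(X))^{1/\gamma^-}$ inside $\Lambda(X)$, which your list omits but which is plainly of that type.
\end{itemize}

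Your treatment of the homogeneous inequality is clean. You subtract $u(y)$ at a point where $g$ is controlled, get an $L^{p(\cdot)}$ bound on $u-u(y)$ by a constant times $\|g\|_{L^{p(\cdot)}(X,\mu)}$, and feed $u-u(y)$ back into the non-homogeneous estimate. This avoids any separate Poincar\'e-type lemma.

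There is one small slip. When $p^-<1$, the bound $g(y)^{p(y)}\leq 2/\mu(X)$ only gives $g(y)\leq\max\{1,(2/\mu(X))^{1/p^-}\}$, not $\max\{1,2/\mu(X)\}$. This is still a constant of the permitted type, so nothing downstream changes.

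There is also a logical caveat rather than a mathematical one. You take Theorem \ref{ciagle} as given, which is legitimate within this paper since both results are imported there. As a self-contained proof, however, it avoids circularity only if \cite{zanurzenia} proves the local inequality without appealing to the present statement.
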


		\begin{tw}\label{gsob}
			Let $(X,d,\mu)$ be a geometrically doubling metric measure space such that $\sup\left\{\mu(B(x,1)):x\in X\right\} <\infty.$ Suppose that there exist $Q\in \mathcal{P}^{\log}_b(X)$ such that $\mu$ is lower Ahlfors $Q(\cdot)$-regular.
			Moreover, let $p,s\in \mathcal{P}^{\log}_b(X)$ be such that $sp \ll Q.$ Then,
			\begin{enumerate}
				\item[(i)] for every $q\in \mathcal{P}(X)$
				\begin{equation*}
					M^{s(\cdot)}_{p(\cdot),q(\cdot)}(X,d,\mu) \hookrightarrow L^{\gamma(\cdot)}(X,\mu),
				\end{equation*}
				where $\displaystyle \gamma:=Qp/(Q-sp)$,
				\item[(ii)] for every $q\in \mathcal{P}(X)$ such that $q \leq p$
				\begin{equation*}
					N^{s(\cdot)}_{p(\cdot),q(\cdot)}(X,d,\mu) \hookrightarrow L^{\gamma(\cdot)}(X,\mu),
				\end{equation*}
				where $\displaystyle \gamma:=Qp/(Q-sp)$,
				\item[(iii)] for every $q\in \mathcal{P}(X)$ and $t\in \mathcal{P}^{\log}_b(X)$ such that $t \ll s$
				\begin{equation*}
					N^{s(\cdot)}_{p(\cdot),q(\cdot)}(X,d,\mu) \hookrightarrow L^{\sigma(\cdot)}(X,\mu),
				\end{equation*}
				where $\displaystyle \sigma:=Qp/(Q-tp)$.
			\end{enumerate}
		\end{tw}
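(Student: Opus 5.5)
The core case is (i) with the space $M^{s(\cdot),p(\cdot)}$ itself. All three statements then follow from it through Proposition \ref{pomiedzy}.

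\textbf{Reductions.} By Proposition \ref{pomiedzy}(v), $M^{s(\cdot)}_{p(\cdot),q(\cdot)} \hookrightarrow M^{s(\cdot),p(\cdot)}$ for every $q$. For $q\leq p$, Proposition \ref{pomiedzy}(vi) gives $N^{s(\cdot)}_{p(\cdot),q(\cdot)}\hookrightarrow M^{s(\cdot),p(\cdot)}$. For (iii), Proposition \ref{pomiedzy}(vii) gives $N^{s(\cdot)}_{p(\cdot),q(\cdot)}\hookrightarrow M^{t(\cdot),p(\cdot)}$ for $t\ll s$. In that case $tp\ll sp\ll Q$ and $t\in\mathcal{P}^{\log}_b(X)$, so the Sobolev case applied with $t$ in place of $s$ yields the exponent $\sigma$. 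Hence it suffices to prove
\begin{equation*}
\|u\|_{L^{\gamma(\cdot)}(X,\mu)}\leq C\left(\|u\|_{L^{p(\cdot)}(X,\mu)}+\|g\|_{L^{p(\cdot)}(X,\mu)}\right)
\end{equation*}
for $u\in M^{s(\cdot),p(\cdot)}(X,d,\mu)$ and $g\in\mathcal{D}^{s(\cdot)}(u)\cap L^{p(\cdot)}(X,\mu)$.

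\textbf{Local estimate.} I will localize with the covering from Lemma \ref{pokryciowy}. Fix a small radius $r\in(0,1/2]$, to be chosen later, and take the points $x_n$ with $\bigcup_n B_n=X$, where $B_n:=B(x_n,r)$. The overlap of the balls $2B_n$ is bounded by $A2^B$. Apply Theorem \ref{ciagle} on each $B_n$ with $\sigma=2$ and $\delta=1$; the restrictions of $u$ and $g$ to $2B_n$ are admissible. The constants are uniform in $n$, for the following reasons:
\begin{itemize}
\item $\mu(B_n)\le M:=\sup_x\mu(B(x,1))$ together with $\mu(B_n)\geq b_1 r^{Q(x_n)}\ge b_1 r^{Q^+}$ bounds $\mu(B_n)/r^{Q(x_n)}$ above and below.
\item $\|1\|_{L^{\gamma(\cdot)}(B_n)}\le\max\{\mu(B_n)^{1/\gamma^-},\mu(B_n)^{1/\gamma^+}\}$ by Lemma \ref{modularnorma}, which is bounded.
\item $(2/\mu(B_n))^{1/\gamma^-_{B_n}}$ is bounded by the lower Ahlfors bound.
\end{itemize}
Thus $\Lambda(B_n)\le\Lambda_0$ for all $n$, and
\begin{equation*}
\|u\|_{L^{\gamma(\cdot)}(B_n)}\le K\left(\|u\|_{L^{p(\cdot)}(B_n)}+\|g\|_{L^{p(\cdot)}(2B_n)}\right)
\end{equation*}
with $K$ independent of $n$.

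\textbf{Summation (main obstacle).} The hard step is passing from these local norm estimates to a global modular estimate, since norms do not add. By homogeneity, assume $\|u\|_{L^{p(\cdot)}}+\|g\|_{L^{p(\cdot)}}\le 1$. Then all local $L^{p(\cdot)}$ norms are at most $1$.

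Lemma \ref{modularnorma} gives two inequalities:
\begin{equation*}
\rho_{\gamma(\cdot)}(v\chi_{B_n})\le\|v\|_{L^{\gamma(\cdot)}(B_n)}^{\gamma^-_{B_n}}\quad\text{when the norm is}\le1,
\qquad
\|w\|_{L^{p(\cdot)}(2B_n)}\le\rho_{p(\cdot)}(w\chi_{2B_n})^{1/p^+_{2B_n}}.
\end{equation*}
Combining these, I get
\begin{equation*}
\rho_{\gamma(\cdot)}\!\left(\frac{u\chi_{B_n}}{2K}\right)\lesssim\left(\rho_{p(\cdot)}(u\chi_{2B_n})+\rho_{p(\cdot)}(g\chi_{2B_n})\right)^{\gamma^-_{B_n}/p^+_{2B_n}}.
\end{equation*}
The right-hand side is dominated linearly by the modulars provided $\gamma^-_{B_n}\ge p^+_{2B_n}$.

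This is where the smallness of $r$ is used. The gap satisfies
\begin{equation*}
\gamma-p=\frac{sp^2}{Q-sp}\ge c_0>0
\end{equation*}
uniformly, since $s^-,p^->0$ and $Q^+<\infty$. Moreover, $p$ and $\gamma$ are uniformly continuous by local log-Hölder continuity. So choosing $r$ small enough that their oscillations on balls of radius $2r$ are below $c_0/2$ makes the exponent ratio at least $1$.

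Summing over $n$, the bounded overlap gives
\begin{equation*}
\rho_{\gamma(\cdot)}\!\left(\frac{u}{2K}\right)\le C\,A2^B\left(\rho_{p(\cdot)}(u)+\rho_{p(\cdot)}(g)\right)\le C'.
\end{equation*}
Lemma \ref{modularnorma} then converts this modular bound into the required norm bound.
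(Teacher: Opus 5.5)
The paper does not prove this theorem: it quotes it from \cite{zanurzenia}, so there is no in-paper argument to compare against. Your proof is correct.

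The reductions are right.
\begin{enumerate}
\item[(i)] Proposition \ref{pomiedzy}(v) reduces this case to $M^{s(\cdot),p(\cdot)}$.
\item[(ii)] Proposition \ref{pomiedzy}(vi) gives the same reduction when $q\le p$.
\item[(iii)] Proposition \ref{pomiedzy}(vii) reduces to $M^{t(\cdot),p(\cdot)}$. Here $(s-t)p\ge (s-t)^-p^->0$ gives $tp\ll sp\ll Q$, so the Sobolev case applies with $t$ in place of $s$.
\end{enumerate}
The constants from Theorem \ref{ciagle} (with $\delta=1$, $\sigma=2$, $r\le 1/2$) are uniform over the balls $B(x_n,r)$ of Lemma \ref{pokryciowy}, for exactly the reasons you list. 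The modular summation with overlap $A2^B$ then closes the argument.

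This is the same local-to-global mechanism the paper itself runs in the proof of Proposition \ref{cauchy}: Theorem \ref{ciagle} on a bounded-overlap covering, a uniform bound on $\Lambda$, an exponent comparison on small balls, and summation of modulars. The only difference is that Proposition \ref{cauchy} adds a H\"older interpolation step. That step is needed there only to extract the small factor $\tau_{n,m}^{1/w^+}$, which a continuous embedding does not require. For your purpose the gap $\gamma-p=sp^2/(Q-sp)\ge s^-(p^-)^2/Q^+$ suffices directly, and your condition $\gamma^-_{B_n}\ge p^+_{2B_n}$ plays the role of the paper's condition \eqref{trololo}.

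Two cosmetic points.
\begin{enumerate}
\item Only the oscillation of $p$ on the balls $2B_n$ matters, since $\gamma(y)-p(z)\ge c_0-|p(y)-p(z)|$ for $y,z\in 2B_n$. So uniform continuity of $\gamma$ is not needed.
\item The base in your ``$\lesssim$'' step can be as large as $2$, so the implicit constant is $2^{\gamma^+/p^- -1}$.
\end{enumerate}
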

		
		\begin{tw}\label{ghold}
			Let $(X,d,\mu)$ be a metric measure space. Suppose that there exist $Q\in \mathcal{P}^{\log}_b(X)$ such that $\mu$ is lower Ahlfors $Q(\cdot)$-regular.
			Moreover, let $p,s\in \mathcal{P}^{\log}_b(X)$ be such that $sp \gg Q$. Then,
			\begin{enumerate}
				\item[(i)] for every $q\in \mathcal{P}(X)$
				\begin{equation*}
					M^{s(\cdot)}_{p(\cdot),q(\cdot)}(X,d,\mu) \hookrightarrow C^{0,\alpha(\cdot)}(X,d),
				\end{equation*}
				where $\displaystyle \alpha:=s-Q/p$,
				\item[(ii)] for every $q\in \mathcal{P}(X)$ such that $q \leq p$
				\begin{equation*}
					N^{s(\cdot)}_{p(\cdot),q(\cdot)}(X,d,\mu) \hookrightarrow C^{0,\alpha(\cdot)}(X,d),
				\end{equation*}
				where $\displaystyle \alpha:=s-Q/p$,
				\item[(iii)] for every $q\in \mathcal{P}(X)$ and $t\in \mathcal{P}^{\log}_b(X)$ such that $Q \ll tp \ll sp$
				\begin{equation*}
					N^{s(\cdot)}_{p(\cdot),q(\cdot)}(X,d,\mu) \hookrightarrow C^{0,\beta(\cdot)}(X,d),
				\end{equation*}
				where $\displaystyle \beta:=t-Q/p$.
			\end{enumerate}
		\end{tw}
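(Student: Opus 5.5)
The plan is to reduce all three statements to the Hajłasz--Sobolev case $M^{s(\cdot),p(\cdot)}\hookrightarrow C^{0,\alpha(\cdot)}$ and then prove that case by a telescoping argument over dyadic balls.

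\textbf{Reduction.} For (i), Proposition \ref{pomiedzy}(v) gives $M^{s(\cdot)}_{p(\cdot),q(\cdot)}\hookrightarrow M^{s(\cdot),p(\cdot)}$ with operator norm at most one. For (ii), the hypothesis $q\le p$ lets us use Proposition \ref{pomiedzy}(vi) in the same way. For (iii), Proposition \ref{pomiedzy}(vii) gives $N^{s(\cdot)}_{p(\cdot),q(\cdot)}\hookrightarrow M^{t(\cdot),p(\cdot)}$, and the assumption $Q\ll tp$ puts $t$ in the Sobolev regime with exponent $\beta=t-Q/p$. Hence it suffices to show: if $sp\gg Q$, then every $u\in M^{s(\cdot),p(\cdot)}$ has a representative with
\begin{equation*}
\sup_X|u|+\sup_{x\neq y}\frac{|u(x)-u(y)|}{d(x,y)^{\alpha(x)}}\lesssim \|u\|_{M^{s(\cdot),p(\cdot)}}.
\end{equation*}

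\textbf{Core estimate.} By homogeneity, normalize so that $\|u\|_{L^{p(\cdot)}}+\|g\|_{L^{p(\cdot)}}\le 1$ for some $g\in\mathcal D^{s(\cdot)}(u)$. Fix $x\notin N_u$ and radii $r_i=2^{-i}r$ with $r\le 1$, and write $B_i=B(x,r_i)$.
\begin{enumerate}
\item Averaging the pointwise inequality \eqref{pointwise} over $z,w\in B_i$ gives
\begin{equation*}
\fint_{B_i}|u-u_{B_i}|\,\mbox{d}\mu\le 2\fint_{B_i}(2r_i)^{s(z)}g(z)\,\mbox{d}\mu(z)\lesssim r_i^{s(x)}\fint_{B_i}g\,\mbox{d}\mu .
\end{equation*}
The exponent $s$ is frozen at $x$ using Lemma \ref{loglemma}(i).
\item To bound $\fint_{B_i}g$, apply Hölder's inequality (Proposition \ref{holder1}, or Jensen's inequality, Proposition \ref{jensen}, in the case $p^-\le1$ issues), together with $\rho_{p(\cdot)}(g)\le1$. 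This gives roughly $\fint_{B_i}g\lesssim \mu(B_i)^{-1/p_{B_i}^{+}}+1$.
\item Lower Ahlfors regularity, $\mu(B_i)\ge b_1r_i^{Q(x)}$, combined with Lemma \ref{loglemma}(ii),(iii) to swap $p_{B_i}^{\pm}$ for $p(x)$ and $Q(y)$ for $Q(x)$ up to constants, turns this into $r_i^{s(x)}\fint_{B_i}g\lesssim r_i^{\alpha(x)}$.
\item The differences satisfy $|u_{B_{i+1}}-u_{B_i}|\lesssim \fint_{B_i}|u-u_{B_i}|$, and $\alpha^->0$. So at a Lebesgue point $x$ the sum is geometric and yields $|u(x)-u_{B(x,r)}|\lesssim r^{\alpha(x)}$.
\end{enumerate}

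\textbf{Assembling the Hölder bound.} For $d(x,y)=r\le 1/2$, compare both $u(x)$ and $u(y)$ with $u_{B(x,2r)}$. This uses $B(y,r)\subseteq B(x,2r)$ and the estimate of step 1 on $B(x,2r)$, and Lemma \ref{loglemma}(iii) converts $r^{\alpha(y)}$ into $r^{\alpha(x)}$. For $d(x,y)>1/2$, it suffices to bound $\sup|u|$. Take $r=1$ above and note that $|u_{B(x,1)}|\le \mu(B(x,1))^{-1}\int_{B(x,1)}|u|$ is uniformly bounded, by $\mu(B(x,1))\ge b_1$ and the normalization of $u$ in $L^{p(\cdot)}$. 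Finally, redefine $u$ on the null set of non-Lebesgue points using the uniform continuity just obtained.

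\textbf{Main obstacle.} I expect steps 2 and 3 to be the hardest: freezing the variable exponents uniformly in the center and the scale without an upper Ahlfors bound. The constants from Lemma \ref{loglemma} must be independent of $i$ so that the geometric series still converges. I would first try splitting $\int_{B_i}g=\int_{\{g\le1\}}+\int_{\{g>1\}}$. The first part is trivially $\le\mu(B_i)$, and the second is controlled by $\int_{B_i}g^{p^-_{B_i}}\le1$ together with Hölder with constant exponent $p^-_{B_i}$. This gives $\fint_{B_i}g\le 1+\mu(B_i)^{-1/p^-_{B_i}}$, after which only the log-Hölder comparison $r_i^{-Q(x)/p^-_{B_i}}\approx r_i^{-Q(x)/p(x)}$ is needed.
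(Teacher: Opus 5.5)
The paper does not prove this theorem; it imports it from \cite{zanurzenia}, so your argument has to stand on its own. Your reduction via Proposition \ref{pomiedzy} (v), (vi), (vii) is correct: for (iii), $tp\ll sp$ gives $t\ll s$ and $tp\gg Q$. The exponent freezing in your steps 2--3 is also uniform in $i$ and in the centre, so your stated ``main obstacle'' is not where the problem lies. The first real gap is that you silently use doubling, which is not assumed; only the lower bound $\mu(B(x,r))\ge b_1r^{Q(x)}$ is available.

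The inequality $|u_{B_{i+1}}-u_{B_i}|\lesssim\fint_{B_i}|u-u_{B_i}|\,\mbox{d}\mu$ comes from $|u_{B_{i+1}}-u_{B_i}|\le\frac{\mu(B_i)}{\mu(B_{i+1})}\fint_{B_i}|u-u_{B_i}|\,\mbox{d}\mu$. A lower mass bound alone does not control the ratio $\mu(B_i)/\mu(B_{i+1})$. The same problem appears when you compare $u_{B(y,r)}$ with $u_{B(x,2r)}$. Likewise, ``Lebesgue point'' and ``redefine $u$ on the non-Lebesgue points'' presuppose a differentiation theorem, which normally needs doubling or a Vitali-type covering property.

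Both are fixable with the Haj{\l}asz inequality itself. First, bound the difference of two averages by a double integral: $|u_{B'}-u_B|\le\fint_{B'}\fint_B|u(z)-u(w)|\,\mbox{d}\mu(w)\,\mbox{d}\mu(z)\lesssim r^{s(x)}\bigl(\fint_{B'}g\,\mbox{d}\mu+\fint_Bg\,\mbox{d}\mu\bigr)$. Use this for $B'=B_{i+1}$, $B=B_i$, and also for $B(x,r)$, $B(y,r)$ when $d(x,y)=r$. Then each $g$-average is controlled by the lower bound on its own ball. Second, $u_{B(x,r)}\to u(x)$ holds for every $x\notin N_u$ with $g(x)<\infty$, directly from $|u(x)-u_{B(x,r)}|\le r^{s(x)}g(x)+\fint_{B(x,r)}d(x,y)^{s(y)}g(y)\,\mbox{d}\mu(y)$, whose right-hand side tends to $0$ by your steps 2--3. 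The H\"older bound then holds off the null set $N_u\cup\{g=\infty\}$. Its complement is dense because balls have positive measure, so you can extend by uniform continuity.

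The second gap is the range $p^-<1$, which the statement allows since only $sp\gg Q$ is assumed. In that range $g$, and a priori $u$, need not be locally integrable, so $u_B$ and $\fint_B g\,\mbox{d}\mu$ may be meaningless. Your splitting uses H\"older with exponent $p^-_{B_i}$, which requires $p^-_{B_i}\ge 1$. Proposition \ref{jensen} is not available either, since it needs $p^->1$ and $p\in\mathcal{P}^{\log}_{\mu}(X)$, and neither is assumed.

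The standard remedy is to run the whole argument with $\gamma$-averages for a fixed $0<\gamma<\min\{1,p^-\}$. This is the device the paper uses with $|u|^q$, $q<p^-$, in Proposition \ref{uniintegrable}.
\begin{enumerate}
\item[(a)] Replace $u_B$ by a minimizer $c_B$ of $c\mapsto\fint_B|u-c|^\gamma\,\mbox{d}\mu$.
\item[(b)] Use $\fint_B g^\gamma\,\mbox{d}\mu\le 1+\mu(B)^{-\gamma/p^-_B}$, which follows from your splitting, now with H\"older exponent $p^-_B/\gamma>1$.
\item[(c)] Compare $c_{B'}$ with $c_B$ through the double integral of $|u(z)-u(w)|^\gamma$.
\item[(d)] Use $|a+b|^\gamma\le|a|^\gamma+|b|^\gamma$, which keeps the telescoping series geometric.
\end{enumerate}
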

		
	In some proofs, we also will use the following results concerning Lipschitz cut-off functions.
	
	\begin{tw}\label{lipschitz1}\cite{zanurzenia}
		Let $\left(X,d,\mu \right)$ be a metric measure space and let $B\subseteq X$ be non-empty bounded set. Suppose $u: X \to \mathbb [0,1]$ is a $L$-Lipschitz function which is zero outside of $B$, where $L\in(0,\infty)$. Fix $s, p\in \mathcal{P}_b(X)$, $q\in \mathcal{P}(X)$ and assume that $s^+ < 1$ and $q^- <\infty$ or $s^+ \leq 1$ and $q^-=\infty$. Then, the sequence $\left\{g_k\right\}_{k\in \mathbb Z}$ defined as follows
		\begin{equation*}
			g_k(x):=\left\{\begin{array}{lll} L2^{k\left(s(x)-1\right)}\chi_{B}(x),& \textnormal{ for } k\geq k_L,\\ 2^{\left(k+1\right)s(x)+1}\chi_{B}(x),&\textnormal{ for } k< k_L, \end{array} \right. 
		\end{equation*}
		where $k_L\in \mathbb Z$ is such that $2^{k_L-1}\leq L <2^{k_L}$, is a vector $s(\cdot)$-gradient of the function $u$. Moreover, there exist $C_{\textnormal{lip}} \in (0,\infty)$ depending only on $q^-$, $s^-$, $s^+$ such that
		\begin{align*}
			\left\| \left\{g_k\right\} \right\|_{L^{p(\cdot)}\left(\ell^{q(\cdot)} \left(X,\mu\right) \right)} &\leq C_{\textnormal{lip}} \max\left\{L^{s_B^-},L^{s_B^+}\right\}\left\|\chi_{B}\right\|_{L^{p(\cdot)}\left(X,\mu\right)},\\ \left\| \left\{g_k\right\}\right\|_{\ell^{q(\cdot)}\left(L^{p(\cdot)}\left(X,\mu\right)\right)} &\leq C_{\textnormal{lip}} \max\left\{L^{s_B^-},L^{s_B^+}\right\}\left\|\chi_{B}\right\|_{L^{p(\cdot)}\left(X,\mu\right)}
		\end{align*}
		and in particular $u$ belongs to $\dot{M}^{s(\cdot)}_{p(\cdot),q(\cdot)}(X,d,\mu)$, $\dot{N}^{s(\cdot)}_{p(\cdot),q(\cdot)}(X,d,\mu)$.
	\end{tw}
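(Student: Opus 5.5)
The plan is to verify the pointwise inequality directly for each pair $x,y\in X$, splitting according to the dyadic scale of $d(x,y)$. Then I bound the two mixed norms of $\{g_k\}$ by computing the $\ell^{q(\cdot)}$ sums pointwise as geometric series. Fix $k\in\mathbb Z$ and $x,y$ with $2^{-k-1}\le d(x,y)<2^{-k}$. If both $x,y\notin B$, then $u(x)=u(y)=0$ and there is nothing to prove. Otherwise, by symmetry, at least one point (say $x$) lies in $B$, so $g_k(x)$ carries its non-zero value. Note that $d(x,y)^{s(x)}\ge 2^{-(k+1)s(x)}$.

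\emph{Case $k<k_L$.} Here I use only $|u(x)-u(y)|\le 1$. Since $d(x,y)^{s(x)}g_k(x)\ge 2^{-(k+1)s(x)}2^{(k+1)s(x)+1}=2\ge1$, the inequality holds.

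\emph{Case $k\ge k_L$.} I use the Lipschitz bound $|u(x)-u(y)|\le L d(x,y)\le L2^{-k}$. I want $d(x,y)^{s(x)}L2^{k(s(x)-1)}\ge L2^{-k}$. The left side is at least $L2^{-(k+1)s(x)}2^{ks(x)-k}=L2^{-k}2^{-s(x)}$, which only gives $L2^{-k}2^{-s^+}$, a factor $2^{-s(x)}\ge 1/2$ short. The fix I would try first is to use both terms when $y\in B$ as well. When $y\notin B$, $u(y)=0$, and I would instead use $|u(x)|\le\min\{1,L\,\mathrm{dist}(x,X\setminus B)\}\le Ld(x,y)$. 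This still loses the factor $2^{-s(x)}$. So I expect the honest statement to require adjusting the constant, with $g_k$ possibly multiplied by $2$, or else exploiting $s\le1$ through $d(x,y)^{s(x)}\ge d(x,y)$ when $d(x,y)<1$. Indeed, for $d(x,y)<1$ one has $d^{s(x)}\cdot L2^{k(s(x)-1)}\ge d\cdot L\cdot 2^{ks(x)-k}\cdot d^{s(x)-1}$, and $d^{s(x)-1}\ge 2^{k(1-s(x))}$ because $d<2^{-k}$ and $s-1\le0$. This gives exactly $\ge Ld$ with no loss. That is the key computation: write $d^{s}=d\cdot d^{s-1}$ and use $d^{s-1}>2^{-k(s-1)}$. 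It works for all $k$, so no restriction on $d<1$ is needed. This settles the case $k\ge k_L$ cleanly, and also the case where $x\notin B$ but $y\in B$, by swapping roles. Measurability is clear, so $\{g_k\}\in\mathbb D^{s(\cdot)}(u)$.

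For the norm bounds, $g_k$ is supported in $B$. At a point $x\in B$ with $q(x)<\infty$, the sum $\sum_{k\ge k_L}(L2^{k(s(x)-1)})^{q(x)}$ is geometric with ratio $2^{(s(x)-1)q(x)}\le 2^{(s^+-1)q^-}<1$. This is where $s^+<1$ is used. The sum is therefore bounded by $C\,L^{q(x)}2^{k_L(s(x)-1)q(x)}\le C (L^{s(x)})^{q(x)}$, using $2^{k_L}\simeq L$. The sum $\sum_{k<k_L}2^{((k+1)s(x)+1)q(x)}$ is geometric with ratio $2^{-s(x)q(x)}\le2^{-s^-q^-}$, and is bounded by $C2^{k_Ls(x)q(x)}\le C(L^{s(x)})^{q(x)}$. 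Hence $\|\{g_k(x)\}\|_{\ell^{q(x)}}\le C_{\rm lip}L^{s(x)}\chi_B(x)\le C_{\rm lip}\max\{L^{s_B^-},L^{s_B^+}\}\chi_B(x)$, where $C_{\rm lip}$ depends only on $q^-,s^\pm$ through the geometric ratios. When $q(x)=\infty$, I take the supremum instead, and the same estimates hold with $s^+\le1$ sufficing since no summation is needed. This gives the $L^{p(\cdot)}(\ell^{q(\cdot)})$ bound.

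For $\ell^{q(\cdot)}(L^{p(\cdot)})$, I would use the same splitting but at the level of norms. Each $\|g_k\|_{L^{p(\cdot)}}\le a_k\|\chi_B\|_{L^{p(\cdot)}}$, with $a_k$ given by $L2^{k(s^\pm-1)}$ or $2^{(k+1)s^\pm+1}$. I then check the semimodular $\sum_k\inf\{\lambda_k:\rho_{p}(g_k/(\lambda\lambda_k^{1/q}))\le1\}\le1$ for $\lambda=C\max\{L^{s_B^\pm}\}\|\chi_B\|$. I choose $\lambda_k$ as terms of a geometric sequence of ratio $2^{-(1-s^+)q^-}$ (respectively $2^{-s^-q^-}$), normalized to have total mass $\le1$. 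I expect this step to be the main obstacle, because $q(\cdot)$ varies inside the modular and $\lambda_k^{1/q(x)}$ must be compared with $2^{k(s(x)-1)}$ uniformly in $x$. The comparison works since, for $\lambda_k=c\,2^{-(k-k_L)(1-s^+)q^-}\le1$, one has $\lambda_k^{1/q(x)}\ge\lambda_k^{1/q^-}$, which dominates the decay of $g_k/g_{k_L}$. The case $q^-=\infty$ is handled analogously with the convention $\lambda^{1/\infty}=1$. Membership in the homogeneous spaces then follows immediately.
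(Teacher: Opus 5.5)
Your proof is correct, and it is the standard argument for this lemma; the paper does not prove the statement itself and only cites \cite{zanurzenia}. You split at $k_L$. For $k\ge k_L$ you use $|u(x)-u(y)|\le Ld(x,y)=Ld(x,y)^{s(x)}d(x,y)^{1-s(x)}\le d(x,y)^{s(x)}L2^{k(s(x)-1)}$, and for $k<k_L$ the trivial bound $|u(x)-u(y)|\le 1\le d(x,y)^{s(x)}2^{(k+1)s(x)+1}$. You then sum geometric series. Your choice of geometric $\lambda_k\le 1$ with $\lambda_k^{1/q(x)}\ge\lambda_k^{1/q^-}$ is the embedding $\ell^{q^-}(L^{p(\cdot)})\hookrightarrow\ell^{q(\cdot)}(L^{p(\cdot)})$ combined with Lemma \ref{mieszane}(i), done by hand.

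In the write-up, delete the exploratory lossy estimate in the case $k\ge k_L$. Do the $\ell^{q(\cdot)}(L^{p(\cdot)})$ comparison pointwise, via $g_k(x)\le L^{s(x)}2^{-(k-k_L)(1-s^+)}\chi_B(x)$ for $k\ge k_L$ and $g_k(x)\le 4L^{s(x)}2^{-(k_L-1-k)s^-}\chi_B(x)$ for $k<k_L$. Do not pass through norm coefficients $a_k$ built from the global $s^{\pm}$: for $L<1$ these only give $L^{s^-}$ instead of $\max\{L^{s_B^-},L^{s_B^+}\}$.
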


		\begin{lem}\label{cuttoff}
		Let us consider the function $u: \mathbb R^n \to \mathbb R$ defined as
		\begin{equation*}
			u(x):= \left\{\begin{array}{ll} 1, & \textnormal{ for } x\in B(0,1),\\ 2-\left|x\right|, & \textnormal{ for } x\in B(0,2)\setminus B(0,1),\\ 0, & \textnormal{ for } x\in \mathbb R^n \setminus B(0,2).\end{array}\right.
		\end{equation*}
		Suppose that $s\in \mathcal{P}_b(\mathbb R^n)$ is such that $s^+\leq 1$. For fixed $\tau \in (0,\infty)$ and $\sigma \in \mathbb R$ we define the function $u_{\tau,\sigma}:\mathbb R^n \to \mathbb R$ as $u_{\tau,\sigma}(x):=\tau^{\sigma} u(\tau x)$. Then, the function $g_{\tau,\sigma}(x):=2\tau^{\sigma+s(x)}\chi_{B(0,2/\tau)}(x)$ is a scalar $s(\cdot)$-gradient of $u_{\tau, \sigma}$.
	\end{lem}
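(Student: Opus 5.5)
The plan is to verify the pointwise inequality \eqref{pointwise} directly for $u_{\tau,\sigma}$ and $g_{\tau,\sigma}$, splitting according to where $x$ and $y$ lie relative to the ball $B(0,2/\tau)$. First record that $u$ is $1$-Lipschitz on $\mathbb R^n$, takes values in $[0,1]$, and vanishes outside $B(0,2)$. Hence $u_{\tau,\sigma}$ is $\tau^{\sigma+1}$-Lipschitz, is bounded by $\tau^{\sigma}$, and vanishes outside $B(0,2/\tau)$. This gives two elementary bounds for all $x,y$:
\begin{equation*}
|u_{\tau,\sigma}(x)-u_{\tau,\sigma}(y)|\leq \tau^{\sigma+1}|x-y| \quad\text{and}\quad |u_{\tau,\sigma}(x)-u_{\tau,\sigma}(y)|\leq \tau^{\sigma}.
\end{equation*}

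\emph{Case 1.} Both $x,y\notin B(0,2/\tau)$. Then the left-hand side is $0$ and there is nothing to prove.

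\emph{Case 2.} Without loss of generality $x\in B(0,2/\tau)$, by symmetry of the inequality. It suffices to show
\begin{equation*}
|u_{\tau,\sigma}(x)-u_{\tau,\sigma}(y)|\leq 2\tau^{\sigma+s(x)}|x-y|^{s(x)}.
\end{equation*}
The term with $g_{\tau,\sigma}(y)\ge 0$ can then be added for free.

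\emph{Subcase 2a: $\tau|x-y|\leq 1$.} Since $s(x)\le s^+\le1$, we have $\tau|x-y|\le(\tau|x-y|)^{s(x)}$. The Lipschitz bound then gives
\begin{equation*}
|u_{\tau,\sigma}(x)-u_{\tau,\sigma}(y)|\le\tau^{\sigma}(\tau|x-y|)^{s(x)}.
\end{equation*}

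\emph{Subcase 2b: $\tau|x-y|>1$.} Here $(\tau|x-y|)^{s(x)}\ge1$, assuming $s\ge0$, which holds since $s^->0$ for exponents in $\mathcal{P}_b$. The sup bound then gives
\begin{equation*}
|u_{\tau,\sigma}(x)-u_{\tau,\sigma}(y)|\le\tau^\sigma\le\tau^{\sigma}(\tau|x-y|)^{s(x)}.
\end{equation*}

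In both subcases we even obtain the estimate with constant $1$, so the factor $2$ leaves room to spare and the inequality holds for all $x,y$ with $N_u=\emptyset$. The only point needing care is the case $y$ outside the ball while $x$ is inside, where the gradient at $y$ is zero. This is handled exactly by putting all the weight on $g_{\tau,\sigma}(x)$, which is why the case split is organised around which point lies in $B(0,2/\tau)$. I expect no real obstacle beyond checking that the exponent bound $s\le1$ is used in the correct direction in Subcase 2a.
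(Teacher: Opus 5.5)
Your proposal is correct and follows the paper's proof. It uses the same reduction to $x\in B(0,2/\tau)$ and the same split at $|x-y|=1/\tau$: the $1$-Lipschitz bound together with $s(x)\le 1$ in the near case, and a sup bound in the far case. The only differences are minor. You use $u\ge 0$ to bound the far-case difference by $\tau^{\sigma}$, whereas the paper uses $|u_{\tau,\sigma}(x)|+|u_{\tau,\sigma}(y)|\le 2\tau^{\sigma}$, which is where its factor $2$ comes from. Also, since $s^+\le 1$ is an essential supremum, you should strictly take $N_u=\{s>1\}$ rather than $\emptyset$; the paper glosses over this point as well.
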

	
	\begin{proof}
		It suffices to prove that for every $x,y\in \mathbb R^n$ we have
		\begin{equation}\label{gradient}
			\left|u_{\tau,\sigma}(x)-u_{\tau,\sigma}(y)\right| \leq \left|x-y\right|^{s(x)}g_{\tau,\sigma}(x)+\left|x-y\right|^{s(y)}g_{\tau,\sigma}(y).
		\end{equation}
		If $x,y\notin B(0,2/\tau)$, then there is nothing to prove. Therefore without loss of generality we can assume that $x\in B(0,2/\tau)$. If $\left|x-y\right| \leq 1/\tau$, then using the fact that $u$ is $1$-Lipschitz, we get
		\begin{align*}
			\left|u_{\tau,\sigma}(x)-u_{\tau,\sigma}(y)\right| &\leq \tau^{\sigma}\left|\tau x -\tau y\right|\leq \tau^{\sigma}\left|\tau x -\tau y\right|^{s(x)} = \left|x-y\right|^{s(x)}\tau^{\sigma+s(x)}\chi_{B(0,2/\tau)}(x) \\ & \leq  \left|x-y\right|^{s(x)}g_{\tau,\sigma}(x)+\left|x-y\right|^{s(y)}g_{\tau, \sigma}(y).
		\end{align*}
		On the other hand, if $\left|x-y\right| > 1/\tau$, then
		\begin{align*}
			\left|u_{\tau,\sigma}(x)-u_{\tau,\sigma}(y)\right| &\leq \left|u_{\tau,\sigma}(x)\right|+\left|u_{\tau,\sigma}(y)\right|\leq 2\tau^{\sigma}< 2\tau^{\sigma}\tau^{s(x)}\left|x-y\right|^{s(x)} \\&\leq \left|x-y\right|^{s(x)}g_{\tau,\sigma}(x)+\left|x-y\right|^{s(y)}g_{\tau,\sigma}(y)
		\end{align*}
		and hence \eqref{gradient} is proven. 
	\end{proof}
	
	\subsection{H\"older space with variable exponent}
	In this subsection, we recall the definition of variable exponent H\"older space. Let $(X,d)$ be a metric space. By $C(X,d)$ we denote the space of continuous functions $u:X \to \mathbb{R}$ such that the norm
	\begin{equation*}
		\left\|u\right\|_{C(X,d)}= \sup_{x\in X}\left|u(x)\right|
	\end{equation*}
	is finite. Moreover, for bounded function $\alpha : X \to [0,\infty)$ we denote by $C^{0,\alpha(\cdot)}(X,d)$ the \texttt{H\"older space}, i.e. the space of all functions $u\in C(X,d)$ such that
	\begin{equation*}
		\left[ u \right]_{\alpha(\cdot),X}:= \sup_{\substack{x,y\in X \\ x\neq y}} \frac{\left|u(x)-u(y)\right|}{d(x,y)^{\alpha(x)}} < \infty.
	\end{equation*}
	The H\"older space $C^{0,\alpha(\cdot)}(X,d)$ is a Banach space due to the norm
	\begin{equation*}
		\left\| u  \right\|_{C^{0,\alpha(\cdot)}(X,d)} := \left\| u \right\|_{C(X,d)}  + \left[u\right]_{\alpha(\cdot),X}, \textnormal{ where } u \in C^{0,\alpha(\cdot)}(X,d).
	\end{equation*}
	In the main section of the paper, we shall use the following result from \cite{holdery}.
	\begin{tw}\label{holdcomp}
	Let $(X,d)$ be a totally bounded metric space and $\alpha,\beta \in \mathcal{P}_b(X)$. If $\alpha \gg \beta$, then we have compact embedding
	\begin{equation*}
		C^{0,\alpha(\cdot)}(X,d) \hookrightarrow \hookrightarrow C^{0,\beta(\cdot)}(X,d).
	\end{equation*}
	\end{tw}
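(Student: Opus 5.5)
Theorem \ref{holdcomp} is an Arzel\`a--Ascoli argument combined with interpolation between the sup-norm and the H\"older seminorm. Let $\mathcal{F}$ be the unit ball of $C^{0,\alpha(\cdot)}(X,d)$. The plan is to show that $\mathcal{F}$ is relatively compact in $C(X,d)$, and then to upgrade convergence in $C(X,d)$ to convergence in the $C^{0,\beta(\cdot)}$ norm using the gap $(\alpha-\beta)^->0$.

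\emph{Step 1 (compactness in $C$).} Every $u\in\mathcal{F}$ satisfies $\|u\|_{C(X,d)}\le 1$ and $|u(x)-u(y)|\le d(x,y)^{\alpha(x)}$. Since $X$ is totally bounded, it has finite diameter $D$. For $d(x,y)<1$ we get $d(x,y)^{\alpha(x)}\le d(x,y)^{\alpha^-}$, so $\mathcal{F}$ is uniformly equicontinuous. Its elements extend uniquely to uniformly continuous functions on the completion $\widehat X$, which is compact, with the same bounds. Arzel\`a--Ascoli then shows that every sequence $(u_n)\subseteq\mathcal{F}$ has a subsequence converging uniformly on $X$ to some $u$. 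Passing to the limit pointwise in the H\"older inequality gives $[u]_{\alpha(\cdot),X}\le 1$, so $u\in\mathcal{F}$.

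\emph{Step 2 (interpolation).} Set $w=u_n-u$. Then $[w]_{\alpha(\cdot),X}\le 2$ and $\|w\|_{C(X,d)}\to 0$. Fix $\eta\in(0,1)$ and split the pairs $x\ne y$ into two cases.
\begin{itemize}
\item If $d(x,y)<\eta$, then
\[
\frac{|w(x)-w(y)|}{d(x,y)^{\beta(x)}}\le 2\,d(x,y)^{\alpha(x)-\beta(x)}\le 2\eta^{(\alpha-\beta)^-}.
\]
\item If $d(x,y)\ge\eta$, then
\[
\frac{|w(x)-w(y)|}{d(x,y)^{\beta(x)}}\le 2\|w\|_{C(X,d)}\max\{\eta^{-\beta^+},1\}.
\]
Here boundedness of $\beta$ is what makes this uniform in $x$; only $\eta\le d(x,y)\le D$ is used, and no upper bound is needed since $\beta\ge0$ gives $d^{\beta(x)}\ge\eta^{\beta^+}$ for $\eta<1$.
\end{itemize}
Choose $\eta$ so that the first bound is below $\varepsilon$, then take $n$ large so that the second is also below $\varepsilon$. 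Hence $\|u_n-u\|_{C^{0,\beta(\cdot)}}\to0$. The continuity of the embedding itself follows from the same splitting with $\eta=1$.

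\emph{Main obstacle.} The only delicate point is Step 1, because $X$ need not be complete, so Arzel\`a--Ascoli cannot be applied on $X$ directly. The fix is to pass to the completion as described, or alternatively to run a diagonal argument on a countable dense set, using equicontinuity to get uniform convergence. Step 2 is routine once the uniform gap $(\alpha-\beta)^->0$ is available.
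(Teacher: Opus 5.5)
The paper gives no proof of Theorem~\ref{holdcomp}; it imports the result from \cite{holdery}, so there is no in-paper argument to compare yours against. Your proof is correct and self-contained, and it follows the standard route:
\begin{itemize}
\item Uniform equicontinuity with modulus $t^{\alpha^-}$, together with Arzel\`a--Ascoli on the compact completion $\widehat X$, gives a subsequence converging uniformly on $X$.
\item Splitting the pairs at scale $\eta\in(0,1)$ gives
\[
[u_n-u]_{\beta(\cdot),X}\le\max\bigl\{2\eta^{(\alpha-\beta)^-},\,2\eta^{-\beta^+}\|u_n-u\|_{C(X,d)}\bigr\}.
\]
This is exactly where $\alpha\gg\beta$ and $\beta^+<\infty$ are needed.
\end{itemize}

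There is one harmless slip. The bound $[u]_{\alpha(\cdot),X}\le 1$ alone does not give $u\in\mathcal F$. For that you need
\[
\|u\|_{C(X,d)}+[u]_{\alpha(\cdot),X}\le\liminf_n\bigl(\|u_n\|_{C(X,d)}+[u_n]_{\alpha(\cdot),X}\bigr)\le 1,
\]
which follows from uniform convergence and lower semicontinuity of the seminorm under pointwise convergence. You never actually use $u\in\mathcal F$, only $[u_n-u]_{\alpha(\cdot),X}\le 2$, so nothing breaks.

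You were also right to read $(\alpha-\beta)^-$ and $\beta^+$ as pointwise infimum and supremum rather than essential ones. With only essential bounds the statement fails. For example, on $[0,1]$ take $\alpha\equiv 1/2$, $\beta=1/10$ except $\beta(1/2)=9/10$, and $u(x)=|x-1/2|^{1/2}$: then $u\in C^{0,\alpha(\cdot)}$ but $u\notin C^{0,\beta(\cdot)}$.
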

	
	\section{Main Results}		\subsection{Compact embeddings on metric spaces with finite or integrable measure}
	In this part, we investigate compact embeddings of $M^{s(\cdot)}_{p(\cdot),q(\cdot)}$ and $N^{s(\cdot)}_{p(\cdot),q(\cdot)}$ into Lebesgue and H\"older spaces on metric spaces equipped with finite or integrable measure. We shall use methods from the paper \cite{zwartestale} adapted to the variable exponent setting.
	\subsubsection{Embeddings into the space of measurable functions}
	We begin with the following theorem concerning compact embeddings into $L^0$.
	\begin{tw}\label{mierzalne}
		Let $(X,d,\mu)$ be a metric measure space such that $\mu(X)<\infty.$ Then, for every $s,p \in \mathcal{P}_b(X)$ and $q\in \mathcal{P}(X)$
		\begin{equation*}
			M^{s(\cdot)}_{p(\cdot),q(\cdot)}(X,d,\mu) \hookrightarrow \hookrightarrow L^0(X,\mu) \hspace{15mm} N^{s(\cdot)}_{p(\cdot),q(\cdot)}(X,d,\mu) \hookrightarrow \hookrightarrow L^0(X,\mu).
		\end{equation*}
	\end{tw}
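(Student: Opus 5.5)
By Proposition \ref{pomiedzy}(v) we have $\dot{M}^{s(\cdot)}_{p(\cdot),q(\cdot)}\hookrightarrow \dot{M}^{s(\cdot),p(\cdot)}$. By Proposition \ref{pomiedzy}(vii), for any $t\ll s$ (say $t=s/2$, which is in $\mathcal{P}_b(X)$) we have $N^{s(\cdot)}_{p(\cdot),q(\cdot)}\hookrightarrow M^{t(\cdot),p(\cdot)}$. So both claims reduce to proving
\begin{equation*}
M^{s(\cdot),p(\cdot)}(X,d,\mu)\hookrightarrow\hookrightarrow L^0(X,\mu)
\end{equation*}
for arbitrary $s,p\in\mathcal{P}_b(X)$. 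Continuity is clear since $\mu(X)<\infty$ and bounded sets in $L^{p(\cdot)}$ are bounded in measure. It therefore suffices to show that the unit ball $\mathcal{F}$ of $M^{s(\cdot),p(\cdot)}$ is totally bounded in $L^0$, and for this I will verify the three conditions of Theorem \ref{krotov}.

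Fix $\varepsilon>0$. For $u\in\mathcal{F}$ pick $g_u\in\mathcal{D}^{s(\cdot)}(u)$ with $\|g_u\|_{L^{p(\cdot)}}\le 2$, so $\|u\|_{L^{p(\cdot)}}\le 1$. Lemma \ref{modularnorma} then gives uniform bounds $\rho_{p(\cdot)}(u)\le 1$ and $\rho_{p(\cdot)}(g_u)\le 2^{p^+}$. Chebyshev applied to the modular gives
\begin{equation*}
\mu(\{|u|>\lambda\})\le \lambda^{-p^-}\quad\text{for }\lambda\ge1,\qquad \mu(\{g_u>\lambda\})\le 2^{p^+}\lambda^{-p^-}.
\end{equation*}
Choose $\lambda$ so large that both bounds are less than $\varepsilon/2$. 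Set
\begin{equation*}
E(u):=\{|u|>\lambda\}\cup\{g_u>\lambda\}\cup N_u,
\end{equation*}
where $N_u$ is the null set from \eqref{pointwise}. Then (i) and (iii) hold, with the same $\lambda$ for every $u\in\mathcal{F}$.

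For (ii), take $x,y\notin E(u)$ with $d(x,y)<\delta\le1$. Then
\begin{equation*}
|u(x)-u(y)|\le \lambda\left(d(x,y)^{s(x)}+d(x,y)^{s(y)}\right)\le 2\lambda\,\delta^{s^-}.
\end{equation*}
This requires $s^->0$, which holds since $s\in\mathcal{P}(X)$. Choosing $\delta$ with $2\lambda\delta^{s^-}<\varepsilon$ makes $\delta$ independent of $u$, so Theorem \ref{krotov} applies. Finally, since $L^0$ is complete, total boundedness of $\mathcal{F}$ gives relative compactness.

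The main point needing care is the reduction for Besov spaces. Proposition \ref{pomiedzy}(vii) requires only $t\ll s$ with $t\in\mathcal{P}_b(X)$, and $t=s/2$ satisfies $(s-t)^-=s^-/2>0$, so the reduction goes through. The remaining steps are routine uniform Chebyshev estimates. If needed, the Triebel–Lizorkin case could instead be handled directly using $g=\sup_k g_k$, as in Proposition \ref{pomiedzy}(ii),(v).
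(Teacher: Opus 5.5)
Your proposal is correct and follows essentially the same route as the paper. Both arguments reduce to $M^{s(\cdot),p(\cdot)}$ via Proposition \ref{pomiedzy} $(v)$ and $(vii)$ with $t=s/2$. Both then verify the hypotheses of Theorem \ref{krotov} using the exceptional set $\{|u|>\lambda\}\cup\{g>\lambda\}\cup N_u$, a Chebyshev estimate on the modular, and the bound $|u(x)-u(y)|\le 2\lambda\delta^{s^-}$; only your normalization constants (unit ball with $\|g\|_{L^{p(\cdot)}}\le 2$, versus the paper's norm strictly below $1$) differ.
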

	
	\begin{proof}
		Since by Proposition \ref{pomiedzy} $(v)$ and $(vii)$
		\begin{equation*}
	M^{s(\cdot)}_{p(\cdot),q(\cdot)}(X,d,\mu) \hookrightarrow M^{s(\cdot),p(\cdot)}(X,d,\mu), \hspace{10mm} N^{s(\cdot)}_{p(\cdot),q(\cdot)}(X,d,\mu) \hookrightarrow M^{s(\cdot)/2,p(\cdot)}(X,d,\mu), 
		\end{equation*}
		it suffices to prove that for every $s,p\in \mathcal{P}_b(X)$ the embedding of $M^{s(\cdot),p(\cdot)}(X,d,\mu)$ into $L^0(X,\mu)$ is compact. Let $\mathcal{F}\subseteq M^{s(\cdot),p(\cdot)}(X,d,\mu)$ be a bounded family. Without loss of generality we can assume that
		\begin{equation}\label{zalozenie}
			0<\sup_{u \in \mathcal{F}} \left\|u \right\|_{M^{s(\cdot),p(\cdot)}(X,d,\mu)}<1.
		\end{equation}
		Our goal is to show that $\mathcal{F}$ is totally bounded in $L^0(X,\mu)$. We will apply Theorem \ref{krotov}\footnote{We assume that measure of every ball is positive and finite, which ensures $X$ is separable, allowing the application of Theorem \ref{krotov}.}. Let us fix $\varepsilon \in (0,1)$ and define
		\begin{equation*}
			\lambda:=\left(\frac{4}{\varepsilon}\right)^{\frac{1}{p^-}}, \hspace{5mm} \delta:=\left(\frac{\varepsilon^{1+\frac{1}{p^-}}}{4^{1+\frac{1}{p^-}}}\right)^{\frac{1}{s^-}}.
		\end{equation*}
		Then, $\lambda\in (1,\infty)$ and $\delta\in (0,1)$.
		For each $u\in \mathcal{F}$ we select $g\in \mathcal{D}^{s(\cdot)}(u) \cap L^{p(\cdot)}(X,\mu)$ such that $\left\|g\right\|_{L^{p(\cdot)}(X,\mu)} \leq 1$. Then, there exists measure zero set $G_u \subseteq X$ such that for every $x,y\in X \setminus G_u$
		\begin{equation*}
			\left|u(x)-u(y)\right| \leq d(x,y)^{s(x)}g(x)+d(x,y)^{s(y)}g(y).
		\end{equation*}
		We consider the following set
		\begin{equation*}
			E(u)=\left\{x\in X: \left|u(x)\right|>\lambda \textnormal{ or } g(x)>\lambda \textnormal{ or } x\in G_u\right\}.
		\end{equation*}
		By Chebyshev's inequality, the fact that $\lambda\in (1,\infty)$, assumption \eqref{zalozenie} and Lemma \ref{modularnorma} we obtain
		\begin{align*}
			\mu\left(E(u)\right)& \leq \mu\left(\left\{x \in X: \left|u(x)\right| > \lambda\right\}\right)+\mu\left(\left\{x\in X: g(x) >\lambda\right\}\right)+\mu\left(G_u\right) \\ &\leq \frac{1}{\lambda^{p^-}}\left(\int_X \left|u(x)\right|^{p(x)}\mbox{d}\mu(x)+\int_X g(x)^{p(x)}\mbox{d}\mu(x)\right)\leq \frac{\varepsilon}{4}\cdot 2 <\varepsilon.
		\end{align*}
		Now, let $x,y\in X \setminus E(u)$ be such that $d(x,y)<\delta$. Thus,
		\begin{align*}
			\left|u(x)-u(y)\right|&\leq d(x,y)^{s(x)}g(x)+d(x,y)^{s(y)}g(y)\leq \delta^{s(x)}\lambda+\delta^{s(y)}\lambda \leq 2\lambda \delta^{s^-} = 2\left(\frac{4}{\varepsilon}\right)^{\frac{1}{p^-}}\frac{\varepsilon^{1+\frac{1}{p^-}}}{4^{1+\frac{1}{p^-}}}=\frac{\varepsilon}{2}<\varepsilon.
		\end{align*}
		Applying Theorem \ref{krotov} we conclude that $\mathcal{F}$ is totally bounded in $L^0(X,\mu)$ and the proof is complete.
	\end{proof}
	
	\subsubsection{Embeddings into variable exponent Lebesgue spaces} In this part, we address sufficient and necessary conditions for the compact embeddings of Haj{\l}asz-Triebel-Lizorkin and Haj{\l}asz-Besov spaces into Lebesgue spaces. Below is the main result of this subsection.
	\begin{tw}\label{zwarteskonczone}
		Suppose that $(X,d,\mu)$ is a metric measure space and $p,s\in \mathcal{P}_b(X)$, $q\in \mathcal{P}(X)$. Assume that at least one of the following conditions hold:\begin{enumerate}
			\item[(i)] $(X,d)$ is totally bounded;
			\item[(ii)] measure $\mu$ is integrable and $p\in \mathcal{P}^{\log}_{\mu}(X)$. 
		\end{enumerate}
		Then, we have compact embeddings
		\begin{equation*}
			M^{s(\cdot)}_{p(\cdot),q(\cdot)}(X,d,\mu) \hookrightarrow \hookrightarrow L^{p(\cdot)}(X,\mu), \hspace{15mm} 	N^{s(\cdot)}_{p(\cdot),q(\cdot)}(X,d,\mu) \hookrightarrow \hookrightarrow L^{p(\cdot)}(X,\mu).
		\end{equation*}
	\end{tw}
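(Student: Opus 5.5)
In both cases I will reduce to the Sobolev space, as in the proof of Theorem \ref{mierzalne}. By Proposition \ref{pomiedzy} $(v)$ and $(vii)$ we have $M^{s(\cdot)}_{p(\cdot),q(\cdot)}\hookrightarrow M^{s(\cdot),p(\cdot)}$ and $N^{s(\cdot)}_{p(\cdot),q(\cdot)}\hookrightarrow M^{s(\cdot)/2,p(\cdot)}$. So it suffices to show that $M^{s(\cdot),p(\cdot)}(X,d,\mu)\hookrightarrow\hookrightarrow L^{p(\cdot)}(X,\mu)$ for all $s,p\in\mathcal{P}_b(X)$.

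Fix a bounded family $\mathcal{F}$ with $\sup_{u\in\mathcal F}\|u\|_{M^{s(\cdot),p(\cdot)}}<1$. For each $u\in\mathcal{F}$ choose $g\in\mathcal{D}^{s(\cdot)}(u)$ with $\|g\|_{L^{p(\cdot)}}\le 1$, so that $\rho_{p(\cdot)}(u),\rho_{p(\cdot)}(g)\le1$. The basic pointwise tool comes from averaging the defining inequality \eqref{pointwise} in $y$ over $B(x,r)$, with $r\in(0,1)$:
\begin{equation*}
|u(x)|\le \fint_{B(x,r)}|u|\,d\mu + r^{s(x)}g(x)+\fint_{B(x,r)} r^{s(y)}g(y)\,d\mu(y).
\end{equation*}
When $p^-<1$ the averages are not controlled by $\rho_{p(\cdot)}$. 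In that case I would first raise to a power: replace $|u|$ by $|u|^{\theta}$ with $\theta<p^-$, using $|a^\theta-b^\theta|\le|a-b|^\theta$ and Lemma \ref{skalowanie}. This reduces to the case $p^-\ge1$.

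\textbf{Case (i).} Here $\mu(X)<\infty$, so by Theorem \ref{hanson} together with Theorem \ref{mierzalne} it suffices to prove $p(\cdot)$-equi-integrability. By Lemma \ref{charakteryzacja}, $h(r)=\inf_x\mu(B(x,r))>0$. The two averages in the display are then bounded by $h(r)^{-1}\bigl(\int|u|+\int g\bigr)\le C_r$, uniformly in $u\in\mathcal F$. Here the integrals are finite and uniformly bounded because $\mu(X)<\infty$ and Lemma \ref{wlozenielp} applies (comparing with a constant exponent below $p^-$). Hence $|u(x)|\le C_r+r^{s^-}g(x)$, and therefore
\begin{equation*}
\int_A|u|^{p(x)}\,d\mu\le 2^{p^+}\bigl(C_r^{p^+}\mu(A)+r^{s^-p^-}\bigr).
\end{equation*}
Given $\varepsilon>0$, I would first fix $r$ small enough that the second term is below $\varepsilon/2$, and then choose $\delta$ so that the first term is below $\varepsilon/2$ whenever $\mu(A)<\delta$.

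\textbf{Case (ii).} Now $\mu(X)$ may be infinite (Example \ref{gaussowska}), and there is no uniform lower bound on $\mu(B(x,r))$. I would fix $r$ and split $u=(u-u_r)+u_r$, where $u_r(x)=\fint_{B(x,r)}u$. For the average term, Proposition \ref{jensen} (with $p\in\mathcal P^{\log}_\mu$, applied to $f=u$ after normalization) gives
\begin{equation*}
\bigl(\beta |u_r(x)|\bigr)^{p(x)}\le \frac{1}{\mu(B(x,r))}+\fint_{B(x,r)}\gamma^{q(x,y)}\,d\mu(y).
\end{equation*}
The first term is a fixed $L^1$ function of $x$ by integrability of $\mu$. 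For the $\gamma$-term I would use $1\in L^{s(\cdot)}$ from the definition of $\mathcal{P}^{\log}_\mu$, via $\gamma^{q(x,y)}\lesssim \gamma^{s(x)}+\gamma^{s(y)}$ (up to the $p_\infty$ bookkeeping), together with Fubini and the symmetry $y\in B(x,r)\iff x\in B(y,r)$.

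Thus all $u_r$ are dominated by a single integrable function $H_r$. This gives uniform smallness of $\int_{X\setminus K}|u_r|^{p}$ for a set $K=\{H_r>\eta\}$ of finite measure. On $K$ I would use Theorem \ref{mierzalne} for convergence in measure and then Theorem \ref{hanson}. The remaining term $g(x)r^{s(x)}+\fint_{B(x,r)}r^{s(y)}g(y)\,d\mu(y)$ must be shown small in $L^{p(\cdot)}$ as $r\to0$, and this is again done by Proposition \ref{jensen}, now applied to $g$.

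I expect the main obstacle to be this Jensen-type bookkeeping. One has to check that the $\gamma^{q(x,y)}$ terms integrate to something finite, and that the resulting domination is uniform over $\mathcal F$. This is exactly where the hypothesis $p\in\mathcal{P}^{\log}_\mu(X)$ enters in an essential way.
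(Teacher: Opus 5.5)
Your reduction to $M^{s(\cdot),p(\cdot)}$ and your passage to $|u|^{\theta}$ with $\theta<p^-$ are sound; the paper does the same with $|u|^q$, $q<p^-$. Your Case (i) is also sound and is essentially the paper's argument.

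The gap is in Case (ii). You claim that the averaged remainder $\fint_{B(x,r)} r^{s(y)}g(y)\,d\mu(y)$ can be made small in $L^{p(\cdot)}(X,\mu)$, uniformly over $\mathcal F$, as $r\to 0$, by Proposition \ref{jensen}. But Jensen is pointwise in $x$. Integrating it over all of $X$ gives at best
\[
r^{s^-p^-}\int_X g(y)^{p(y)}\,\Theta_r(y)\,d\mu(y)+\dots,\qquad \Theta_r(y):=\int_{B(y,r)}\frac{d\mu(x)}{\mu(B(x,r))},
\]
or, with the crude estimate, $r^{s^-p^-}\int_X \mu(B(x,r))^{-1}\,d\mu(x)$. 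Integrability of $\mu$ bounds $\Theta_r$ for each fixed $r$, but says nothing about its growth as $r\to 0$. Already for a bounded domain in $\mathbb R^n$ the crude bound behaves like $r^{s^-p^--n}\to\infty$. A bound on $\sup_y\Theta_r(y)$ that is uniform in $r$ amounts to uniform boundedness of the averaging operators. That needs extra geometry such as geometric doubling, which Theorem \ref{zwarteskonczone} does not assume. The $\lambda^{t(y)}$-part of the $\gamma^{q(x,y)}$ term has the same problem when $r\to0$. Your equi-integrability on $K$ (needed for Theorem \ref{hanson}) and your smallness off $K$ both rest on this step, so Case (ii) is not proved as written.

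The remedy is what you already did in Case (i), where you absorbed this average into $C_r$. Only the pointwise term $r^{s(x)}g(x)$ needs $r\to 0$, and its modular is at most $r^{s^-p^-}\rho_{p(\cdot)}(g)\le r^{s^-p^-}$. For the averaged gradient, keep $r$ fixed and dominate it exactly like $u_r$. Set $1/t:=|1/p-1/p_\infty|$, pick $\lambda\in(0,1)$ with $\rho_{t(\cdot)}(\lambda)<\infty$, and put $\gamma:=\lambda^2$. Then $\gamma^{q(x,y)}\le\lambda^{t(x)}+\lambda^{t(y)}$. The crude bound $\fint_{B(x,r)}\lambda^{t(y)}\,d\mu(y)\le\rho_{t(\cdot)}(\lambda)/\mu(B(x,r))$ needs no Fubini, and gives
\[
\Bigl(\beta\fint_{B(x,r)}g\,d\mu\Bigr)^{p(x)}\le\frac{1+\rho_{t(\cdot)}(\lambda)}{\mu(B(x,r))}+\lambda^{t(x)}.
\]
This is the same integrable dominator as for $u_r$; in your reduced setting, apply it to $g^\theta$ with exponent $p/\theta$. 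So for fixed $r$, everything except $r^{s(x)}g(x)$ is controlled by a single integrable function of $x$ independent of $u$. That gives $p(\cdot)$-equi-integrability and tightness at once, which is the paper's Proposition \ref{uniintegrable}. Restricting to a large ball (or to your $K$) and invoking Theorems \ref{mierzalne} and \ref{hanson} then completes the proof.
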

	
	\begin{proof} We begin with proving proposition that will be crucial in the proof of Theorem \ref{zwarteskonczone}.
		
		\begin{prop}\label{uniintegrable}
			Let $(X,d,\mu)$ be a metric measure space and $p,s\in \mathcal{P}_b(X).$ Assume that at least one of the following conditions is satisfied:
			\begin{enumerate}
				\item[(i)] $(X,d)$ is totally bounded;
				\item[(ii)] measure $\mu$ is integrable and $p\in \mathcal{P}^{\log}_{\mu}(X)$.
			\end{enumerate}Then, every bounded family $\mathcal{F} \subseteq M^{s(\cdot),p(\cdot)}(X,d,\mu)$ is $p(\cdot)$-equi-integrable. Moreover, for every $x_0 \in X$ we have
			\begin{equation}\label{zanik}
				\lim_{R\to \infty} \sup_{u\in \mathcal{F}} \int_{X \setminus B(x_0,R)} \left|u(x)\right|^{p(x)}\mbox{d}\mu(x)=0.
			\end{equation}
		\end{prop}
		\begin{proof}
			Let $\mathcal{F} \subseteq M^{s(\cdot),p(\cdot)}(X,d,\mu)$ be any bounded family. Without loss of generality assume that
			\begin{equation*}
				\sup_{u\in \mathcal{F}} \left\|u\right\|_{M^{s(\cdot),p(\cdot)}(X)}= \frac{1}{2}.
			\end{equation*}
			Fix arbitrary $u\in \mathcal{F}$ and $q\in (0,p^-)$. By the very definition of quasi-norm $\left\|\cdot \right\|_{M^{s(\cdot),p(\cdot)}(X,d,\mu)}$ we can find $g\in \mathcal{D}^{s(\cdot)}(u) \cap L^{p(\cdot)}(X,\mu)$ such that
			\begin{equation}\label{tralala}
				\left\|u\right\|_{L^{p(\cdot)}(X,\mu)}+\left\|g\right\|_{L^{p(\cdot)}(X,\mu)}\leq 1.
			\end{equation}
			By the definition of Haj{\l}asz $s(\cdot)$-gradient we can find the measure zero set $G_u \subseteq X$ such that for every $x,y\in X\setminus G_u$
			\begin{equation*}
				\left|u(x)-u(y)\right| \leq d(x,y)^{s(x)}g(x)+d(x,y)^{s(y)}g(y).
			\end{equation*} Let $x\in X\setminus G_u$ and $r\in (0,1)$, then for all $y\in B(x,r)\setminus G_u$ we have
			\begin{align*}
				\left|u(x)\right|^q&\leq 2^q\left(\left|u(x)-u(y)\right|^q+\left|u(y)\right|^{q}\right)\leq 2^{2q}\left(d(x,y)^{qs(x)}g(x)^q+d(x,y)^{qs(y)}g(y)^q+\left|u(y)\right|^q\right) \\ & \leq 2^{2q}\left[r^{qs^-}\left(g(x)^q+g(y)^q\right)+\left|u(y)\right|^q\right].
			\end{align*}
			Averaging the above inequality with respect to $y\in B(x,r)$ we obtain
			\begin{equation*}
				\left|u(x)\right|^q \leq 4^{q}\left[ r^{qs^-}\left(g(x)^q+\left(g^q\right)_{B(x,r)}\right)+\left(\left|u\right|^q\right)_{B(x,r)}\right],
			\end{equation*}
			where
			\begin{align*}
				\left(g^q\right)_{B(x,r)}:= \fint_{B(x,r)} g(y)^q \mbox{d}\mu(y),\hspace{15mm}
				\left(\left|u\right|^q\right)_{B(x,r)}:= \fint_{B(x,r)} \left|u(y)\right|^q \mbox{d}\mu(y).
			\end{align*}
			Subsequently, we raise the above inequality to the $p(\cdot)/q$ power and integrate over fixed measurable set $E\subseteq X$ to get
			\begin{align}\label{nier1}
				\int_E \left|u(x)\right|^{p(x)}\mbox{d}\mu \leq 4^{p^+ +\frac{p^+}{q}}\left[r^{p^-s^-}\left(\int_E g(x)^{p(x)}\mbox{d}\mu(x)+\int_E\left(g^q\right)_{B(x,r)}^{\frac{p(x)}{q}}\mbox{d}\mu(x)\right)+\int_E \left(\left|u\right|^q\right)_{B(x,r)}^{\frac{p(x)}{q}}\mbox{d}\mu(x)\right].
			\end{align}
			We shall firstly estimate second integral on the right-hand side of the above inequality. Assume that $(i)$ holds. By Lemma \ref{charakteryzacja} we conclude that $\mu(X)<\infty$ and for every $r\in (0,\infty)$ we have $h(r):=\inf\left\{\mu(B(x,r)): x\in X\right\}>0.$ Then, using \eqref{tralala}, Proposition \ref{holder1}, Lemma \ref{modularnorma}, and Lemma \ref{skalowanie} we obtain
			\begin{align*}
				\begin{split}
					\left(g^q\right)_{B(x,r)}^{\frac{p(x)}{q}} & \leq \left( \frac{2}{\mu(B(x,r))}\left\| g^q \right\|_{L^{\frac{p(\cdot)}{q}}(B(x,r))}\left\|1\right\|_{L^{\frac{p(\cdot)}{p(\cdot)-q}}(B(x,r))}\right)^{\frac{p(x)}{q}
					} \\ & \leq 2^{\frac{p(x)}{q}}\left\|g\right\|^{p(x)}_{L^{p(\cdot)}(B(x,r))} \max\left\{\mu(B(x,r))^{-\frac{p(x)}{p^+}},\mu(B(x,r))^{-\frac{p(x)}{p^-}}\right\} \\ & \leq 2^{\frac{p^+}{q}}\max\left\{h(r)^{-\frac{p(x)}{p^+}},h(r)^{-\frac{p(x)}{p^-}}\right\}\leq 2^{\frac{p^+}{q}}\max\left\{h(r)^{-1},h(r)^{-\frac{p^-}{p^+}},h(r)^{-\frac{p^+}{p^-}}\right\}.
				\end{split}
			\end{align*}
			Hence
			\begin{equation}\label{nier2}
				\int_E	\left(g^q\right)_{B(x,r)}^{\frac{p(x)}{q}}\mbox{d}\mu(x) \leq  2^{\frac{p^+}{q}}\max\left\{h(r)^{-1},h(r)^{-\frac{p^-}{p^+}},h(r)^{-\frac{p^+}{p^-}}\right\}\mu(E). 
			\end{equation}
			Similarily, we obtain
			\begin{equation}\label{nier3}
				\int_E \left(\left|u\right|^q\right)_{B(x,r)}^{\frac{p(x)}{q}}\mbox{d}\mu(x) \leq  2^{\frac{p^+}{q}}\max\left\{h(r)^{-1},h(r)^{-\frac{p^-}{p^+}},h(r)^{-\frac{p^+}{p^-}}\right\}\mu(E).
			\end{equation}
			In concert, \eqref{nier1}, \eqref{nier2} and \eqref{nier3} give
			\begin{equation}\label{ost}
				\int_E \left|u(x)\right|^{p(x)}\mbox{d}\mu(x)\leq 4^{p^+ +\frac{p^+}{q}} \left[r^{p^- s^-} + 2^{\frac{p^+}{q}}\max\left\{h(r)^{-1},h(r)^{-\frac{p^-}{p^+}},h(r)^{-\frac{p^+}{p^-}}\right\}\mu(E)\left(1+r^{p^- s^-}\right)\right].
			\end{equation}
			Now, from \eqref{ost} it immediately follows that $\mathcal{F}$ is $p(\cdot		)$-equi-integrable. In addition, since measure $\mu$ is finite, we also have proved \eqref{zanik}. 
			
			Suppose that $(ii)$ holds. Since $p \in \mathcal{P}_{\mu}^{\log}(X)$, there exists $p_{\infty}\in (0,\infty]$ such that $1\in L^{t(\cdot)}(X,\mu)$, where
			\begin{equation*}
				\frac{1}{t(x)}:=\left|\frac{1}{p(x)}-\frac{1}{p_{\infty}}\right|.
			\end{equation*} 
			Hence, we can find $\lambda \in (0,1)$ such that $\rho_{t(\cdot)}(\lambda)<\infty.$ Then, by the Jensen inequality (Proposition \ref{jensen}) applied for $\gamma:=\lambda^2$ and respectively $f:=g^q$, $f:=\left|u\right|^q$ there exists $\beta \in (0,1)$ depending only on $\lambda$ and $C_{\log,\mu}(p/q)$ such that
			\begin{align}\label{oszac}
			\begin{split}	&\left(\beta \left(g^q\right)_{B(x,r)}\right)^{\frac{p(x)}{q}} \leq \fint_{B(x,r)} g(y)^{p(y)} \mbox{d}\mu(y) +\fint_{B(x,r)} \lambda^{2w(x,y)}\mbox{d}\mu(y),\\&\left(\beta \left(\left|u\right|^q\right)_{B(x,r)}\right)^{\frac{p(x)}{q}} \leq \fint_{B(x,r)} \left|u(y)\right|^{p(y)} \mbox{d}\mu(y) +\fint_{B(x,r)} \lambda^{2w(x,y)}\mbox{d}\mu(y),
				\end{split}
			\end{align}
			where
			\begin{equation*}
				\frac{1}{w(x,y)}:=\max\left\{ \frac{1}{p(x)}-\frac{1}{p(y)},0\right\}.
			\end{equation*}
			From the definition of $w$ and $t$ it follows that 
			\begin{equation*}
				0 \leq \frac{1}{w(x,y)} \leq \frac{1}{t(x)}+\frac{1}{t(y)},
			\end{equation*}
			for all $x,y\in X$. This implies that $\displaystyle w(x,y) \geq \min \left\{t(x)/2,t(y)/2\right\}.$ Thus, since $\lambda \in (0,1)$ we obtain
			\begin{equation}\label{ineqnum}
				\lambda^{2w(x,y)} \leq \lambda^{t(x)}+\lambda^{t(y)}.
			\end{equation}
			Using inequality \eqref{ineqnum} in \eqref{oszac} we get
			\begin{align*}
			&\left(\beta \left(g^q\right)_{B(x,r)}\right)^{\frac{p(x)}{q}} \leq \frac{1}{\mu(B(x,r))}\left(1+\rho_{t(\cdot)}(\lambda)\right)+\lambda^{t(x)},\\&\left(\beta \left(\left|u\right|^q\right)_{B(x,r)}\right)^{\frac{p(x)}{q}} \leq \frac{1}{\mu(B(x,r))}\left(1+\rho_{t(\cdot)}(\lambda)\right)+\lambda^{t(x)}.
			\end{align*}
			Hence, by inequality \eqref{nier1} we get
			\begin{align*}
				\int_E \left|u(x)\right|^{p(x)}\mbox{d}\mu(x)  \leq 2\cdot 4^{p^+\left(1+\frac{1}{q}\right)}\left[ r^{p^- s^-} +  \frac{1+\rho_{t(\cdot)}(\lambda)}{\beta^{\frac{p^+}{q}}}\left(\int_E \frac{1}{\mu(B(x,r))} \mbox{d}\mu(x)+ \int_E \lambda^{t(x)}\mbox{d}\mu(x)\right)\right].
			\end{align*}
		Since $\mu$ is integrable and $\rho_{t(\cdot)}(\lambda)<\infty$, we conclude that the family $\mathcal{F}$ is $p(\cdot)$-equi-integrable and moreover \eqref{zanik} holds, which completes the proof.
		\end{proof}
		
		Now we are ready to prove Theorem \ref{zwarteskonczone}. Since by Proposition \ref{pomiedzy} $(v)$ and $(vii)$ we have continuous embeddings
		\begin{equation*}
			M^{s(\cdot)}_{p(\cdot),q(\cdot)}(X,d,\mu) \hookrightarrow M^{s(\cdot),p(\cdot)}(X,d,\mu), \hspace{10mm} N^{s(\cdot)}_{p(\cdot),q(\cdot)}(X,d,\mu) \hookrightarrow M^{s(\cdot)/2,p(\cdot)}(X,d,\mu),
		\end{equation*}
		it suffices to prove that embedding of $M^{s(\cdot),p(\cdot)}(X,d,\mu)$ into $L^{p(\cdot)}(X,\mu)$ is compact for every $p,s\in \mathcal{P}_b(X)$.
	 Let $\mathcal{F} \subseteq M^{s(\cdot),p(\cdot)}(X,d,\mu)$ be any bounded set in $M^{s(\cdot),p(\cdot)}(X,d,\mu)$. We shall prove that $\mathcal{F}$ is totally bounded in $L^{p(\cdot)}(X,\mu)$. Let $\varepsilon \in (0,\infty)$ be fixed. It suffices to find $\varepsilon$-net for $\mathcal{F}$ in $L^{p(\cdot)}(X,\mu)$. By Proposition \ref{uniintegrable} we obtain that we can find $x_0\in X$ and $R>0$ such that 
	 \begin{equation*}
	 	\sup_{u\in \mathcal{F}} \int_{X\setminus B(x_0,R)} \left|u(x)\right|^{p(x)}\mbox{d}\mu(x)< \frac{\varepsilon}{2}.
	 \end{equation*}
	 Now, let us consider measure $\nu:= \mu \measurerestr B(x_0,R)$. Then, obviously $\nu$ is finite and $\nu \leq \mu$. Since $M^{s(\cdot),p(\cdot)}(X,d,\mu) \hookrightarrow M^{s(\cdot),p(\cdot)}(X,d,\nu)$, by Theorem \ref{mierzalne} we know that the family $\mathcal{F}$ is totally bounded in $L^0(X,\nu)$. Moreover, since $\mathcal{F}$ is $p(\cdot)$-equi-integrable with respect to $\mu$, we conclude that it is also $p(\cdot)$-equi-integrable with respect to $\nu$. This allows us to apply Theorem \ref{hanson} and deduce that $\mathcal{F}$ is totally bounded in $L^{p(\cdot)}(X,\nu)$. Hence, there exist $n\in \mathbb N$ and the set $\left\{f_i\right\}_{i=1}^n\subseteq L^{p(\cdot)}(X,\nu)$ such that for every $u\in \mathcal{F}$ we can find $j\in \left\{1,\dots,n\right\}$ such that
	 \begin{equation}\label{nier}
	 	\int_X \left|u(x)-f_j(x)\right|^{p(x)}\mbox{d}\nu(x) < \frac{\varepsilon}{2}.
	 \end{equation}
	 For $i\in\left\{1,\dots,n\right\}$ let $g_i:=f_i \chi_{B(x_0,R)}$. We shall show that $\left\{g_i\right\}_{i=1}^n$ is desired $\varepsilon$-net in $L^{p(\cdot)}(X,\mu)$. It is straightforward that $\left\{g_i\right\}_{i=1}^n \subseteq L^{p(\cdot)}(X,\mu)$. Let us fix $u\in \mathcal{F}$ and take $j\in \left\{1,\dots,n\right\}$ such that \eqref{nier} is satisfied. Then
	 \begin{align*}
	 	\int_X \left|u(x)-g_j(x)\right|^{p(x)}\mbox{d}\mu(x) & = \int_{B(x_0,R)} \left|u(x)-f_j(x)\right|^{p(x)}\mbox{d}\mu(x)+\int_{X \setminus B(x_0,R)} \left|u(x)\right|^{p(x)}\mbox{d}\mu(x) \\ & =\int_X \left|u(x)-f_j(x)\right|\mbox{d}\nu(x)+\int_{X \setminus B(x_0,R)} \left|u(x)\right|^{p(x)}\mbox{d}\mu(x) \\ & < \frac{\varepsilon}{2}+\frac{\varepsilon}{2}=\varepsilon.
	 \end{align*}
	 Therefore, we have found $\varepsilon$-net for $\mathcal{F}$ in $L^{p(\cdot)}(X,\mu)$, which proves that $\mathcal{F}$ is totally bounded in $L^{p(\cdot)}(X,\mu)$ and the proof is done.
	\end{proof}	
	
	\begin{ex}
	Let $((0,\infty),d,\mu_{\beta})$, where $d=\left|\cdot- \cdot\right|$ is the Euclidean distance and $\mu_{\beta}$ is defined as $\mbox{d}\mu_{\beta}=e^{\varepsilon x^{\beta}}\mbox{d}x$ for any fixed $\beta \in (1,\infty)$ and $\varepsilon \in\left\{-1,1\right\}$. Then, for every exponents $p,s\in \mathcal{P}_b((0,\infty))$ and $q\in \mathcal{P}((0,\infty))$ such that $p\in \mathcal{P}_{\mu_{\beta}}^{\log}((0,\infty))$ we have the compact embeddings
	\begin{equation*}
		M^{s(\cdot)}_{p(\cdot),q(\cdot)}((0,\infty),d,\mu_{\beta}) \hookrightarrow \hookrightarrow L^{p(\cdot)}((0,\infty),\mu_{\beta}), \hspace{10mm} N^{s(\cdot)}_{p(\cdot),q(\cdot)}((0,\infty),d,\mu_{\beta}) \hookrightarrow \hookrightarrow L^{p(\cdot)}((0,\infty),\mu_{\beta}).
	\end{equation*}
	\end{ex}
	
	\begin{proof}
	From Example \ref{gaussowska} we know that $\mu_{\beta}$ is integrable. Therefore, the claim follows from Theorem \ref{zwarteskonczone}.
	\end{proof}
	
	For a converse, we shall prove that in certain class of metric measure spaces totally boundedness of $(X,d)$ is necessary for compact embeddings.
	
	\begin{tw}\label{konieczny1}
	Let $(X,d,\mu)$ be a metric measure space such that $\sup\left\{\mu(B(x,1)):x\in X\right\}<\infty$ and $h(r):=\inf\left\{\mu(B(x,r)): x\in X\right\}>0$ for all $r\in (0,1)$. Suppose that there exist $s,p,\beta\in \mathcal{P}_b(X)$, $q\in \mathcal{P}(X)$ satisfying conditions $s^+ <1$ and $q^-<\infty$ or $s^+ \leq 1$ and $q^-=\infty$ such that at least one of the following embeddings is compact
	\begin{equation*}
		M^{s(\cdot)}_{p(\cdot),q(\cdot)}(X,d,\mu) \hookrightarrow L^{\beta(\cdot)}(X,\mu) \hspace{10mm} 
		N^{s(\cdot)}_{p(\cdot),q(\cdot)}(X,d,\mu) \hookrightarrow L^{\beta(\cdot)}(X,\mu).
	\end{equation*}
	Then, the space $(X,d)$ is totally bounded.
	\end{tw}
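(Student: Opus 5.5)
Suppose $(X,d)$ is not totally bounded; I will exhibit a bounded sequence in the source space with no convergent subsequence in $L^{\beta(\cdot)}(X,\mu)$. By Lemma \ref{charakteryzacja}, either $\mu(X)=\infty$ or $h(r_0)=0$ for some $r_0$. The second alternative is excluded by the hypothesis $h(r)>0$ for $r\in(0,1)$, since $h$ is nondecreasing. I will not use this dichotomy further, but work directly with the failure of total boundedness. There is $r\in(0,1)$ and an infinite sequence $\{x_n\}\subseteq X$ with $d(x_n,x_m)\ge 4r$ for $n\neq m$: if every $r$-separated set with $r<1$ were finite, $X$ would be covered by finitely many balls of any radius. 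The balls $B_n:=B(x_n,2r)$ are then pairwise disjoint.

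On each ball I place a Lipschitz bump $u_n(x):=\max\{0,\,1-d(x,x_n)/r\}$. It takes values in $[0,1]$, is $(1/r)$-Lipschitz, vanishes outside $B(x_n,r)\subseteq B_n$, and equals $1$ at $x_n$. I also want $u_n\ge 1/2$ on $B(x_n,r/2)$. By Theorem \ref{lipschitz1}, which applies exactly under the assumed constraints on $s$ and $q$, each $u_n$ lies in $\dot M^{s(\cdot)}_{p(\cdot),q(\cdot)}$ and $\dot N^{s(\cdot)}_{p(\cdot),q(\cdot)}$. Its seminorm is bounded by
\begin{equation*}
C_{\textnormal{lip}}\max\{r^{-s^-},r^{-s^+}\}\,\|\chi_{B(x_n,r)}\|_{L^{p(\cdot)}}.
\end{equation*}
Since $\mu(B(x_n,r))\le M:=\sup_x\mu(B(x,1))<\infty$, Lemma \ref{modularnorma} gives $\|\chi_{B(x_n,r)}\|_{L^{p(\cdot)}}\le\max\{M^{1/p^-},M^{1/p^+}\}$. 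The same bound controls $\|u_n\|_{L^{p(\cdot)}}$, because $0\le u_n\le\chi_{B(x_n,r)}$. Hence $\{u_n\}$ is bounded in both $M^{s(\cdot)}_{p(\cdot),q(\cdot)}$ and $N^{s(\cdot)}_{p(\cdot),q(\cdot)}$.

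For the lower bound, take $n\neq m$. The functions $u_n$ and $u_m$ have disjoint supports, and $u_n\ge 1/2$ on $B(x_n,r/2)$. Therefore
\begin{equation*}
\rho_{\beta(\cdot)}(u_n-u_m)\ge\int_{B(x_n,r/2)}2^{-\beta(x)}\,\mbox{d}\mu\ge 2^{-\beta^+}h(r/2)>0.
\end{equation*}
This is the key point where $h>0$ enters. By Lemma \ref{modularnorma}, $\|u_n-u_m\|_{L^{\beta(\cdot)}}\ge\min\{c^{1/\beta^-},c^{1/\beta^+}\}$ with $c:=2^{-\beta^+}h(r/2)$, a positive constant independent of $n$ and $m$. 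So no subsequence of $\{u_n\}$ is Cauchy in $L^{\beta(\cdot)}$, contradicting compactness of either embedding.

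The main step to verify carefully is the first one: producing an infinite separated sequence at a scale $r<1$ from the failure of total boundedness. This is standard, since a maximal $r$-separated set is an $r$-net, and one may shrink $r$ because separation at a larger scale implies separation at a smaller one. The remaining steps are applications of Theorem \ref{lipschitz1} and Lemma \ref{modularnorma}.
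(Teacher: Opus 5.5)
Your proposal is correct and follows essentially the same route as the paper. Both arguments take Lipschitz bumps centered at an infinite separated sequence at a scale below $1$, bound them uniformly in the source space via Theorem \ref{lipschitz1} and $\sup_x \mu(B(x,1))<\infty$, and separate them uniformly in $L^{\beta(\cdot)}$ using $h>0$. The differences are cosmetic. The paper phrases the argument as finiteness of every $4\delta$-separated set with $\delta\in(0,1/2)$ and uses bumps equal to $1$ on $B(x_k,\delta)$. You build the separated sequence directly and use a tent function that is at least $1/2$ on $B(x_n,r/2)$.
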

	
	\begin{proof}
	Let $A\in \left\{M,N\right\}$ and let us assume that embedding of $A^{s(\cdot)}_{p(\cdot),q(\cdot)}(X,d,\mu)$ into $L^{\beta(\cdot)}(X,\mu)$ is compact. It is enough to show that for every $\delta\in (0,1/2)$ every $4\delta$-separated subset of $X$ is finite. Let us fix $\delta \in (0,1/2)$ and let $S \subseteq X$ be any $\delta$-separated subset. Since $X$ is separable, we have $S=\left\{x_k\right\}_{k=1}^{\infty}$ for some $x_k \in X$, where $k\in \mathbb N$. For each $k\in \mathbb N$ we define the function $\phi_k : X \to [0,1]$ as
	\begin{equation*}
		\phi_k(x):= \left\{\begin{array}{ll} 1,& \textnormal{ for } x\in B(x_k,\delta),\\ \displaystyle 2-\frac{d(x,x_k)}{\delta},& \textnormal{ for } x\in B(x_k,2\delta) \setminus B(x_k,\delta),\\ 0,& \textnormal{ for } x\in X \setminus B(x_k,2\delta).\end{array}  \right.
	\end{equation*}
	Then, $\phi_k$ is $1/\delta$-Lipschitz and supported in $B(x_k,2\delta)$. Hence, by Theorem \ref{lipschitz1} and Lemma \ref{modularnorma} we get
	\begin{align*}
		\left\|\phi_k \right\|_{A^{s(\cdot)}_{p(\cdot),q(\cdot)}(X,d,\mu)}& = \left\|\phi_k\right\|_{L^{p(\cdot)}(X,\mu)} + \left\| \phi_k \right\|_{\dot{A}^{s(\cdot)}_{p(\cdot),q(\cdot)}(X,d,\mu)} \\ & \leq \left\|\chi_{B(x_k,2\delta)}\right\|_{L^{p(\cdot)}(X,\mu)}+C_{\textnormal{lip}}\max\left\{\delta^{-s_{B(x_k,2\delta)}^+}, \delta^{-s_{B(x_k,2\delta)}^-}\right\}\left\|\chi_{B(x_k,2\delta)}\right\|_{L^{p(\cdot)}(X,\mu)} \\ & \leq \max\left\{\mu(B(x_k,2\delta))^{\frac{1}{p^+}},\mu(B(x_k,2\delta))^{\frac{1}{p^-}}\right\}\left(1+C_{\textnormal{lip}} \max\left\{\delta^{-s^+},\delta^{-s^-}\right\}\right) \\ & \leq (M+1)^{\frac{1}{p^-}}\left(1+C_{\textnormal{lip}} \max\left\{\delta^{-s^+},\delta^{-s^-}\right\}\right),
	\end{align*}
	where $M:= \sup\left\{\mu(B(x,1)): x\in X\right\}<\infty$ and $C_{\textnormal{lip}}\in (0,\infty)$ is a constant from Theorem \ref{lipschitz1}. Therefore, the sequence $\left\{\phi_k\right\}_{k=1}^{\infty}$ is bounded in $A^{s(\cdot)}_{p(\cdot),q(\cdot)}(X,d,\mu)$ and our assumption implies that there exists subsequence $\left\{\phi_{k_m}\right\}_{m=1}^{\infty}$ which converges to some $\phi$ in $L^{\beta(\cdot)}(X,\mu)$. However, since $S$ is $4\delta$-separated, we have $B(x_k,2\delta)\cap B(x_l,2\delta)=\emptyset$ for all $k,l\in \mathbb N$ such that $k\neq l$. Hence for $k,l\in \mathbb N$, $k\neq l$ we have
	\begin{align*}
		\rho_{\beta(\cdot)}\left(\phi_k-\phi_l\right)&\geq \int_{B(x_k,2\delta) \cup B(x_l,2\delta)} \left|\phi_k(x)-\phi_l(x)\right|^{\beta(x)}\mbox{d}\mu(x)\\ &=\int_{B(x_k,2\delta)} \left|\phi_k(x)\right|^{\beta(x)}\mbox{d}\mu(x) + \int_{B(x_l,2\delta)} \left|\phi_l(x)\right|^{\beta(x)}\mbox{d}\mu(x)\\&\geq  \mu(B(x_k,\delta))+\mu(B(x_l,\delta))\geq 2h(\delta)>0,
	\end{align*}
	which implies that $\left\{\phi_k\right\}_{k=1}^{\infty}$ can not have convergent subsequence in $L^{\beta(\cdot)}(X,\mu)$. This contradiction implies that every $4\delta$-separated subset of $X$ is finite and the proof is complete.
	\end{proof}
	
	Now, we immediately obtain the following characterization of compact embeddings.
	
	\begin{tw}
	Let $(X,d,\mu)$ be a metric measure space such that $\sup\left\{\mu(B(x,1)): x\in X\right\}<\infty$ and $h(r):=\inf\left\{\mu(B(x,r)): x\in X\right\}>0$ for all $r\in (0,1)$. Then, the following statements are equivalent:
	\begin{enumerate}
		\item[(i)] $\mu(X)<\infty$;
		\item[(ii)] for every $p,s\in \mathcal{P}_b(X)$ and $q\in \mathcal{P}(X)$ we have the following compact embeddings
		\begin{equation*}
			M^{s(\cdot)}_{p(\cdot),q(\cdot)}(X,d,\mu) \hookrightarrow \hookrightarrow L^{p(\cdot)}(X,\mu), \hspace{10mm} 
			N^{s(\cdot)}_{p(\cdot),q(\cdot)}(X,d,\mu) \hookrightarrow \hookrightarrow L^{p(\cdot)}(X,\mu);
		\end{equation*}
		\item[(iii)] there exist $p,s,\beta\in \mathcal{P}_b(X)$ and $q\in \mathcal{P}(X)$ satisfying conditions $s^+ <1$ and $q^- < \infty$ or $s^+ \leq 1$ and $q^- =\infty$ such that at least one of the following compact embeddings hold
			\begin{equation*}
			M^{s(\cdot)}_{p(\cdot),q(\cdot)}(X,d,\mu) \hookrightarrow \hookrightarrow L^{\beta(\cdot)}(X,\mu), \hspace{10mm} 
			N^{s(\cdot)}_{p(\cdot),q(\cdot)}(X,d,\mu) \hookrightarrow \hookrightarrow L^{\beta(\cdot)}(X,\mu).
		\end{equation*}		
	\end{enumerate}
	\end{tw}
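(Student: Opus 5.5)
The plan is to prove the cycle of implications (i) $\Rightarrow$ (ii) $\Rightarrow$ (iii) $\Rightarrow$ (i), using Lemma \ref{charakteryzacja} to pass between finiteness of $\mu(X)$ and total boundedness of $(X,d)$.

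\textbf{(i) $\Rightarrow$ (ii).} Assume $\mu(X)<\infty$. The standing hypothesis gives $h(r)>0$ for all $r\in(0,1)$. For $r\ge 1$ we have $h(r)\ge h(1/2)>0$ by monotonicity of $r\mapsto \mu(B(x,r))$. Hence $h(r)>0$ for every $r\in(0,\infty)$. Lemma \ref{charakteryzacja} then shows that $(X,d)$ is totally bounded. Theorem \ref{zwarteskonczone} under condition (i) now yields both compact embeddings for all $p,s\in\mathcal{P}_b(X)$ and $q\in\mathcal{P}(X)$.

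\textbf{(ii) $\Rightarrow$ (iii).} This is immediate. Take, for instance, constant exponents $s\equiv 1/2$, $p\equiv\beta\equiv 2$ and $q\equiv 2$. These satisfy $s^+<1$ and $q^-<\infty$, and (ii) gives the required compact embeddings with $\beta=p$.

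\textbf{(iii) $\Rightarrow$ (i).} The hypotheses of (iii) are exactly those of Theorem \ref{konieczny1}, which is stated under the same assumptions $\sup_x\mu(B(x,1))<\infty$ and $h(r)>0$ for $r\in(0,1)$. That theorem gives that $(X,d)$ is totally bounded. Lemma \ref{charakteryzacja} (i) $\Rightarrow$ (ii) then gives $\mu(X)<\infty$.

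The only point requiring care is the extension of $h(r)>0$ from $r\in(0,1)$ to all $r>0$ in the first implication, which the monotonicity argument handles. The rest is a direct assembly of Theorems \ref{zwarteskonczone} and \ref{konieczny1} with Lemma \ref{charakteryzacja}, so I expect no real obstacle.
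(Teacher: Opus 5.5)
Your proposal is correct and follows the paper's proof: the same cycle of implications. In (i)$\Rightarrow$(ii) you use monotonicity of $h$ with Lemma \ref{charakteryzacja} and Theorem \ref{zwarteskonczone}, in (iii)$\Rightarrow$(i) Theorem \ref{konieczny1} followed by Lemma \ref{charakteryzacja}, and your explicit constant exponents in (ii)$\Rightarrow$(iii) spell out what the paper calls straightforward.
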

	
	\begin{proof}
	Implication $(ii) \implies (iii)$ is straightforward. If $(i)$ holds, then by Lemma \ref{charakteryzacja} and the fact that function $h$ is increasing we obtain that $(X,d)$ is totally bounded. Hence by Theorem \ref{zwarteskonczone} we get $(ii)$. On the other hand, if $(iii)$ holds, then Theorem \ref{konieczny1} yields that $(X,d)$ is totally bounded. In particular $\mu(X)<\infty$ and we got $(i)$.
	\end{proof}
	
	\subsubsection{Rellich-Kondrachov type compactness theorem}
	Now we shall apply the results from the previous subsections to prove the Rellich-Kondrachov type theorem (firstly for Haj{\l}asz-Sobolev spaces).
	\begin{tw}\label{rellich}
		Let $(X,d,\mu)$ be a totally bounded metric measure space such that $\mu$ is lower Ahlfors $Q(\cdot)$-regular for some $Q\in \mathcal{P}_b^{\log}(X)$. Fix $p,s\in \mathcal{P}_b^{\log}(X)$ such that $sp \ll Q$. Then, we have compact embedding
		\begin{equation*}
			M^{s(\cdot),p(\cdot)}(X,d,\mu) \hookrightarrow \hookrightarrow L^{q(\cdot)}(X,\mu)
		\end{equation*}
		for every $q\in \mathcal{P}_b(X)$ such that $p\leq q \ll \gamma$, where $\displaystyle \gamma:=Qp/(Q-sp)$.
	\end{tw}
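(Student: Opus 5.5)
The plan is to combine the compactness into $L^{p(\cdot)}$ from Theorem \ref{zwarteskonczone} with the continuous embedding into $L^{\gamma(\cdot)}$ from Theorem \ref{boundedembedding}, and then interpolate. Since $(X,d)$ is totally bounded, it is bounded, and $\mu(X)<\infty$ by Lemma \ref{charakteryzacja}. Lower Ahlfors $Q(\cdot)$-regularity gives $\mu(B(x,r))\geq b_1 r^{Q(x)}$ for $r\in(0,1]$; replacing $b_1$ by $\min\{b_1,1\}$, the hypotheses of Theorem \ref{boundedembedding} hold. Hence every bounded family $\mathcal{F}\subseteq M^{s(\cdot),p(\cdot)}(X,d,\mu)$ is bounded in $L^{\gamma(\cdot)}(X,\mu)$, say by $K$. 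By Theorem \ref{zwarteskonczone}(i), taking $q=\infty$ and using Proposition \ref{pomiedzy}(ii) to identify $M^{s(\cdot)}_{p(\cdot),\infty}$ with $M^{s(\cdot),p(\cdot)}$, the family $\mathcal{F}$ is totally bounded in $L^{p(\cdot)}(X,\mu)$.

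To pass to $L^{q(\cdot)}$, I would use Theorem \ref{hanson} with exponent $q$, on the finite measure space $(X,\mu)$. This requires two conditions.

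\emph{Condition (i): total boundedness in $L^0$.} This follows from Theorem \ref{mierzalne}, or directly from total boundedness in $L^{p(\cdot)}$.

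\emph{Condition (ii): $q(\cdot)$-equi-integrability.} This is the main step. Since $q\ll\gamma$, set $t:=\gamma/q$, so that $t^->1$. For a measurable $A$, write
\begin{equation*}
\int_A |u|^{q(x)}\,\mbox{d}\mu=\int_X |u|^{q(x)}\chi_A\,\mbox{d}\mu\leq 2\left\||u|^{q}\right\|_{L^{t(\cdot)}(X,\mu)}\left\|\chi_A\right\|_{L^{t'(\cdot)}(X,\mu)}
\end{equation*}
by Proposition \ref{holder1}. By Lemma \ref{modularnorma} and Lemma \ref{skalowanie}, the first factor is controlled by $\max\{K^{q^+},K^{q^-}\}$ up to the normalization. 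Note that $\gamma$ is bounded because $sp\ll Q$ and $Q$ is bounded, so $t\in\mathcal{P}_b$. Again by Lemma \ref{modularnorma}, $\|\chi_A\|_{L^{t'(\cdot)}}\leq \max\{\mu(A)^{1/(t')^+},\mu(A)^{1/(t')^-}\}$, where $(t')^+<\infty$ since $t^->1$. This tends to $0$ uniformly as $\mu(A)\to 0$, so the family is uniformly equi-integrable. Theorem \ref{hanson} then yields total boundedness of $\mathcal{F}$ in $L^{q(\cdot)}(X,\mu)$, which is the claimed compactness.

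Two technical points need checking. First, the Hölder/modular bookkeeping: one normalizes $\|u\|_{L^{\gamma(\cdot)}}\leq 1$ by scaling, which is harmless since $\mathcal{F}$ is bounded. Second, $u\in L^{q(\cdot)}$ at all, which follows from Lemma \ref{wlozenielp} because $\gamma\gg q$ and $\mu(X)<\infty$.

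The hypothesis $p\leq q$ is not really needed. It only ensures that $q$ lies between $p$ and $\gamma$, and the argument above already covers that range.
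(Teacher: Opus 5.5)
Your argument is correct, but it takes a different route from the paper. The paper first uses Theorem \ref{zwarteskonczone} to extract a subsequence converging in $L^{p(\cdot)}$ and bounds the limit in $L^{\gamma(\cdot)}$. It then interpolates: splitting $|v_k|^{q}=|v_k|^{q_1}|v_k|^{q_2}$ and applying Hölder with $w=(\gamma-p)/(\gamma-q)$ gives $\rho_{q(\cdot)}(v_k)\le 2\max\{\rho_{p(\cdot)}(v_k)^{1/w^-},\rho_{p(\cdot)}(v_k)^{1/w^+}\}$. This step needs $p\ll q$, so that $w'=(\gamma-p)/(q-p)$ is bounded, and the case $p\le q$ is recovered separately by passing through an exponent $q+\varepsilon\ll\gamma$ and Lemma \ref{wlozenielp}. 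You instead apply the Vitali-type criterion (Theorem \ref{hanson}) directly at the target exponent $q$. Total boundedness in $L^0$ comes from Theorem \ref{mierzalne}, and $q(\cdot)$-equi-integrability comes from the uniform $L^{\gamma(\cdot)}$ bound via Hölder with $t=\gamma/q$, $t^->1$. This removes the interpolation and the case distinction, and makes your appeal to Theorem \ref{zwarteskonczone} in the first paragraph unnecessary. As you observe, it also yields compactness for every $q\in\mathcal{P}_b(X)$ with $q\ll\gamma$, without the restriction $p\le q$. The paper's interpolation estimate, on the other hand, is quantitative and localizable. It is exactly the mechanism reused (on balls) in Proposition \ref{cauchy} for the Berestycki--Lions theorem, where $\mu(X)=\infty$ and Theorem \ref{hanson} cannot be applied globally. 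One small correction: Lemma \ref{skalowanie} is stated only for a constant power, so for variable $q$ you should bound $\||u|^{q}\|_{L^{t(\cdot)}}$ via the identity $\rho_{t(\cdot)}(|u|^{q})=\rho_{\gamma(\cdot)}(u)$ together with Lemma \ref{modularnorma}, just as the paper does with $\rho_{w'(\cdot)}(|v_k|^{q_2})=\rho_{\gamma(\cdot)}(v_k)$.
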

	
	\begin{proof}
		At first, assume that $p\ll q\ll \gamma$. Let $\left\{u_n\right\}_{n=1}^{\infty}\subseteq M^{s(\cdot),p(\cdot)}(X,d,\mu)$ be a bounded sequence. By the virtue of Theorem \ref{boundedembedding} $(i)$ and Proposition \ref{pomiedzy} $(ii)$ we conclude that there exists $C>0$ such that
		\begin{equation}\label{ogran}
			\sup_{n\in \mathbb N} \left\|u_n\right\|_{L^{\gamma(\cdot)}(X,\mu)}\leq C.
		\end{equation}
		On the other hand, Theorem \ref{zwarteskonczone} yields that $\left\{u_n\right\}_{n=1}^{\infty}$ has subsequence $\left\{u_{n_k}\right\}_{k=1}^{\infty}$ which converges to some $u$ in $L^{p(\cdot)}(X,\mu)$. Moreover, we may assume that $u_{n_k} \to u$ almost everywhere. From \eqref{ogran} we know that $\left\|u\right\|_{L^{\gamma(\cdot)}(X,\mu)} \leq C$. Let $v_k:=\left(u_{n_k}-u\right)\left(2C\kappa_{\gamma(\cdot)}\right)^{-1}$, where $\kappa_{\gamma(\cdot)}$ is a constant from triangle inequality for the quasi-norm $\left\|\cdot\right\|_{L^{\gamma(\cdot)}}$. Then $v_k \to 0$ in $L^{p(\cdot)}(X,\mu)$ and $\left\|v_k\right\|_{L^{\gamma(\cdot)}(X,\mu)} \leq 1$ for every $k\in \mathbb N$.
		
		Let
		\begin{align*}
			q_1=\frac{p\left(\gamma-q\right)}{\gamma-p}, \hspace{4mm} q_2=\frac{\gamma\left(q-p\right)}{\gamma-p},\hspace{4mm}
			w=\frac{\gamma-p}{\gamma-q}, \hspace{4mm} w'=\frac{\gamma-p}{q-p}.
		\end{align*}
		By H\"older inequality (Proposition \ref{holder1}) applied for the exponents $w$ and $w'$ we get that
		\begin{equation*}
			\int_X \left|v_k(x)\right|^{q(x)}\mbox{d}\mu(x) = \int_X \left|v_k(x)\right|^{q_1(x)}\left|v_k(x)\right|^{q_2(x)}\mbox{d}\mu(x) \leq 2\left\| \left|v_k\right|^{q_1} \right\|_{L^{w(\cdot)}(X,\mu)} \left\| \left|v_k\right|^{q_2} \right\|_{L^{w'(\cdot)}(X,\mu)}
		\end{equation*}
		and in particular $v_k \in L^{q(\cdot)}(X,\mu)$ for each $k\in \mathbb N$.	Let us fix $k\in \mathbb N$. Since $\left\|v_k\right\|_{L^{\gamma(\cdot)}(X,\mu)} \leq 1$, by Lemma \ref{modularnorma} we get that
		\begin{equation*}
			\rho_{w'(\cdot)}\left(\left|v_k\right|^{q_2}\right)=\rho_{\gamma(\cdot)}\left(v_k\right)\leq 1.
		\end{equation*}
		Hence, again using Lemma \ref{modularnorma} we get $\left\| \left|v_k\right|^{q_2} \right\|_{L^{w'(\cdot)}(X,\mu)} \leq 1$ and therefore by Lemma \ref{modularnorma}
		\begin{align*}
			\int_X \left|v_k(x)\right|^{q(x)}\mbox{d}\mu(x)&\leq 2\left\| \left|v_k\right|^{q_1} \right\|_{L^{w(\cdot)}(X,\mu)}\leq 2\max\left\{ \rho_{w(\cdot)}\left(\left|v_k\right|^{q_1}\right)^{\frac{1}{w^-}},  \rho_{w(\cdot)}\left(\left|v_k\right|^{q_1}\right)^{\frac{1}{w^+}}\right\} \\ & =2\max\left\{ \rho_{p(\cdot)}(v_k)^{\frac{1}{w^-}},\rho_{p(\cdot)}(v_k)^{\frac{1}{w^+}}\right\} \stackrel{k\to \infty}{\longrightarrow} 0,
		\end{align*} 
		which implies that $v_k \to 0$ in $L^{q(\cdot)}(X,\mu)$ and the proof is complete in the case $p \ll q \ll \gamma$.
		
		Assume now that $p \leq q \ll \gamma$ and fix $\varepsilon \in (0,\infty)$ such that $t:=q+\varepsilon \ll \gamma$. Then, $p \ll t \ll \gamma$ and by the first part of the proof we have
		\begin{equation*}
			M^{s(\cdot),p(\cdot)}(X,d,\mu) \hookrightarrow \hookrightarrow L^{t(\cdot)}(X,\mu).
		\end{equation*}
		Since by Lemma \ref{wlozenielp} there holds the embedding
		\begin{equation*}
			L^{t(\cdot)}(X,\mu) \hookrightarrow L^{q(\cdot)}(X,\mu)
		\end{equation*}
		we obtain that
		\begin{equation*}
			M^{s(\cdot),p(\cdot)}(X,d,\mu) \hookrightarrow \hookrightarrow L^{q(\cdot)}(X,\mu),
		\end{equation*}
		which completes the proof.
	\end{proof}
	
	The following example shows that we can not expect that the Sobolev embedding into the Lebesgue space with the exponent $q=\gamma$ is compact, although other assumptions of Theorem \ref{rellich} are satisfied. 
	
	\begin{ex}\label{przyklad1}
		Let $n\in \mathbb N$. Consider the metric measure space $\left(B, d, \lambda\right)$, where $B=B(0,3)\subseteq \mathbb R^n$, $d:=\left|\cdot-\cdot\right|$ denotes the Euclidean distance and $\lambda$ is the Lebesgue measure. Suppose that $s,p\in \mathcal{P}_b^{\log}( B)$ are such that $sp \ll n$ and $s^+ \leq 1$. Then, the embedding  \begin{equation*}
			M^{s(\cdot),p(\cdot)}( B,d,\lambda) \hookrightarrow L^{\gamma(\cdot)}(B,\lambda)
		\end{equation*}
		is not compact, where $\displaystyle \gamma:=np/(n-sp)$.
	\end{ex}
	\begin{proof}
		Let us define the function $u: \mathbb R^n \to \mathbb R$ as follows
		\begin{equation*}
			u(x):= \left\{\begin{array}{ll} 1, & \textnormal{ for } x\in B(0,1),\\ 2-\left|x\right|, & \textnormal{ for } x\in B(0,2)\setminus B(0,1),\\ 0, & \textnormal{ for } x\in \mathbb R^n \setminus B(0,2).\end{array}\right.
		\end{equation*}
		Now, for fixed $k\geq 2$ let we define $u_k:=k^{\frac{n}{\gamma(0)}}u(kx)$. By the virtue of Lemma \ref{cuttoff} applied for $\tau:=k$ and $\sigma=n/\gamma(0)$ we obtain that the function $g_k(x):=2k^{\frac{n}{\gamma(0)}+s(s)}\chi_{B(0,2/k)}(x)$ is a scalar $s(\cdot)$-gradient of $u_k$. We shall prove that the sequence $\left\{u_{k}\right\}_{k=2}^{\infty}$ is bounded in $M^{s(\cdot),p(\cdot)}(B,d,\lambda)$. By the log-H\"older continuity of $p$ for all $x\in B(0,2/k)$ we have
		\begin{align*}
			p(x)&\leq p(0)+\frac{C_{\log}(p)}{\log(e+1/\left|x\right|)} \leq p(0)+\frac{C_{\log}(p)}{\log(e+k/2)}.
		\end{align*}
		Moreover, the function 
		\begin{equation*}
			(0,1)\ni t \mapsto \left(\frac{1}{t}\right)^{\left[\log\left(e+(2t)^{-1}\right)\right]^{-1}}
		\end{equation*}
		is bounded from above by some constant $M\in (0,\infty)$.
		Hence, since $\gamma(0) > p(0)$, we estimate
		\begin{align*}
			\rho_{p(\cdot)}\left(u_{k}\right) &\leq \int_{B(0,2/k)} k^{\frac{np(x)}{\gamma(0)}}\mbox{d}x \leq \left(\frac{2}{k}\right)^n \omega_n k^{\frac{n}{\gamma(0)}\left( p(0)+C_{\log}(p)\left[\log\left(e+k/2\right)\right]^{-1}\right)} \\ & \leq 2^n \omega_n k^{n\left(\frac{p(0)}{\gamma(0)}-1\right)}M^{nC_{\log}(p)/\gamma(0)}\leq 2^n \omega_n M^{nC_{\log}(p)/\gamma(0)},
		\end{align*}
		where $\omega_n$ denotes the $n$-dimensional Lebesgue measure of unit ball in $\mathbb R^n$. Moreover, by the definition of $\gamma$ and the log-H\"older continuity of $p$ and $sp$ we obtain
		\begin{align*}
			\rho_{p(\cdot)}\left(g_k\right)& \leq 2^{p^+}\int_{B(0,2/k)}k^{\frac{np(x)}{\gamma(0)}+s(x)p(x)}\mbox{d}x \\ & \leq 2^{p^+} \left(\frac{2}{k}\right)^n \omega_n k^{\frac{n}{\gamma(0)}\left[p(0)+C_{\log}(p)\left(\log\left(e+k/2\right)\right)^{-1}\right]}\cdot k^{s(0)p(0)+C_{\log}(sp)\left[\log\left(e+k/2\right)\right]^{-1}}  \\ & \leq 2^{n+p^+} \omega_n k^{\frac{np(0)}{\gamma(0)}+s(0)p(0)-n} \cdot k^{\left(\frac{nC_{\log}(p)}{\gamma(0)}+C_{\log}(sp)\right)\left[\log\left(e+k/2\right)\right]^{-1}} \\ & \leq 2^{n+p^+} \omega_n M^{\frac{nC_{\log}(p)}{\gamma(0)}+C_{\log}(sp)}.
		\end{align*}
		Hence, the sequence $\left\{u_{k}\right\}_{k=2}^{\infty}$ is bounded in $M^{s(\cdot),p(\cdot)}(B,d,\lambda)$.
		On the other hand,
		\begin{align*}
			\rho_{\gamma(\cdot)}\left(u_{k}\right) & \geq \int_{B(0,1/k)} k^{\frac{n\gamma(x)}{\gamma(0)}}\mbox{d}x.
		\end{align*}
		Let $x\in B(0,1/k)$. Using the log-H\"older continuity of $\gamma$ we get
		\begin{equation*}
			1\leq k^{\left|\gamma(x)-\gamma(0)\right|}\leq\left(e+k\right)^{\left|\gamma(x)-\gamma(0)\right|}\leq e^{C_{\log}(\gamma)}.
		\end{equation*}
		Thus,
		\begin{equation}\label{ostatnie}
			\rho_{\gamma(\cdot)}\left(u_{k}\right) \geq \int_{B(0,1/k)} k^{n}e^{-\frac{n}{\gamma(0)}C_{\log}(\gamma)}\mbox{d}x=\omega_n e^{-\frac{n}{\gamma(0)}C_{\log}(\gamma)}>0.
		\end{equation}
		Notice that $u_{k} \stackrel{k \to \infty}{\longrightarrow} 0$ almost everywhere in $\mathbb R^n$. Therefore, if there existed subsequence $\left\{u_{k_m}\right\}_{m=1}^{\infty}$ convergent in $L^{\gamma(\cdot)}(B,\lambda)$, it would converge to $0$. This is an obvious contradiction with \eqref{ostatnie} and the embedding of $M^{s(\cdot),p(\cdot)}(B,d,\lambda)$ into $L^{\gamma(\cdot)}(B,\lambda)$ cannot be compact.
	\end{proof}

	Using Theorem \ref{rellich} we obtain the following version of the Rellich-Kondrachov theorem for Besov and Triebel-Lizorkin spaces.
	\begin{tw}
		Suppose that $(X,d,\mu)$ is a totally bounded metric measure space and $\mu$ is lower Ahlfors $Q(\cdot)$-regular for some $Q\in \mathcal{P}_b^{\log}(X)$. Fix $q\in \mathcal{P}(X)$ and $p,s\in \mathcal{P}_b^{\log}(X)$ such that $sp \ll Q$.	Then, the following statements are true.
		\begin{enumerate}
			\item[(i)] For every $\beta\in \mathcal{P}_b(X)$ we have compact embedding
			\begin{equation*}
				M^{s(\cdot)}_{p(\cdot),q(\cdot)}(X,d,\mu) \hookrightarrow \hookrightarrow L^{\beta(\cdot)}(X,\mu)
			\end{equation*}
			if $p\leq \beta \ll \gamma$, where $\displaystyle \gamma:=Qp/(Q-sp)$.
			\item[(ii)] For every $\beta\in \mathcal{P}_b(X)$ such that $q\leq p$ we have compact embedding
			\begin{equation*}
				N^{s(\cdot)}_{p(\cdot),q(\cdot)}(X,d,\mu) \hookrightarrow \hookrightarrow L^{\beta(\cdot)}(X,\mu)
			\end{equation*}
			if $p\leq \beta \ll \gamma$, where $\displaystyle \gamma:=Qp/(Q-sp)$.
			\item[(iii)] For every $\beta\in \mathcal{P}_b(X)$ and $t\in \mathcal{P}_b^{\log}(X)$ such that $t\ll s$ we have compact embedding
			\begin{equation*}
				N^{s(\cdot)}_{p(\cdot),q(\cdot)}(X,d,\mu) \hookrightarrow \hookrightarrow L^{\beta(\cdot)}(X,\mu),
			\end{equation*}
			if $p\leq \beta \ll \sigma$, where $\displaystyle \sigma:=Qp/(Q-tp)$.
		\end{enumerate}
	\end{tw}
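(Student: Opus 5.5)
The plan is to reduce each of the three statements to the Rellich--Kondrachov theorem for Haj{\l}asz--Sobolev spaces (Theorem \ref{rellich}), using the continuous embeddings of Proposition \ref{pomiedzy}. A composition of a continuous embedding followed by a compact one is compact, so it suffices to exhibit, in each case, a continuous embedding into a suitable space $M^{\tilde s(\cdot),p(\cdot)}(X,d,\mu)$. The exponent $\tilde s$ must be log-H\"older, satisfy $\tilde s p\ll Q$, and have Sobolev exponent $Qp/(Q-\tilde sp)$ equal to $\gamma$ (respectively $\sigma$).

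For $(i)$, Proposition \ref{pomiedzy} $(v)$ gives
\begin{equation*}
\dot M^{s(\cdot)}_{p(\cdot),q(\cdot)}\hookrightarrow \dot M^{s(\cdot),p(\cdot)}
\end{equation*}
with semi-norm bound one. Together with the common $L^{p(\cdot)}$ part of the quasi-norms, this yields
\begin{equation*}
M^{s(\cdot)}_{p(\cdot),q(\cdot)}(X,d,\mu)\hookrightarrow M^{s(\cdot),p(\cdot)}(X,d,\mu).
\end{equation*}
Theorem \ref{rellich} then applies directly with $\beta$ in place of $q$, since $p\le\beta\ll\gamma$ and $\beta\in\mathcal P_b(X)$.

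For $(ii)$, the hypothesis $q\le p$ lets us use Proposition \ref{pomiedzy} $(vi)$:
\begin{equation*}
\dot N^{s(\cdot)}_{p(\cdot),q(\cdot)}\hookrightarrow \dot M^{s(\cdot),p(\cdot)}.
\end{equation*}
Hence $N^{s(\cdot)}_{p(\cdot),q(\cdot)}\hookrightarrow M^{s(\cdot),p(\cdot)}$, and Theorem \ref{rellich} finishes the argument exactly as in $(i)$.

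For $(iii)$, Proposition \ref{pomiedzy} $(vii)$ gives $N^{s(\cdot)}_{p(\cdot),q(\cdot)}\hookrightarrow M^{t(\cdot),p(\cdot)}$ whenever $s\gg t$. Before applying Theorem \ref{rellich} with $t$ in place of $s$, its hypotheses must be checked:
\begin{itemize}
\item $t\in\mathcal P_b^{\log}(X)$, which is assumed.
\item $tp\ll Q$. Since $t\le s$ pointwise, $Q-tp\ge Q-sp$, and $(Q-sp)^->0$.
\end{itemize}
I also need $t^->0$, which is part of $t\in\mathcal P(X)$. Theorem \ref{rellich} then gives $M^{t(\cdot),p(\cdot)}\hookrightarrow\hookrightarrow L^{\beta(\cdot)}$ for $p\le\beta\ll\sigma$, where $\sigma=Qp/(Q-tp)$, and composing the two embeddings completes $(iii)$.

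The only step needing care is verifying the log-H\"older hypothesis on $p$ in Theorem \ref{rellich}. The theorem requires $p,s\in\mathcal P_b^{\log}(X)$, meaning $1/p$ is locally log-H\"older. This is assumed in the statement, and the regularity of $tp$ used inside the Sobolev embedding follows from $t,p\in\mathcal P^{\log}_b$, so I do not expect a genuine obstacle. The whole proof is a bookkeeping composition of embeddings.
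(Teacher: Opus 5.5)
Your proposal is correct and follows the paper's proof: each part composes the continuous embedding from Proposition \ref{pomiedzy} $(v)$, $(vi)$, $(vii)$, respectively, with the compact embedding of Theorem \ref{rellich}. Your explicit check in part $(iii)$ that $tp\ll Q$ follows from $t\ll s$ and $sp\ll Q$, which is needed to apply Theorem \ref{rellich} with $t$ in place of $s$, is a detail the paper leaves implicit.
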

	
	\begin{proof} 
		The statement $(i)$ follows immediately from Proposition \ref{pomiedzy} $(v)$ and Theorem \ref{rellich}. Then, $(ii)$ follows from Proposition \ref{pomiedzy} $(vi)$ and Theorem \ref{rellich}. Finally, the statement $(iii)$ follows from Proposition \ref{pomiedzy} $(vii)$ and Theorem \ref{rellich}.
	\end{proof} 
	
	We also obtain similar theorem in the case of embedding into H\"older spaces.
	
	\begin{tw}
		Let $(X,d,\mu)$ be a totally bounded metric measure space such that $\mu$ is lower Ahlfors $Q(\cdot)$-regular for some $Q\in \mathcal{P}_b^{\log}(X)$. Moreover, let $s,p\in \mathcal{P}_b^{\log}(X)$ be such that $sp \gg Q$. Then, the following statements are true.
		\begin{enumerate}
			\item[(i)] For every $q\in \mathcal{P}(X)$ we have compact embedding
			\begin{equation*}
				M^{s(\cdot)}_{p(\cdot),q(\cdot)}(X,d,\mu) \hookrightarrow \hookrightarrow C^{0,\lambda(\cdot)}(X,d),
			\end{equation*}
			for every $\lambda \in \mathcal{P}_b(X)$ such that $\lambda \ll \alpha$, where $\alpha=s-Q/p$.
			\item[(ii)] For every $q\in \mathcal{P}(X)$ such that $q\leq p$ we have compact embedding
			\begin{equation*}
				N^{s(\cdot)}_{p(\cdot),q(\cdot)}(X,d,\mu) \hookrightarrow \hookrightarrow C^{0,\lambda(\cdot)}(X,d),
			\end{equation*}
			for every $\lambda \in \mathcal{P}_b(X)$ such that $\lambda \ll \alpha$, where $\alpha=s-Q/p$.
			\item[(iii)] For every $q\in \mathcal{P}(X)$ and $t\in \mathcal{P}_b^{\log}(X)$ such that $Q\ll tp \ll sp$ we have compact embedding
			\begin{equation*}
				M^{s(\cdot)}_{p(\cdot),q(\cdot)}(X,d,\mu) \hookrightarrow \hookrightarrow C^{0,\lambda(\cdot)}(X,d),
			\end{equation*}
			for every $\lambda \in \mathcal{P}_b(X)$ such that $\lambda \ll \beta$, where $\beta=t-Q/p$.
		\end{enumerate}
	\end{tw}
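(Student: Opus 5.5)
The plan is to compose the continuous Hölder embeddings of Theorem \ref{ghold} with the compact Hölder-to-Hölder embedding of Theorem \ref{holdcomp}. The latter applies because $(X,d)$ is totally bounded by assumption. A composition of a bounded operator followed by a compact one is compact, so in each case it suffices to exhibit a continuous embedding into some $C^{0,\alpha'(\cdot)}(X,d)$ with $\alpha'\gg\lambda$. Lower Ahlfors $Q(\cdot)$-regularity is exactly the hypothesis needed in Theorem \ref{ghold}.

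For $(i)$, fix $q\in\mathcal{P}(X)$. Theorem \ref{ghold} $(i)$ gives $M^{s(\cdot)}_{p(\cdot),q(\cdot)}(X,d,\mu)\hookrightarrow C^{0,\alpha(\cdot)}(X,d)$ with $\alpha=s-Q/p$. To apply Theorem \ref{holdcomp} with the pair $(\alpha,\lambda)$, we must check that $\alpha,\lambda\in\mathcal{P}_b(X)$. First, $\alpha$ is bounded because $s$ is bounded and $Q/p\le Q^+/p^-$. Second, $\alpha^->0$: from $sp\gg Q$ we get $(sp-Q)^-=:\eta>0$, hence $\alpha=(sp-Q)/p\ge \eta/p^+>0$. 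Since $\lambda\ll\alpha$, Theorem \ref{holdcomp} yields $C^{0,\alpha(\cdot)}\hookrightarrow\hookrightarrow C^{0,\lambda(\cdot)}$, and $(i)$ follows by composition. Part $(ii)$ is identical, using Theorem \ref{ghold} $(ii)$ under the assumption $q\le p$.

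For $(iii)$, Theorem \ref{ghold} $(iii)$ gives an embedding into $C^{0,\beta(\cdot)}$ with $\beta=t-Q/p$, but it is stated for the Besov space $N^{s(\cdot)}_{p(\cdot),q(\cdot)}$, whereas the statement concerns $M^{s(\cdot)}_{p(\cdot),q(\cdot)}$. I would handle this in one of two ways. One option is to note that $Q\ll tp\ll sp$ means $t\ll s$ with $tp\gg Q$, and that $t\in\mathcal{P}^{\log}_b$. Theorem \ref{ghold} $(i)$ applied with $t$ in place of $s$ then only requires $M^{s(\cdot)}_{p(\cdot),q(\cdot)}\hookrightarrow M^{t(\cdot)}_{p(\cdot),q(\cdot)}$, which is Proposition \ref{pomiedzy} $(iv)$. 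The other option is the simpler route: use $(i)$ directly, since $\beta=t-Q/p\ll s-Q/p=\alpha$, so $\lambda\ll\beta$ implies $\lambda\ll\alpha$ and $(iii)$ is a consequence of $(i)$. In either case, positivity of $\beta^-$ follows from $tp\gg Q$ exactly as for $\alpha$, and Theorem \ref{holdcomp} is applied to the pair $(\beta,\lambda)$ or $(\alpha,\lambda)$.

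The main obstacle is only bookkeeping. One must verify that the limiting Hölder exponents genuinely lie in $\mathcal{P}_b(X)$, that is, that they are bounded with strictly positive essential infimum, so that Theorem \ref{holdcomp} applies. Beyond that, the only point needing care is the mismatch in $(iii)$ between the $M$ space in the statement and the $N$ space in Theorem \ref{ghold} $(iii)$, which is resolved by reducing to $(i)$.
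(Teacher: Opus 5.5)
Your proposal is correct and takes the same approach as the paper, whose proof simply composes the continuous embeddings of Theorem \ref{ghold} with the compact inclusion of Theorem \ref{holdcomp}; your checks that $\alpha,\beta\in\mathcal{P}_b(X)$ supply the bookkeeping the paper leaves implicit. Your reduction of (iii) to (i) via $\beta\ll\alpha$ correctly handles the statement as written for $M^{s(\cdot)}_{p(\cdot),q(\cdot)}$, and if the Besov space $N^{s(\cdot)}_{p(\cdot),q(\cdot)}$ was intended there, composing Theorem \ref{ghold} $(iii)$ with Theorem \ref{holdcomp} gives it in the same way.
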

	
	\begin{proof}
		All the statements follow immediately from Theorems \ref{ghold} and \ref{holdcomp}.
	\end{proof}
	
		\subsection{Compact embeddings of group invariant Besov and Triebel-Lizorkin spaces}
	In this subsection we shall investigate the influence of isometry group action on the compact embeddings of invariant Haj{\l}asz-Besov and Haj{\l}asz-Triebel-Lizorkin spaces. In particular, we answer the open question posed in \cite{Górka}. Next, we deal with the necessary condition for compact embeddings of group invariant spaces. At the end of subsection we present a partial characterization of compact embeddings. 
	
	\subsubsection{Berestycki-Lions type theorem} In this subsection, we investigate sufficient conditions for compact embeddings of group invariant Sobolev, Besov and Triebel-Lizorkin spaces. Firstly, we shall start with proving the Berestycki-Lions type theorem  for fractional Haj{\l}asz-Sobolev spaces. The idea of its proof is inspired on the methods used in \cite{Gaczkowski, Górka}.

	\begin{tw}\label{zwartenieogr}
	
	Suppose that $(X,d,\mu)$ is a geometrically doubling metric measure space where $\mu$ satisfies the condition  $\sup\left\{\mu(B(x,1)): x \in X \right\}<\infty$. Moreover, assume that $\mu$ is lower Ahlfors $Q(\cdot)$-regular for some $Q\in \mathcal{P}_b^{\log}(X)$. Suppose that $H\subseteq \textnormal{Iso}_{\mu}(X,d)$ is such that there exist $x_0\in X$ and $r\in (0,\infty)$ satisfying
		\begin{equation*}
			\lim_{R\to \infty} \inf_{x\in X \setminus B(x_0,R)} M_H(x,r)=\infty.
		\end{equation*}
		Then, for all $p\in \mathcal{P}_b^{\log}(X)\cap \mathcal{P}_H(X)$, $s\in \mathcal{P}_b^{\log}(X)$ such that $s^+ \leq 1$ and $sp \ll Q$ we have the compact embedding
		\begin{equation*}
			M^{s(\cdot),p(\cdot)}_H(X,d,\mu) \hookrightarrow \hookrightarrow L^{q(\cdot)}(X,\mu)
		\end{equation*}
		for all $q\in \mathcal{P}_b(X)$ satisfying $p \ll q \ll \gamma$, where $\displaystyle \gamma=Qp/(Q-sp)$.
	\end{tw}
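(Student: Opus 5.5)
The plan is the standard Berestycki--Lions scheme: local compactness on bounded balls, plus uniform smallness of the tails that comes from $H$-invariance. Let $\{u_n\}$ be bounded in $M^{s(\cdot),p(\cdot)}_H(X,d,\mu)$, normalised so that $\|u_n\|_{L^{p(\cdot)}}+\|g_n\|_{L^{p(\cdot)}}\le 1$ for suitable $g_n\in\mathcal{D}^{s(\cdot)}(u_n)$. Since $L^{q(\cdot)}$ is quasi-Banach, it suffices to show that $\{u_n\}$ is totally bounded in $L^{q(\cdot)}(X,\mu)$. I would first record the global bound $\sup_n\|u_n\|_{L^{\gamma(\cdot)}(X,\mu)}<\infty$ from Theorem \ref{gsob} (i) with Proposition \ref{pomiedzy} (ii).

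\textbf{Step 1 (local compactness).} For fixed $R$, restrict to the ball $B(x_0,R)$ with the restricted metric and measure. This ball is totally bounded, since $X$ is geometrically doubling, and $\mu$ restricted to it is still lower Ahlfors $Q(\cdot)$-regular for radii up to a fixed scale. So Theorem \ref{rellich}, or equivalently Theorem \ref{zwarteskonczone} combined with the interpolation argument of its proof, gives a subsequence converging in $L^{q(\cdot)}(B(x_0,R))$. One point needs care here: lower regularity on a ball for small $r$ only holds relative to the ball, which has a boundary. I would avoid this by applying Theorem \ref{zwarteskonczone} (i) on the ball to obtain $L^{p(\cdot)}$-convergence, then interpolating between $p$ and $\gamma$ exactly as in the proof of Theorem \ref{rellich}, using the global $L^{\gamma(\cdot)}$ bound already obtained. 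A diagonal argument over $R\in\mathbb N$ then reduces the problem to the tail estimate.

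\textbf{Step 2 (tail estimate; the main obstacle).} I must show $\sup_n\rho_{q(\cdot)}(u_n\chi_{X\setminus B(x_0,R)})\to 0$ as $R\to\infty$. I would cover $X$ by balls $B(x_i,r)$ of the fixed radius $r$, using Lemma \ref{pokryciowy}, so that the balls $B(x_i,2\sigma r)$ have bounded overlap. On each ball, the Sobolev--Poincar\'e inequality (Theorem \ref{ciagle}) bounds $\|u\|_{L^{\gamma(\cdot)}(B(x_i,r))}$ by $C(\|g\|_{L^{p(\cdot)}(\sigma B_i)}+\|u\|_{L^{p(\cdot)}(B_i)})$, with a constant uniform in $i$; this uses $\sup_x\mu(B(x,1))<\infty$ and lower regularity to control $\Lambda(B_i)$ and $\mu(B_i)/r^{Q(x_i)}$. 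Interpolating between $p$ and $\gamma$ on $B_i$ via Hölder, as in Theorem \ref{rellich}, gives
\[\rho_{q(\cdot)}(u\chi_{B_i})\le C\,\big(\rho_{p(\cdot)}(u\chi_{\sigma B_i})+\rho_{p(\cdot)}(g\chi_{\sigma B_i})\big)^{\theta}\]
for some $\theta>1$. Here $\theta>1$ comes from $q\gg p$ after passing to modulars, so the exponents are $q^-/p^+$-like; this may require shrinking $r$ or using $p\ll q$ locally via log-H\"older continuity. Writing $a_i:=\rho_{p(\cdot)}(u\chi_{\sigma B_i})+\rho_{p(\cdot)}(g\chi_{\sigma B_i})$, I get $\sum_{i:\,x_i\notin B(x_0,R)}a_i^{\theta}\le(\sup_{x_i\notin B(x_0,R)}a_i)^{\theta-1}\sum_i a_i$, and $\sum_i a_i$ is bounded by bounded overlap.

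\textbf{Step 3 (invariance forces smallness).} It remains to show $\sup_{x_i\notin B(x_0,R)}a_i\to0$. Fix $x$ far away and $m=M_H(x,r')$, where $r'$ is large relative to $\sigma r$ (I may have to enlarge $r$ in the hypothesis; $M_H(x,\cdot)$ is monotone, so I would pick the covering radius below the given $r$ instead). Choose $h_1,\dots,h_m\in H$ such that the balls $B(h_j(x),r)$ are disjoint. Because $u$, $p$ and $\mu$ are $H$-invariant, and $g\circ h$ is again a gradient of $u$, I may replace $g$ by a quantity whose modular on each ball $h_j(\sigma B)$ is the same. More simply: $\rho_{p(\cdot)}(u\chi_{h_j(B)})=\rho_{p(\cdot)}(u\chi_B)$, so $m\,\rho_{p(\cdot)}(u\chi_{B})\le\rho_{p(\cdot)}(u)\le1$. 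For $g$ I would use the pointwise minimum over invariance, or simply note that one can choose $g$ to be $H$-invariant up to a constant factor, for example by taking the infimum over $h$ of $g\circ h$. Thus $a_i\lesssim 1/M_H(x_i,r)\to0$. The delicate points are that $\sigma B$ must be compared with disjoint balls, which is handled by adjusting radii and applying Lemma \ref{pokryciowy}, and the handling of the gradient's invariance. Combining Steps 1--3 yields total boundedness.
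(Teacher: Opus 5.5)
Your architecture (local compactness on bounded balls, the bounded-overlap covering of Lemma \ref{pokryciowy}, Sobolev--Poincar\'e via Theorem \ref{ciagle}, H\"older interpolation between $p$ and $\gamma$, and the invariance bound $m\,\rho_{p(\cdot)}(u\chi_B)\le\rho_{p(\cdot)}(u)$) is the same as the paper's. However, Steps 2--3 do not close as written.

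Your estimate $\rho_{q(\cdot)}(u\chi_{B_i})\le C a_i^{\theta}$ puts the gradient into the quantity that has to be small. To use $\sum_i a_i^{\theta}\le(\sup_i a_i)^{\theta-1}\sum_i a_i$ you need $\sup_{x_i\notin B(x_0,R)}\rho_{p(\cdot)}(g_n\chi_{\sigma B_i})\to 0$ uniformly in $n$. That is what an $H$-invariant gradient would give, and none of your ways of producing one works:
\begin{enumerate}
\item[(a)] The pointwise infimum of Haj{\l}asz gradients is not a Haj{\l}asz gradient in general. The defining inequality couples $g(x)$ and $g(y)$, and the infimum may be realised by different $h$ at $x$ and at $y$. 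For instance, on a three-point space with all distances $1$, $u=(0,1,1)$ has gradients $(1,0,0)$ and $(0,1,1)$, whose minimum is $0$.
\item[(b)] $g\circ h$ is not even an $s(\cdot)$-gradient of $u$. From $|u(x)-u(y)|=|u(hx)-u(hy)|$ you only get the bound $d(x,y)^{s(hx)}g(hx)+d(x,y)^{s(hy)}g(hy)$, i.e.\ an $(s\circ h)(\cdot)$-gradient. The theorem assumes only $p$, not $s$, to be $H$-invariant.
\item[(c)] $H$ is merely a subset of $\textnormal{Iso}_{\mu}(X,d)$, so Haar averaging is unavailable as well.
\end{enumerate}

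The missing idea is that only $u$, never $g$, has to be locally small, and you get this by splitting the H\"older step asymmetrically. Take $t=q+\eta\ll\gamma$, $q_1=p(t-q)/(t-p)$, $q_2=t(q-p)/(t-p)$, $w=(t-p)/(t-q)$, and a covering by balls $B_i=B(x_i,\tilde r)$ with $\tilde r\le r$. After normalising,
\[
\rho_{q(\cdot)}(u\chi_{B_i})\le 2\,\bigl\||u|^{q_1}\bigr\|_{L^{w(\cdot)}(B_i)}\,\bigl\||u|^{q_2}\bigr\|_{L^{w'(\cdot)}(B_i)}.
\]
The first factor is at most $\rho_{p(\cdot)}(u\chi_{B_i})^{1/w^+}$, because $q_1w=p$; it involves $u$ only. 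Only the second factor goes through Theorem \ref{ciagle}. Choose $\eta$ and $\tilde r$ small (using uniform continuity of $p$) so that $q^-_{B(y,\tilde r)}\ge (w')^+p^+_{B(y,2\tilde r)}$. Then the second factor is bounded by a constant times $\rho_{p(\cdot)}(u\chi_{2B_i})+\rho_{p(\cdot)}(g\chi_{2B_i})$, which is summable over $i$ by bounded overlap. Hence
\[
\rho_{q(\cdot)}\bigl(u\chi_{X\setminus B(x_0,R+\tilde r)}\bigr)\le C\Bigl(\sup_{x\notin B(x_0,R)}\rho_{p(\cdot)}\bigl(u\chi_{B(x,r)}\bigr)\Bigr)^{1/w^+}\le C\Bigl(\inf_{x\notin B(x_0,R)}M_H(x,r)\Bigr)^{-1/w^+},
\]
which uses the invariance of $u$, $p$ and $\mu$ only.

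This is exactly Proposition \ref{cauchy} of the paper, applied there to differences $u_n-u_m$ and combined with the compact cut-off operator of Lemma \ref{operator} and a diagonal argument. With this replacement, your Step 1 together with the tail estimate would also go through.
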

	
	\begin{remark}
		It is worth to notice that the above result is new even for the constant exponents, since we do not need assumption on reflexivity of Haj{\l}asz-Sobolev space, the condition $p^->1$ nor upper Ahlfors regularity of measure as in \cite{Górka}. Moreover, the set $H$ does not necessarily have to be subgroup of $\textnormal{Iso}_{\mu}(X,d)$.
	\end{remark}
	
	 Before we give the proof of Theorem \ref{zwartenieogr}, we need some definitions and lemmas.
	
	\begin{defi}
		Let $(X,d)$ be a metric space. Fix $x_0 \in X$ and $R\in (0,\infty)$. We define the following map
		\begin{equation*}
			f_{x_0,R}(x):=\left\{ \begin{array}{lll} \frac{1}{R}\left(2R-d(x,x_0)\right),& \textnormal{ for } x\in B(x_0,2R)\setminus B(x_0,R),\\ 1, & \textnormal{ for } x\in B(x_0,R),\\ 0, & \textnormal{ for } x\in X \setminus B(x_0,2R).\end{array} \right.
		\end{equation*}
		Now, we define the operator $F_{x_0,R}$ by the formula $F_{x_0,R}(u):=uf_{x_0,R}$ for measurable function $u$.
	\end{defi}
	
	\begin{lem}\label{operator}
		Let $(X,d,\mu)$ be a metric measure space such that $\displaystyle h(r):=\inf_{x\in X}\mu(B(x,r))>0$ for all $r\in (0,1)$. Then, for every $s,p\in \mathcal{P}_b(X)$ with $s^+\leq 1$, every $x_0\in X$ and $R\in (0,\infty)$ the operator 
		\begin{equation*}
			F_{x_0,R}: M^{s(\cdot),p(\cdot)}(X,d,\mu) \rightarrow L^{p(\cdot)}(X,\mu)
		\end{equation*} is compact.
	\end{lem}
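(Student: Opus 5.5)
The idea is to factor $F_{x_0,R}$ through a Sobolev space on the bounded ball $B:=B(x_0,2R)$ and then use the compactness results already proved there. Take a bounded family $\mathcal{F}\subseteq M^{s(\cdot),p(\cdot)}(X,d,\mu)$; we must show that $\{uf_{x_0,R}: u\in\mathcal{F}\}$ is totally bounded in $L^{p(\cdot)}(X,\mu)$. Every such function vanishes outside $B$, so it suffices to work in $L^{p(\cdot)}(B,\mu)$.

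\textbf{Step 1: the products are uniformly bounded in $M^{s(\cdot),p(\cdot)}(X)$.} Fix $u\in\mathcal{F}$ and a gradient $g\in\mathcal{D}^{s(\cdot)}(u)$. Write $f:=f_{x_0,R}$; it is $1/R$-Lipschitz, takes values in $[0,1]$, and is supported in $B$. We use
\begin{equation*}
|uf(x)-uf(y)|\le |u(x)-u(y)|f(x)+|u(y)||f(x)-f(y)|.
\end{equation*}
The first term is at most $d(x,y)^{s(x)}g(x)+d(x,y)^{s(y)}g(y)$. For the second, since $s^+\le1$,
\begin{equation*}
|f(x)-f(y)|\le \min\{1,d(x,y)/R\}\le \max\{1,R^{-1}\}\,d(x,y)^{s(\cdot)}
\end{equation*}
for either endpoint. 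If $d(x,y)\ge1$ the bound $1\le d(x,y)^{s}$ holds trivially. The factor $|u(y)|$ sits at $y$, so we symmetrize: $|u(y)|\le |u(x)|+|u(x)-u(y)|$, and split according to which of $x,y$ lies in $B$.

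This yields a scalar gradient of the form $C_R(g+|u|)\chi_{\tilde B}+g$ for a slightly larger ball $\tilde B$, or simply $C_R(g+|u|)$. Hence $\|uf\|_{M^{s(\cdot),p(\cdot)}}\le C\|u\|_{M^{s(\cdot),p(\cdot)}}$. The key point is that $s^+\le1$ is exactly what makes the Lipschitz cutoff an $s(\cdot)$-Hölder multiplier.

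\textbf{Step 2: restrict to the bounded set $B$.} Consider the metric measure space $(B,d,\mu\measurerestr B)$. Restrictions of Hajłasz gradients remain gradients, so $\{uf|_B\}$ is bounded in $M^{s(\cdot),p(\cdot)}(B,d,\mu)$ and has measure $\mu(B)<\infty$. Theorem~\ref{mierzalne} then shows the family is totally bounded in $L^0(B,\mu)$. We then want to apply Theorem~\ref{hanson}, for which $p(\cdot)$-equi-integrability is needed.

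\textbf{Step 3: equi-integrability (the main obstacle).} This is where $h(r)>0$ enters. I would repeat the argument of Proposition~\ref{uniintegrable}, case (i), but with balls $B(x,r)$ taken in $X$ rather than in $B$. The functions $v=uf$ are defined on all of $X$ and bounded in $M^{s(\cdot),p(\cdot)}(X)$, and by assumption $\mu(B(x,r))\ge h(r)$ for $r<1$. Estimate \eqref{nier1} with the Hölder step then gives
\begin{equation*}
\int_E|v|^{p}\,d\mu\le C\left[r^{p^-s^-}+c(h(r))\,\mu(E)\right].
\end{equation*}
The proof of that estimate used only the lower bound $h(r)$ and never used finiteness of $\mu(X)$. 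Choosing $r$ small and then $\mu(E)$ small gives equi-integrability. Theorem~\ref{hanson} on $(B,\mu)$ then yields total boundedness in $L^{p(\cdot)}(B,\mu)$, hence in $L^{p(\cdot)}(X,\mu)$, since all the functions vanish off $B$.

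The delicate points are checking that the proof of Proposition~\ref{uniintegrable}(i) goes through with only $h(r)>0$ and the integral restricted to $E\subseteq B$, and writing out the product gradient in Step 1 correctly.
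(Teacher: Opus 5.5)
Your proposal is correct and follows the paper's proof essentially step by step: a product gradient showing $F_{x_0,R}(\mathcal{F})$ is bounded in $M^{s(\cdot),p(\cdot)}$, then Theorem~\ref{mierzalne} on $B(x_0,2R)$, equi-integrability, Theorem~\ref{hanson}, and extension by zero. The paper's gradient is $\chi_{B(x_0,2R)}(g+2R^{-s}|u|)$, while your global $g+\max\{1,R^{-1}\}|u|$ also works and needs no symmetrization, because $|u(y)|\,|f(x)-f(y)|$ is already $d(x,y)^{s(y)}$ times a function of $y$. Your remark that the argument of Proposition~\ref{uniintegrable}(i) uses only $h(r)>0$ usefully makes explicit what the paper's direct citation of that proposition leaves implicit, since neither of its stated hypotheses is assumed in the lemma.
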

	
	\begin{proof}
		Let $\mathcal{F} \subseteq M^{s(\cdot),p(\cdot)}(X,d,\mu)$ be a bounded set. We begin with proving that $F_{x_0,R}(\mathcal{F})$ is bounded in $M^{s(\cdot),p(\cdot)}(X,d,\mu)$. Fix $u \in \mathcal{F}$ and $g\in \mathcal{D}^{s(\cdot)}(u) \cap L^{p(\cdot)}(X,\mu)$. Define
		\begin{equation*}
			\tilde{g}(x):= g(x)\chi_{B(x_0,2R)}(x)+2\left|u(x)\right|R^{-s(x)}\chi_{B(x_0,2R)}(x).
		\end{equation*}
		Obviously, we see that $\tilde{g} \in L^{p(\cdot)}(X,\mu)$. We shall check that $\tilde{g} \in \mathcal{D}^{s(\cdot)}(F_{x_0,R}(u))$. Let $G_u\subseteq X$ be the measure zero set such that
		\begin{align*}
			\left|u(x)-u(y)\right|&\leq d(x,y)^{s(x)}g(x)+d(x,y)^{s(y)}g(y)
		\end{align*}
		for all $x,y\in X \setminus G_u$.
		We prove that
		\begin{equation}\label{hajlasz}
			\left|u(x)f_{x_0,R}(x)-u(y)f_{x_0,R}(y)\right| \leq d(x,y)^{s(x)}\tilde{g}(x)+d(x,y)^{s(y)}\tilde{g}(y)
		\end{equation}
		for every $x,y\in X \setminus G_u$. If $x,y\notin B(x_0,2R)$, then \eqref{hajlasz} is obvious. Therefore, we assume that at least one of $x,y$ belongs to $B(x_0,2R)$. Due to the symmetry of \eqref{hajlasz} we can assume that $y\in B(x_0,2R)$. If $d(x,y)\leq R$, then
		\begin{align*}
 \left|f_{x_0,R}(x)-f_{x_0,R}(y)\right| & \leq  \frac{1}{R}d(x,y)= \frac{1}{R}d(x,y)^{s(y)}d(x,y)^{1-s(y)} \leq \frac{1}{R}d(x,y)^{s(y)} R^{1-s(y)}\\ & \leq \frac{1}{R^{s(y)}}d(x,y)^{s(y)}.
		\end{align*}
		On the other hand, if $d(x,y)>R$, then
		\begin{align*}
		\left|f_{x_0,R}(x)-f_{x_0,R}(y)\right| &\leq 2=2d(x,y)^{s(y)}d(x,y)^{-s(y)}\leq \frac{2}{R^{s(y)}}d(x,y)^{s(y)}.
		\end{align*}Now, if also $x\in B(x_0,2R)$ then we have
		\begin{align*}
			\left|u(x)f_{x_0,R}(x)-u(y)f_{x_0,R}(y)\right|& \leq \underbrace{\left|f_{x_0,R}(x)\right|}_{\leq 1} \left|u(x)-u(y)\right| +\left|u(y)\right| \left|f_{x_0,R}(x)-f_{x_0,R}(y)\right| \\ & \leq  d(x,y)^{s(x)}g(x)+d(x,y)^{s(y)}g(y)+\left|u(y)\right|\frac{2}{R^{s(y)}}d(x,y)^{s(y)}\\ & \leq d(x,y)^{s(x)}\tilde{g}(x)+d(x,y)^{s(y)}\tilde{g}(y).
		\end{align*}
		If $x\notin B(x_0,2R)$, then
		\begin{align*}
			\left|u(x)f_{x_0,R}(x)-u(y)f_{x_0,R}(y)\right|& \leq \underbrace{\left|f_{x_0,R}(x)\right|}_{=0} \left|u(x)-u(y)\right| +\left|u(y)\right| \left|f_{x_0,R}(x)-f_{x_0,R}(y)\right| \\ & \leq  \left|u(y)\right|\frac{2}{R^{s(y)}}d(x,y)^{s(y)}\\ & \leq d(x,y)^{s(x)}\tilde{g}(x)+d(x,y)^{s(y)}\tilde{g}(y).
		\end{align*}Hence $F_{x_0,R}(u) \in M^{s(\cdot),p(\cdot)}(X,d,\mu)$. Moreover, it is straightforward that
		\begin{align*}
			\left\|F_{x_0,R}(u)\right\|_{M^{s(\cdot),p(\cdot)}(X,d,\mu)} \leq \left(1+\kappa_{p(\cdot)}\max\left\{1+\frac{2}{R^{s^+}},1+\frac{2}{R^{s^-}}\right\}\right)\left\|u\right\|_{M^{s(\cdot),p(\cdot)}(X,d,\mu)},
		\end{align*}
		where $\kappa_{p(\cdot)}$ is a constant from triangle inequality for quasi-norm $\left\|\cdot \right\|_{L^{p(\cdot)}}$. Therefore, the family $F_{x_0,R}(\mathcal{F})$ is bounded in the Sobolev space $M^{s(\cdot),p(\cdot)}(X,d,\mu)$, as wanted. 	
		
		By Proposition \ref{uniintegrable} we obtain that $F_{x_0,R}(\mathcal{F})$ is $p(\cdot)$-equi-integrable. Let us fix $\varepsilon\in (0,\infty)$. By Theorem \ref{mierzalne} we get
		\begin{equation*}
			M^{s(\cdot),p(\cdot)}(B(x_0,2R)) \hookrightarrow \hookrightarrow L^0(B(x_0,2R)).
		\end{equation*}
		Thus, by Theorem \ref{hanson} we deduce that $F_{x_0,R}(\mathcal{F})$ it totally bounded in $L^{p(\cdot)}(B(x_0,2R))$. Hence, there exist $n\in \mathbb N$ and the set $\left\{w_i\right\}_{i=1}^n\subseteq L^{p(\cdot)}(B(x_0,2R))$ such that for fixed $u\in \mathcal{F}$ there exists $j\in \left\{1,\dots,n\right\}$ such that we have
		\begin{equation*}
			\int_{B(x_0,2R)} \left|w_j(x)-F_{x_0,R}(u)(x)\right|^{p(x)}\mbox{d}\mu(x) < \varepsilon.
		\end{equation*}
		Let 
		\begin{equation*}
			v_i(x):=\left\{\begin{array}{ll} w_i(x), & \textnormal{ for } x\in B(x_0,2R),\\ 0, & \textnormal{ for } x\in X \setminus B(x_0,2R).\end{array}\right.
		\end{equation*}
		We have that $\left\{v_i\right\}_{i=1}^n\subseteq L^{p(\cdot)}(X,\mu)$ and
		\begin{equation*}
			\int_{X} \left|v_j(x)-F_{x_0,R}(u)(x)\right|^{p(x)}\mbox{d}\mu(x)=\int_{B(x_0,2R)}\left|w_j(x)-F_{x_0,R}(u)(x)\right|^{p(x)}\mbox{d}\mu(x) <\varepsilon.
		\end{equation*}
		Therefore, $F_{x_0,R}(\mathcal{F})$ is totally bounded in $L^{p(\cdot)}(X,\mu)$ and the proof is done.
	\end{proof}
	
	\begin{prop}\label{cauchy}
		Suppose that $(X,d,\mu)$ is a geometrically doubling metric measure space where $\mu$ satisfies the condition  $\sup\left\{\mu(B(x,1)): x \in X \right\}<\infty$. Moreover, assume that $\mu$ is lower Ahlfors $Q(\cdot)$-regular for some $Q\in \mathcal{P}_b(X)$.
		Fix $p,s\in \mathcal{P}_b^{\textnormal{log}}(X)$ such that $sp \ll Q$. Suppose that  $\left\{u_n\right\}_{n=1}^{\infty}$ is a bounded sequence in $M^{s(\cdot),p(\cdot)}(X,d,\mu)$ such that there exists $r\in (0,1/2)$ satisfying
		\begin{equation*}
			\scalebox{1.1}{$\forall_{\varepsilon \in (0,\infty)}\hspace{1mm}\exists_{N\in \mathbb R}\hspace{1mm} \forall_{m,n>N}$\hspace{1mm}} \sup_{y\in X} \int_{B(y,r)}\left|u_m(x)-u_n(x)\right|^{p(x)} \mbox{d}\mu(x)<\varepsilon.
		\end{equation*}
		Then, $u_n \to u$ in $L^{q(\cdot)}(X,\mu)$ for some $u\in L^{q(\cdot)}(X,\mu)$, where $p \ll q \ll \gamma$ and $\displaystyle \gamma=Qp/(Q-sp)$.
	\end{prop}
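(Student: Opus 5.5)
The idea is to show that $\{u_n\}$ is Cauchy in $L^{q(\cdot)}(X,\mu)$. We cover $X$ by balls of radius $r$ with bounded overlap, use the local Sobolev--Poincar\'e inequality (Theorem \ref{ciagle}) on each ball, and interpolate between $p$ and $\gamma$ as in the proof of Theorem \ref{rellich}. Completeness of $L^{q(\cdot)}$ then gives the limit $u$.

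\textbf{Covering.} By Lemma \ref{pokryciowy}, take $\{x_i\}$ with $X=\bigcup_i B(x_i,r)$ and such that each point lies in at most $K=A\sigma^B$ of the balls $B(x_i,\sigma r)$, where $\sigma=2$ (so $\sigma r<1$). Set $w_{m,n}=u_m-u_n$, and pick gradients $g_{m,n}\in\mathcal{D}^{s(\cdot)}(w_{m,n})$ with $\rho_{p(\cdot)}(g_{m,n})\le C$ uniformly, which is possible since the sequence is bounded.

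\textbf{Local estimate on each ball.} Write $B_i=B(x_i,r)$. The factor $\mu(B_i)/r^{Q(x_i)}$ is bounded above, since $\mu(B_i)\le \sup_x\mu(B(x,1))$ and $r$ is fixed. The constant $\Lambda(B_i)$ is bounded uniformly in $i$ by lower Ahlfors regularity, $\mu(B_i)\ge b_1 r^{Q^+}$, together with the upper bound on $\mu(B_i)$. Theorem \ref{ciagle} then gives
\[
\|w_{m,n}\|_{L^{\gamma(\cdot)}(B_i)}\le C\big(\|g_{m,n}\|_{L^{p(\cdot)}(2B_i)}+\|w_{m,n}\|_{L^{p(\cdot)}(B_i)}\big).
\]

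\textbf{Interpolation on each ball.} We split $|w|^{q}=|w|^{q_1}|w|^{q_2}$ with the same $q_1,q_2,w,w'$ as in the proof of Theorem \ref{rellich}, and apply Proposition \ref{holder1} on $B_i$. This should give
\[
\int_{B_i}|w_{m,n}|^{q}\,d\mu\le C\,\rho_{p(\cdot),B_i}(w_{m,n})^{a}\,\big(\rho_{p(\cdot),2B_i}(g_{m,n})+\rho_{p(\cdot),B_i}(w_{m,n})\big)^{b},
\]
with some exponent $a>0$ and with $b\geq 1$. Exponents $a$ and $b$ of this kind come from the fact that $q\gg p$, so that $q/p\ge 1+\eta$. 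One has to be careful with norm-versus-modular conversions via Lemma \ref{modularnorma}: on each ball all quantities are bounded, so the min/max can be replaced by the worse power. Concretely, $\rho_{\gamma}(w)\le \max(\|w\|^{\gamma^-},\|w\|^{\gamma^+})_{B_i}$. The point is to obtain the total power of the "$p$-quantities" strictly greater than $1$, and to have one factor that is small uniformly in $i$ by the hypothesis. A clean route is to bound
\[
\int_{B_i}|w|^q\le \Big(\sup_y\rho_{p,B(y,r)}(w)\Big)^{\theta}\,C\big(\rho_{p,2B_i}(g)+\rho_{p,B_i}(w)\big)
\]
for some $\theta>0$. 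This uses $\rho\le C$ to absorb the extra powers above $1$.

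\textbf{Summing and conclusion.} Summing over $i$, bounded overlap gives
\[
\rho_{q(\cdot)}(w_{m,n})\le C\varepsilon^{\theta}K\big(\rho_p(g_{m,n})+\rho_p(w_{m,n})\big)\le C'\varepsilon^\theta
\]
for $m,n>N$. So $\{u_n\}$ is Cauchy in modular, hence in the norm of $L^{q(\cdot)}$, and converges to some $u\in L^{q(\cdot)}(X,\mu)$.

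The main obstacle is the exponent bookkeeping in the per-ball interpolation. We must extract a power strictly larger than one of the $p$-modular quantities, uniformly in the ball, while handling the min/max ambiguity of Lemma \ref{modularnorma} and the gradient being taken on the enlarged ball $2B_i$. I would first try dividing $w_{m,n}$ by a fixed constant so that all local norms are $\le1$, which reduces each conversion to a single power.
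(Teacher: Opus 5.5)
\textbf{Gap: the per-ball interpolation estimate.} Your architecture matches the paper's: a bounded-overlap cover from Lemma \ref{pokryciowy}, Theorem \ref{ciagle} on $B(x_i,r)$ with $\sigma=2$, a H\"older splitting on each ball, summation, and completeness. However, the one nontrivial step, the per-ball estimate, is asserted rather than proved, and the route you propose for it breaks down.

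Write $\rho_{p,E}(f):=\int_E|f|^{p(x)}\,d\mu$ and $S_i:=\rho_{p,2B_i}(v)+\rho_{p,2B_i}(g)$. Interpolate between $p$ and $\gamma$ as in Theorem \ref{rellich}, with $w=(\gamma-p)/(\gamma-q)$ and $w'=(\gamma-p)/(q-p)$, and convert norms to modulars with Lemma \ref{modularnorma} on $B_i$. This gives
\[
\int_{B_i}|v|^{q(x)}\,d\mu\lesssim \rho_{p,B_i}(v)^{a_i}\,S_i^{\,b_i},\qquad a_i=\frac{1}{w^+_{B_i}},\quad b_i=\frac{\gamma^-_{B_i}}{(w')^+_{B_i}\,p^+_{2B_i}} .
\]
\begin{itemize}
\item Your claim $b_i\ge 1$ is false even for constant exponents: $p=2$, $q=3$, $\gamma=10$ give $b=5/8$.
\item So you must borrow from $a_i$, which requires $a_i+b_i>1$. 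In the variable setting this fails. The exponent $q$ is only measurable, and if on every ball it takes values close to both $p$ and $\gamma$, then $w^+_{B_i}$ and $(w')^+_{B_i}$ are both huge. For instance, $p\equiv 2$, $\gamma\equiv 10$ and $q\in\{2.1,\,9.9\}$ on every ball give $a_i+b_i=0.075$. Then $\sum_i S_i^{0.075}$ is not controlled by $\sum_i S_i$.
\item Even for continuous exponents you would have to shrink $r$ to make local oscillations small, but you keep $r$ fixed. Shrinking is legitimate, since the hypothesis for $r$ implies it for every smaller radius.
\end{itemize}

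\textbf{Missing idea: interpolate between $p$ and an exponent just above $q$.} Use an auxiliary $t:=q+\eta$ instead of $\gamma$. With $\xi:=(q-p)^->0$, choose $\eta\le \xi^2/(2p^+)$ such that $t\ll\gamma$. Then $w=(t-p)/\eta$ has $w^+<\infty$, and
\[
w'=\frac{t-p}{q-p}=1+\frac{\eta}{q-p}\le 1+\frac{\xi}{2p^+}
\]
uniformly, however $q$ oscillates. Use Proposition \ref{jednakowo} to pass from $L^{\gamma(\cdot)}(B)$ to $L^{t(\cdot)}(B)$. Shrink $r$, using uniform continuity of $p$, so that $q^-_{B(y,r)}\ge p^+_{B(y,2r)}+\xi/2$. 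After normalizing so that all local norms are at most $1$, you get
\[
\bigl\||v|^{q_2}\bigr\|_{L^{w'(\cdot)}(B)}^{(w')^+}\le\int_B|v|^{t(x)}\,d\mu\le\|v\|_{L^{t(\cdot)}(B)}^{q^-_B}\lesssim\|v\|_{M^{s(\cdot),p(\cdot)}(2B)}^{q^-_B}.
\]
Hence the second H\"older factor is bounded by $\bigl(\rho_{p,2B}(v)+\rho_{p,2B}(g)\bigr)$ raised to the power $q^-_B/((w')^+p^+_{2B})\ge 1$, and therefore by a constant times $\rho_{p,2B}(v)+\rho_{p,2B}(g)$. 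The first factor is at most $\tau^{1/w^+}$ with $\tau=\sup_y\rho_{p,B(y,r)}(v)$. With this per-ball bound, your summation and completeness steps go through as you wrote them.
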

	
	\begin{proof}
		Firstly, note that assumption that $p \ll q$ assures that the quantity $\xi:=(q-p)^-$ is strictly positive. Moreover, since $\gamma \gg q$, we can choose $\eta \in (0,\infty)$ such that $\eta \leq \xi^2/2p^+$ and $q+\eta \ll \gamma$. In addition, we introduce auxiliary exponent $t:=q+\eta$. Uniform continuity of $p$ and relation $p+\xi/2 \ll q$ imply that we can find some $\tilde{r}\in (0,r]$ such that for every $y\in X$ we have $q_{B(y,\tilde{r})}^-\geq p_{B(y,2\tilde{r})}^+ +\xi/2$. Since both $\tilde{r}$ and $r$ can be taken sufficiently small, we can assume that $r=\tilde{r}$.
		
		Now, define exponent
		\begin{equation*}
			w=\frac{t-p}{t-q}.
		\end{equation*}
		Once can easily convince oneself that
		\begin{equation*}
			w'=\frac{t-p}{q-p}
		\end{equation*}
		and for $x\in X$
		\begin{equation*}
			w'(x)=1+\frac{t(x)-q(x)}{q(x)-p(x)}=1+\frac{\eta}{q(x)-p(x)}\leq 1+\frac{\eta}{\xi}\leq 1+\frac{\xi}{2p^+}\leq 1+\frac{\xi}{2p_{B(y,2r)}^+}.
		\end{equation*}
		Hence, for all $y\in X$
		\begin{equation}\label{trololo}
			\frac{q_{B(y,r)}^-}{\left(w'\right)^+p_{B(y,2r)}^+}\geq \frac{p_{B(y,2r)}^++\xi/2}{\left(1+\xi/2p_{B(y,2r)}^+\right)p_{B(y,2r)}^+}=1.
		\end{equation} 
		Now, for $n,m\in \mathbb N$ we shall denote $v_{n,m}:=u_n-u_m$. Applying Theorem \ref{ciagle} for $\sigma:=2$ and $B_0:=B(y,r)$ we get  
		\begin{equation*}
			\left\|v_{n,m}\right\|_{L^{\gamma(\cdot)}(B(y,r))} \leq D(y,r)\left\|g_{n,m} \right\|_{L^{p(\cdot)}(B(y,2r))}+ \Lambda(B(y,r))\left\| v_{n,m} \right\|_{L^{p(\cdot)}(B(y,r))},
		\end{equation*}
		where
		\begin{equation*}
			D(y,r):=\left(1+\Lambda(B(y,r))\right)C_S \left(\frac{\mu(B(y,r))}{r^{Q(y)}}\right)^{\frac{1}{\gamma_{B(y,r)}^-}}
		\end{equation*}
		and $g_{n,m}\in \mathcal{D}^{s(\cdot)}(v_{n,m})\cap L^{p(\cdot)}(B(y,2r))$. Here, $\Lambda(B(y,r))$ is a constant from Theorem \ref{ciagle}, i.e.
		\begin{equation*}
			\Lambda(B(y,r))=\kappa_{\gamma(\cdot)}^2 \max\left\{2, \left(\frac{2}{\mu(B(y,r))}\right)^{\frac{1}{\gamma_{B(y,r)}^-}}\right\} \left\|1\right\|_{L^{\gamma(\cdot)}(B(y,r))},
		\end{equation*}
		where $\kappa_{\gamma(\cdot)}$ is a constant from triangle inequality for $L^{\gamma(\cdot)}$ quasi-norm. Since $\mu$ is lower Ahlfors $Q(\cdot)$-regular, there exists $b\in (0,1)$ such that $\mu(B(y,r))\geq br^{Q(y)}$. Then, using the assumption that $M:=\displaystyle \sup\left\{\mu(B(x,1)): x\in X \right\}<\infty$ we obtain
		\begin{align*}
			\Lambda(B(y,r))&\leq \kappa_{\gamma(\cdot)}^2 \max\left\{2, \left(\frac{2}{br^{Q(y)}}\right)^{\frac{1}{\gamma_{B(y,r)}^-}}\right\}\max\left\{\mu(B(y,r))^{\frac{1}{\gamma_{B(y,r)}^-}}, \mu(B(y,r))^{\frac{1}{\gamma_{B(y,r)}^+}}\right\} \\ & \leq \kappa_{\gamma(\cdot)}^2 \left(\frac{2}{br^{Q^+}}\right)^{\frac{1}{\gamma^-}} (M+1)^{\frac{1}{\gamma^-}}:=\Lambda(r)<\infty.
		\end{align*}
		Therefore
		\begin{align*}
			D(y,r)& \leq \left(1+\Lambda(r)\right) C_S \left(\frac{M+1}{r^{Q^+}}\right)^{\frac{1}{\gamma^-}}:=D(r)
		\end{align*}
		and hence for every $y\in X$, $n,m\in \mathbb N$ it holds
		\begin{equation*}
			\left\|v_{n,m}\right\|_{L^{\gamma(\cdot)}(B(y,r))}\leq E(r)\left\|v_{n,m}\right\|_{M^{s(\cdot),p(\cdot)}(B(y,2r))},
		\end{equation*}
		where $E(r):=D(r)+\Lambda(r)<\infty$. Moreover, since $t\ll \gamma$, Proposition \ref{jednakowo} yields that there exists $C=C(t,\gamma,M)\in (0,\infty)$ such that for every $y\in X$ we have
		\begin{equation}\label{ogr}
			\left\|v_{n,m}\right\|_{L^{t(\cdot)}(B(y,r))}\leq C
			\left\|v_{n,m}\right\|_{L^{\gamma(\cdot)}(B(y,r))}\leq E(r)C\left\|v_{n,m}\right\|_{M^{s(\cdot),p(\cdot)}(B(y,2r))}.
		\end{equation}
		Without loss of generality we can assume that $$\left\|u_n\right\|_{M^{s(\cdot),p(\cdot)}(X,d,\mu)} < \min\left\{1, \left(2\kappa_{s(\cdot),p(\cdot)}E(r)C\right)^{-1}, \left(2\kappa_{s(\cdot),p(\cdot)}\right)^{-1}\right\},$$ where $\kappa_{s(\cdot),p(\cdot)}\in (0,\infty)$ is a constant from the triangle inequality for $\left\|\cdot\right\|_{M^{s(\cdot),p(\cdot)}(X)}$ quasi-norm. Then, from inequality \eqref{ogr} it follows that $\left\|v_{n,m}\right\|_{L^{t(\cdot)}(B(y,r))}\leq 1$.
		Subsequently, applying H\"older inequality (Proposition \ref{holder1}) for the exponents 
		\begin{equation*}
			q_1:=\frac{p(t-q)}{t-p}, \hspace{5mm} q_2:=\frac{t(q-p)}{t-p}.
		\end{equation*}
		we get
		\begin{align}\label{holder}
			\int_{B(y,r)} \left|v_{n,m}(x)\right|^{q(x)}\mbox{d}\mu(x)\leq 2\left\| \left|v_{n,m}\right|^{q_1}\right\|_{L^{w(\cdot)}(B(y,r))} \left\| \left|v_{n,m}\right|^{q_2} \right\|_{L^{w'(\cdot)}(B(y,r))}.
		\end{align}
		Now, denote $$a_{n,m}(y):=\left\| \left|v_{n,m}\right|^{q_1} \right\|_{L^{w(\cdot)}(B(y,r))}.$$ From Lemma \ref{modularnorma} it follows that
		\begin{equation}\label{est}
			a_{n,m}(y) \leq \rho_{w(\cdot)}\left(\left|v_{n,m}\right|^{q_1}\chi_{B(y,r)}\right)^{\frac{1}{w^+}}=\left(\int_{B(y,r)} \left|v_{n,m}(x)\right|^{p(x)}\mbox{d}\mu(x)\right)^{\frac{1}{w^+}} \leq \tau_{n,m}^{\frac{1}{w^+}},
		\end{equation}
		where $\displaystyle \tau_{n,m}:=\sup_{y\in X} \int_{B(y,r)}\left|v_{n,m}(x)\right|^{p(x)}\mbox{d}\mu(x).$ 
		
		Using the above result, we shall prove that $\left\{u_n\right\}_{n=1}^{\infty}$ is a Cauchy sequence in $L^{q(\cdot)}(X,\mu)$. Let us define the following quantity
		\begin{equation*}
			b_{n,m}(y):=\left\| \left|v_{n,m}\right|^{q_2} \right\|_{L^{w'(\cdot)}(B(y,r))}.
		\end{equation*}
		By Lemma \ref{modularnorma}, inequality \eqref{ogr} and the facts that $\left\|v_{n,m}\right\|_{L^{t(\cdot)}(B(y,r))}\leq 1$ and $t_{B(y,r)}^- \geq q_{B(y,r)}^-$, we get
		\begin{align*}
			b_{n,m}(y)^{(w')^+}&\leq \int_{B(y,r)} \left|v_{n,m}(x)\right|^{t(x)}\mbox{d}\mu(x)\leq \left\|v_{n,m}\right\|_{L^{t(\cdot)}(B(y,r))}^{t^-_{B(y,r)}}\leq \left\|v_{n,m}\right\|_{L^{t(\cdot)}(B(y,r))}^{q^-_{B(y,r)}} \\& \leq \left(E(r)C \left\|v_{n,m}\right\|_{M^{s(\cdot),p(\cdot)}(B(y,2r))}\right)^{q^-_{B(y,r)}}.
		\end{align*}
		Since $\left\|u_n\right\|_{M^{s(\cdot),p(\cdot)}(X,d,\mu)} <  (2\kappa_{s(\cdot),p(\cdot)})^{-1}$, then there exists $g_n\in \mathcal{D}^{s(\cdot)}(u)$ such that \begin{equation*}\left\|u_n\right\|_{L^{p(\cdot)}(X,\mu)}+\left\|g_n\right\|_{L^{p(\cdot)}(X,\mu)}\leq (2\kappa_{s(\cdot),p(\cdot)})^{-1}.
		\end{equation*}
		Denote $g_{n,m}:=g_n+g_m$. Then, obviously $g_n+g_m \in \mathcal{D}^{s(\cdot)}(v_{n,m})$. Using \eqref{trololo}, Lemma \ref{modularnorma} and the fact that $\left\|v_{n,m}\right\|_{L^{p(\cdot)}(X,\mu)}\leq 1$, $\left\|g_{n,m}\right\|_{L^{p(\cdot)}(X,\mu)}\leq 1$ we get
		\begin{align*}
			b_{n,m}(y)&\leq \max\left\{1,E(r)C\right\}^{q^+/(w')^+} \left(\left\|v_{n,m}\right\|_{L^{p(\cdot)}(B(y,2r))}+\left\|g_{n,m}\right\|_{L^{p(\cdot)}(B(y,2r))}\right)^{q^-_{B(y,r)}/(w')^+}\\ &\leq V_r\left[ \left(\int_{B(y,2r)} \left|v_{n,m}(x)\right|^{p(x)}\right)^{\frac{q^-_{B(y,r)}}{(w')^+p^+_{B(y,2r)}}}\mbox{d}\mu(x) + \left(\int_{B(y,2r)} \left|g_{n,m}(x)\right|^{p(x)}\mbox{d}\mu(x)\right)^{\frac{q^-_{B(y,r)}}{(w')^+p^+_{B(y,2r)}}}\right] \\ & \leq V_r\left(\int_{B(y,2r)} \left|v_{n,m}(x)\right|^{p(x)}\mbox{d}\mu(x) + \int_{B(y,2r)} \left|g_{n,m}(x)\right|^{p(x)}\mbox{d}\mu(x)\right)
		\end{align*}
		where $V_r:=2^{q^+/(w')^+}\max\left\{1,E(r)C\right\}^{q^+/(w')^+}$. 
		Combining estimates for $a_{n,m}$, $b_{n,m}$ and \eqref{holder} we get
		\begin{align*}
			\int_{B(y,r)} \left|v_{n,m}(x)\right|^{q(x)}\mbox{d}\mu(x)& \leq 2 a_{n,m}(y) b_{n,m}(y)\\ &\leq 2V_r \tau_{n,m}^{\frac{1}{w^+}} \left(\int_{B(y,2r)} \left|v_{n,m}(x)\right|^{p(x)}\mbox{d}\mu(x) + \int_{B(y,2r)} \left|g_{n,m}(x)\right|^{p(x)}\mbox{d}\mu(x)\right).
		\end{align*}
		Thus, using Lemma \ref{pokryciowy} for $\delta:=2r$ we find positive constants $A$, $B$ and sequence $\left\{x_i\right\}_{i=1}^{\infty} \subseteq X$ such that 
		\begin{equation*}
			X=\bigcup_{i=1}^{\infty} B(x_i,r)
		\end{equation*}
		and every $x\in X$ belongs to at most $A\cdot2^B$ balls $B(x_i,2r)$. Hence,
		\begin{align*}
			\int_X \left|v_{n,m}(x)\right|^{q(x)}\mbox{d}\mu(x)&\leq \sum_{i=1}^{\infty} \int_{B(x_i,r)} \left|v_{n,m}(x)\right|^{q(x)}\mbox{d}\mu(x) \\ & \leq V_r \tau_{n,m}^{\frac{1}{w^+}}\sum_{i=1}^{\infty} \int_{B(x_i,2r)} \left(\left|v_{n,m}(x)\right|^{p(x)}+\left|g_{n,m}\right|^{p(x)}\right)\mbox{d}\mu(x) \\ & \leq \tau_{n,m}^{\frac{1}{w^+}}V_rA\cdot2^B\int_X\left(\left|v_{n,m}(x)\right|^{p(x)}+\left|g_{n,m}(x)\right|^{p(x)}\mbox{d}\mu(x)\right)\leq \tau_{n,m}^{\frac{1}{w^+}}V_r A\cdot2^B.
		\end{align*}
		Therefore, we obtain that $\left\{u_n\right\}_{n=1}^{\infty}$ satisfies Cauchy condition in $L^{q(\cdot)}(X,\mu)$. By completeness of $L^{q(\cdot)}(X,\mu)$ we deduce existence of $u\in L^{q(\cdot)}(X,\mu)$ such that $u_n \to u$ in $L^{q(\cdot)}(X,\mu)$, which completes the proof.
	\end{proof}
	
	Now we can return to the proof of Theorem \ref{zwartenieogr}.
	
	\begin{proof}[Proof of Theorem \ref{zwartenieogr}]
		Let $\left\{u_n\right\}_{n=1}^{\infty}$ be a bounded sequence in $M^{s(\cdot),p(\cdot)}_H(X,d,\mu)$. We will show that sequence $\left\{u_n\right\}_{n=1}^{\infty}$ satisfies assumptions of Proposition	\ref{cauchy}. Let $v_{n,m}:=u_n-u_m$. We know that the measure $\mu$, functions $u_n$, $p$ are $H$-invariant, so for every $y_1\in X$ and $y_2\in H(y_1)$ there holds equality
		\begin{equation*}
			\int_{B(y_1,r)} \left|v_{n,m}(x)\right|^{p(x)}\mbox{d}\mu(x)=\int_{B(y_2,r)} \left|v_{n,m}(x)\right|^{p(x)}\mbox{d}\mu(x).
		\end{equation*}
		Indeed, if $y_2 \in H(y_1)$, then there exists $f\in H$ such that $y_2=f(y_1)$. Since $f$ is measure-preserving isometry and $v_{n,m}$, $p$ are $H$-invariant, by change of variables formula \cite[Theorem 3.6.1]{Bogachev} we obtain
		\begin{align*}
			\int_{B(y_2,r)} \left|v_{n,m}(x)\right|^{p(x)}\mbox{d}\mu(x)&=\int_{f\left(B(y_1,r)\right)} \left|v_{n,m}(x)\right|^{p(x)}\mbox{d}\mu(x)=\int_{f\left(B(y_1,r)\right)} \left|v_{n,m}(x)\right|^{p(x)}\mbox{d}\left(f_\# \mu\right)(x)\\ &=\int_{B(y_1,r)} \left|\left(v_{n,m}\circ f\right)(x)\right|^{\left(p\circ f\right)(x)}\mbox{d}\mu(x)=\int_{B(y_1,r)} \left|v_{n,m}(x)\right|^{p(x)}\mbox{d}\mu(x),
		\end{align*}
		where $f_\# \mu$ is the pushforward measure.
		Subsequently, for every $y\in X$ we have
		\begin{equation*}
			M_H(y,r)\int_{B(y,r)}\left|v_{n,m}(x)\right|^{p(x)}\mbox{d}\mu(x) \leq \int_X \left|v_{n,m}(x)\right|^{p(x)}\mbox{d}\mu(x)\leq 1.
		\end{equation*}
		Using the above inequality and according to the assumption on $M_H(y,r)$ we obtain that for each $\varepsilon\in (0,\infty)$, there exists some finite $R_{\varepsilon}$ such that for every $n,m$ we have
		\begin{equation*}
			\sup_{y\in X \setminus B(x_0,R_{\varepsilon})} \int_{B(y,r)} \left|v_{n,m}(x)\right|^{p(x)}\mbox{d}\mu(x) \leq \left(\inf_{y\in X \setminus B(x_0,R_{\varepsilon})} M_H(y,r)\right)^{-1} \leq \varepsilon.
		\end{equation*}
		In particular, for every $k\in \mathbb N$ we can find $R_k\in (0,\infty)$ such that
		\begin{equation}\label{zawieranie22}
			\sup_{y\in X \setminus B(x_0,R_{k})} \int_{B(y,r)} \left|v_{n,m}(x)\right|^{p(x)}\mbox{d}\mu(x) \leq \frac{1}{k}.
		\end{equation}
		Additionally, we can take $R_k$ such that the sequence $\left\{R_k\right\}$ is non-decreasing. On the other hand,
		\begin{equation}\label{zawieranie2}
			\sup_{y\in B(x_0,R_{k})} \int_{B(y,r)} \left|v_{n,m}(x)\right|^{p(x)}\mbox{d}\mu(x) \leq \int_{B(x_0,R_{k}+r)} \left|v_{n,m}(x)\right|^{p(x)}\mbox{d}\mu(x).
		\end{equation} By Proposition \ref{operator} we know that the operator $F_{x_0,R_1+r}$ is compact from $M^{s(\cdot),p(\cdot)}(X,d,\mu)$ to $L^{p(\cdot)}(X,\mu)$. Thus, we can extract subsequence $\left\{u_n^1\right\}_{n=1}^{\infty}$ of the sequence $\left\{u_n\right\}_{n=1}^{\infty}$ which satisfies Cauchy condition in $L^{p(\cdot)}(B(x_0,R_1+r))$.
		
		Suppose that for some $k\in \mathbb N$ we have constructed the subsequence $\left\{u^k_n\right\}_{n=1}^{\infty}$ of $\left\{u_n\right\}_{n=1}^{\infty}$ satisfying Cauchy condition in $L^{p(\cdot)}(B(x_0,R_k+r))$. Hence, due to the compactness of $F_{x_0,R_{k+1}+r}$ we can extract subsequence $\left\{u^{k+1}_n\right\}_{n=1}^{\infty}$ of $\left\{u^k_n\right\}_{n=1}^{\infty}$ which is a Cauchy sequence in $L^{p(\cdot)}(B(x_0,R_{k+1}+r))$. Now, let $w_n:=u^n_n$. By the construction $\left\{w_n\right\}_{n=1}^{\infty}$ satisfies Cauchy condition in $L^{p(\cdot)}(B(x_0,R_{k}+r))$ for each $k$. Let us fix $\varepsilon\in (0,\infty)$ and take $k$ such that $1/k < \varepsilon$. Then, there exists $N$ such that for $n,m>N$
		\begin{equation*}
			\int_{B(x_0,R_k+r)} \left|w_n(x)-w_m(x)\right|^{p(x)}\mbox{d}\mu(x)\leq \varepsilon.
		\end{equation*}
		Finally, for $n,m>N$ from \eqref{zawieranie22}, \eqref{zawieranie2} we have
		\begin{align*}
			\sup_{y\in X} \int_{B(y,r)} \left|w_n(x)-w_m(x)\right|^{p(x)}\mbox{d}\mu(x)& \leq \int_{B(x_0,R_k+r)} \left|w_n(x)-w_m(x)\right|^{p(x)}\mbox{d}\mu(x)+\frac{1}{k} \leq 2\varepsilon.
		\end{align*}
		Hence, from Proposition \ref{cauchy} it follows that there exists $u\in L^{q(\cdot)}(X)$ such that $w_n \to u$ in $L^{q(\cdot)}(X)$, which completes the proof.
	\end{proof}
	
	 The following example shows that in general the embedding in Theorem \ref{zwartenieogr} is not compact if $q=p$.
	
	\begin{ex}
		Let $n\in \mathbb N$. Consider the metric measure space $(\mathbb R^n,d,\lambda)$, where $d=\left|\cdot - \cdot\right|$ denotes the Euclidean distance and $\lambda$ is the Lebesgue measure. Suppose that $H$ is any subgroup of the orthogonal group of $\mathbb R^n$. Moreover, suppose that $s\in \mathcal{P}_b(X)$, $p\in (0,\infty)$ are such that $ps^+ < n$ and $s^+ \leq 1$. Then, the embedding
		\begin{equation*}
			M^{s(\cdot),p}_H(\mathbb R^n,d,\lambda) \hookrightarrow L^{p}(\mathbb R^n,\lambda)
		\end{equation*}
		is not compact.
	\end{ex}
	
	\begin{proof}
		Let us define the function $u: \mathbb R^n \to \mathbb R$ as follows
		\begin{equation*}
			u(x):= \left\{\begin{array}{ll} 1, & \textnormal{ for } x\in B(0,1),\\ 2-\left|x\right|, & \textnormal{ for } x\in B(0,2)\setminus B(0,1),\\ 0, & \textnormal{ for } x\in \mathbb R^n \setminus B(0,2).\end{array}\right.
		\end{equation*}
		Now, for fixed $k\geq 1$ let we define $u_k:=k^{-\frac{n}{p}}u(x/k)$. By the virtue of Lemma \ref{cuttoff} applied for $\tau:=1/k$ and $\sigma=n/p$ we obtain that the function $g_k(x):=2k^{-\frac{n}{p}+s(x)}\chi_{B(0,2k)}(x)$ is a scalar $s(\cdot)$-gradient of $u_k$.
		Since obviously for every $k\geq 1$ functions $u_{k}, g_k \in L^{p}(\mathbb R^n,\lambda)$ and $u_k$ is $H$-invariant, we conclude that $u_{k} \in M^{s(\cdot),p}_H(\mathbb R^n ,d,\lambda)$. Moreover, we have 
		\begin{equation*}
			\left\|u_k\right\|_{L^p(\mathbb R^n)} \leq \omega_n^{\frac{1}{p}} \textnormal{ and } \left\|g_k\right\|_{L^p(\mathbb R^n)} \leq 2^{\frac{n}{p}+1} \omega_n^{\frac{1}{p}},
		\end{equation*} where $\omega_n$ denotes the Lebesgue measure of unit ball in $\mathbb R^n$. Therefore, $\left\{u_k\right\}_{k=1}^{\infty}$ is bounded in $M^{s(\cdot),p}_H(\mathbb R^n, d,\lambda)$. On the other hand, we shall show that this sequence has not subsequence convergent in $L^p(\mathbb R^n,\lambda)$. Assume the contrary, that there exists a subsequence $\left\{u_{k_m}\right\}_{m=1}^{\infty}$ convergent in $L^p(\mathbb R^n,\lambda)$ to some function $u$. Passing to the subsequence, we can assume that $u_{k_m} \stackrel{m\to \infty}{\longrightarrow} u$ almost everywhere in $\mathbb R^n$. Let us notice that $u_k \stackrel{k\to \infty}{\longrightarrow} 0$ everywhere in $\mathbb R^n$, which implies that $u=0$. However, for every $m\in \mathbb N$ we have
		\begin{equation*}
			\left\| u_{k_m} \right\|_{L^p(\mathbb R^n)}^p \geq \int_{B(0,k_m)} k_m^{-n} \mbox{d}x=\omega_n>0,
		\end{equation*}
		and hence $\left\{u_{k_m}\right\}_{m=1}^{\infty}$ cannot converge to $0$ in $L^p(\mathbb R^n,\lambda)$. Thus we got a contradiction and the embedding of $M^{s(\cdot),p}_H(\mathbb R^n,d,\lambda)$ into $L^p(\mathbb R^n, \lambda)$ is not compact.
	\end{proof}
	
	Applying Proposition \ref{pomiedzy} $(v)$, $(vii)$ we immediately obtain Berestycki-Lions type theorem for Triebel-Lizorkin and Besov spaces.
	
	\begin{tw}\label{zwartenieogrbesov}
		Suppose that $(X,d,\mu)$ is a geometrically doubling metric measure space where $\mu$ satisfies the condition  $\sup\left\{\mu(B(x,1)): x \in X \right\}<\infty$. Moreover, assume that $\mu$ is lower Ahlfors $Q(\cdot)$-regular for some $Q\in \mathcal{P}_b^{\log}(X)$. Suppose that $H\subseteq \textnormal{Iso}_{\mu}(X)$ is such that there exist $x_0\in X$ and $r\in (0,\infty)$ satisfying
		\begin{equation*}
			\lim_{R\to \infty} \inf_{x\in X \setminus B(x_0,R)} M_H(x,r)=\infty.
		\end{equation*}
		Let $p\in \mathcal{P}_b^{\log}(X) \cap P_H(X)$, $q\in \mathcal{P}(X)$ and $s\in \mathcal{P}_b^{\log}(X)$ be such that $s^+ \leq 1$ and $sp \ll Q$.
		\begin{enumerate}
			\item[(i)] We have the compact embedding
			\begin{equation*}
				M^{s(\cdot),H}_{p(\cdot),q(\cdot)}(X,d,\mu) \hookrightarrow \hookrightarrow L^{\beta(\cdot)}(X,\mu)
			\end{equation*}
			for every $\beta\in \mathcal{P}_b(X)$ satisfying $p \ll \beta \ll \gamma$, where $\displaystyle \gamma=Qp/(Q-sp)$.
			\item[(ii)] For all $t\in \mathcal{P}_b^{\log}(X)$ satisfying inequality $t\ll s$ we have the compact embedding
			\begin{equation*}
				N^{s(\cdot),H}_{p(\cdot),q(\cdot)}(X,d,\mu) \hookrightarrow \hookrightarrow L^{\beta(\cdot)}(X,\mu)
			\end{equation*}
			for every $\beta \in \mathcal{P}_b(X)$ satisfying $p \ll \beta \ll \sigma$, where $\displaystyle \sigma=Qp/(Q-tp)$.
		\end{enumerate}
	\end{tw}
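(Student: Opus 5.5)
The plan is to reduce both statements to Theorem \ref{zwartenieogr}, composing a continuous embedding into an invariant Haj{\l}asz-Sobolev space with the compact embedding of that space. A continuous embedding followed by a compact one is compact, so it suffices to exhibit the continuous embeddings and check that all hypotheses of Theorem \ref{zwartenieogr} hold.

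For $(i)$, Proposition \ref{pomiedzy} $(v)$ gives $\dot{M}^{s(\cdot)}_{p(\cdot),q(\cdot)}(X,d,\mu)\hookrightarrow \dot{M}^{s(\cdot),p(\cdot)}(X,d,\mu)$ with operator semi-norm at most one. Since both non-homogeneous spaces carry the same $L^{p(\cdot)}$ part of the quasi-norm, this yields
\begin{equation*}
M^{s(\cdot)}_{p(\cdot),q(\cdot)}(X,d,\mu)\hookrightarrow M^{s(\cdot),p(\cdot)}(X,d,\mu).
\end{equation*}
The embedding is the identity map on functions, so $H$-invariance is preserved, and hence $M^{s(\cdot),H}_{p(\cdot),q(\cdot)}(X,d,\mu)\hookrightarrow M^{s(\cdot),p(\cdot)}_H(X,d,\mu)$. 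The hypotheses of Theorem \ref{zwartenieogr} are exactly those assumed here: $p\in\mathcal{P}^{\log}_b(X)\cap\mathcal{P}_H(X)$, $s\in\mathcal{P}^{\log}_b(X)$, $s^+\le1$, $sp\ll Q$, together with the geometric assumptions on $X$ and $H$. Theorem \ref{zwartenieogr} therefore gives $M^{s(\cdot),p(\cdot)}_H\hookrightarrow\hookrightarrow L^{\beta(\cdot)}$ for $p\ll\beta\ll\gamma$, and composing the two embeddings proves $(i)$.

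For $(ii)$, fix $t\in\mathcal{P}^{\log}_b(X)$ with $t\ll s$. Proposition \ref{pomiedzy} $(vii)$ gives $N^{s(\cdot)}_{p(\cdot),q(\cdot)}(X,d,\mu)\hookrightarrow M^{t(\cdot),p(\cdot)}(X,d,\mu)$, and again $H$-invariance is preserved. It remains to check the hypotheses of Theorem \ref{zwartenieogr} with $s$ replaced by $t$. Since $t\le s$ we have $t^+\le s^+\le 1$ and $tp\le sp\ll Q$, so $tp\ll Q$. The log-H\"older regularity of $t$ is assumed. One point needs care: Theorem \ref{zwartenieogr} tacitly requires $t$ to be a genuine exponent, i.e. $t^->0$. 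This holds because $t\in\mathcal{P}_b(X)$, and it ensures that $\sigma=Qp/(Q-tp)$ satisfies $\sigma\gg p$, so the range $p\ll\beta\ll\sigma$ makes sense. Theorem \ref{zwartenieogr} then gives $M^{t(\cdot),p(\cdot)}_H\hookrightarrow\hookrightarrow L^{\beta(\cdot)}$ for $p\ll\beta\ll\sigma$, and composing proves $(ii)$.

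There is no serious obstacle. The only points needing attention are that the continuous embeddings of Proposition \ref{pomiedzy} restrict to the $H$-invariant subspaces, which is immediate since they are identity maps, and that the exponent conditions $t^+\le1$ and $tp\ll Q$ transfer from $s$ to $t$, which follows from $t\le s$.
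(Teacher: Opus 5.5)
Your proposal is correct and is the same argument as the paper's: the paper obtains Theorem \ref{zwartenieogrbesov} by composing the continuous embeddings of Proposition \ref{pomiedzy} $(v)$ and $(vii)$, restricted to $H$-invariant functions, with Theorem \ref{zwartenieogr}, which is applied with $s$ in part $(i)$ and with $t$ in part $(ii)$. Your check that $t^+\le 1$ and $tp\ll Q$ follow from $t\ll s$ fills in the routine details that the paper leaves implicit.
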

	
	\subsubsection{Necessary conditions for compact embeddings of group invariant spaces} Now we investigate the necessary conditions for compact embeddings of $M^{s(\cdot),H}_{p(\cdot),q(\cdot)}$, $N^{s(\cdot),H}_{p(\cdot),q(\cdot)}$. The following theorem is main result of this part of our paper.
	
	\begin{tw}\label{konieczny}Suppose that $(X,d,\mu)$ is unbounded metric measure space where $\mu$ satisfies the condition  $\sup\left\{\mu(B(x,1)): x \in X \right\}<\infty$. Assume that $g(r):=\inf\left\{\mu(B(x,r)): x\in X\right\}>0$ for all $r\in (0,1)$. Let $H$ be a compact subgroup of $\textnormal{Iso}_{\mu}(X,d)$. Let $s,p,\beta\in \mathcal{P}_b(X)$, $q\in \mathcal{P}(X)$ be such that $s^+ < 1$ and $q^- <\infty$ or $s^+ \leq 1$ and $q^- =\infty$. Suppose that at least one of the following compact embeddings hold
		\begin{equation*}
			M^{s(\cdot),H}_{p(\cdot),q(\cdot)}(X,d,\mu) \hookrightarrow \hookrightarrow L^{\beta(\cdot)}(X,\mu), \hspace{15mm} N^{s(\cdot),H}_{p(\cdot),q(\cdot)}(X,d,\mu) \hookrightarrow \hookrightarrow L^{\beta(\cdot)}(X,\mu).
		\end{equation*}
		Then, for every $x_0\in X$ and $r\in (0,1/3)$
		\begin{equation*}
			\lim_{R \to \infty} \sup_{x\in X \setminus B(x_0,R)} M_H(x,r)=\infty.
		\end{equation*}
	\end{tw}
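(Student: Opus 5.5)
I would argue by contradiction. Suppose that for some $x_0\in X$ and $r\in(0,1/3)$ the limit is not $\infty$. Since the function $R\mapsto\sup_{x\in X\setminus B(x_0,R)}M_H(x,r)$ is non-increasing, it is then bounded: there is $K\in\mathbb N$ with $M_H(x,r)\le K$ for all $x$ outside some ball $B(x_0,R_0)$. Because $X$ is unbounded, I can pick points $y_k\in X$ with $d(y_k,x_0)\to\infty$, chosen inductively so that the orbits are far apart. Concretely, I would require
\begin{equation*}
d(y_{k+1},x_0)>\sup_{h\in H}d(h(y_j),x_0)+10 \quad\text{for all } j\le k.
\end{equation*}
This is possible because $H$ is compact, so each orbit $H(y_j)$ is bounded (it is the continuous image of a compact set). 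Since the elements of $H$ are isometries fixing distances to $H(x_0)$, which is also bounded, the orbits $H(y_k)$ then lie in disjoint annuli.

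For each $k$ I will build an $H$-invariant bump. Let $A_k\subseteq H(y_k)$ be a maximal subset whose $r$-balls are pairwise disjoint, so $\#A_k\le K$. By maximality, the balls $B(\xi,2r)$ with $\xi\in A_k$ cover $H(y_k)$. I would take
\begin{equation*}
\phi_k(x):=\max\{0,1-d(x,H(y_k))/r\}.
\end{equation*}
This is $1/r$-Lipschitz, takes values in $[0,1]$, and is $H$-invariant because $d(h(x),H(y_k))=d(x,H(y_k))$ for every $h\in H$. Its support lies in the $r$-neighbourhood of $H(y_k)$, and hence in $B_k:=\bigcup_{\xi\in A_k}B(\xi,3r)$, which has $\mu(B_k)\le K\sup_x\mu(B(x,1))$ since $3r<1$. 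This is exactly where the restriction $r<1/3$ enters. Theorem~\ref{lipschitz1}, applied with $B=B_k$ (bounded), together with Lemma~\ref{modularnorma}, then gives a uniform bound on $\|\phi_k\|_{A^{s(\cdot)}_{p(\cdot),q(\cdot)}}$ for $A\in\{M,N\}$, depending only on $K$, $r$ and $\sup_x\mu(B(x,1))$. This is the same computation as in the proof of Theorem~\ref{konieczny1}. So $\{\phi_k\}$ is bounded in the invariant space.

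To get the contradiction, note that $\phi_k\equiv1$ on $B(y_k,r)$ (it equals $1$ on $H(y_k)$ and is at least $1/2$ on $B(y_k,r/2)$). The supports of different $\phi_k$ are disjoint by the annulus separation. Hence, exactly as in Theorem~\ref{konieczny1},
\begin{equation*}
\rho_{\beta(\cdot)}(\phi_k-\phi_l)\ge 2^{-\beta^+}\big(\mu(B(y_k,r/2))+\mu(B(y_l,r/2))\big)\ge 2^{1-\beta^+}g(r/2)>0,
\end{equation*}
so no subsequence is Cauchy in $L^{\beta(\cdot)}(X,\mu)$. This contradicts the assumed compact embedding.

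The main obstacle I expect is the support and measure control of $\phi_k$, namely ensuring that the orbit neighbourhood is covered by boundedly many unit-scale balls. This uses the maximal $r$-disjoint family and the bound $M_H\le K$. I would double-check two points there: that maximality indeed gives covering by $2r$-balls, and that measurability and Borel issues for $d(\cdot,H(y_k))$ cause no trouble. Compactness of $H$ is used only to keep the orbits bounded, which is what makes the separation of the supports possible.
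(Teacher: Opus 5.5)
Your proof is correct, and it differs from the paper's in two places: how the invariant test functions are built, and how non-compactness is detected.

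\textbf{Comparison with the paper.} The paper negates the conclusion as the existence of $x_k$ with $d(x_k,x_0)\to\infty$ and $\sup_k M_H(x_k,r)<\infty$. It takes a bump $f_k$ at $x_k$ and symmetrizes it with the normalized Haar measure of the compact group $H$, $\varphi_k(y)=\int_H f_k(h^{-1}y)\,\mbox{d}\nu_H(h)$. The norm bound is the same as yours: a maximal separated subset of the orbit, balls of radius $3r$, and Theorem~\ref{lipschitz1}. The contradiction, however, comes from $\varphi_k\to 0$ pointwise together with $\rho_{\beta(\cdot)}(\varphi_k)\ge c>0$. That lower bound is obtained via H\"older's inequality for $\sigma=\max\{2,\beta\}$ and Fubini over $H$.

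You instead use $\phi_k=(1-d(\cdot,H(y_k))/r)_+$, which is $H$-invariant without any averaging. You then separate the orbits so the supports are disjoint, and $\phi_k\ge 1/2$ on $B(y_k,r/2)$ gives $\rho_{\beta(\cdot)}(\phi_k-\phi_l)\ge 2^{1-\beta^+}g(r/2)$. This is exactly the mechanism of Theorem~\ref{konieczny1}.

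Your route is more elementary, since it needs no Haar measure, Fubini or H\"older. It also does not really need compactness of $H$: for $y\notin B(x_0,R_0)$, the bound $M_H(y,r)\le K$ already gives $H(y)\subseteq\bigcup_{\xi\in A}B(\xi,2r)$, so the orbits you use are bounded for any subgroup $H$. The paper's argument, on the other hand, runs along an arbitrary sequence, so it actually yields the stronger statement with $\inf$ in place of $\sup$. Yours does too after extracting a sufficiently separated subsequence.

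\textbf{Two small corrections.}
\begin{enumerate}
\item $\phi_k$ is not $\equiv 1$ on $B(y_k,r)$. You only have $\phi_k\ge 1/2$ on $B(y_k,r/2)$, which is what your estimate actually uses.
\item The ``disjoint annuli'' justification does not hold as stated. Points of $H(y_k)$ can be closer to $x_0$ than $y_k$ by as much as $\sup_{h\in H}d(h(x_0),x_0)$, and this may exceed $10$. Rotations of $\mathbb R^2$ with $x_0$ far from the origin give an example. So the annuli about $x_0$ containing different orbits may overlap. The correct reason the supports are disjoint is that, for $j<k$ and $h,h'\in H$,
\begin{equation*}
d(h(y_k),h'(y_j))=d(y_k,h^{-1}h'(y_j))\ge d(y_k,x_0)-\sup_{g\in H}d(g(y_j),x_0)>10>2r .
\end{equation*}
This uses only that $H$ is a group of isometries, together with your inductive choice of the $y_k$.
\end{enumerate}
Measurability of $\phi_k$ is not an issue, since it is Lipschitz.
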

	
	\begin{proof}
		Assume the contrary, i.e. that there exist $x_0\in X$, $r\in (0,1/3)$ and sequence $\left\{x_k\right\}_{k=1}^{\infty} \subseteq X$ such that $d(x_k,x_0) \to \infty$ as $k\to \infty$ and
		\begin{equation*}
			0<\lambda:=\sup_{k\in \mathbb N} M_H(x_k,r)<\infty.
		\end{equation*}
		The above inequality implies that for every $k\in \mathbb N$ cardinality of any $2r$-separated subset of $H(x_k)$ is not greater than $\lambda$.
		
		For $k\in \mathbb N$ let $A_k \subseteq H(x_k)$ be the maximal $2r$-separated subset of $H(x_k)$. Then
		\begin{equation*}
			A_k:=\left\{y_1^k,\dots,y_{n_k}^k\right\},
		\end{equation*}
		for some $\left\{y_i^k\right\}_{i=1}^{n_k}\subseteq H(x_k)$ where $1\leq n_k \leq \lambda$ and
		\begin{equation}\label{zawieranie}
			H(x_k) \subseteq \bigcup_{i=1}^{n_k} B\left(y_i^k,2r\right).
		\end{equation}
		For $k\in \mathbb N$ we define the function $f_k: X \to \mathbb R$ as
		\begin{equation*}
			f_k(y):=\left\{ \begin{array}{lll} 1,& \textnormal{ for } y\in B\left(x_k,r/4\right),\\ \displaystyle 2-\frac{4}{r}d(y,x_k),& \textnormal{ for } y\in B\left(x_k,r/2\right) \setminus B\left(x_k,r/4\right),\\ 0,& \textnormal{ for } y\in X \setminus B\left(x_k,r/2\right).  \end{array}\right.
		\end{equation*}
		Obviously $f_k$ is $4/r$-Lipschitz and supported in $B\left(x_k,r/2\right).$
		Moreover, since group $H$ is compact, by \cite[Theorem 9.2.2 and Proposition 9.3.3]{Cohn} it carries left-invariant Haar measure $\nu_H$ such that $\nu_H(H)=1.$ Thus we define $\varphi_k: X \to \mathbb R$
		\begin{equation*}
			\varphi_k(y):=\int_H f_k\left(h^{-1}y\right) \mbox{d}\nu_H(h),
		\end{equation*}
		for $k\in \mathbb N$ and $y\in X.$ It is easy to notice that $\varphi_k$ is $H$-invariant and $4/r$-Lipschitz. In addition, for every $k\in \mathbb N$ function $\varphi_k$ is zero outside the set
		\begin{equation*}
			S_k:=\bigcup_{a\in H(x_k)} B\left(a,r/2\right).
		\end{equation*}
		Indeed, if some $y_0\in X$ is such that $d(y_0,a)\geq r/2$ for all $a\in H(x_k)$, then $d(h^{-1}y_0,x_k)\geq r/2$ for all $h\in H$. Hence $f_k(h^{-1}y_0)=0$ and $\varphi_k(y_0)=0.$ Hence by Theorem \ref{lipschitz1} we have $\varphi_k \in \dot{A}^{s(\cdot),H}_{p(\cdot),q(\cdot)}(X,d,\mu)$, where $A=M$ or $A=N$. Moreover, using \eqref{zawieranie} we get
		\begin{equation*}
			S_k \subseteq \bigcup_{i=1}^{n_k} B\left(y_i^k,3r\right).
		\end{equation*} Then, since $r\in (0,1/3)$, we have for every $k\in \mathbb N$
		\begin{equation}\label{miara}
			\mu(S_k) \leq \sum_{i=1}^{n_k} \mu(B(y_i^k,3r))\leq n_k\sup_{x\in X}\mu(B(x,1))\leq \lambda \xi<\infty,
		\end{equation}
		where
		\begin{equation*}
		\xi:=\sup_{x\in X} \mu(B(x,1)).
		\end{equation*} We show that $\left\{\varphi_k\right\}_{k=1}^{\infty}$ is bounded in $A^{s(\cdot),H}_{p(\cdot),q(\cdot)}(X,d,\mu),$ where $A\in \left\{M,N\right\}$. By Theorem \ref{lipschitz1} we obtain
		\begin{equation}\label{hajlasze}
			\left\| \varphi_k \right\|_{\dot{A}^{s(\cdot),H}_{p(\cdot),q(\cdot)}(X,d,\mu)} \leq C_{\textnormal{lip}} \left(\frac{4}{r}\right)^{s^+} \left\|\chi_{S_k}\right\|_{L^{p(\cdot)}(X,\mu)}\leq C_{\textnormal{lip}} \left(\frac{4}{r}\right)^{s^+} \max\left\{(\lambda \xi)^{\frac{1}{p^+}},(\lambda \xi)^{\frac{1}{p^-}}\right\},
		\end{equation}
		where $C_{\textnormal{lip}}$ is a constant from Theorem \ref{lipschitz1}.
		Moreover, applying \eqref{miara} and using the facts that $\varphi_k$ is zero outside of $S_k$ and $\left|\varphi_k(y)\right| \leq 1$ for all $y\in X$, we obtain
		\begin{align}\label{lp}
		\int_X \left|\varphi_k(y)\right|^{p(y)}\mbox{d}\mu(y)= \int_{S_k} \left|\varphi_k(y)\right|^{p(y)}\mbox{d}\mu(y)\leq \mu(S_k) \leq \lambda \xi<\infty 
		\end{align}
		and hence $\left\{\varphi_k\right\}_{k=1}^{\infty}$ is bounded in $L^{p(\cdot)}(X,\mu)$. Therefore, by \eqref{hajlasze} and \eqref{lp} we conclude that $\left\{\varphi_k\right\}_{k=1}^{\infty}$ is bounded in $A^{s(\cdot),H}_{p(\cdot),q(\cdot)}(X,d,\mu)$.
		
		 On the other hand, we will show that sequence $\left\{\varphi_k\right\}_{k=1}^{\infty}$ has not subsequence convergent in $L^{\beta(\cdot)}(X,\mu)$. Since $f_k \stackrel{k\to \infty}{\longrightarrow} 0$ almost everywhere in $X$, by dominated convergence theorem we also know that $\varphi_k \stackrel{k\to \infty}{\longrightarrow} 0$ almost everywhere in $X$. Therefore, it suffices to prove that there exists constant $c\in (0,\infty)$ such that $\rho_{\beta(\cdot)}\left(\varphi_k\right)\geq c>0.$ Notice that since $0\leq \varphi_k \leq 1$, we have
		 \begin{equation}\label{sigma}
		 \rho_{\beta(\cdot)}\left(\varphi_k\right)=\int_X \varphi_k(y)^{\beta(y)}\mbox{d}\mu(y) \geq \int_X \varphi_k(y)^{\sigma(y)}\mbox{d}\mu(y)=\rho_{\sigma(\cdot)}\left(\varphi_k\right),
		 \end{equation}
		 where $\sigma:=\max\left\{2,\beta\right\}$. By the fact that $\sigma^- >1$, H\"older inequality and the Fubini theorem we get
		\begin{align}\label{holderrr}
		\begin{split}	\left\| \varphi_k \right\|_{L^{\sigma(\cdot)}(X,\mu)}= \left\| \varphi_k \right\|_{L^{\sigma(\cdot)}(S_k)} &\geq \frac{1}{2\left\|1\right\|_{L^{\sigma'(\cdot)}(S_k)}} \int_{S_k} \varphi_k(y)\mbox{d}\mu(y)\\&= \frac{1}{2\left\|1\right\|_{L^{\sigma'(\cdot)}(S_k)}}\int_H \int_{S_k} f_k(h^{-1}y)\mbox{d}\mu(y)\mbox{d}\nu_H(h).
			\end{split}
		\end{align}
		Now, notice that for every $h
		\in H$ we have
		\begin{equation}\label{punktowe}
		 \int_{S_k} f_k\left(h^{-1}y\right)\mbox{d}\mu(y) \geq \int_{B\left(h(x_k),r/4\right)} f_k\left(h^{-1}y\right)\mbox{d}\mu(y)=\mu\left(B\left(h(x_k),r/4\right)\right) \geq g(r/4).
		\end{equation}
		Hence, using \eqref{punktowe} in \eqref{holderrr} and applying Lemma \ref{modularnorma} and \eqref{miara} we get
		\begin{align}\label{ostatnie2}
			\begin{split}\left\|\varphi_k \right\|_{L^{\sigma(\cdot)}(X,\mu)} &\geq \frac{1}{2\left\|1\right\|_{L^{\sigma'(\cdot)}(S_k)}} g(r/4)\geq \frac{1}{2\max\left\{ \mu(S_k)^{\frac{1}{(\sigma')_{S_k}^-}},\mu(S_k)^{\frac{1}{(\sigma')_{S_k}^+}} \right\}} g\left(r/4\right) \\&\geq \frac{1}{2 \max\left\{ NM,1 \right\}^{\frac{1}{(\sigma')^-}}} g\left(r/4\right) =:d>0,
			\end{split}
		\end{align}
		since from assumptions $g(r/4)=\inf\left\{\mu\left(B(x,r/4)\right): x\in X\right\}>0$. Now, using \eqref{ostatnie2} in inequality \eqref{sigma} and Lemma \ref{modularnorma} we get
		\begin{equation*}
			\rho_{\beta(\cdot)}\left(\varphi_k\right) \geq \rho_{\sigma(\cdot)}\left(\varphi_k\right) \geq \min \left\{\left\| \varphi_k\right\|_{L^{\sigma(\cdot)}(X,\mu)}^{\sigma^+}, \left\|\varphi_k\right\|_{L^{\sigma(\cdot)}(X,\mu)}^{\sigma^-}\right\} \geq \min \left\{d^{\sigma^+}, d^{\sigma^-}\right\}:=c>0.
		\end{equation*} Therefore, the sequence $\left\{\varphi_k\right\}_{k=1}^{\infty}$ has not subsequence convergent in $L^{\beta(\cdot)}(X,\mu).$
	\end{proof}
	
	\subsubsection{Characterization of compact embeddings of group invariant spaces}
	Combining Theorem \ref{zwartenieogrbesov} with Theorem \ref{konieczny} we can obtain the following characterization of compact embeddings of group invariant Besov and Triebel-Lizorkin spaces.
	
	\begin{tw}\label{podsumowanie}
	Suppose that $(X,d,\mu)$ is unbounded geometrically doubling metric measure spaces where $\mu$ satisfies the condition $\sup \left\{ \mu(B(x,1)): x\in X\right\}<\infty$. Moreover, assume that $\mu$ is lower Ahlfors $Q(\cdot)$-regular for some $Q\in \mathcal{P}_b^{\log}(X)$. Let $H$ be the compact subgroup of $\textnormal{Iso}_{\mu}(X,d)$. Then, the following statements are equivalent.
	\begin{enumerate}
		\item[(i)] For every $x_0\in X$ and $r\in \left(0,1/3\right)$ we have
		\begin{equation*}
			\lim_{R\to \infty} \sup_{x\in X \setminus B(x_0,R)} M_H(x,r)=\infty.
		\end{equation*}
		\item[(ii)] For all $p\in \mathcal{P}_b^{\log}(X)\cap \mathcal{P}_H(X)$, $q\in \mathcal{P}(X)$, $s\in \mathcal{P}_b^{\log}(X)$ such that $s^+\leq 1$ and $sp \ll Q$ we have the compact embedding
		\begin{equation*}
			M^{s(\cdot),H}_{p(\cdot),q(\cdot)}(X,d,\mu) \hookrightarrow \hookrightarrow L^{\beta(\cdot)}(X,\mu)
		\end{equation*}
		for every $\beta\in \mathcal{P}_b(X)$ such that $p \ll \beta \ll \gamma$, where $\gamma=Qp/(Q-sp)$.
		\item[(iii)] There exist $s,p,\beta \in \mathcal{P}_b(X)$, $q\in \mathcal{P}(X)$ satisfying $s^+ <1 $ and $q^-<\infty$ or $s^+\leq 1$ and $q^-=\infty$ such that we have the compact embedding
		\begin{equation*}
			M^{s(\cdot),H}_{p(\cdot),q(\cdot)}(X,d,\mu) \hookrightarrow \hookrightarrow L^{\beta(\cdot)}(X,\mu).
		\end{equation*}
		\item[(iv)] For all $p\in \mathcal{P}_b^{\log}(X)\cap \mathcal{P}_H(X)$, $q\in \mathcal{P}(X)$, $s\in \mathcal{P}_b^{\log}(X)$ such that $s^+\leq 1$ and $sp \ll Q$ and for every $t\in \mathcal{P}_b^{\log}(X)$ such that $t \ll s$ we have the compact embedding
		\begin{equation*}
			N^{s(\cdot),H}_{p(\cdot),q(\cdot)}(X,d,\mu) \hookrightarrow \hookrightarrow L^{\beta(\cdot)}(X,\mu)
		\end{equation*}
		for every $\beta\in \mathcal{P}_b(X)$ such that $p \ll \beta \ll \sigma$, where $\sigma=Qp/(Q-tp)$.
		\item[(v)] There exist $s,p,\beta \in \mathcal{P}_b(X)$, $q\in \mathcal{P}(X)$ satisfying $s^+ <1 $ and $q^-<\infty$ or $s^+\leq 1$ and $q^-=\infty$ such that we have the compact embedding
		\begin{equation*}
			N^{s(\cdot),H}_{p(\cdot),q(\cdot)}(X,d,\mu) \hookrightarrow \hookrightarrow L^{\beta(\cdot)}(X,\mu).
		\end{equation*}
	\end{enumerate}
	\end{tw}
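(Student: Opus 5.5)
The plan is to close a cycle of implications using Theorem \ref{zwartenieogrbesov} for sufficiency and Theorem \ref{konieczny} for necessity. The trivial implications come first. For $(ii)\Rightarrow(iii)$, choose admissible exponents that are $H$-invariant and log-H\"older, for instance constants. Take $s\in(0,1)$ constant, so $s^+<1$, take $q$ constant and finite, so $q^-<\infty$, take $p$ constant with $sp<Q^-$, and take $\beta$ constant with $p<\beta<\gamma^-$. Then $(ii)$ gives the compact embedding required in $(iii)$. The same choice with any constant $t<s$ gives $(iv)\Rightarrow(v)$.

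Next come $(iii)\Rightarrow(i)$ and $(v)\Rightarrow(i)$. Here I would apply Theorem \ref{konieczny} directly, since its hypotheses are all in force. $X$ is unbounded and $\sup_x\mu(B(x,1))<\infty$. $H$ is a compact subgroup of $\textnormal{Iso}_\mu(X,d)$. The condition $g(r)=\inf_x\mu(B(x,r))>0$ for $r\in(0,1)$ follows from lower Ahlfors $Q(\cdot)$-regularity, because $\mu(B(x,r))\ge b_1 r^{Q(x)}\ge b_1 r^{Q^+}$ for $r\le 1$. Its conclusion is exactly $(i)$.

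The remaining implications $(i)\Rightarrow(ii)$ and $(i)\Rightarrow(iv)$ are where I expect the main obstacle. Theorem \ref{zwartenieogrbesov} requires the \emph{infimum} condition $\lim_{R\to\infty}\inf_{x\notin B(x_0,R)}M_H(x,r)=\infty$ for some $x_0$ and $r$, whereas $(i)$ as worded only controls the supremum, and $\inf\le\sup$ runs the wrong way. My first attempt will be to exploit the fact that the proof of Theorem \ref{konieczny} actually gives more than its statement. It assumes, for contradiction, a sequence $x_k$ with $d(x_k,x_0)\to\infty$ and $\sup_k M_H(x_k,r)<\infty$, and the negation of that assumption is precisely the infimum condition. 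So from $(iii)$ or $(v)$ one obtains the strong form $(i')$: the infimum condition holds for every $x_0\in X$ and $r\in(0,1/3)$. Then $(i')\Rightarrow(ii)$ and $(i')\Rightarrow(iv)$ follow from Theorem \ref{zwartenieogrbesov} $(i)$ and $(ii)$ respectively, and $(i')\Rightarrow(i)$ is trivial. This closes the equivalence of $(ii)$ through $(v)$ with $(i')$, and shows that each of them implies $(i)$.

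To get all the way back from the literal $(i)$, I would try to upgrade the supremum condition to the infimum condition. One route is to use that, for compact $H$, the orbit map $x\mapsto H(x)$ is $1$-Lipschitz in the Hausdorff distance, so $M_H(\cdot,r)$ is controlled along nearby points: $M_H(y,r)\ge M_H(x,3r)$ whenever $d(x,y)<r$. Combined with geometric doubling, this relates large values of $M_H$ at different scales. However, this only propagates largeness locally and does not force it at \emph{every} far point. If this upgrade fails, the honest conclusion is that the argument establishes the equivalence with $(i)$ read in its infimum form, which is what Theorems \ref{zwartenieogrbesov} and \ref{konieczny} (through its proof) jointly support. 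I would then point out that this is the reading under which $(i)\Rightarrow(ii)$ holds.
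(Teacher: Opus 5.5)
Your cycle is the argument the paper intends. The paper gives no separate proof, only the remark that the theorem follows by combining Theorem \ref{zwartenieogrbesov} (sufficiency) with Theorem \ref{konieczny} (necessity). Your check of the hypotheses of Theorem \ref{konieczny} (e.g.\ $\inf_x\mu(B(x,r))\geq b_1r^{\sup Q}$ for $r<1$) is fine, and so are your constant exponents for $(ii)\Rightarrow(iii)$. One slip: for $(iv)\Rightarrow(v)$ take a constant $t\in(0,s)$ and $p<\beta<\sigma^-=Q^+p/(Q^+-tp)$. Staying below $\gamma^-$ is not enough, since $\sigma<\gamma$.

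The $\sup$/$\inf$ mismatch you found is real. It is a defect of the statement, and of the paper's one-line justification, not of your argument. No upgrade can exist, because $(i)$ as written does not imply $(ii)$.

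Take $X=\mathbb{R}^3$ with the Euclidean metric and Lebesgue measure $\lambda$, $Q\equiv 3$, and $H\cong SO(2)$ the rotations about the $x_3$-axis. Every hypothesis of the theorem holds.
\begin{itemize}
\item The orbit of $(\rho,0,0)$ is a circle of radius $\rho$ containing at least $\lfloor 2\rho/r\rfloor$ points with mutual distances $\geq 2r$. Hence $\sup_{x\in X\setminus B(x_0,R)}M_H(x,r)=\infty$ for every $R$, and $(i)$ holds literally.
\item On the other hand, $ke_3=(0,0,k)$ is fixed by $H$, so $M_H(ke_3,r)=1$. The function $\phi_k(y):=\max\{0,\min\{1,2-|y-ke_3|\}\}$ is $H$-invariant, $1$-Lipschitz, and satisfies $0\leq\phi_k\leq\chi_{B(ke_3,2)}$.
\item Take $p=q=2$, $s=1/2$ (so $\gamma=3$) and $\beta=5/2$. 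Theorem \ref{lipschitz1} bounds $\{\phi_k\}$ uniformly in $M^{1/2,H}_{2,2}(\mathbb{R}^3,d,\lambda)$.
\item However, $\|\phi_k-\phi_l\|_{L^{5/2}}\geq(2|B(0,1)|)^{2/5}$ whenever $|k-l|\geq 4$. So no subsequence converges in $L^{5/2}$, and $(ii)$ fails.
\end{itemize}
This is just the construction from the proof of Theorem \ref{konieczny} with $x_k=ke_3$.

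So the theorem is true only with $(i)$ replaced by your $(i')$, namely $\lim_{R\to\infty}\inf_{x\in X\setminus B(x_0,R)}M_H(x,r)=\infty$. That is what the proof of Theorem \ref{konieczny} actually delivers and what Theorem \ref{zwartenieogrbesov} requires. For that version your proof is complete, and you can drop the local-propagation attempt.
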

	
	We will finish the whole section with the following example, which illustrates the main results.
	
	\begin{ex}
	Consider metric measure space $(\mathbb C, d, \lambda)$, where $d=\left|\cdot - \cdot\right|$ denotes the Euclidean distance and $\lambda$ is the Lebesgue measure. Suppose that $H$ is a cyclic group generated by $T_{\theta}$, where $\theta \in \mathbb R$ and $T_{\theta}: \mathbb C \to \mathbb C$ is a transformation defined for $z\in \mathbb C$ as $T_{\theta}(z)=e^{2\pi \theta i}z.$ Moreover, assume that $q\in \mathcal{P}(\mathbb C)$, $p\in \mathcal{P}_b^{\log}(\mathbb C)\cap \mathcal{P}_H(\mathbb C)$, $t,s\in \mathcal{P}_b^{\log}(\mathbb C)$ are such that $sp \ll 2$, $t\ll s$ and $s^+ \leq 1$. Let us fix any $\beta, \tau \in \mathcal{P}_b(\mathbb C)$ such that $p \ll \beta \ll \gamma$ and $p \ll \tau \ll \sigma$, where $\gamma=Qp/(Q-sp)$, $\sigma=Qp/(Q-tp)$. Then, the embeddings
		\begin{equation}\label{zanurzeniaa}
			M^{s(\cdot),H}_{p(\cdot),q(\cdot)}(\mathbb C,d,\lambda) \hookrightarrow L^{\beta(\cdot)}(\mathbb C,\lambda), \hspace{10mm} N^{s(\cdot),H}_{p(\cdot),q(\cdot)}(\mathbb C,d,\lambda) \hookrightarrow L^{\tau(\cdot)}(\mathbb C,\lambda)
		\end{equation}
		are compact if and only if $\theta \in \mathbb R \setminus \mathbb Q$.
	\end{ex}
	
	\subsection{Compact embeddings between Besov and Triebel-Lizorkin spaces}
	
	In the last section we present some results concerning compact embeddings between spaces $A^{s(\cdot)}_{p(\cdot),q(\cdot)}$ and $\tilde{A}^{t(\cdot)}_{p(\cdot),r(\cdot)}$, where $A, \tilde{A} \in \left\{M,N\right\}$. Below we formulate two main results of this subsection.
	
	\begin{tw}\label{pomiedzyM}
		Let $(X,d,\mu)$ be a metric measure space. Let $p,s\in \mathcal{P}_b(X)$ and $q\in \mathcal{P}(X)$. Then, the following statements are equivalent.
		\begin{enumerate}
			\item[(i)] We have compact embedding
			\begin{equation*}
				M^{s(\cdot)}_{p(\cdot),q(\cdot)}(X,d,\mu) \hookrightarrow \hookrightarrow L^{p(\cdot)}(X,\mu).
			\end{equation*}
			\item[(ii)] For every $r\in \mathcal{P}(X)$ and $t\in \mathcal{P}_b(X)$ such that $t \ll s$ we have compact embeddings
			\begin{equation*}
				M^{s(\cdot)}_{p(\cdot),q(\cdot)}(X,d,\mu) \hookrightarrow \hookrightarrow M^{t(\cdot)}_{p(\cdot),r(\cdot)}(X,d,\mu), \hspace{10mm} 
				M^{s(\cdot)}_{p(\cdot),q(\cdot)}(X,d,\mu) \hookrightarrow \hookrightarrow N^{t(\cdot)}_{p(\cdot),r(\cdot)}(X,d,\mu).
			\end{equation*}
			\item[(iii)] There exist $r\in \mathcal{P}(X)$ and $t\in \mathcal{P}_b(X)$ such that $t \ll s$ and we have at least one of the following compact embeddings
			\begin{equation*}
				M^{s(\cdot)}_{p(\cdot),q(\cdot)}(X,d,\mu) \hookrightarrow \hookrightarrow M^{t(\cdot)}_{p(\cdot),r(\cdot)}(X,d,\mu), \hspace{10mm} 
				M^{s(\cdot)}_{p(\cdot),q(\cdot)}(X,d,\mu) \hookrightarrow \hookrightarrow N^{t(\cdot)}_{p(\cdot),r(\cdot)}(X,d,\mu).
			\end{equation*}
		\end{enumerate}
	\end{tw}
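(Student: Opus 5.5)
We will prove the cycle (i)$\Rightarrow$(ii)$\Rightarrow$(iii)$\Rightarrow$(i).

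The implication (ii)$\Rightarrow$(iii) is immediate. For (iii)$\Rightarrow$(i) we observe that both $M^{t(\cdot)}_{p(\cdot),r(\cdot)}$ and $N^{t(\cdot)}_{p(\cdot),r(\cdot)}$ embed continuously into $L^{p(\cdot)}(X,\mu)$, since the $L^{p(\cdot)}$ quasi-norm is part of their quasi-norms. Composing a compact embedding with a continuous one gives a compact embedding, so (i) follows.

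The main work is (i)$\Rightarrow$(ii). Fix $r\in\mathcal{P}(X)$ and $t\in \mathcal{P}_b(X)$ with $t\ll s$, and set $\varepsilon:=(s-t)^->0$. First we reduce the $M$ target to the $N$ target. By Proposition \ref{pomiedzy}(vii) applied with the pair $(t,\tilde t)$, where $\tilde t:=t+\varepsilon/2\ll s$, we have $N^{\tilde t(\cdot)}_{p(\cdot),r(\cdot)}\hookrightarrow M^{t(\cdot),p(\cdot)}$. We also need to reach $M^{t(\cdot)}_{p(\cdot),r(\cdot)}$ for arbitrary $r$. Proposition \ref{pomiedzy}(iv) and (iii) give $N^{\tilde t(\cdot)}_{p(\cdot),\min\{r,p\}(\cdot)}\hookrightarrow N^{t(\cdot)}_{p(\cdot),\min\{r,p\}(\cdot)}$. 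This equals $M^{t(\cdot)}_{p(\cdot),\min\{r,p\}(\cdot)}$ when $r\ge p$; in general we combine (iv) with (i) in the $M$ scale. Hence it suffices to show $M^{s(\cdot)}_{p(\cdot),q(\cdot)}\hookrightarrow\hookrightarrow N^{\tilde t(\cdot)}_{p(\cdot),\rho(\cdot)}$ for every $\tilde t\ll s$ and a small constant index $\rho$, say $\rho=\min\{r^-,p^-\}$ or any fixed small number. Proposition \ref{pomiedzy}(i) then lets us pass to the larger $r$.

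To prove this, take a bounded sequence $(u_n)$ in $M^{s(\cdot)}_{p(\cdot),q(\cdot)}$. By (i), extract a subsequence that is Cauchy in $L^{p(\cdot)}$. Write $v=u_n-u_m$ and let $g=\{g_k\}$ be a vector $s(\cdot)$-gradient of $v$ with bounded $L^{p(\cdot)}(\ell^{q(\cdot)})$ norm. By Proposition \ref{pomiedzy}(v),(viii) we may take $g$ with $\sup_k\|g_k\|_{L^{p(\cdot)}}\le C$. Fix a cutoff $K$ and define a new vector $\tilde t(\cdot)$-gradient $h$ of $v$:
\begin{itemize}
\item for $k\ge K$, set $h_k:=2^{-k(s-\tilde t)}g_k$, valid because $d(x,y)^{s}\le 2^{-k(s-\tilde t)}d(x,y)^{\tilde t}$ on the $k$-th annulus;
\item for $k<K$, use the trivial bound $|v(x)-v(y)|\le |v(x)|+|v(y)|$ and set $h_k:=2^{(k+1)\tilde t}|v|$.
\end{itemize}
The $\ell^{\rho}(L^{p(\cdot)})$ size of the tail $k\ge K$ is at most $C\,(\sum_{k\ge K}2^{-k\varepsilon\rho/2})^{1/\rho}$, which is small for $K$ large uniformly in $n,m$. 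The modular-to-norm passage uses Lemma \ref{mieszane}(i) for constant $\rho$, plus the pointwise factor bound $2^{-k(s-\tilde t)(x)}\le 2^{-k\varepsilon/2}$ for $k\ge0$.

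The head $k<K$ contributes at most $C_K\|v\|_{L^{p(\cdot)}}$, with $C_K$ depending on $K$ and $\tilde t^{\pm}$. For $k<0$ the factors $2^{(k+1)\tilde t}$ are summable, so negative indices cause no difficulty. Choosing $K$ first and then $n,m$ large shows that the subsequence is Cauchy in $N^{\tilde t(\cdot)}_{p(\cdot),\rho}$. The quasi-triangle inequality for the mixed space gives this, as does completeness.

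The main obstacle is this estimate. We must verify carefully that the piecewise sequence is a genuine vector gradient, with exceptional null sets chosen uniformly. We must also handle the variable exponents in the mixed norm $\ell^{\rho}(L^{p(\cdot)})$ without losing uniformity. If a direct passage to $M^{t(\cdot)}_{p(\cdot),r(\cdot)}$ with $r<p$ is not obtainable through Proposition \ref{pomiedzy}, we instead repeat the same head/tail splitting directly in $L^{p(\cdot)}(\ell^{\rho})$. There the tail is bounded pointwise by $\sup_k g_k\cdot(\sum_{k\ge K}2^{-k\varepsilon\rho/2})^{1/\rho}$, and the head by $C_K|v|$.
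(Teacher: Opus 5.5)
Your proposal is correct and follows essentially the paper's route. Both arguments split a vector gradient of $u_n-u_m$ at a dyadic cutoff: coarse scales are controlled by $2^{(k+1)\tilde t}|u_n-u_m|$, and fine scales by the original gradient times a small factor coming from the smoothness gap. The general targets are then reached through Proposition \ref{pomiedzy}; your vague step ``combine (iv) with (i)'' is precisely the chain $N^{\tilde t(\cdot)}_{p(\cdot),\rho}\hookrightarrow N^{t'(\cdot)}_{p(\cdot),p(\cdot)}=M^{t'(\cdot)}_{p(\cdot),p(\cdot)}\hookrightarrow M^{t(\cdot)}_{p(\cdot),r(\cdot)}$ with $t\ll t'\ll\tilde t$, as in the paper. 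The only real difference is in the fine-scale factor. You use the scale-dependent factor $2^{-k(s-\tilde t)}$, so a uniform bound on $\sup_k\|g_k\|_{L^{p(\cdot)}}$ already suffices. The paper uses the uniform factor $\varepsilon^{(\alpha-\beta)^-}$ and therefore first passes to $A^{\alpha(\cdot)}_{p(\cdot),q^-}$ to have $\ell^{q^-}$-summable gradients.
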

	
	\begin{tw}\label{pomiedzyN}
		Let $(X,d,\mu)$ be a metric measure space. Let $p,s\in \mathcal{P}_b(X)$ and $q\in \mathcal{P}(X)$. Then, the following statements are equivalent.
		\begin{enumerate}
			\item[(i)] We have compact embedding
			\begin{equation*}
				N^{s(\cdot)}_{p(\cdot),q(\cdot)}(X,d,\mu) \hookrightarrow \hookrightarrow L^{p(\cdot)}(X,\mu).
			\end{equation*}
			\item[(ii)] For every $r\in \mathcal{P}(X)$ and $t\in \mathcal{P}_b(X)$ such that $t \ll s$ we have compact embeddings
			\begin{equation*}
				N^{s(\cdot)}_{p(\cdot),q(\cdot)}(X,d,\mu) \hookrightarrow \hookrightarrow M^{t(\cdot)}_{p(\cdot),r(\cdot)}(X,d,\mu), \hspace{10mm} 
				N^{s(\cdot)}_{p(\cdot),q(\cdot)}(X,d,\mu) \hookrightarrow \hookrightarrow N^{t(\cdot)}_{p(\cdot),r(\cdot)}(X,d,\mu).
			\end{equation*}
			\item[(iii)] There exist $r\in \mathcal{P}(X)$ and $t\in \mathcal{P}_b(X)$ such that $t \ll s$ and we have at least one of the following compact embeddings
			\begin{equation*}
				N^{s(\cdot)}_{p(\cdot),q(\cdot)}(X,d,\mu) \hookrightarrow \hookrightarrow M^{t(\cdot)}_{p(\cdot),r(\cdot)}(X,d,\mu), \hspace{10mm} 
				N^{s(\cdot)}_{p(\cdot),q(\cdot)}(X,d,\mu) \hookrightarrow \hookrightarrow N^{t(\cdot)}_{p(\cdot),r(\cdot)}(X,d,\mu).
			\end{equation*}
		\end{enumerate}
	\end{tw}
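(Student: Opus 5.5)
The plan is to prove the cycle (i) $\Rightarrow$ (ii) $\Rightarrow$ (iii) $\Rightarrow$ (i) for Theorem \ref{pomiedzyN}. The implication (ii) $\Rightarrow$ (iii) is trivial.

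\emph{(iii) $\Rightarrow$ (i).} Both target spaces embed continuously into $L^{p(\cdot)}(X,\mu)$, since their quasi-norms contain the summand $\|\cdot\|_{L^{p(\cdot)}(X,\mu)}$. Composing a compact embedding with a continuous one gives a compact map, so $N^{s(\cdot)}_{p(\cdot),q(\cdot)}\hookrightarrow\hookrightarrow L^{p(\cdot)}$.

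\emph{(i) $\Rightarrow$ (ii).} Fix $t\ll s$ and $r\in\mathcal{P}(X)$. Choose an intermediate exponent $\tilde t:=(s+t)/2$, so that $t\ll\tilde t\ll s$, and let $\theta:=(\tilde t-t)^->0$. Take a bounded sequence $\{u_n\}$ in $N^{s(\cdot)}_{p(\cdot),q(\cdot)}$. By (i), pass to a subsequence that is Cauchy in $L^{p(\cdot)}$. Put $v:=u_n-u_m$ and fix $k_0\in\mathbb N$. I would build a vector $t(\cdot)$-gradient of $v$ by treating high and low frequencies separately.

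\textbf{High frequencies ($k> k_0$).} By Proposition \ref{pomiedzy}(vii) the sequence is bounded in $M^{\tilde t(\cdot),p(\cdot)}$, so each $v$ has a scalar $\tilde t(\cdot)$-gradient $g$ with uniformly bounded $L^{p(\cdot)}$ norm. For $2^{-k-1}\le d(x,y)<2^{-k}$,
\[
d(x,y)^{\tilde t(x)}g(x)\le d(x,y)^{t(x)}2^{-k\theta}g(x).
\]
Hence $g_k:=2^{-k\theta}g$ works. Its contribution to $\|\{g_k\}_{k>k_0}\|$ in either $L^{p(\cdot)}(\ell^{r(\cdot)})$ or $\ell^{r(\cdot)}(L^{p(\cdot)})$ is at most $C\,2^{-k_0\theta}\sup\|g\|$, because geometric decay dominates any $\ell^{r(x)}$ sum, including when $r^-$ is small. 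This estimate is uniform in $n,m$.

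\textbf{Low frequencies ($k\le k_0$).} Use the trivial gradient $g_k(x):=2^{(k+1)t(x)}|v(x)|$. For $d(x,y)\ge 2^{-k-1}$,
\[
|v(x)-v(y)|\le d(x,y)^{t(x)}g_k(x)+d(x,y)^{t(y)}g_k(y).
\]
For $k\le 0$ the terms $2^{(k+1)t}$ decay geometrically, so the whole low range is controlled by $C(k_0,t^+)\|v\|_{L^{p(\cdot)}}$.

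\textbf{Combining.} Glue the two families into a single vector gradient. Using the quasi-triangle inequality in the mixed spaces, this gives
\[
\|v\|_{\dot A^{t}_{p,r}}\le C\bigl(2^{-k_0\theta}+C(k_0)\|v\|_{L^{p(\cdot)}}\bigr).
\]
First choose $k_0$ large, then $n,m$ large, to conclude the subsequence is Cauchy in $A^{t(\cdot)}_{p(\cdot),r(\cdot)}$ for $A\in\{M,N\}$. Completeness then gives convergence.

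The main obstacle is handling the variable quasi-norms of $\ell^{r(\cdot)}(L^{p(\cdot)})$ and $L^{p(\cdot)}(\ell^{r(\cdot)})$ with geometric weights $2^{-k\theta}$ and $2^{kt(x)}$. I would dominate the weights by $2^{-|k|\theta'}$ uniformly in $x$ and bound the sums via modulars, using Lemma \ref{mieszane} for the Besov scale. The $r=\infty$ case is handled by a supremum.
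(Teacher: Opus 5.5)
Your proposal is correct and is essentially the paper's proof via Proposition \ref{rownowaznosc}: a dyadic cut-off, the trivial gradient $2^{(k+1)t(x)}|u_n-u_m|$ at large scales and a gradient damped by the smoothness gap at small scales, giving $\|u_n-u_m\|_{\dot A}\le C(\eta+C_\eta\|u_n-u_m\|_{L^{p(\cdot)}})$. The paper damps a vector $\alpha(\cdot)$-gradient in $\ell^{q^-}$ and then reaches $M^{t(\cdot)}_{p(\cdot),r(\cdot)}$ and $N^{t(\cdot)}_{p(\cdot),r(\cdot)}$ through the embeddings of Proposition \ref{pomiedzy}, whereas you use a scalar gradient from Proposition \ref{pomiedzy}(vii) with the factor $2^{-k\theta}$ and sum directly in $\ell^{r(\cdot)}$; for the Besov target with $r^+=\infty$ but $r$ not identically $\infty$, Lemma \ref{mieszane}(ii) is unavailable, so first reduce to the constant exponent $r^-$ via Proposition \ref{pomiedzy}(i) (equivalently, use $\lambda^{1/r(x)}\ge\lambda^{1/r^-}$ for $\lambda\le 1$) and then apply Lemma \ref{mieszane}(i).
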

	
	\begin{proof}[Proof of Theorems \ref{pomiedzyM} and \ref{pomiedzyN}]
	
	It suffices to prove the implication $(i) \implies (ii)$, as implications $(ii) \implies (iii)$ and $(iii) \implies (i)$ are straightforward. To prove it, we need the following proposition. 
	
	\begin{prop}\label{rownowaznosc}
	Let $(X,d,\mu)$ be a metric measure space and $p,s\in \mathcal{P}_b(X)$, $q,r\in \mathcal{P}(X)$. Let $\left\{u_n\right\}_{n=1}^{\infty}$ be a bounded sequence in $M^{s(\cdot)}_{p(\cdot),q(\cdot)}(X,d,\mu)$ or $N^{s(\cdot)}_{p(\cdot),q(\cdot)}(X,d,\mu)$. Assume that the sequence $\left\{u_n\right\}_{n=1}^{\infty}$ converges in $L^{p(\cdot)}(X,\mu)$, Then, it converges both in $M^{t(\cdot)}_{p(\cdot),r(\cdot)}(X,d,\mu)$ and in $N^{t(\cdot)}_{p(\cdot),r(\cdot)}(X,d,\mu)$ for every $t\in \mathcal{P}_b(X)$ such that $t \ll s$.	
	\end{prop}

	\begin{proof}
	Let $A\in \left\{M,N\right\}$ and assume that the sequence $\left\{u_n\right\}_{n=1}^{\infty}$ is bounded in $A^{s(\cdot)}_{p(\cdot),q(\cdot)}(X,d,\mu)$. Moreover, let us fix $t\in \mathcal{P}_b(X)$ such that $t \ll s$. Denote $\alpha:=3s/4+t/4$ and $\beta:=(s+t)/2$. Notice that it suffices to prove that $\left\{u_n\right\}_{n=1}^{\infty}$ converges in $A^{\beta(\cdot)}_{p(\cdot),q^-}(X,d,\mu)$. Indeed, by Proposition \ref{pomiedzy} $(viii)$, $(iv)$, $(iii)$ we get the following strings of embeddings	
	\begin{align*}
		M^{\beta(\cdot)}_{p(\cdot),q^-}(X,d,\mu) &\stackrel{(viii)}{\hookrightarrow} N^{\beta(\cdot)}_{p(\cdot),\infty}(X,d,\mu) \stackrel{(iv)}{\hookrightarrow} N^{t(\cdot)}_{p(\cdot),r(\cdot)}(X,d,\mu),\\
		N^{\beta(\cdot)}_{p(\cdot),q^-}(X,d,\mu) &\stackrel{(iv)}{\hookrightarrow} N^{s(\cdot)/4+3t(\cdot)/4}_{p(\cdot),p(\cdot)}(X,d,\mu)\stackrel{(iii)}{=}M^{s(\cdot)/2+3t(\cdot)/4}_{p(\cdot),p(\cdot)}(X,d,\mu) \stackrel{(iv)}{\hookrightarrow} M^{t(\cdot)}_{p(\cdot),r(\cdot)}(X,d,\mu)
	\end{align*}
	and
	\begin{align*}
	M^{\beta(\cdot)}_{p(\cdot),q^-}(X,d,\mu) \stackrel{(iv)}{\hookrightarrow} M^{t(\cdot)}_{p(\cdot),r(\cdot)}(X,d,\mu),\hspace{15mm}
	N^{\beta(\cdot)}_{p(\cdot),q^-}(X,d,\mu)  \stackrel{(iv)}{\hookrightarrow} N^{t(\cdot)}_{p(\cdot),r(\cdot)}(X,d,\mu).
	\end{align*}  Due to Proposition \ref{pomiedzy} $(iv)$ we have
	\begin{equation*}
	A^{s(\cdot)}_{p(\cdot),q(\cdot)}(X,d,\mu) \hookrightarrow A^{\alpha(\cdot)}_{p(\cdot),q^-}(X,d,\mu)\hookrightarrow A^{\beta(\cdot)}_{p(\cdot),q^-}(X,d,\mu).
	\end{equation*}
	Therefore, the sequence $\left\{u_n\right\}_{n=1}^{\infty}$ is bounded in $A^{\alpha(\cdot)}_{p(\cdot),q^-}(X,d,\mu)$ and it suffices to prove that it converges in $A^{\beta(\cdot)}_{p(\cdot),q^-}(X,d,\mu)$. Denote $\displaystyle C:=\sup_{n\in \mathbb N} \left\|u_n \right\|_{A^{\alpha(\cdot)}_{p(\cdot),q^-}(X,d,\mu)}$.
	
	 By the very definition of the $A^{\alpha(\cdot)}_{p(\cdot),q^-}$-quasi-norm, for each $n\in \mathbb N$ we can find $g_n\in \mathbb{D}^{\alpha(\cdot)}(u_n)$ such that
	\begin{equation*}
	\left\| g_n \right\| \leq \left\| u_n \right\|_{A^{\alpha(\cdot)}_{p(\cdot),q^-}(X,d,\mu)} \leq C,
	\end{equation*}
	where $\left\| \cdot\right\|$ denotes the $L^{p(\cdot)}(\ell^{q^-})$-quasi-norm if $A=M$ and $\ell^{q^-}(L^{p(\cdot)})$-quasi-norm if $A=N$.
	Now, notice that for every $n,m\in \mathbb N$ we have that $g_n+g_m \in \mathbb{D}^{\alpha(\cdot)}(u_n-u_m)$. From now, we shall use notation $g_{n,m}:=g_n+g_m$ and $u_{n,m}:=u_n-u_m$. Let us take arbitrary $\varepsilon \in (0,1)$ and let $E\subseteq X$ be such that $\mu(E)=0$ and 
	\begin{equation*}
	\left| u_{n,m}(x)-u_{n,m}(y)\right| \leq  d(x,y)^{\alpha(x)}g_{n,m,k}(x)+d(x,y)^{\alpha(y)}g_{n,m,k}(y) \textnormal{ for all } x,y\in X \setminus E
	\end{equation*}
	satisfying $2^{-k-1} \leq d(x,y) <2^{-k}$, where $k\in \mathbb Z$. Let $l \in \mathbb Z$ be such integer that $2^{-l} \leq \varepsilon < 2^{-l+1}$. Then, since $\varepsilon \in (0,1)$ we have $l\in (0,\infty)$. Now, if $k\in \mathbb Z$ is such that $k\geq l$, then for all $x,y\in X \setminus E$ with $2^{-k-1} \leq d(x,y) <2^{-k}$ we have
	\begin{align*}
	\left|u_{n,m}(x)-u_{n,m}(y)\right|& \leq d(x,y)^{\beta(x)}\varepsilon^{\alpha(x)-\beta(x)} g_{n,m,k}(x)+d(x,y)^{\beta(y)} \varepsilon^{\alpha(y)-\beta(y)} g_{n,m,k}(y)\\ & \leq d(x,y)^{\beta(x)}\varepsilon^{\left(\alpha-\beta\right)^-}g_{n,m,k}(x)+d(x,y)^{\beta(y)} \varepsilon^{\left(\alpha-\beta\right)^-} g_{n,m,k}(y).
	\end{align*}
	On the other hand, if $k<l$, then
	\begin{align*}
	\left| u_{n,m}(x) - u_{n,m}(y) \right| & \leq \left|u_{n,m}(x)\right| + \left|u_{n,m}(y)\right| \\ & \leq d(x,y)^{\beta(x)} 2^{(k+1)\beta(x)} \left|u_{n,m}(x)\right|+d(x,y)^{\beta(y)}2^{(k+1)\beta(y)}\left|u_{n,m}(y)\right|.
	\end{align*}
	Therefore, the sequence $h_{n,m}$ defined as
	\begin{equation*}
	h_{n,m,k}(x):= \left\{\begin{array}{ll} \varepsilon^{\left(\alpha-\beta\right)^-}g_{n,m,k}(x), & \textnormal{ if } k \geq l,\\ 2^{(k+1)\beta(x)}\left|u_{n,m}(x)\right|, & \textnormal{ if } k<l, \end{array} \right.
	\end{equation*}
	is a vector $\beta(\cdot)$-gradient of $u_{n,m}$. 
	
	Now we shall estimate quasi-norm of $h_{n,m}$ in the case when $A=M$. Let us fix $x\in X\setminus E$. If $q^-=\infty$, then
	\begin{align*}
		\left\| h_{n,m}(x) \right\|_{\ell^{q^-}} \leq \varepsilon^{(\alpha-\beta)^-}\left\| g_{n,m}(x)\right\|_{\ell^{\infty}}+\left(\frac{2}{\varepsilon}\right)^{\beta^+}\left| u_{n,m}(x) \right|
	\end{align*}
	If $q^-< \infty$, then we estimate
	\begin{align*}
	\left\|h_{n,m}(x)\right\|_{\ell^{q^-}}^{q^-} &= \sum_{k=-\infty}^{l-1} 2^{(k+1)\beta(x)q^-}\left|u_{n,m}(x)\right|^{q^-}+\sum_{k=l}^{\infty} \varepsilon^{(\alpha-\beta)^-q^-}g_{n,m,k}(x)^{q^-}\\ & =  \left|u_{n,m}(x)\right|^{q^-}\frac{2^{l\beta(x)q^-}}{1-2^{-\beta(x)q^-}} +\varepsilon^{(\alpha-\beta)^-q^-}\sum_{k=l}^{\infty} g_{n,m,k}(x)^{q^-} \\ & \leq \left|u_{n,m}(x)\right|^{q^-}Q(\beta,q)\left(\frac{2}{\varepsilon}\right)^{\beta^+ q^-}+\varepsilon^{(\alpha-\beta)^-q^-}\left\| g_{n,m}(x)\right\|_{\ell^{q^-}}^{q^-},
	\end{align*}
	where
	\begin{equation*}
	Q(\beta,q):=\frac{1}{1-2^{-\beta^-q^-}}
	\end{equation*} Taking in both cases the $L^{p(\cdot)}$-quasi-norm we obtain
	\begin{align}\label{cauchym}
	\left\| u_n-u_m \right\|_{\dot{M}^{\beta(\cdot)}_{p(\cdot),q^-}(X,d,\mu)} \leq \kappa_{p(\cdot)}\left(2^{\frac{1}{q^-}}Q(\beta,q)^{\frac{1}{q^-}} \left(\frac{2}{\varepsilon}\right)^{\beta^+} \left\|u_n-u_m\right\|_{L^{p(\cdot)}(X,\mu)}+2^{\frac{1}{q^-}+1}\varepsilon^{(\alpha-\beta)^-} C\right).
	\end{align}
	Here, in the case $q^-=\infty$, we apply conventions $1/\infty=0$ and $2^{-\infty}=0$.
	
	Now, we are going to find similar estimate for $\left\|h_{n,m}\right\|$ if $A=N$. For every $k\in \mathbb Z$ and $n,m\in \mathbb N$ we have
	\begin{equation*}
			\left\|h_{n,m,k}\right\|_{L^{p(\cdot)}(X,\mu)}:= \left\{\begin{array}{ll} \varepsilon^{\left(\alpha-\beta\right)^-}\left\|g_{n,m,k}\right\|_{L^{p(\cdot)}(X,\mu)}, & \textnormal{ if } k \geq l,\\ \left\|2^{(k+1)\beta(x)}u_{n,m}\right\|_{L^{p(\cdot)}(X,\mu)}, & \textnormal{ if } k<l. \end{array} \right.
	\end{equation*}
	Hence, if $q^-=\infty$, then
	\begin{equation*}
	\left\| h_{n,m} \right\|_{\ell^{q^-}\left(L^{p(\cdot)}(X,\mu)\right)} \leq \left(\frac{2}{\varepsilon}\right)^{\beta^+} \left\|u_{n,m}\right\|_{L^{p(\cdot)}(X,\mu)} + \varepsilon^{\left(\alpha-\beta\right)^-} \left\| g_{n,m} \right\|_{\ell^{q^-}\left(L^{p(\cdot)}(X,\mu)\right)}.
	\end{equation*}
	On the other hand, if $q^-< \infty$, then
	\begin{align*}
	&\left\| h_{n,m} \right\|_{\ell^{q^-}(L^{p(\cdot)}(X,\mu))}^{q^-}  \leq \sum_{k=-\infty}^{l-1} \left\|2^{(k+1)\beta(x)}u_{n,m}\right\|_{L^{p(\cdot)}(X,\mu)}^{q^-}+ \varepsilon^{(\alpha-\beta)^- q^-} \sum_{k=l}^{\infty} \left\|g_{n,m,k} \right\|_{L^{p(\cdot)}(X,\mu)}^{q^-} \\ & \hspace{8mm} \leq \sum_{k=-\infty}^{l-1} \max\left\{2^{(k+1)q^-\beta^+},2^{(k+1)q^-\beta^-}\right\}\left\|u_{n,m}\right\|_{L^{p(\cdot)}(X,\mu)}^{q^-}+ \varepsilon^{(\alpha-\beta)^- q^-} \sum_{k=l}^{\infty} \left\|g_{n,m,k} \right\|_{L^{p(\cdot)}(X,\mu)}^{q^-} \\ & \hspace{8mm}\leq \left\|u_{n,m}\right\|_{L^{p(\cdot)}(X,\mu)}^{q^-}\left[\sum_{k=-\infty}^{l-1} 2^{(k+1)q^-\beta^+} +\sum_{k=-\infty}^{l-1} 2^{(k+1)q^-\beta^-}\right]+ \varepsilon^{(\alpha-\beta)^- q^-} \sum_{k=l}^{\infty} \left\|g_{n,m,k} \right\|_{L^{p(\cdot)}(X,\mu)}^{q^-} \\ & \hspace{8mm}\leq 2Q(\beta,q) \left(\frac{2}{\varepsilon}\right)^{\beta^+ q^-} \left\|u_{n,m} \right\|_{L^{p(\cdot)}(X,\mu)}^{q^-} +	\varepsilon^{(\alpha-\beta)^- q^-} \left\| g_{n,m}\right\|_{\ell^{q^-}(L^{p(\cdot)}(X,\mu))}^{q^-}.
	\end{align*}
	Therefore,
	\begin{equation}\label{cauchyn}
	\left\|u_n -u_m\right\|_{\dot{N}^{\beta(\cdot)}_{p(\cdot),q^-}(X,d,\mu)} \leq \kappa_{p(\cdot)}\left(2^{\frac{1}{q^-}}2Q(\beta,q)^{\frac{1}{q^-}}\left(\frac{2}{\varepsilon}\right)^{\beta^+} \left\|u_n -u_m \right\|_{L^{p(\cdot)}(X,\mu)} +2^{\frac{1}{q^-}+1} \varepsilon^{(\alpha-\beta)^-}C\right),
	\end{equation}
	where we applied once more the conventions $1/\infty=0$ and $2^{-\infty}=0$. 
	
	Since $\left\{u_n\right\}$ is a Cauchy sequence in $L^{p(\cdot)}(X,\mu)$ and $\varepsilon \in (0,1)$ is arbitrary, by \eqref{cauchym}, \eqref{cauchyn} we deduce that $\left\{u_n\right\}$ is also a Cauchy sequence in $A^{\beta(\cdot)}_{p(\cdot),q^-}(X,d,\mu)$ and the proof is finished.
	\end{proof}
	
	We can get back to the proof of Theorems \ref{pomiedzyM}, \ref{pomiedzyN}. Assume that $(i)$ holds. Let $\left\{u_n\right\}_{n=1}^{\infty}$ be a sequence bounded in $A^{s(\cdot)}_{p(\cdot),q(\cdot)}(X,d,\mu)$, where $A \in \left\{M,N\right\}$. By assumption $(i)$ there exists subsequence $\left\{u_{n_k}\right\}_{k=1}^{\infty}$ which converges in $L^{p(\cdot)}(X,\mu)$. Now, thanks to Proposition \ref{rownowaznosc} we deduce that it also converges in $\tilde{A}^{t(\cdot)}_{p(\cdot),r(\cdot)}(X,d,\mu)$, where $\tilde{A} \in \left\{M,N\right\}$ and the proof is done.
	\end{proof}
	
	Combining together Theorems \ref{zwarteskonczone}, \ref{pomiedzyM} and \ref{pomiedzyN} we obtain the following result.
	
	\begin{corollary}
	Let $(X,d,\mu)$ be a metric measure space. Let $p,s\in \mathcal{P}_b(X)$ and $q\in \mathcal{P}(X)$. Assume that at least one of the following conditions hold:
	\begin{enumerate}
		\item[(i)] $(X,d)$ is totally bounded;
		\item[(ii)] measure $\mu$ is integrable, finite and $p\in \mathcal{P}_{\mu}^{\log}(X)$.
	\end{enumerate}
	Then, for every $r\in \mathcal{P}(X)$ and $t\in \mathcal{P}_b(X)$ such that $t \ll s$ we have compact embeddings
	\begin{align*}
	 M^{s(\cdot)}_{p(\cdot),q(\cdot)}(X,d,\mu) \hookrightarrow \hookrightarrow M^{t(\cdot)}_{p(\cdot),r(\cdot)}(X,d,\mu),&\hspace{10mm} M^{s(\cdot)}_{p(\cdot),q(\cdot)}(X,d,\mu) \hookrightarrow \hookrightarrow N^{t(\cdot)}_{p(\cdot),r(\cdot)}(X,d,\mu),\\
	 N^{s(\cdot)}_{p(\cdot),q(\cdot)}(X,d,\mu) \hookrightarrow \hookrightarrow M^{t(\cdot)}_{p(\cdot),r(\cdot)}(X,d,\mu),& \hspace{10mm} N^{s(\cdot)}_{p(\cdot),q(\cdot)}(X,d,\mu) \hookrightarrow \hookrightarrow N^{t(\cdot)}_{p(\cdot),r(\cdot)}(X,d,\mu).
	\end{align*}
	\end{corollary}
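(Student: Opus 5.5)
The corollary follows by chaining two earlier results, so the plan has two steps. First I will show that, under either hypothesis (i) or (ii), the compact embeddings
\begin{equation*}
M^{s(\cdot)}_{p(\cdot),q(\cdot)}(X,d,\mu) \hookrightarrow \hookrightarrow L^{p(\cdot)}(X,\mu), \qquad N^{s(\cdot)}_{p(\cdot),q(\cdot)}(X,d,\mu) \hookrightarrow \hookrightarrow L^{p(\cdot)}(X,\mu)
\end{equation*}
hold. Second, I will upgrade these to compact embeddings into $M^{t(\cdot)}_{p(\cdot),r(\cdot)}$ and $N^{t(\cdot)}_{p(\cdot),r(\cdot)}$ for every $r\in\mathcal{P}(X)$ and $t\in\mathcal{P}_b(X)$ with $t\ll s$.

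For the first step I will apply Theorem \ref{zwarteskonczone} directly. Its hypotheses are exactly alternative (i), that $(X,d)$ is totally bounded, or alternative (ii), that $\mu$ is integrable and $p\in\mathcal{P}^{\log}_{\mu}(X)$. Alternative (ii) of the corollary additionally assumes $\mu$ is finite, which is not needed for Theorem \ref{zwarteskonczone}, so this extra assumption can simply be ignored. No extra constraints on $s,q$ are required beyond $p,s\in\mathcal{P}_b(X)$ and $q\in\mathcal{P}(X)$, and these match the corollary's standing assumptions.

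For the second step I will invoke the implication $(i)\Rightarrow(ii)$ of Theorem \ref{pomiedzyM}, with the $M$-space as the source, and of Theorem \ref{pomiedzyN}, with the $N$-space as the source. In each case statement (ii) gives compactness into both $M^{t(\cdot)}_{p(\cdot),r(\cdot)}$ and $N^{t(\cdot)}_{p(\cdot),r(\cdot)}$ for all admissible $r,t$, which yields the four embeddings claimed.

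No genuine obstacle arises. The only point needing care is confirming that the standing assumptions on $p,s,q$ in Theorems \ref{pomiedzyM} and \ref{pomiedzyN} agree with those of the corollary, and they do. The underlying mechanism is Proposition \ref{rownowaznosc}: a bounded sequence in the source space that converges in $L^{p(\cdot)}$ also converges in the target space, because the small-scale part of the gradient is controlled by the gap $\varepsilon^{(\alpha-\beta)^-}$ and the large-scale part by the $L^{p(\cdot)}$ norm.
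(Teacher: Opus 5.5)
Your proposal is correct and is exactly the paper's argument: Theorem \ref{zwarteskonczone} gives compactness of $M^{s(\cdot)}_{p(\cdot),q(\cdot)}$ and $N^{s(\cdot)}_{p(\cdot),q(\cdot)}$ into $L^{p(\cdot)}(X,\mu)$ under (i) or (ii). The implication (i)$\Rightarrow$(ii) of Theorems \ref{pomiedzyM} and \ref{pomiedzyN} then upgrades this to the four stated embeddings. Your observation that the finiteness of $\mu$ in (ii) is not needed for Theorem \ref{zwarteskonczone} is also right.
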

	
	\textbf{Acknowledgements:} The author is grateful to P. Górka for careful reading of the manuscript, valuable remarks, and essential support during the preparation of this paper.
	
	\smallskip
	{\small Micha{\l} Dymek}\\
	\small{Department of Mathematics and Information Sciences,}\\
	\small{Warsaw University of Technology,}\\
	\small{Pl. Politechniki 1, 00-661 Warsaw, Poland} \\
	{\tt michal.dymek.dokt@pw.edu.pl}\\
	

\begin{thebibliography}{99}
	\bibitem{maximal} {\sc T. Adamowicz, P. Harjulehto, P. H\"{a}st\"{o}}, Maximal operator in variable exponent Lebesgue spaces on unbounded quasimetric measure spaces, Math. Scand. (2015) 116: 5-22.
	
	\bibitem{Adams} {\sc R. A. Adams}, Sobolev Spaces, Pure and Applied Mathematics 65, Academic Press, New York-London, 1975.
	
	\bibitem{AH} {\sc A. Almeida, P. H\"ast\"o}, Besov spaces with variable smoothness and integrability, J. Funct. Anal. 258 (2010), 1628-1655.
	
	\bibitem{samko} {\sc A. Almeida, S. Samko}, Embeddings of variable Haj{\l}asz-Sobolev spaces into Hölder spaces of variable order,
	J. Math. Anal. Appl. 353 (2) (2009) 489–496.
		
	\bibitem{zanurzenia} {\sc R. Alvarado, M. Dymek, P. Górka, N. Karak}, Embeddings of variable Sobolev, Besov, and Triebel-Lizorkin spaces on metric measure spaces, Preprint (2026), arXiv:2603.19111.
	
	\bibitem{zwartestale} {\sc R. Alvarado, P. Górka, A. S{\l}abuszewski}, Compact embeddings of Sobolev, Besov, and Triebel-Lizorkin
	spaces, Journal of Differential Equations, Volume 446, 2025.
	
	\bibitem{Ambrosio} {\sc L. Ambrosio, P. Tilli}, Topics on analysis in metric spaces, Oxford Lecture Ser. Math. Appl. 25, Oxford Univ. Press, Oxford, 2004.
	
	\bibitem{baloch} {\sc Z. M. Balogh, A. Kristály}, Lions-type compactness and Rubik actions on the Heisenberg group, Calc. Var. Partial Differential Equations 48, 2013, 89–109.
	
	\bibitem{bandaliyev} {\sc R. Bandaliyev, P. Górka}, Relatively compact sets in variable-exponent Lebesgue spaces, Banach
	J. Math. Anal. 12 (2018), 331-346.
	
	\bibitem{Berestycki} {\sc H. Berestycki, P. L. Lions}, Existence of a ground state in nonlinear equations of the Klein-Gordon type. Variational inequalities and complementarity problems, In: Proc. Internat. School, Erice, 1978, Wiley, Chichester, 1980, 35–51.
	
	\bibitem{Bogachev} {\sc V. Bogachev}, Measure Theory, Springer Verlag, Berlin (2007).
	
	\bibitem{Cohn} {\sc  D. L. Cohn}, Measure Theory, Birkh\"auser, New York, 2013.
	
	\bibitem{Coifman} {\sc R. R. Coifman, G. Weiss}, Analyse harmonique non-commutative sur certains espaces homog`enes, Lecture Notes in Math., vol. 242, Springer-Verlag, Berlin, 1971. MR 58:17690.
	
	\bibitem{cruzuribe} {\sc D. Cruz-Uribe, A. Fiorenza}, Variable Lebesgue spaces: foundations and harmonic analysis, Springer Science Business Media, 2013.
	
	\bibitem{diening} {\sc L. Diening, P. Harjulehto, P. H\"ast\"o, M. R\r{u}\^{z}i\^{c}ka}, Lebesgue and Sobolev spaces with variable exponents, Lecture Notes in Math., Springer,
	Heidelberg, 2011.
	
	\bibitem{lizorkin} {\sc L. Diening, P. H\"at\"o, S. Roudenko}, Function spaces of variable smoothness and integrability, J. Funct. Anal. 256 (6)
	(2009) 1731–1768.
	
	\bibitem{nasze} {\sc M. Dymek, P. G\'orka}, Compactness in the spaces of variable integrability and summability, Math. Nachr. 296 (2023), 4317–4334.
	
	\bibitem{fan} {\sc X. Fan, Y. Zhao, D. Zhao}. Compact imbedding theorems with symmetry of Strauss Lions type for the space $W^{1,p(x)}(\Omega)$, J. Math. Anal. Appl. 255:1, 2001, 333–348.
	
	\bibitem{gacz} {\sc M. Gaczkowski, P. Górka}. Variable Haj{\l}asz-Sobolev spaces on compact metric spaces. Math. Slov. 67, 2017, 199–208.
	
	\bibitem{Gaczkowski}  {\sc M. Gaczkowski, P. G\'{o}rka, D. Pons}, Sobolev spaces with variable exponents on complete manifolds, J. Funct. Anal. 270, 2016, 1379--1415.
	
	\bibitem{krytyczny} {\sc M. Gaczkowski, P. G\'{o}rka, D. Pons}, Symmetry and compact embeddings for critical 
	exponents in metric-measure space, J. Differential Equations 269 (2020), 9819–-9837.
	
	\bibitem{ghorba} {\sc A. Ghorbanalizadeh, P. G\'orka}, Completeness and separability of the spaces of variable integrability and summability, Proc. Amer. Math.
	Soc. 149 (2021), 3873–3879.
	
	\bibitem{Górka} {\sc P. G\'orka}, Looking For Compactness In Sobolev Spaces On Noncompact etric Spaces, Ann. Acad. Sci. Fenn., Vol 43, 2018, 531-540.
	
	\bibitem{borel} {\sc P. G\'orka}, Separability of a metric space is equivalent to the existence of a borel measure, Am. Math.
	Mon. 128(1), 2020, 84–86.
	
	\bibitem{kostrzewa} {\sc P. G\'orka, T. Kostrzewa, E. G. Reyes}, Sobolev spaces on locally compact abelian	groups: compact embeddings and local spaces.- J. Funct. Spaces 2014, 2014, ID 404738.
	
	\bibitem{macios} {\sc P. Górka, A. Macios}. Almost everything you need to know about relatively compact	sets in variable Lebesgue spaces. J. Funct. Anal. 269:7, 2015, 1925–1949.
	
	\bibitem{nonlinear} {\sc P. G\'orka, A. S{\l}abuszewski}, Embeddings of the fractional Sobolev spaces on metric-measure spaces, Nonlinear Anal. 221 (2022), Paper No. 112867, 23 pp.
	
	\bibitem{holdery} {\sc P. G\'orka, C. Ruiz, M. Sanchiz},  Compact inclusions between variable H\"older spaces, J. Math. Anal. Appl. 538 (2024) 128328.
	
	\bibitem{hajlaszpierwszy} {\sc P. Hajlasz}, Sobolev spaces on an arbitrary metric space, Potential Anal. 5 (1996), no. 4, 403–415.
	
	\bibitem{hajlaszkoskela} {\sc P. Haj{\l}asz, P. Koskela}, Sobolev met Poincar´e. Mem. Amer. Math. Soc. 145 (2000), no. 688
	
	\bibitem{zmienneq} {\sc P. Harjulehto, P. H\"ast\"o, V. Latvala} Sobolev embeddings in metric measure spaces with variable dimension, Math. Z. 254, 591–609, (2006).
	
	\bibitem{hassel} {\sc B. Hasselblatt, A. Katok}, A First Course in Dynamics, Cambridge University Press (2003).
	
	
	\bibitem{zmiennepraca} {\sc Y. He, Q. Sun, C. Zhuo}, Pointwise characterizations of variable Besov and Triebel-Lizorkin spaces via Haj{\l}asz gradients. Fract. Calc. Appl. Anal. 27, 944–969 (2024).
	
	\bibitem{hebey} {\sc E. Hebey, M. Vaugon}, Sobolev spaces in the presence of symmetries.- J. Math. Pures Appl. 76, 1997, 859–881.
	
	\bibitem{kalamajska} {\sc A. Ka{\l}amajska}, On compactness of embedding for Sobolev spaces defined on metric spaces, Ann. Acad. Sci. Ser.	A. I Math. 24 (1999), 123–132.
	
	\bibitem{kempka} {\sc H. Kempka, J. Vybíral}, A note on the spaces of variable integrability and summability of Almeida and Hästö, Proc. Amer. Math. Soc. 141
	(2013), 3207–3212.
	
	\bibitem{koskela} {\sc P. Koskela, D. Yang, Y. Zhou}, Pointwise characterizations of Besov and Triebel-Lizorkin spaces and quasiconformal mappings, Adv. Math. 226 (2011), 3579-3621.
	
	\bibitem{Lions} {\sc P. L. Lions}, Symétrie et compacité dans les espaces de Sobolev, J. Funct. Anal. 49, 1982,	315–334.
	
	\bibitem{lukkainen} {\sc J. Luukkainen, E. Saksman}, Every complete doubling metric space carries a doubling measure, Proc. of the Amer. Math. Soc.,	Vol. 126, No. 2, 1998, pp. 531-534.
	
	\bibitem{skrzypczak2} {\sc L. Skrzypczak}, Rotation invariant subspaces of Besov and Triebel-Lizorkin space: compactness of embeddings, smoothness and decay properties, Revista Mat. Iberoamer. 18 (2002), 267-299.
	
	\bibitem{skrzypczak} {\sc L. Skrzypczak, C. Tintarev}, A geometric criterion for compactness of invariant sub
	spaces, Arch. Math. 101, 2013, 259–268.
	
	\bibitem{xu} {\sc J.-S. Xu}, Variable Besov and Triebel-Lizorkin spaces, Ann. Acad. Sci. Fenn. Math. 33 (2008) 511–522
	
	\bibitem{yang} {\sc D. Yang}, New characterizations of Hajlasz-Sobolev spaces on metric spaces, Sci. China Ser. A 46 (2003), 675-689
	\end{thebibliography}
\end{document}